\documentclass[11pt,twoside]{amsart}

\usepackage[T1]{fontenc}
\usepackage{lmodern}
\usepackage{microtype}
\usepackage{amsmath,amssymb,amsthm,mathtools}
\usepackage[inline]{enumitem}
\usepackage{booktabs}
\usepackage{array}
\usepackage{tikz}
\usetikzlibrary{arrows.meta,positioning}
\usepackage{xcolor}
\definecolor{insertpurple}{RGB}{112,48,160}
\usepackage[hidelinks]{hyperref}
\usepackage{xr-hyper}
\usepackage[nameinlink,capitalise]{cleveref}

\calclayout

\title[Lie algebras in symmetric tensor categories]
{Lie algebras in symmetric tensor categories}

\author{I. Angiono}
\address{Angiono: FaMAF-CIEM (CONICET), Universidad Nacional de C\'ordoba, Medina Allende s/n, Ciudad Universitaria, 5000 C\'ordoba, Argentina}
\email{ivan.angiono@unc.edu.ar}

\author{A. Czenky}\address{Czenky: Department of Mathematics, University of Southern California,
3620 S. Vermont Ave., Los Angeles, CA 90089, USA}
\email{czenky@usc.edu}

\author{P. Etingof}\address{Etingof: Department of Mathematics, Massachusetts Institute of Technology,
77 Massachusetts Avenue, Cambridge, MA 02139, USA}
\email{etingof@math.mit.edu}

\author{V. Ostrik}\address{Ostrik: Department of Mathematics, University of Oregon,
Eugene, OR 97403, USA}
\email{vostrik@uoregon.edu}

\author{J. Plavnik}\address{Plavnik: Department of Mathematics, Indiana University, Bloomington, IN 47405, USA, \& Department of Mathematics
and Data Science, Vrije Universiteit Brussel, Pleinlaan 2, 1050 Brussels, Belgium}
\email{jplavnik@iu.edu}

\author{G. Sanmarco}\address{Sanmarco: Iliad}
\email{sanmarcoguille@gmail.com}

\subjclass[2020]{17B05, 17B10, 17B35, 18M05, 18M20}
\keywords{Lie algebra object, symmetric tensor category, Engel theorem, Killing form, nondegenerate grading, category $\mathcal O$, contragredient Lie algebra}

\newtheorem{theorem}{Theorem}[section]
\newtheorem{proposition}[theorem]{Proposition}
\newtheorem{lemma}[theorem]{Lemma}
\newtheorem{corollary}[theorem]{Corollary}
\theoremstyle{definition}
\newtheorem{definition}[theorem]{Definition}

\newtheorem{question}[theorem]{Question}
\theoremstyle{remark}
\newtheorem{remark}[theorem]{Remark}

\numberwithin{equation}{section}

\newcommand{\C}{\mathcal C}
\newcommand{\W}{\mathcal W}
\newcommand{\Ocat}{\mathcal O}
\newcommand{\Pwt}{\mathsf P}
\newcommand{\one}{\mathbf 1}
\newcommand{\g}{\mathfrak g}
\newcommand{\h}{\mathfrak h}
\newcommand{\nplus}{\mathfrak n^+}
\newcommand{\nminus}{\mathfrak n^-}

\newcommand{\aLie}{\mathfrak a}
\newcommand{\bLie}{\mathfrak b}
\newcommand{\Der}{\operatorname{Der}}
\newcommand{\End}{\operatorname{End}}
\newcommand{\Hom}{\operatorname{Hom}}
\newcommand{\Irr}{\operatorname{Irr}}

\newcommand{\Rep}{\operatorname{Rep}}
\newcommand{\gr}{\operatorname{gr}}
\newcommand{\tr}{\operatorname{tr}}
\newcommand{\ad}{\operatorname{ad}}
\newcommand{\HC}{\operatorname{HC}}
\newcommand{\CE}{\mathrm{CE}}
\newcommand{\Verp}{\operatorname{Ver}_p}
\newcommand{\Verplus}{\operatorname{Ver}_p^+}
\newcommand{\id}{\operatorname{id}}

\newcommand{\Ver}{\operatorname{Ver}}

\newcommand{\he}{\mathrm{ht}}

\newcommand{\fb}{\mathfrak{b}}
\newcommand{\fg}{\mathfrak{g}}
\newcommand{\fh}{\mathfrak{h}}

\newcommand{\fn}{\mathfrak{n}}

\newcommand{\ttb}{\mathtt{b}}
\newcommand{\ttd}{\mathtt{d}}
\newcommand{\tte}{\mathtt{e}}
\newcommand{\ttf}{\mathtt{f}}

\newcommand{\tth}{\mathtt{h}}

\newcommand{\ttq}{\mathtt{q}}

\newcommand{\ttm}{\mathtt{m}}

\newcommand{\ttB}{\mathtt{B}}
\newcommand{\ttI}{\mathtt{I}}
\newcommand{\ttK}{\mathtt{K}}

\newcommand{\ttU}{\mathtt{U}}
\newcommand{\ttV}{\mathtt{V}}

\newcommand{\ttY}{\mathtt{Y}}
\newcommand{\ttW}{\mathtt{W}}

\newcommand{\ev}{\mathsf{ev}}

\newcommand{\ima}{\operatorname{im}}

\newcommand{\indcat}[1]{#1^{\operatorname{ind}}}

\newcommand{\bQ}{\mathbb{Q}}
\newcommand{\bZ}{\mathbb{Z}}
\newcommand{\bN}{\mathbb{N}}

\newcommand{\cC}{\mathcal{C}}

\newcommand{\cG}{\mathcal{G}}

\newcommand{\cO}{\mathcal{O}}

\newcommand{\cW}{\mathcal{W}}

\newcommand{\cR}{\mathcal{R}}

\newcommand{\cX}{\mathcal{X}}

\newcommand{\fc}{\mathfrak{c}}

\newcommand{\Ann}{\operatorname{Ann}}
\newcommand{\re}{\operatorname{re}}

\newcommand{\FLie}{\operatorname{FLie}}

\newcommand{\Lie}{\mathsf{Lie}}

\newcommand\restr[2]{{\left.\kern-\nulldelimiterspace #1 \vphantom{\big|} \right|_{#2}}} 

\begin{document}

\begin{abstract}
We develop the general theory of Lie
algebras in symmetric tensor categories. We prove categorical versions of classical results such as Engel's theorem, Cartan's criterion, and that all derivations of a strongly
semisimple Lie algebra are inner.  
For finite-dimensional graded Lie algebras, we construct a graded category $\Ocat$, classify its graded-simple
objects, and prove the generic graded irreducibility of Verma modules. 
Lastly, we  develop the theory of root systems for contragredient Lie
algebras. 
\end{abstract}

\maketitle

\tableofcontents

\section{Introduction}\label{sec:introduction}

The guiding question of this paper is how much of the architecture of classical Lie theory survives in  symmetric tensor categories over fields of arbitrary characteristic.
Many classical arguments depend on properties of vector spaces that need not persist in this setting. For instance, in positive characteristic, a skew-symmetric bracket satisfying the Jacobi identity need not have the PBW property \cite{EtingofPBW}. Thus, extending Lie theory from vector spaces to symmetric tensor categories requires more than a formal translation.

There is a particular reason to ask this question now. Over a field of
characteristic zero, Deligne proved that every pre-Tannakian category of
moderate growth admits a fiber functor to supervector spaces
\cite{DeligneTensor}, and hence can be reconstructed from ordinary or super
group theory. The direct analogue fails in characteristic $p$. The
fundamental counterexamples are the Verlinde categories $\Verp$, obtained by
semisimplifying the category of tilting modules for $SL_2$; for $p\geq 5$, they admit no
symmetric tensor functor to either $\operatorname{Vec}$ or
$\operatorname{sVec}$. Moreover, $\Verp$ plays an important role:
Ostrik proved that every symmetric fusion category in
characteristic $p$ admits a symmetric tensor functor to $\Verp$
\cite{OstrikSymmetricFusion}, and Coulembier, Etingof, and Ostrik subsequently extended this result to
semisimple pre-Tannakian categories of moderate growth
\cite{CEOFrobenius}. 

The study of algebraic structures in symmetric tensor categories
has expanded considerably in recent years. Much of the recent work has
focused on the Verlinde categories, including affine group schemes and
Harish--Chandra pairs \cite{VenkateshHC,Ma}, representations of
general linear groups \cite{Venkatesh,Kannan,Utiralova}, categorical
Chevalley--Eilenberg cohomology \cite{EtingofHu}, and the construction and
classification of Lie algebra objects
\cite{APS-ss,EKOweak,Newton}. Contragredient Lie algebras have also been
defined in arbitrary symmetric tensor categories
\cite{APS-contragredient}.

The present paper develops some of the general tools needed for this
program. Let $\C$ be a symmetric tensor category over an algebraically
closed field $\Bbbk$, and let $\g$ be an operadic Lie algebra in $\C$.
Although the natural map $\g\to U(\g)$ need not be injective
\cite{EtingofPBW}, its kernel is always central. If $\C$ is
quasi-semisimple, meaning that symmetric algebras preserve
monomorphisms, then the quotient by this kernel has the PBW property;
see \cref{prop:PBW-kernel-central}. In particular, every simple operadic
Lie algebra in a quasi-semisimple symmetric tensor category has the
PBW property.

Our first main result is a categorical version of Engel's theorem: if the underlying object of a finite-dimensional Lie algebra $\g$ is
semisimple and every simple subobject of $\g$ acts nilpotently on a
finite-dimensional representation $V$, then $V$ admits a finite
filtration strictly lowered by $\g$; see
\cref{thm:engels-representation-theorem}.

 We then study invariant bilinear
forms. We call an operadic Lie algebra \emph{quadratic} if it admits a
symmetric invariant nondegenerate form, and \emph{strongly semisimple} if its
Killing form is nondegenerate. We prove that a finite-dimensional quadratic
operadic Lie algebra with no non-zero abelian ideals is semisimple. Applied to the Killing form, this yields a
categorical Cartan criterion, see \cref{thm:cartan-criterion}. We also show that
every derivation of a strongly semisimple Lie algebra is inner.

We next establish a rigidity result for Lie algebra homomorphisms. In
classical theory, the vanishing of  the first Chevalley--Eilenberg
cohomology gives rigidity only up to conjugation. In $\C$, however, a non-zero object
$X$ may be \emph{invariantless}, meaning that
$\Hom_{\C}(\one,X)=0$. For an invariantless $\aLie$, we prove that the affine
scheme of Lie algebra homomorphisms $\g\to\aLie$ is finite and reduced when
the relevant first cohomology vanishes. In particular, this holds when $\g$ is
linearly reductive. This result has no nontrivial analogue in ordinary vector
spaces, where the only invariantless object is zero.

We then define a graded category $\Ocat$ and
Verma modules $\Delta(X,d)$. As in classical highest-weight theory, every Verma module has
a unique graded-simple quotient, and every graded-simple object of $\Ocat$
arises in this way, see \cref{thm:simple-graded,thm:classification-simple-O}. In particular, for $\g$ in
$\Verp$, we prove $U(\g)$ is noetherian and
$\Ocat$ is abelian. The key categorical
input is that the symmetric algebra of the nonclassical part of $\g$ is
finite-dimensional; this is \cref{thm:noetherian-enveloping}.

We also define categorical Shapovalov maps to study the reducibility of Verma modules. In characteristic zero, a nondegenerate grading makes every Shapovalov determinant generically nonzero, which yields generic graded simplicity of Verma modules; see Theorem~\ref{thm:generic-irreducibility-char0}. In characteristic $p$, central $p$-th powers obstruct the corresponding result for ordinary Verma modules. Under a $p$-nilpotence hypothesis in $\Verp$, we prove instead that baby Verma modules are graded-simple on a nonempty Zariski open set; see Theorem~\ref{thm:generic-irreducibility-verp}. We also construct an example illustrating the category $\mathcal O$ and Shapovalov maps.

The final part of the paper concerns contragredient Lie algebras in quasi-semisimple symmetric tensor categories. In this setting, the Cartan matrix is replaced by representation-theoretic
data \cite{APS-contragredient}. We prove
a simplicity criterion and show that, whenever the
relevant root strings are finite, reflection of the generating data is
realized by a Lie algebra isomorphism; see
\cref{thm:contragredient-indec-simple,thm:Ti-isomorphism}.
 This allows us to develop a root system, see 
\cref{th:cartan-graph-root-system}. We end this section with the computation of an explicit example.

\section{Lie algebras in symmetric tensor categories}\label{sec:conventions}

A \emph{pseudo-tensor category} over $\Bbbk$ is a $\Bbbk$-linear Karoubian rigid
monoidal category whose tensor product is $\Bbbk$-bilinear, with
finite-dimensional Hom spaces and
$\End(\one)=\Bbbk$.  A \emph{tensor category} is a pseudo-tensor category that
is abelian and in which every object has finite length.  When a pseudo-tensor category is endowed with a  braided structure, we say the category is braided.
If the braiding satisfies that the square of the braid is trivial for every pair of objects, we call the category symmetric. 
 For the general theory of tensor categories, we follow 
\cite{EGNO}.

Unless stated otherwise, $\C$ is a symmetric tensor category with
simple unit over the algebraically closed field $\Bbbk$; it is not
assumed semisimple. Tensor product and duality are exact. We suppress
associativity constraints and symmetry morphisms, and write
$\indcat{\C}$ for the ind-completion. An object of $\indcat{\C}$ is
called \emph{finite-dimensional} if it belongs to $\C$.

We call $\C$ \emph{quasi-semisimple} if every monomorphism $X\to Y$
induces a monomorphism $S(X)\to S(Y)$
\cite[Definition~7.5]{EtingofPBW}. Every semisimple symmetric tensor
category is quasi-semisimple. In characteristic zero, every symmetric
tensor category is quasi-semisimple, since symmetric powers are the
images of symmetrizer idempotents on tensor powers. In positive characteristic quasi-semisimplicity may fail, although all
known counterexamples are in characteristic $2$
\cite[\S7.3]{EtingofPBW}.

\subsection{Schematic notation}

We frequently express identities of morphisms using element notation.  If
$X\in\C$, a symbol $x\in X$ appearing in such a formula is a placeholder
for a tensor factor; it is not assumed to be a classical element
$\one\to X$.  Thus this notation remains meaningful even when
$\Hom_{\C}(\one,X)=0$.

More precisely, let $A$ be a commutative algebra object of $\C$.  The set
of $A$-valued points of an object $X$ is
\[
 X(A):=\Hom_{\C}(\one,A\otimes X)
 \cong
 \Hom_{A\text{-}\mathrm{mod}}(A,A\otimes X).
\]
A morphism $f:X_1\otimes\cdots\otimes X_n\longrightarrow Y$
induces a map
 $ f_A:X_1(A)\times\cdots\times X_n(A)\longrightarrow Y(A).$ 
It is obtained by tensoring the given $A$-valued points, using the
symmetry to move the copies of $A$ next to one another, multiplying them
in $A$, and then applying $f$. 

At the categorical level, every such formula denotes an equality between
morphisms with the same source and target, with the necessary
associativity and symmetry morphisms inserted.  For example, if
$\ttb:\g\otimes\g\to\g$ is a Lie bracket, then
\[
 [x,y]=-[y,x]
\]
denotes the morphism identity
\[
 \ttb=-\ttb\circ c_{\g,\g}.
\]
For further examples of this notation, see
Definition~\ref{def:orthogonal} and the proof of
Lemma~\ref{lemma:perp}.

\subsection{Basic notions}
For $V\in\C$, write $\wedge^nV$ for the ordinary exterior power, i.e., the
object of alternating coinvariants, and
\[
 \Lambda^nV:=(\wedge^nV^*)^*
\]
for the divided exterior power, i.e., the object of alternating invariants.
There is a natural morphism $\wedge^nV\to\Lambda^nV$.  It is an isomorphism
when $n!$ is invertible.  It is also an isomorphism for every
$V\in\Verplus$ and every $n$; see
\cite[Lemma~2.2]{EtingofHu}.

For $V\in\C$, a \emph{classical element} of $V$ is a morphism
$x:\one\to V$.  We set
\[
 \mathfrak{gl}(V):=\underline{\End}(V):=V\otimes V^*
\]
with its associative multiplication and commutator bracket.  If
$A\subseteq\underline{\End}(V)$ is a subobject, then
$A^r\subseteq\underline{\End}(V)$ denotes the image of the $r$-fold
composition map.

\begin{definition}\label[definition]{def:lie-algebra}
An \emph{operadic Lie algebra} in a symmetric pseudo-tensor category is an
object $\g$ with a skew-symmetric bracket
$[\ ,\ ]:\g\otimes\g\to\g$ satisfying the Jacobi identity.  If the
category is a symmetric tensor category, its enveloping algebra is
schematically
\[
 U(\g):=T(\g)\big/\bigl(x\otimes y-y\otimes x-[x,y]\bigr).
\]
The tensor-degree filtration gives a natural surjection
\[
 S(\g)\longrightarrow\gr U(\g).
\]
We call $\g$ a \emph{Lie algebra} if this map is an isomorphism.  This is
the PBW property.
\end{definition}

\begin{definition}\label[definition]{def:simple-semisimple-perfect}
We say that an operadic Lie algebra $\g$ is 
\begin{itemize}
\item \emph{simple} if it is nonabelian and has no
non-zero proper ideals,
\item \emph{semisimple} if it is a finite direct sum
of simple ideals,
\item \emph{perfect} if $[\g,\g]=\g$.
\end{itemize}
\end{definition}

A representation of $\g$ on $V$ is an operadic Lie algebra morphism
$\g\to\mathfrak{gl}(V)$.  We write $V^{\g}$ for the largest subobject of $V$ annihilated by $\g$; equivalently,
\[
 V^{\g}:=\ker\bigl(V\longrightarrow\g^*\otimes V\bigr),
\]
where the arrow is adjoint to the action $\g\otimes V\to V$.  In particular, the \emph{adjoint representation}
\begin{align}\label{eq:adjoint-rep}
\ad:\g\longrightarrow\underline{\End}(\g)
\end{align} 
is a morphism of operadic Lie algebras by the Jacobi identity.

An \emph{ideal} of $\g$ is a subobject stable under
the adjoint action.

\smallskip

In characteristic zero the PBW property is automatic \cite{Humphreys, EtingofPBW}.  In positive
characteristic it is an additional condition.  For a finite-dimensional
operadic Lie algebra $\g\in\C$, let
\[
 I(\g):=\ker\bigl(\g\longrightarrow U(\g)\bigr).
\]

\begin{proposition}\label[proposition]{prop:PBW-kernel-central}
The subobject $I(\g)$ is a central ideal of $\g$. If $\C$ is
quasi-semisimple, then $\g/I(\g)$ is a Lie algebra. In particular,
every simple operadic Lie algebra in a quasi-semisimple symmetric
tensor category is a Lie algebra.
\end{proposition}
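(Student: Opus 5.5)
Write $\iota:\g\to U(\g)$ for the natural map and $I=I(\g)=\ker\iota$. The plan is to prove the three assertions in order.

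\emph{$I$ is a central ideal.} In $U(\g)$ the relation $xy-yx=\iota([x,y])$ holds, i.e.
\[
 \iota\circ\ttb=\mathrm m\circ(\iota\otimes\iota)\circ(1-c_{\g,\g}).
\]
Restrict to $I\otimes\g$. Then $\iota\otimes\iota$ vanishes on $I\otimes\g$ and on $\g\otimes I$, so $\iota\circ\ttb|_{I\otimes\g}=0$. Hence $[I,\g]\subseteq I$, and $I$ is an ideal. This alone does not give centrality, so we argue differently. Since $\iota$ is a morphism of operadic Lie algebras, $\ad(\iota(x))$ acts on $U(\g)$ and extends the adjoint action of $\g$ on the degree-one part. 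Consider the action of $\g$ on the adjoint representation $\g$ itself: it factors through $U(\g)$, because every representation of $\g$ is a $U(\g)$-module. Concretely, $\ad:\g\to\underline{\End}(\g)$ is a Lie algebra morphism into an associative algebra, so by the universal property it factors as $\g\xrightarrow{\iota}U(\g)\to\underline{\End}(\g)$. Therefore $\ad$ vanishes on $\ker\iota$, that is, $[I,\g]=0$, and $I$ is central.

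\emph{$\g/I$ has the PBW property when $\C$ is quasi-semisimple.} Put $\bar\g=\g/I$. First, $U(\bar\g)=U(\g)$. Indeed, $U(\bar\g)$ is $U(\g)$ modulo the ideal generated by the image of $I$, and that image is zero. So $\bar\g\to U(\bar\g)$ is injective. Now use the PBW criterion of \cite{EtingofPBW}: in a quasi-semisimple category, an operadic Lie algebra $\mathfrak a$ is a Lie algebra if and only if $\mathfrak a\to U(\mathfrak a)$ is a monomorphism. This is the main point I expect to need citing (\cite[\S7]{EtingofPBW}). If it must be proved, the approach is as follows. Embed $\bar\g$ into a Lie algebra with PBW, for instance into $U(\bar\g)$ viewed with its commutator bracket, or into $\mathfrak{gl}$ of a faithful module. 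Then compare symmetric algebras using the hypothesis that $S$ preserves monomorphisms, so that the map $S(\bar\g)\to\gr U(\bar\g)$ is shown to be injective. Surjectivity is automatic, so this map is an isomorphism.

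\emph{Simple operadic Lie algebras.} If $\g$ is simple, $I$ is an ideal, so either $I=0$ or $I=\g$. If $I=\g$, then $I$ is central, so $\g$ is abelian, which contradicts nonabelianness. Hence $I=0$, so $\g=\g/I$, and by the previous step $\g$ is a Lie algebra.
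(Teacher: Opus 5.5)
Your proposal is correct and follows essentially the same route as the paper. Centrality comes from factoring $\ad$ through $U(\g)$, the PBW property of $\g/I(\g)$ comes from the monomorphism $\g/I(\g)\hookrightarrow U(\g)$ together with Etingof's criterion for quasi-semisimple categories \cite[Proposition~7.7]{EtingofPBW}, and the simple case follows because $I(\g)$ is a central ideal; your identification $U(\g/I)\cong U(\g)$ and your separate check that $I$ is an ideal (which centrality already implies) are harmless extra steps.
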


\begin{proof}
The image of the natural map $\g\to U(\g)$ lies in the commutator Lie
algebra of $U(\g)$, so its kernel is an ideal.  
The
adjoint representation \eqref{eq:adjoint-rep} extends uniquely to an algebra morphism
$U(\g)\to\underline{\End}(\g)$.  It follows that
$I(\g)\subseteq\ker(\ad)$.

Assume now that $\C$ is quasi-semisimple. The induced map
$\g/I(\g)\to U(\g)$ is a monomorphism of operadic Lie algebras.
By \cite[Proposition~7.7]{EtingofPBW}, its source has the PBW property,
so it is a Lie algebra in the sense of \cref{def:lie-algebra}.
If $\g$ is simple, then the central ideal $I(\g)$ is zero, since
$\g$ is nonabelian. The final assertion follows.
\end{proof}

\begin{lemma}\label{{lem:split-subalgebra-PBW}}
Let $j:L\hookrightarrow A$ be a monomorphism of operadic Lie algebras
in a symmetric tensor category. Suppose that $A$ has the PBW property
and that $j$ has a retraction as a morphism of underlying objects.
Then $L$ has the PBW property. In particular, this applies when $A$
is an associative algebra with its commutator bracket.
\end{lemma}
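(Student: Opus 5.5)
The plan is to compare the PBW maps of $L$ and $A$ through the functoriality of $S$ and $U$. The morphism $j$ induces an algebra map $U(j):U(L)\to U(A)$. Since $j$ preserves the tensor-degree filtrations, it also induces $\gr U(j)$. Together these give a commutative square
\[
S(L)\xrightarrow{\ \pi_L\ }\gr U(L)\xrightarrow{\ \gr U(j)\ }\gr U(A)\xleftarrow{\ \pi_A\ } S(A),
\qquad \gr U(j)\circ\pi_L=\pi_A\circ S(j).
\]
By hypothesis $\pi_A$ is an isomorphism, and $\pi_L$ is already surjective. So it suffices to show that $\pi_L$ is a monomorphism. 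For that it is enough to show that $\pi_A\circ S(j)$ is a monomorphism, because $\pi_A\circ S(j)=\gr U(j)\circ\pi_L$ would then force $\pi_L$ to be mono.

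The key point, and the only place the retraction enters, is that $S(j)$ is a monomorphism. Let $r:A\to L$ be a retraction of underlying objects, so $r\circ j=\id_L$. Since $S$ is a functor on $\C$ (or on $\indcat{\C}$), we get $S(r)\circ S(j)=\id_{S(L)}$. Hence $S(j)$ is a split monomorphism, and this holds in each degree $S^n$. This is why no quasi-semisimplicity hypothesis is needed: a general monomorphism need not induce a monomorphism on symmetric algebras, but a split one always does. The rest of the argument is formal bookkeeping in the square above.

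I expect the main obstacle to be checking carefully that the square commutes and that $\gr U(j)$ is well defined on the associated graded. This holds because $U(j)$ is induced by $T(j)$, which preserves degree, and the relations $x\otimes y-y\otimes x-[x,y]$ go to relations because $j$ is a Lie morphism. Then $\pi_L$ is mono and epi in the abelian category $\indcat{\C}$, hence an isomorphism, so $L$ has the PBW property.

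For the final assertion, let $A$ be an associative algebra with its commutator bracket, and embed it into $U(A_{\mathrm{Lie}})$. The PBW property of an associative algebra's commutator Lie algebra in $\C$ is asserted in the statement rather than proved by the argument above, and I would derive it from an existing reference. Concretely, I would cite \cite{EtingofPBW}, where it is shown that Lie algebras embeddable into associative algebras in a suitable sense are PBW (essentially \cite[Proposition~7.7]{EtingofPBW} as used in \cref{prop:PBW-kernel-central}). Then I would apply the first part with $A$ in the role of the ambient PBW Lie algebra.
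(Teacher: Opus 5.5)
Your argument for the main claim is the paper's argument: the naturality identity $\gr U(j)\circ\eta_L=\eta_A\circ S(j)$, split injectivity of $S(j)$ via $S(r)$, and the conclusion that $\eta_L$ is a monomorphism and hence an isomorphism.

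The gap is in the final clause. There you need that the commutator Lie algebra of an \emph{arbitrary} associative algebra in an \emph{arbitrary} symmetric tensor category has the PBW property. The paper obtains this from \cite[Corollary~7.3]{EtingofPBW}, which has no extra hypothesis. You propose instead to cite \cite[Proposition~7.7]{EtingofPBW}. As this paper uses it, that result requires $\C$ to be quasi-semisimple: in \cref{prop:PBW-kernel-central}, and again for the semidirect product $\FLie(X)\rtimes\fh$, it is invoked only after quasi-semisimplicity has been assumed. The lemma is stated for any symmetric tensor category, and you yourself stress that the main argument needs no quasi-semisimplicity. So your route proves the last assertion only in the quasi-semisimple case.

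The detour through $A\hookrightarrow U(A_{\mathrm{Lie}})$ also does no work. If a result of the form ``embeds into an associative algebra $\Rightarrow$ PBW'' were available, the identity $A\to A$ would already suffice. Without such a result, the split embedding into $U(A_{\mathrm{Lie}})$ only reduces the question to the PBW property of another commutator algebra, which is circular. Cite \cite[Corollary~7.3]{EtingofPBW} for the final assertion and the proof is complete.
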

\begin{proof}
    Let $\eta_L:S(L)\twoheadrightarrow\gr U(L)$ and
$\eta_A:S(A)\twoheadrightarrow\gr U(A)$ be the canonical morphisms.
Naturality gives
\[
 \gr U(j)\circ\eta_L=\eta_A\circ S(j).
\]
The right-hand side is a monomorphism: $\eta_A$ is an isomorphism,
and $S(j)$ is split injective because the symmetric-algebra functor
preserves the chosen retraction. Thus $\eta_L$ is a monomorphism,
and hence an isomorphism. The last assertion uses
\cite[Corollary~7.3]{EtingofPBW}. 
\end{proof}

For $\C=\Verp$, the PBW property is equivalent to the $p$-Jacobi condition
of \cite[Theorem~6.6]{EtingofPBW}.  The preceding proposition gives, in
particular, that $I(\g)\subseteq\ker(\ad)=Z(\g)$; compare
\cite[Proposition~2.3]{Newton}.

An object $X\in\C$ is \emph{invariantless} if
$\Hom_{\C}(\one,X)=0$.  A finite-dimensional Lie algebra $\g$ is
\emph{linearly reductive} if the category of its finite-dimensional modules
in $\C$ is semisimple.

If $\C$ is quasi-semisimple, then
\cref{prop:PBW-kernel-central} implies that every simple or semisimple
operadic Lie algebra is a Lie algebra. A simple operadic Lie algebra is perfect and has
zero center.  Distinct simple ideals in a direct sum commute.

\section{Engel's theorem}\label{sec:engel}
In this section $\C$ is an arbitrary symmetric tensor category, and
$\Bbbk$ need not be algebraically closed.

Let $V\in \C$. A subobject 
$U\subseteq\underline{\End}(V)$
 is \emph{nilpotent} if $U^n=0$ for some $n\geq1$. We will need the following lemmas.

\begin{lemma}\label{lem:nilpotent-filtration}
A subobject $U\subseteq\underline{\End}(V)$ is nilpotent if and only if there is
a finite filtration
\[
0=F_0V\subseteq F_1V\subseteq\cdots\subseteq F_NV=V
\]
such that
\[
U\cdot F_iV\subseteq F_{i-1}V
\qquad\text{for every }1\leq i\leq N.
\]
\end{lemma}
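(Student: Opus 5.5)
We prove the two implications separately, working throughout with the action morphism $a\colon \underline{\End}(V)\otimes V\to V$ and, for a subobject $W\subseteq V$, writing $U\cdot W$ for the image of the restricted map $U\otimes W\to V$. The basic compatibility we need is that $U^r\cdot W=U\cdot(U^{r-1}\cdot W)$. To see this, note that the composition map $U^{\otimes r}\to \underline{\End}(V)$ followed by the action is the $r$-fold iterated action $U^{\otimes r}\otimes W\to V$, by associativity of the action of $\underline{\End}(V)$ on $V$. Images then behave well because tensor products are exact: the image of $U\otimes (U^{\otimes(r-1)}\otimes W)$ coincides with the image of $U\otimes \operatorname{im}(U^{\otimes (r-1)}\otimes W\to V)$. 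Similarly, $U^r\cdot W$ is the image of $U^{\otimes r}\otimes W$ under the iterated action, since $U^{\otimes r}\to U^r$ is an epimorphism and $-\otimes W$ is exact.

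For the forward direction, suppose $U^n=0$. Define a descending filtration by $G_r:=U^r\cdot V$, with $G_0=V$. Then $U\cdot G_r=G_{r+1}$ by the compatibility above, and $G_n=U^n\cdot V=0$. Reindexing by $F_iV:=G_{n-i}$ gives
\[
0=F_0V\subseteq F_1V\subseteq\cdots\subseteq F_nV=V
\qquad\text{with}\qquad
U\cdot F_iV=F_{i-1}V.
\]
It remains to check that the $G_r$ are decreasing. This holds because $G_{r+1}=U^r\cdot(U\cdot V)\subseteq U^r\cdot V=G_r$, using $U\cdot V\subseteq V$ and the monotonicity of images, which again follows from exactness of $\otimes$.

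For the converse, suppose such a filtration exists. We show by induction on $r$ that $U^r\cdot F_iV\subseteq F_{i-r}V$, where $F_jV:=0$ for $j\le 0$. The inductive step is $U^{r}\cdot F_iV=U\cdot(U^{r-1}\cdot F_iV)\subseteq U\cdot F_{i-r+1}V\subseteq F_{i-r}V$. Taking $r=N$ and $i=N$ gives $U^N\cdot V=0$.

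The main obstacle is deducing $U^N=0$ from $U^N\cdot V=0$. Here we use that the action $\underline{\End}(V)\otimes V\to V$ is faithful in the internal sense. A subobject $Z\subseteq\underline{\End}(V)=V\otimes V^*$ whose action on $V$ vanishes corresponds, via the adjunction isomorphism $\Hom(Z\otimes V,V)\cong\Hom(Z,V\otimes V^*)$, to the inclusion $Z\hookrightarrow V\otimes V^*$. Indeed, the action restricted to $Z$ is the adjoint of this inclusion. If the restricted action is zero, then the inclusion is zero, so $Z=0$. Applying this to $Z=U^N$ finishes the proof.

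The only delicate point is the bookkeeping that images commute with tensoring and composition. This relies on exactness of $\otimes$ in $\C$, which is part of our standing assumptions.
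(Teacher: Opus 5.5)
Your proof is correct and has the same structure as the paper's. The only difference in the forward direction is the choice of filtration: you use the descending image filtration $F_iV=U^{n-i}\cdot V$, whereas the paper uses the ascending kernel filtration $F_iV=\ker\widetilde a_i$, where $\widetilde a_i\colon V\to(U^{\otimes i})^*\otimes V$ is adjoint to the $i$-fold iterated action, i.e.\ the largest subobject killed by $U^{\otimes i}$. The converse direction is identical, and your adjunction argument makes explicit the faithfulness step $U^N\cdot V=0\Rightarrow U^N=0$, which the paper leaves implicit.
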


\begin{proof}
Let $a_i:U^{\otimes i}\otimes V\to V$ be the iterated action and let $\widetilde a_i:V\to(U^{\otimes i})^*\otimes V$ be its adjoint. Set
$ F_iV:=\ker\widetilde a_i.$ 
Then $F_0V=0$, $F_NV=V$, and
$U\cdot F_iV\subseteq F_{i-1}V$.
Conversely, the inclusions imply that 
$U^N\cdot V=0$, and hence $U^N=0$.
\end{proof}

\begin{lemma}\label[lemma]{lem:ad-nilpotent}
If $U\subseteq\underline{\End}(V)$ satisfies $U^N=0$ for some $N\ge1$, then the commutator action of $U$ on
$\underline{\End}(V)$ satisfies
\[
(\ad U)^{2N-1}=0.
\]
\end{lemma}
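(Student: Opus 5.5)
The plan is to imitate the classical argument $\ad(x)=L_x-R_x$, replacing single elements by the subobject $U$ and working with morphisms. Write $L,R:U\otimes\underline{\End}(V)\to\underline{\End}(V)$ for the left and right multiplication morphisms, $L(u\otimes T)=uT$ and $R(u\otimes T)=Tu$ schematically. The commutator action is then $\ad=L-R\circ c$, where $c$ denotes the symmetry needed to place $u$ on the right. The iterated action
\[
(\ad U)^{m}:U^{\otimes m}\otimes\underline{\End}(V)\to\underline{\End}(V)
\]
is the composite of $m$ copies of $\ad$. Saying that $(\ad U)^{2N-1}=0$ means this composite vanishes for $m=2N-1$; equivalently, the image of the iterated action is zero.

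The first step is to expand the $m$-fold composite multilinearly. We get a signed sum over subsets $S\subseteq\{1,\dots,m\}$, namely the indices where $L$ is used; on the complement $R$ is used. Schematically, the term indexed by $S$ is
\[
u_{i_1}\cdots u_{i_k}\,T\,u_{j_1}\cdots u_{j_l}.
\]
Here $S=\{i_1,\dots,i_k\}$ and the complement is $\{j_1,\dots,j_l\}$, with $k+l=m$, and each term comes with the appropriate symmetry morphism permuting tensor factors. This uses only that $L$ and $R$ commute as actions, which follows from associativity of composition in $\underline{\End}(V)$. Since symmetries and signs do not affect whether a term vanishes, it suffices to show that each individual term is zero.

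For $m=2N-1$ we have $k\ge N$ or $l\ge N$. In the first case the term factors through the morphism
\[
U^{\otimes k}\to\underline{\End}(V),
\]
given by $k$-fold composition, tensored with identities. That morphism factors through $U^k\subseteq U^N$. The inclusion $U^k\subseteq U^N$ holds because $U^k=U^N\cdot U^{k-N}$ when $k\ge N$, and composition is associative, so the $k$-fold composition factors through $U^N\otimes U^{\otimes(k-N)}$. Since $U^N=0$, the term vanishes. The case $l\ge N$ is symmetric, using right multiplication. Hence every term is zero and $(\ad U)^{2N-1}=0$.

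The main obstacle is making the binomial expansion rigorous categorically. I would handle it by induction on $m$: assume the $m$-fold composite equals $\sum_S \pm\,\mu_S\circ\sigma_S$, where $\sigma_S$ is a symmetry and $\mu_S$ is the appropriate composition morphism $U^{\otimes k}\otimes\underline{\End}(V)\otimes U^{\otimes l}\to\underline{\End}(V)$. Applying one more $\ad=L-R\circ c$ then splits each term into two of the same form. This step uses naturality of the symmetry and associativity of the multiplication on $\underline{\End}(V)=V\otimes V^*$.
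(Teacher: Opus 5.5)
Your proposal is correct and follows the paper's proof: both expand the iterated commutator $\ad=L-R\circ c$ into terms with $a$ factors of $U$ to the left of $T$ and $m-a$ factors to the right. For $m=2N-1$ one side always has at least $N$ factors, so every term vanishes because $U^N=0$. Your inductive bookkeeping of the symmetry morphisms and associativity just makes explicit what the paper leaves implicit.
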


\begin{proof}
Put $m=2N-1$. We expand
\[
 (\ad u_1)\cdots(\ad u_m)(T),
 \qquad u_i\in U,\quad T\in\underline{\End}(V).
\]
Each summand is a product of $a$ factors from $U$, followed by $T$, followed by $m-a$ factors from $U$. Since either $a\ge N$ or $m-a\ge N$, every summand vanishes by $U^N=0$. Thus the entire morphism $U^{\otimes m}\otimes\underline{\End}(V)\to\underline{\End}(V)$ is zero.
\end{proof}

Let $\g$ be a finite-dimensional Lie algebra in $\C$. For a Lie subalgebra $\h\subseteq\g$, we consider the composite
\[
\g\otimes\h\xrightarrow{[\, ,\, ]}\g\longrightarrow\g/\h.
\]
By adjunction, this determines a morphism
\[
\g\longrightarrow(\g/\h)\otimes\h^*.
\]
The \emph{normalizer} $N_\g(\h)$ is defined to be its kernel.
Schematically,
$N_\g(\h)=\{x\in\g:[x,\h]\subseteq\h\}.$
The Jacobi identity implies that $N_\g(\h)$ is a Lie subalgebra of
$\g$.

\begin{theorem}[Engel's theorem for representations]
\label{thm:engels-representation-theorem}
Let $V\in\C$, and let $\g$ be a finite-dimensional Lie algebra
acting on $V$ whose underlying object in $\C$ is semisimple. Assume
that every simple subobject $X\subseteq\g$ acts nilpotently on $V$. Then there is a finite filtration
\[
0=F_0V\subseteq F_1V\subseteq\cdots\subseteq F_NV=V
\]
such that
\[
\g\cdot F_iV\subseteq F_{i-1}V
\qquad\text{for every }1\leq i\leq N.
\]
\end{theorem}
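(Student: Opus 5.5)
Replace $\g$ by its image $\rho(\g)\subseteq\underline{\End}(V)$ under the action map $\rho:\g\to\underline{\End}(V)$. Being a quotient of a semisimple object, $L:=\rho(\g)$ is semisimple in $\C$ and is an operadic Lie subalgebra of $\underline{\End}(V)$. Every simple subobject of $L$ is the image of a simple subobject of $\g$, since $\g\to L$ splits on underlying objects, so it is nilpotent. By \cref{lem:nilpotent-filtration}, it suffices to show that the associative subalgebra generated by $L$ is nilpotent, i.e.\ $L^n=0$ for some $n$. We argue by induction on the length of $L$, proving the more precise claim: \emph{if $L\subseteq\underline{\End}(V)$ is a Lie subalgebra, semisimple as an object, with every simple subobject nilpotent, then $L^n=0$ for some $n$.}

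\textbf{Step 1 (adjoint nilpotence).} By \cref{lem:ad-nilpotent}, each simple $X\subseteq L$ acts nilpotently on $\underline{\End}(V)$ by $\ad$, hence also on any $\ad L$-stable subquotient such as $L/K$ for a subalgebra $K\subseteq L$.

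\textbf{Step 2 (maximal subalgebra and normalizer).} Take a proper Lie subalgebra $K\subsetneq L$ that is maximal among proper subalgebras. Since $L$ is semisimple, $K$ is a direct summand of $L$, so $K$ is semisimple with nilpotent simple summands. By induction $K^m=0$. Consider $\ad K$ acting on $L/K$. By \cref{lem:ad-nilpotent} applied to $K$, we get $(\ad K)^{2m-1}=0$ on $\underline{\End}(V)$, hence on $L/K$. \Cref{lem:nilpotent-filtration} then gives a nonzero subobject $(L/K)^K=F_1(L/K)$ killed by $K$. Its preimage lies in $N_L(K)$ and strictly contains $K$. The preimage of any simple subobject of $(L/K)^K$, added to $K$, is a subalgebra, because $[X,X]\subseteq L$ and we must check that it lies in $K+X$. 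This is the delicate point. Instead, I would take $N_L(K)\supsetneq K$, which is a subalgebra, so by maximality $N_L(K)=L$ and $K$ is an ideal. Then pick a simple $X\subseteq L$ with $X\not\subseteq K$ (split). Now $K+X$ is a subalgebra provided $[X,X]\subseteq K+X$. Since $L/K$ is a Lie algebra whose simple subobjects act nilpotently, I would argue either that $K+X$ is already all of $L$ (so $L=K\oplus X$), or replace $X$ by the subalgebra it generates modulo $K$. This step, producing a proper subalgebra or ideal $K$ with $L/K$ ``small'', is the main obstacle. The cleanest route I would try is to choose $K$ as a maximal proper \emph{ideal} direct summand. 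Then $L/K$ is semisimple as an object, and the image of any simple $X$ generates $L/K$ as a Lie algebra.

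\textbf{Step 3 (conclude).} Set $W:=V^K=F_1V$ for $K$ (nonzero by $K^m=0$). It is $L$-stable because $K$ is an ideal: schematically $k(xw)=x(kw)-[x,k]w=0$. On $W$, $L$ acts through $L/K$, which is generated by the image of nilpotent simple $X$'s. So it remains to show that a Lie algebra generated by nilpotent simple subobjects, whose associative span is controlled, kills a nonzero subobject of $W$. If $L=K\oplus X$, then $X$ acts nilpotently on $W$, so $W^X\neq0$ and it is killed by all of $L$. In general, I would iterate: $W^L\neq0$. Then induct on the length of $V$, applying the argument to $V/W^L$ (the image of $L$ there again satisfies the hypotheses) to build the filtration $F_1V=V^L\subseteq F_2V\subseteq\cdots$. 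Finiteness of length of $V$ ensures the process terminates, yielding the desired filtration.
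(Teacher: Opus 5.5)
Up to the point where $K$ is shown to be an ideal, your argument matches the paper's. You pass to the image $L\subseteq\underline{\End}(V)$, take $K$ maximal among proper subalgebras, get $K^m=0$ by induction, use \cref{lem:ad-nilpotent} to produce a nonzero $(L/K)^K$, and conclude $N_L(K)=L$. You are also right that $W=V^K$ is nonzero and $L$-stable.

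The gap is exactly the step you flag: you never prove $W^L\neq0$. ``I would iterate'' is not an argument. The statement you reduce to, that a Lie algebra generated by nilpotent simple subobjects kills a nonzero subobject, is false without further input: in $\operatorname{Vec}$ the lines spanned by $e$ and $f$ act nilpotently on $\Bbbk^2$, but they generate $\mathfrak{sl}_2$. Your fallback of choosing $K$ as a maximal proper ideal does not help either. Maximality among ideals gives no reason for the image of a single simple $X$ to generate $L/K$.

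The missing idea is to use the maximality of $K$ among \emph{subalgebras} once more. Since $K$ is an ideal, every Lie subalgebra of $L/K$ pulls back to a subalgebra of $L$ containing $K$. So $L/K$ has no nonzero proper subalgebras. Hence, for any simple $X\subseteq L$ with $X\not\subseteq K$, the image of $X$ generates $L/K$ as a Lie algebra.

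Because $K$ acts by zero on $W$, the image of $L$ in $\underline{\End}(W)$ is the Lie subalgebra generated by the image $X_W$ of $X$. This lies in the nonunital associative algebra $\sum_{k\geq1}X_W^k$, which is nilpotent because $X_W^n=0$ (restrict the vanishing $n$-fold action of $X$ on $V$ to $W$). Then \cref{lem:nilpotent-filtration} gives $V^L=W^L\neq0$. This argument also covers $K=0$. Your induction on the length of $V$ then finishes the proof, provided you state the induction hypothesis for all $V$ at once.

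The paper splits this step into two cases. When $\h=0$ it uses precisely this single-generator argument. When $\h\neq0$ it instead applies the induction hypothesis to the image of $\g/\h$ in $\underline{\End}(W)$, which has smaller length.
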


\begin{proof}

Every subquotient of the underlying object of $\g$ is semisimple.
In particular, every simple subobject of its image lifts to a simple
subobject of $\g$, so the hypothesis is preserved. The image still has
the PBW property, as a quotient of $\g$
\cite[Lemma~7.6(i)]{EtingofPBW}. We may therefore assume that
$\g\subseteq\underline{\End}(V)$.

We first prove that $V^{\g}\ne0$ when $V\ne0$.  We argue by induction on
the length of $\g$. All underlying subquotients of $\g$ are semisimple. The Lie
subalgebras used in the induction have the PBW property by Lemma \ref{{lem:split-subalgebra-PBW}}, and their images have PBW by
\cite[Lemma~7.6(i)]{EtingofPBW}. Their underlying objects are
semisimple, since they are subquotients of $\g$. The statement is clear for $\g=0$.

Let  $\h\subsetneq\g$ be a proper Lie subalgebra of maximal length.  
Note that, if $\h=0$,  then for any simple subobject $X\subseteq\g$,  the Lie subalgebra
generated by $X$ is non-zero, hence equals $\g$.  Since $X$ is nilpotent in
$\underline{\End}(V)$, the nonunital associative algebra generated by $X$ is
nilpotent.  It contains $\g$, so $V^{\g}\ne0$ by
Lemma \ref{lem:nilpotent-filtration}, as desired. 

Assume then that $\h\ne 0.$
Every
simple subobject of $\h$ acts nilpotently on $V$.  By
\cref{lem:ad-nilpotent}, it also acts nilpotently on $\g/\h$ through the
adjoint action.  By the induction hypothesis applied to the image of $\h$ in
$\underline{\End}(\g/\h)$, there exists a non-zero simple subobject
\[
 \overline X\subseteq(\g/\h)^{\h}.
\]
Since the underlying object of $\g$ is semisimple, the quotient map
$\g\twoheadrightarrow\g/\h$ splits in $\C$; hence
$\overline X$ lifts to a simple subobject $X\subseteq\g$.  Then $X\not\subseteq\h$ and $[\h,X]\subseteq\h$.
Thus $N_{\g}(\h)$ strictly contains $\h$.  Since $\h$ is of maximal length, we get that 
$N_{\g}(\h)=\g$, so $\h$ is an ideal.

By induction, $W:=V^{\h}$ is non-zero.  Since
$\h$ is an ideal, $W$ is stable under $\g$, and $\g/\h$ acts on $W$.
Every simple subobject of the image of $\g/\h$ in $\underline{\End}(W)$ lifts to a
simple subobject of $\g$ and is nilpotent.  Applying the induction
hypothesis once more we obtain that
\[
 0\ne W^{\g/\h}=V^{\g}.
\]

We now construct the filtration.  Choose a simple subobject
$Y\subseteq V^{\g}$.  The hypothesis descends to the induced action on $V/Y$.  By induction on the
length, we have a filtration for $V/Y$ strictly lowered by $\g$. Hence we get a
filtration of $V$ strictly lowered by $\g$ by pullback of this filtration of $V/Y$.
\end{proof}

The \emph{lower central series} of $\g$ is the descending sequence of
ideals
\[
\g=\gamma_1(\g)\supseteq\gamma_2(\g)\supseteq
\gamma_3(\g)\supseteq\cdots
\]
defined recursively by
\[
\gamma_{r+1}(\g):=[\g,\gamma_r(\g)].
\]
We call $\g$ \emph{nilpotent} if $\gamma_N(\g)=0$ for some
$N$.

\begin{corollary}[Engel's theorem]\label[corollary]{cor:engel}
Let $\g$ be a finite-dimensional Lie algebra whose underlying object
in $\C$ is semisimple. Then $\g$ is nilpotent if and only if every
simple subobject $X\subseteq\g$ acts nilpotently on $\g$ through the
adjoint action.
\end{corollary}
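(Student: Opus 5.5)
We prove the two implications separately; the reverse implication is an application of \cref{thm:engels-representation-theorem} to the adjoint representation.

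\emph{Nilpotent implies ad-nilpotent.} Suppose $\gamma_N(\g)=0$. By induction on $r$, the iterated action $\g^{\otimes r}\otimes\g\to\g$, $x_1\otimes\cdots\otimes x_r\otimes y\mapsto[x_1,[x_2,\ldots[x_r,y]\ldots]]$, has image contained in $\gamma_{r+1}(\g)$. Indeed, its image is the image of the bracket $\g\otimes\gamma_r(\g)\to\g$ applied to the image of the previous step, which by definition lies in $[\g,\gamma_r(\g)]=\gamma_{r+1}(\g)$. Taking $r=N-1$, the subobject $\ad(\g)\subseteq\underline{\End}(\g)$ satisfies $\ad(\g)^{N-1}=0$. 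The image of the $r$-fold composition map restricted to $\ad(X)^{\otimes r}$ is contained in $\ad(\g)^r$. Hence every simple subobject $X$, and indeed $\g$ itself, acts nilpotently.

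\emph{Ad-nilpotent implies nilpotent.} Apply \cref{thm:engels-representation-theorem} with $V=\g$ and the adjoint action $\ad$; the hypotheses are exactly those of the corollary. This gives a filtration $0=F_0\g\subseteq\cdots\subseteq F_N\g=\g$ with $[\g,F_i\g]\subseteq F_{i-1}\g$. We then show by induction on $r$ that $\gamma_r(\g)\subseteq F_{N-r+1}\g$. The base case $r=1$ holds since $F_N\g=\g$. For the inductive step, the bracket $\g\otimes\gamma_r(\g)\to\g$ factors through $\g\otimes F_{N-r+1}\g$, because tensoring is exact and so preserves the subobject inclusion. Its image is therefore contained in $F_{N-r}\g$. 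It follows that $\gamma_{N+1}(\g)=0$.

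The only subtle point is making sure that images and inclusions behave correctly in $\C$. Specifically, the image of a composite of a morphism with a monomorphism tensored on is contained in the expected image; this holds by exactness of $\otimes$. One must also check that ``acts nilpotently'' for $X$ means $\ad(X)^n=0$ in $\underline{\End}(\g)$, which is the notion used in the theorem. Beyond this, the result follows immediately from the theorem, and I expect no real obstacle.
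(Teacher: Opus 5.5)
Your proof is correct and follows the paper's route. The forward direction is the routine check that iterated brackets land in $\gamma_{r+1}(\g)$. The converse applies \cref{thm:engels-representation-theorem} to the adjoint representation and then reads off $\gamma_r(\g)\subseteq F_{N-r+1}\g$; the paper phrases this via the image of $\ad$, which is the same reduction the theorem's proof already performs, and you simply spell out the details the paper leaves implicit.
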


\begin{proof}
The forward implication is straightforward.  For the
converse, we apply \cref{thm:engels-representation-theorem} to the image of
$\ad:\g\to\mathfrak{gl}(\g)$.  The resulting filtration of $\g$ is
strictly lowered by the adjoint action, so the lower central series
terminates.
\end{proof}

\begin{remark}
Our proof closely follows the proof of Engel's theorem for generalized
Lie algebras given in \cite{KochetovRadu}.
\end{remark}

\begin{remark}
    The semisimplicity hypothesis on the underlying object of $\g$ cannot
be dropped. In characteristic zero, take
$\C=\Rep(\mathbb G_a)$ and let
\[
 \g=\Bbbk h\oplus\Bbbk e,\qquad [h,e]=e,\qquad
 t\cdot h=h+te,\quad t\cdot e=e.
\]
The displayed $\mathbb G_a$-action preserves the bracket. The only
simple subobject of $\g$ is $\Bbbk e$, and its adjoint action is
nilpotent, since $(\ad e)^2=0$. Nevertheless,
$(\ad h)(e)=e$, so the adjoint action of $\g$ is not nilpotent;
indeed, $\gamma_n(\g)=\Bbbk e$ for every $n\ge2$. 
\end{remark}

\section{Quadratic Lie algebras, the Killing form, and derivations}
\label{sec:quadratic}

Throughout this section, $\C$ is an arbitrary symmetric tensor
category and $\operatorname{char}\Bbbk\ne2$.

\subsection{Invariant bilinear forms}
Let $V$ be a $\g$-module, with action
$\rho:\g\otimes V\to V$. A bilinear form
$B:V\otimes V\to\one$ is \emph{invariant} if
\[
B\circ(\rho\otimes\id_V)
+
B\circ(\id_V\otimes\rho)
\circ(c_{\g,V}\otimes\id_V)
=0.
\]
Schematically, this means
\begin{equation}\label{eq:invariant-form}
B(a\cdot v,w)+B(v,a\cdot w)=0.
\end{equation}
Equivalently, the induced morphism $V\to V^*$ is a morphism of
$\g$-modules. The left and right radicals of an invariant bilinear form are
$\g$-submodules.

For the adjoint representation $V=\g$, invariance can be
written (schematically) as
\[
B([a,b],c)=B(a,[b,c]).
\]

\begin{definition}\label[definition]{def:quadratic}
An operadic Lie algebra $\g$ is \emph{quadratic} if it admits a symmetric
invariant nondegenerate bilinear form.
\end{definition}

\begin{definition}
\label{def:orthogonal}
Let $\phi:\mathfrak g \otimes \mathfrak g \to \mathbf{1}$ be a symmetric bilinear form, and let $\mathfrak h \subset \mathfrak g$ be a subobject. The orthogonal complement $\mathfrak h^\perp$ is defined as the kernel of the composite
\[ p_{\h}:
\mathfrak g \xrightarrow{\tilde \phi} \mathfrak g^* \twoheadrightarrow \mathfrak h^*,
\]
where $\tilde \phi:\mathfrak g \to \mathfrak g^*$ is the map induced from $\phi$ by the natural adjunction. Schematically, $\h^\perp$ is the set of $x\in \g$ such that 
\[
\phi(x,y)=0,\quad \text{for all }y\in\h.
\]

\end{definition}

\begin{lemma}\label{lemma:perp}
Let $\phi$ be a symmetric invariant bilinear form. If $\mathfrak h \subset \mathfrak g$ is an ideal, then $\mathfrak h^\perp$ is also an ideal.
\end{lemma}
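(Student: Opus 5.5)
We must show that the bracket $\g\otimes\h^\perp\to\g$ factors through $\h^\perp=\ker(p_\h)$. Equivalently, the composite
\[
 \g\otimes\h^\perp\xrightarrow{[\ ,\ ]}\g\xrightarrow{\tilde\phi}\g^*\twoheadrightarrow\h^*
\]
must vanish. By adjunction, this is the same as showing that the morphism
\[
 \Phi:\g\otimes\h^\perp\otimes\h\longrightarrow\one,\qquad
 \Phi(a,x,y)=\phi([a,x],y),
\]
is zero. Because the symmetric structure lets us freely permute tensor factors, this identity can be checked schematically and then read back as an equality of morphisms with the appropriate symmetry maps inserted.

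The key step is to apply invariance, which in adjoint form reads $\phi([a,x],y)=-\phi(x,[a,y])$. This follows from \eqref{eq:invariant-form} with $V=\g$ together with skew-symmetry, and the sign is irrelevant here. We therefore obtain
\[
 \Phi=-\phi\circ(\iota_{\h^\perp}\otimes\iota_\h)\circ(\id_{\h^\perp}\otimes\beta)\circ(c_{\g,\h^\perp}\otimes\id_\h),
\]
where $\beta:\g\otimes\h\to\h$ is the restricted bracket. This morphism exists because $\h$ is an ideal, so $[\g,\h]\subseteq\h$.

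It then suffices to show that $\phi\circ(\iota_{\h^\perp}\otimes\iota_\h):\h^\perp\otimes\h\to\one$ is zero. Under adjunction, this morphism is $p_\h\circ\iota_{\h^\perp}:\h^\perp\to\h^*$, which vanishes by the definition of $\h^\perp$ as $\ker p_\h$. Here we use the symmetry of $\phi$ to identify which slot is paired with $\h$. Hence $\Phi=0$, the bracket maps $\g\otimes\h^\perp$ into $\h^\perp$, and $\h^\perp$ is an ideal.

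The main obstacle is making the schematic step rigorous: we must check that the invariance identity, stated for the adjoint action, translates correctly after inserting the symmetry morphisms. We also need that "factoring through the kernel" is equivalent to the vanishing of the composite into $\h^*$, which holds in an abelian category. The rest is a diagram chase using naturality of the braiding and the adjunction $\Hom(X\otimes\h,\one)\cong\Hom(X,\h^*)$.
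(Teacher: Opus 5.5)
Your proposal is correct and is essentially the paper's argument: you reduce via adjunction to the vanishing of the pairing $\g\otimes\h^\perp\otimes\h\to\one$, use invariance to move the bracket onto the $\h$ factor, and conclude because $[\g,\h]\subseteq\h$ and $\h^\perp$ pairs trivially with $\h$. One minor point: skew-symmetry is not needed for the step $\phi([a,x],y)=-\phi(x,[a,y])$, since this is exactly the invariance identity for the adjoint module $V=\g$.
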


\begin{proof} We give both the morphism-theoretic argument and its schematic
translation, to further illustrate our use of schematic notation.
It suffices to show that the composite
\[
\mathfrak g\otimes \mathfrak h^\perp
\xrightarrow{[\ ,\ ]}
\mathfrak g
\xrightarrow{p_{\h}}
\mathfrak h^*
\]
is zero. Equivalently, using the adjunction, it suffices to show that the associated morphism
\[
\phi\circ(\mathrm{id}_{\mathfrak h}\otimes[\ ,\ ]):\mathfrak h\otimes \mathfrak g\otimes \mathfrak h^\perp \to \mathbf 1
\]
is zero.
By invariance of \(\phi\), this is the same as the morphism $\phi\circ([\ ,\ ]\otimes \mathrm{id}_{\mathfrak h^\perp})$, which is zero. In fact, 
because $\mathfrak h$ is an ideal, the bracket $[\mathfrak h \otimes \mathfrak g]\to \mathfrak g$
factors through $\mathfrak h$. Following this by $\phi$ thus results in zero, as the pairing of $\mathfrak h^\perp$ with
$\mathfrak h$ is zero.

Schematically, if
$a\in\g$, $x\in\h^\perp$, and $y\in\h$, then invariance gives
\[
\phi([a,x],y)=-\phi(x,[a,y])=0,
\]
because $[a,y]\in\h$. Hence $[a,x]\in\h^\perp$, so
$\h^\perp$ is an ideal.
\end{proof}

\begin{proposition}\label[proposition]{prop:forms-on-simple}
Let $\g$ be simple.  Every non-zero invariant bilinear form on $\g$ is
symmetric and nondegenerate.  If such a form exists, it is unique up to a
scalar.  In particular, $\g$ is quadratic.
\end{proposition}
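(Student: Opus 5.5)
We propose to work with the morphism $\tilde B:\g\to\g^*$ induced by an invariant form $B$ rather than with $B$ itself; invariance says exactly that $\tilde B$ is a morphism of $\g$-modules, where $\g^*$ carries the coadjoint action. The argument then has three steps: nondegeneracy, uniqueness, and symmetry.

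For nondegeneracy, suppose $B\neq0$. Its left radical $\ker\tilde B$ is a $\g$-submodule of $\g$, that is, an ideal. By simplicity it is either $0$ or $\g$, and $\g$ is excluded because $B\ne0$. The same argument applies to the right radical, the kernel of the map $\g\to\g^*$ obtained from $B\circ c_{\g,\g}$. So $\tilde B$ is a monomorphism $\g\to\g^*$. Since $\g$ and $\g^*$ have the same length, a monomorphism between them is an isomorphism. Thus $B$ is nondegenerate.

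For uniqueness, let $B_1,B_2$ be invariant with $B_1\ne0$. Then $\tilde B_1$ is an isomorphism, so $\theta:=\tilde B_1^{-1}\circ\tilde B_2$ is a $\g$-module endomorphism of the adjoint representation. The key claim is that $\End_\g(\g)=\Bbbk$. Note that $\g$ is a simple object of the category of $\g$-modules, since its submodules are exactly its ideals. Schur's lemma gives that $\End_\g(\g)$ is a division algebra. It is finite-dimensional, because it sits inside $\Hom_\C(\g,\g)$. As $\Bbbk$ is algebraically closed, $\End_\g(\g)=\Bbbk$. Hence $\theta=\lambda\,\id$, so $\tilde B_2=\lambda\tilde B_1$ and $B_2=\lambda B_1$.

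For symmetry, let $B$ be a nonzero invariant form and put $B^t:=B\circ c_{\g,\g}$. We first check that $B^t$ is again invariant, using the schematic rule $B([a,b],c)=B(a,[b,c])$ together with skew-symmetry of the bracket:
\[
B^t([a,b],c)=B(c,[a,b])=-B(c,[b,a])=-B([c,b],a)=B([b,c],a)=B^t(a,[b,c]).
\]
This identity must be verified at the level of morphisms, with the symmetry constraints inserted. Because $c^2=\id$, this is where the check stays clean. By uniqueness, $B^t=\lambda B$. Applying transposition twice gives $B=\lambda^2B$, so $\lambda=\pm1$.

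We expect the main obstacle to be ruling out $\lambda=-1$. Suppose $B$ were skew, i.e.\ $B^t=-B$. Consider the trilinear form $\omega(a,b,c):=B([a,b],c)$. It is skew in $(a,b)$. Invariance gives $\omega(a,b,c)=B(a,[b,c])$, and skewness of $B$ then gives $\omega(a,b,c)=-B([b,c],a)=-\omega(b,c,a)$. So $\omega$ changes sign under the $3$-cycle. But a $3$-cycle is the square of a transposition-type composite (it equals $\sigma^{-2}$ in $S_3$), so applying this relation three times yields $\omega=-\omega$. Since $\operatorname{char}\Bbbk\neq2$, we get $\omega=0$. Then $B\circ(\ttb\otimes\id)=0$, and since $B$ is nondegenerate this forces $\ttb=0$. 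That contradicts $\g$ being nonabelian, so $\lambda=1$ and $B$ is symmetric.

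Finally, the statement that $\g$ is quadratic means that a nonzero invariant form exists, and then it is symmetric and nondegenerate by the steps above. I would read the ``in particular'' as conditional on existence, as the phrase ``if such a form exists'' suggests. If an unconditional claim is intended, we would take $B$ to be the Killing form or a trace form. Its nonvanishing then requires a separate argument, which we would flag rather than prove here.
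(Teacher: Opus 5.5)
Your argument is correct and follows essentially the same route as the paper: the radicals are ideals and hence vanish, Schur's lemma gives uniqueness, transposition gives $c=\pm1$, and the trilinear form $T(x,y,z)=B([x,y],z)$ changes sign under the $3$-cycle, so it vanishes because $\operatorname{char}\Bbbk\neq2$. The differences are cosmetic: you finish by deducing $\ttb=0$ from nondegeneracy, whereas the paper deduces $B=0$ from perfectness. Your conditional reading of ``quadratic'' also matches the paper, whose proof likewise does not establish that a nonzero invariant form exists.
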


\begin{proof}
The left and right radicals are ideals.  Since the form is non-zero and $\g$
is simple, both radicals are zero.  Thus $\g\cong\g^*$ as a $\g$-module,
and Schur's lemma shows that the space of invariant forms is
one-dimensional.

Let $B^t(x,y):=B(y,x)$.  Since $B^t$ is invariant, $B^t=cB$ for some
$c\in \Bbbk^\times$.  Transposing twice gives $c^2=1$, so $c=1$ or $c=-1$.
Suppose $c=-1$.  Set $T(x,y,z):=B([x,y],z)$.  Invariance and skew-symmetry
give
\[
T(x,y,z)=-T(y,z,x).
\]
Applying this identity three times gives $T=-T$.  Hence $T=0$ because
$\operatorname{char}\Bbbk\ne2$.  Since $\g$ is perfect, this implies $B=0$, a
contradiction.  Therefore $c=1$.
\end{proof}

The following theorem relates quadratic operadic Lie algebras with semisimplicity. We will use it in the following subsection to prove Cartan's criterion. 
\begin{theorem}\label{thm:quadratic-semisimple}
Let $(\g,B)$ be a finite-dimensional quadratic operadic Lie algebra.  If
$\g$ has no non-zero abelian ideals, then $\g$ is semisimple.
\end{theorem}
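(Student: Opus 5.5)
We argue by induction on the length of $\g$; the case $\g=0$ is trivial. The plan is to find a minimal non-zero ideal $\mathfrak a\subseteq\g$, show that it is a simple ideal which is an orthogonal direct summand, and then apply the induction hypothesis to its orthogonal complement.

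\emph{Step 1: $\mathfrak a$ is simple.} Let $\mathfrak a$ be a non-zero ideal of minimal length. Then $[\mathfrak a,\mathfrak a]$ is an ideal contained in $\mathfrak a$; it is an ideal by the Jacobi identity. It is non-zero, since otherwise $\mathfrak a$ would be a non-zero abelian ideal, so minimality gives $[\mathfrak a,\mathfrak a]=\mathfrak a$. By \cref{lemma:perp}, $\mathfrak a^\perp$ is an ideal. Hence $\mathfrak a\cap\mathfrak a^\perp$ is an ideal contained in $\mathfrak a$, so it is either $0$ or $\mathfrak a$.

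\emph{Step 2: $\mathfrak a\cap\mathfrak a^\perp=0$.} This is the step I expect to be the main obstacle. Suppose instead that $\mathfrak a\subseteq\mathfrak a^\perp$, i.e.\ $B(\mathfrak a,\mathfrak a)=0$. For $x,y\in\mathfrak a$ and $z\in\g$, invariance gives
\[
B([x,y],z)=B(x,[y,z])=0,
\]
because $[y,z]\in\mathfrak a$. So $B([\mathfrak a,\mathfrak a],\g)=0$. Categorically, the composite $\mathfrak a\otimes\mathfrak a\otimes\g\to\g\otimes\g\to\one$ vanishes, and since $\mathfrak a\otimes\mathfrak a\to[\mathfrak a,\mathfrak a]$ is an epimorphism, the restriction of $B$ to $[\mathfrak a,\mathfrak a]\otimes\g$ is zero. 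Nondegeneracy then forces $[\mathfrak a,\mathfrak a]=0$, contradicting Step 1. Hence $\mathfrak a\cap\mathfrak a^\perp=0$.

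\emph{Step 3: splitting off $\mathfrak a$.} The map $\g\to\mathfrak a^*$ is the composite of the isomorphism $\tilde B$ with an epimorphism, so it is surjective. Therefore $\mathfrak a^\perp$ has length $\ell(\g)-\ell(\mathfrak a)$, and with Step 2 we get $\g=\mathfrak a\oplus\mathfrak a^\perp$ as objects, a direct sum of ideals. In particular $[\mathfrak a,\mathfrak a^\perp]\subseteq\mathfrak a\cap\mathfrak a^\perp=0$.
Next, $B$ restricted to $\mathfrak a$ is nondegenerate, since its radical is $\mathfrak a\cap\mathfrak a^\perp=0$. Likewise $B$ restricted to $\mathfrak a^\perp$ is nondegenerate, since its radical is contained in $(\mathfrak a^\perp)^\perp\cap\mathfrak a^\perp$ and $(\mathfrak a^\perp)^\perp=\mathfrak a$ by a length count. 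These restrictions are symmetric and invariant.
Any ideal of $\mathfrak a$ is an ideal of $\g$, because $\mathfrak a^\perp$ acts on $\mathfrak a$ by zero. So $\mathfrak a$ has no non-zero proper ideals, and it is nonabelian, hence simple. Similarly, ideals of $\mathfrak a^\perp$ are ideals of $\g$, so $\mathfrak a^\perp$ has no non-zero abelian ideals. The Jacobi identity and skew-symmetry pass to the summand $\mathfrak a^\perp$, so it is a quadratic operadic Lie algebra.

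\emph{Step 4: induction.} Since $\ell(\mathfrak a^\perp)<\ell(\g)$, the induction hypothesis shows that $\mathfrak a^\perp$ is a finite direct sum of simple ideals. These are ideals of $\g$ as well, so $\g=\mathfrak a\oplus\mathfrak a^\perp$ is semisimple.

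Beyond Step 2, the points needing care are the categorical justifications that images and epimorphisms interact correctly with $B$, and the length equality $\ell(\mathfrak a^\perp)=\ell(\g)-\ell(\mathfrak a)$, which uses exactness of duality.
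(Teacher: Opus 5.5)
Your proof is correct and follows essentially the same route as the paper: take a minimal ideal $\mathfrak a$ with $[\mathfrak a,\mathfrak a]=\mathfrak a$, use invariance to rule out $\mathfrak a\subseteq\mathfrak a^\perp$, split $\g=\mathfrak a\oplus\mathfrak a^\perp$ into commuting ideals, and induct on length. The only minor variations are in the splitting step: you obtain it by a length count using exactness of duality, whereas the paper notes that $\mathfrak a\to\mathfrak a^*$ is a non-zero map of simple $\g$-modules and writes down the explicit retraction $b_{\mathfrak a}^{-1}i^*b$; you also check explicitly that $B$ is nondegenerate on $\mathfrak a^\perp$, which the paper leaves implicit.
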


\begin{proof}
We use induction on the length of $\g$.  Let $\mathfrak m\subseteq\g$ be a
minimal non-zero ideal.  The ideal $[\mathfrak m,\mathfrak m]$ is either zero
or $\mathfrak m$.  The first case is excluded by the hypothesis, so
$[\mathfrak m,\mathfrak m]=\mathfrak m$.

The intersection $\mathfrak m\cap\mathfrak m^\perp$ is an ideal contained
in $\mathfrak m$.  It is therefore zero or $\mathfrak m$.  If
$\mathfrak m\subseteq\mathfrak m^\perp$, then by invariance we obtain 
\[
 B([x,y],z)=B(x,[y,z])=0,
 \qquad x,y\in\mathfrak m,\quad z\in\g.
\]
Since $[\mathfrak m,\mathfrak m]=\mathfrak m$, this implies
$B(\mathfrak m,\g)=0$, contradicting nondegeneracy.  Hence
$\mathfrak m\cap\mathfrak m^\perp=0$.

The map $\g\to\mathfrak m^*$ induced by $B$ has kernel
$\mathfrak m^\perp$.  Its restriction $\mathfrak m\to\mathfrak m^*$ is a
non-zero morphism between simple $\g$-modules, hence an isomorphism. Hence, if $i:\mathfrak m\hookrightarrow\g$ is the
inclusion, $b:\g\to\g^*$ is induced by $B$, and
$b_{\mathfrak m}=i^*bi$, then
\[
 r=b_{\mathfrak m}^{-1}i^*b:\g\longrightarrow\mathfrak m
\]
is a retraction of $i$, with kernel $\mathfrak m^\perp$. Thus
\[
 \g=\mathfrak m\oplus\mathfrak m^\perp
\]
as a direct sum of ideals.  Since the two ideals commute, every ideal of
$\mathfrak m$ is an ideal of $\g$. By the minimality assumption, $\mathfrak m$ is
simple.  Any abelian ideal of $\mathfrak m^\perp$ is an abelian ideal of
$\g$, so $\mathfrak m^\perp$ satisfies the same hypotheses.  The proof follows by induction.
\end{proof}

\subsection{The Killing form and Cartan's criterion}

\begin{definition}
The \emph{Killing form} of a Lie algebra $\mathfrak g$ is the morphism
\[
K_{\g} : \mathfrak g \otimes \mathfrak g \to \mathbf{1}
\]
defined as the composition
\[
\mathfrak g \otimes \mathfrak g
\xrightarrow{\ \operatorname{ad} \otimes \operatorname{ad}\ }
(\mathfrak g \otimes \mathfrak g^*) \otimes (\mathfrak g \otimes \mathfrak g^*)
\xrightarrow{\operatorname{ev}} \mathfrak g \otimes \mathfrak g^*
\xrightarrow{ \tr_{\g}=\operatorname{ev} \ }
\mathbf{1}.
\]
Schematically,
\begin{equation}\label{eq:killing-form}
K_{\g}(x,y):=\tr_{\g}(\ad(x)\ad(y)).
\end{equation}
\end{definition}
Notice that the Killing form  is symmetric and invariant.

\begin{definition}\label[definition]{def:strongly-semisimple}
An operadic Lie algebra $\g$ in a symmetric pseudo-tensor category is
\emph{strongly semisimple} if its Killing form induces an isomorphism
$\g\xrightarrow{\sim}\g^*$.
\end{definition}

Thus, a strongly semisimple operadic Lie algebra is quadratic, with its
Killing form as a distinguished quadratic form.
We will need the following technical lemmas.

\begin{lemma}\label[lemma]{lem:trace-nilpotent-subobject}
If $U\subseteq\underline{\End}(V)$ is nilpotent, then the trace morphism
$U\to\one$ is zero.
\end{lemma}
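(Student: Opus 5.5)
We use the filtration from \cref{lem:nilpotent-filtration} together with additivity of the trace on short exact sequences. Since $U^N=0$, there is a finite filtration $0=F_0V\subseteq\cdots\subseteq F_NV=V$ with $U\cdot F_iV\subseteq F_{i-1}V$. Concretely, $F_iV=\ker\widetilde a_i$.

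First, we show that $U$, viewed through $U\subseteq V\otimes V^*$, lies inside the subobject
\[
 E:=\sum_{i}F_{i-1}V\otimes (V/F_iV)^*\subseteq V\otimes V^*,
\]
where $(V/F_iV)^*\subseteq V^*$ is the annihilator of $F_iV$. This object $E$ is the internal version of the strictly filtration-lowering endomorphisms. To justify the inclusion, it suffices to check the following for each $i$: the composite
\[
U\otimes F_iV\to V\to V/F_{i-1}V
\]
vanishes. By rigidity, this says that the image of $U$ in $\underline{\Hom}(F_iV,V/F_{i-1}V)$ is zero. The subobject of $\underline{\End}(V)$ cut out by these conditions for all $i$ is exactly $E$. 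One way to see this is to identify the conditions as the kernel of
\[
V\otimes V^*\to\bigoplus_i (V/F_{i-1}V)\otimes (F_iV)^*.
\]
That kernel is the internal ``strictly upper triangular'' part.

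Second, we show that the trace $\tr=\ev:V\otimes V^*\to\one$ vanishes on $E$, and hence on $U$. It suffices to treat each summand $F_{i-1}V\otimes(V/F_iV)^*$. On this summand the trace equals the evaluation pairing between $F_{i-1}V\subseteq V$ and $(V/F_iV)^*\subseteq V^*$. That pairing factors through
\[
F_{i-1}V\to V\to V/F_iV,
\]
which is zero because $F_{i-1}V\subseteq F_iV$. Since the trace restricted to each summand is zero, it vanishes on their sum $E$ (the sum is the image of their direct sum). Therefore the restriction $U\to\one$ is zero.

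The main obstacle is the first step: making rigorous, in a nonsemisimple $\C$, that ``$U$ lowers the filtration'' forces $U\subseteq E$ as subobjects of $V\otimes V^*$. Equivalently, one must identify $E$ with the kernel displayed above. We plan to argue by induction on $N$ via the exact sequences $0\to F_{N-1}V\to V\to V/F_{N-1}V\to 0$, using exactness of $\otimes$ and of duality.

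An alternative that sidesteps this identification is to prove $\tr(u)=0$ by induction on $N$ directly. For this we would use the additivity of the internal trace for an endomorphism preserving a subobject $W=F_{N-1}V$:
\[
\tr_V=\tr_W+\tr_{V/W}.
\]
Here $U$ acts by zero on $V/W$, because $U\cdot V\subseteq W$, and $U$ restricted to $W$ is again nilpotent with a shorter filtration. Additivity of trace in a tensor category, applied to $A$-valued points or in the schematic form with $u\in U$ treated as a placeholder, is standard (cf.\ \cite{EGNO}). This inductive route is likely the cleanest, and we would adopt it if the subobject identification becomes cumbersome.
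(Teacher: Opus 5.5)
Your primary route has an off-by-one error. The object $E=\sum_i F_{i-1}V\otimes(V/F_iV)^*$ contains only tensors that lower the filtration by at least two steps. So the claim $U\subseteq E$ fails, and so does the claimed equality of $E$ with the kernel of $V\otimes V^*\to\bigoplus_i(V/F_{i-1}V)\otimes(F_iV)^*$. For a counterexample, take $V=\Bbbk^2$ and let $U=\Bbbk e_{12}$ be the span of $e_1\otimes e_2^*$. Then $F_1V=\Bbbk e_1$ and $F_2V=V$, so $E=F_0V\otimes(V/F_1V)^*+F_1V\otimes(V/F_2V)^*=0$, although $U\neq0$.

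The right object is $E'=\sum_iF_iV\otimes(V/F_iV)^*$. Your trace computation works verbatim for $E'$, since the pairing factors through $F_iV\to V\to V/F_iV$. Even so, identifying the kernel with $E'$ in a nonsemisimple $\C$ is the identity $\bigcap_i\bigl(F_{i-1}V\otimes V^*+V\otimes(V/F_iV)^*\bigr)=\sum_iF_iV\otimes(V/F_iV)^*$ in the subobject lattice of $V\otimes V^*$. That lattice is only modular, not distributive. The identity does hold, because two chains generate a distributive sublattice of a modular lattice, but this is exactly the step you left open.

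Your fallback route is correct, provided you run the induction over arbitrary actions $a:U\otimes V\to V$ with the parametrized trace, since the induced $U\to\underline{\End}(W)$ need not be a monomorphism. It is close to the paper's proof, which also inducts on the nilpotence degree. The paper instead takes $W=\ima(a)$ and uses only the cyclicity identity $\tr_V(i\circ b)=\tr_W\bigl(b\circ(\id_U\otimes i)\bigr)$ for the factorization $a=i\circ b$. This identity is a formal consequence of rigidity.

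Cyclicity also works for your choice $W=F_{N-1}V$. Because $U$ acts by zero on $V/W$, $a$ factors through $W$, and cyclicity gives $\tr_V(a)=\tr_W(a_W)$ directly; the quotient term never appears. Additivity of traces on short exact sequences is true in a tensor category, but it is a heavier input in the nonsemisimple setting, where the naive argument via a complement to $W$ is unavailable. You would also need its parametrized form, which the citation you give does not directly supply. Cyclicity avoids both issues.
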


\begin{proof}
Let
\(
a:U\otimes V\longrightarrow V
\)
be the evaluation action. We argue by induction on its nilpotence
degree. If $a=0$, the result is immediate.
Otherwise, let $W=\ima(a)$ and write
\[
a:U\otimes V\xrightarrow{b}W\xrightarrow{i}V.
\]
The action of $U$ on $W$ is given by
\[
a_W=b(\id_U\otimes i):U\otimes W\longrightarrow W.
\]
If the $N$-fold action of $U$ on $V$ is zero, then the
$(N-1)$-fold action on $W$ is zero. Indeed, this follows from the
factorization above, since $i$ is a monomorphism and tensor product
preserves epimorphisms. Thus, by induction, $\tr_W(a_W)=0$.
Finally, 
\[
\tr_V(a)
=\tr_V(ib)
=\tr_W\bigl(b(\id_U\otimes i)\bigr)
=\tr_W(a_W)
=0.
\]
Since $\tr_V(a):U\to\one$ is the restriction to $U$ of the
categorical trace,
the result follows.
\end{proof}

\begin{lemma}\label[lemma]{lem:killing-no-abelian-ideal}
A strongly semisimple operadic Lie algebra has no non-zero abelian ideals.
\end{lemma}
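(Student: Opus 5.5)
Let $\aLie\subseteq\g$ be an abelian ideal; we will show that $\aLie\subseteq\aLie^\perp$ with respect to the Killing form $K_\g$. Nondegeneracy then forces $\aLie=0$. Concretely, the map $\g\to\aLie^*$ induced by $K_\g$ is the composite of the isomorphism $\g\to\g^*$ with the epimorphism $\g^*\twoheadrightarrow\aLie^*$, so it is surjective. If its restriction to $\aLie$ vanished, then dualizing shows the restriction to $\aLie$ of $\g\to\g^*$ is zero, i.e.\ $\aLie\to\g\xrightarrow{\sim}\g^*$ is zero, hence $\aLie=0$.

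The key claim is that the morphism $K_\g|_{\g\otimes\aLie}:\g\otimes\aLie\to\one$ vanishes. Schematically, $K_\g(x,a)=\tr_\g(\ad(x)\ad(a))$. Consider the subobject $U\subseteq\underline{\End}(\g)$ given by the image of the composite
\[
\g\otimes\aLie\xrightarrow{\ad\otimes\ad}\underline{\End}(\g)\otimes\underline{\End}(\g)\xrightarrow{\text{mult}}\underline{\End}(\g).
\]
The Killing form restricted to $\g\otimes\aLie$ factors as this epimorphism onto $U$ followed by the trace morphism $U\to\one$. By \cref{lem:trace-nilpotent-subobject}, it therefore suffices to show that $U$ is nilpotent, and in fact $U^2=0$.

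For $U^2=0$, we check schematically that $\ad(x)\ad(a)\ad(y)\ad(b)(z)=0$ for $x,y,z\in\g$ and $a,b\in\aLie$. Since $\aLie$ is an ideal, $w:=\ad(y)\ad(b)(z)=[y,[b,z]]$ lies in $\aLie$, so $\ad(a)(w)=[a,w]\in[\aLie,\aLie]=0$. At the morphism level, the composite $\g\otimes\aLie\otimes\g\otimes\aLie\otimes\g\to\g$ factors through $\g\otimes\aLie\otimes\aLie\to\g$ via the bracket $[\aLie,\aLie]$, which is zero. This uses the fact that the bracket on $\g\otimes\aLie$ factors through $\aLie$, with the symmetry moved appropriately. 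Since $U$ is the image of an epimorphism from $\g\otimes\aLie$, and tensor products preserve epimorphisms, $U^2$ is the image of this zero morphism and so vanishes.

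The only delicate point is bookkeeping: making sure the trace of the product restricted to $U$ really equals $K_\g$ on $\g\otimes\aLie$. This is just the definition of the Killing form, together with the fact that the trace on $\underline{\End}(\g)$ restricted to the image of the epimorphism gives the composite. No real obstacle is expected.
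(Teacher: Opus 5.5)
Your argument is correct and is essentially the paper's. The paper takes $A=(\ad\aLie)(\ad\g)$ instead of your $(\ad\g)(\ad\aLie)$, checks $A^2=0$ by the same nested-bracket computation, and applies \cref{lem:trace-nilpotent-subobject} to get $K_\g(\aLie,\g)=0$.

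One correction to your first paragraph: showing only $\aLie\subseteq\aLie^\perp$ (i.e.\ that $\aLie\to\aLie^*$ vanishes) would not suffice, since nondegenerate forms can have nonzero isotropic subobjects. What your key claim actually proves, and what is needed, is that the whole map $\g\to\aLie^*$ vanishes, i.e.\ $\aLie\subseteq\g^\perp$. Then $\aLie\hookrightarrow\g\xrightarrow{\sim}\g^*$ is zero, so $\aLie=0$.
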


\begin{proof}
Let $\aLie\subseteq\g$ be an abelian ideal and set
\[
 A:=(\ad\aLie)(\ad\g)\subseteq\underline{\End}(\g).
\]
Then $A^2=0$.  Indeed,
\[
 [\aLie,[\g,[\aLie,[\g,\g]]]]
 \subseteq[\aLie,[\g,\aLie]]
 \subseteq[\aLie,\aLie]=0.
\]
By \cref{lem:trace-nilpotent-subobject}, the trace vanishes on $A$.
Therefore $K_{\g}(\aLie,\g)=0$.  By strong semisimplicity, we get that $\aLie=0$.
\end{proof}

The following proof of Cartan's criterion is the categorical version of the
short classical argument of Dieudonn\'e \cite{Dieudonne}.

\begin{theorem}[Cartan's criterion]\label{thm:cartan-criterion}
Every finite-dimensional strongly semisimple operadic Lie algebra in $\C$ is
semisimple.  In particular, it is a Lie algebra.
\end{theorem}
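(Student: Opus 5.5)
The argument combines the results already established. Let $\g$ be finite-dimensional and strongly semisimple. The Killing form $K_{\g}$ is symmetric and invariant, and by \cref{def:strongly-semisimple} it is nondegenerate. Hence $(\g,K_{\g})$ is a finite-dimensional quadratic operadic Lie algebra in the sense of \cref{def:quadratic}. By \cref{lem:killing-no-abelian-ideal}, $\g$ has no non-zero abelian ideals. \Cref{thm:quadratic-semisimple} then applies directly and shows that $\g$ is semisimple, i.e.\ $\g=\bigoplus_i \g_i$ is a finite direct sum of simple ideals.

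For the second assertion we must show the PBW property. We cannot simply invoke \cref{prop:PBW-kernel-central}, since $\C$ is not assumed quasi-semisimple. Instead we use that $I(\g)=\ker(\g\to U(\g))$ is a central ideal by the first part of \cref{prop:PBW-kernel-central}. A semisimple operadic Lie algebra has zero center: the ideals $\g_i$ pairwise commute, and each $\g_i$ is simple, hence perfect with zero center. So $I(\g)=0$, and $\g$ embeds in $U(\g)$.

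To obtain the PBW property from this, we pass to the adjoint representation. We would use that $\ad:\g\to\underline{\End}(\g)$ is a monomorphism, since its kernel is the center, which is zero. We then want a retraction of $\ad$ as a morphism of underlying objects, so that \cref{lem:split-subalgebra-PBW} applies with $A=\underline{\End}(\g)$, an associative algebra with the commutator bracket. The natural candidate comes from the Killing form itself. The map
\[
\underline{\End}(\g)\to\g^*,\qquad T\mapsto\bigl(y\mapsto\tr_{\g}(T\,\ad(y))\bigr),
\]
composed with the inverse of the isomorphism $\g\xrightarrow{\sim}\g^*$ induced by $K_{\g}$, is a morphism $\underline{\End}(\g)\to\g$. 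Precomposed with $\ad$ it gives $K_{\g}$ followed by its inverse, that is, the identity. This is exactly the required retraction, so \cref{lem:split-subalgebra-PBW} shows that $\g$ is a Lie algebra.

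The step needing the most care is writing this retraction precisely as a morphism in $\C$, namely as a composite of $\id\otimes\ad$, composition in $\underline{\End}(\g)$, trace, and adjunction, and checking that its composite with $\ad$ is the morphism $\g\to\g^*$ induced by $K_{\g}$. This is a routine diagram chase from the definition \eqref{eq:killing-form}. With this retraction in hand, the PBW argument does not actually need the vanishing of $I(\g)$; the semisimplicity part is immediate.
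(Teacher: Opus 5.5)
Your proposal is correct and is essentially the paper's proof. Semisimplicity comes from \cref{lem:killing-no-abelian-ideal} together with \cref{thm:quadratic-semisimple}, and the PBW property comes from the retraction $K_{\g}^{-1}q$ of $\ad:\g\to\underline{\End}(\g)$, where $q(T)(x)=\tr_{\g}(T\ad x)$, combined with the PBW property of the commutator algebra of $\underline{\End}(\g)$; as you note yourself, the detour through $I(\g)$ and the vanishing of the center is unnecessary, since the retraction already makes $\ad$ a monomorphism.
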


\begin{proof}
We apply \cref{lem:killing-no-abelian-ideal,thm:quadratic-semisimple}
to the Killing form to obtain that $\g$ is semisimple.

To prove the PBW property, set $A=\underline{\End}(\g)$ and let
$j=\ad:\g\to A$. We define
\[
 q:A\longrightarrow\g^*,\qquad
 q(T)(x)=\tr_{\g}(T\ad x).
\]
Then $qj=K_{\g}$, so $K_{\g}^{-1}q$ is a retraction of $j$ as
a morphism of underlying objects. The commutator Lie algebra of $A$
has the PBW property by \cite[Corollary~7.3]{EtingofPBW}. Consequently the canonical
map $\eta_{\g}:S(\g)\twoheadrightarrow\gr U(\g)$ is injective:
indeed,
\[
 \gr U(j)\circ\eta_{\g}=\eta_A\circ S(j),
\]
and the right-hand side is a monomorphism because $S(j)$ is split
injective. Thus $\eta_{\g}$ is an isomorphism, as required.
\end{proof}

If $\operatorname{char}\Bbbk=0$, every finite-dimensional semisimple Lie
algebra in $\operatorname{Vec}_{\Bbbk}$ is strongly semisimple. In positive characteristic, the implication
from semisimplicity to strong semisimplicity can fail. Likewise, even in characteristic zero, a
semisimple Lie algebra in $\operatorname{sVec}_{\Bbbk}$ may have degenerate
Killing form; see, for example, \cite{Kac,Musson}.

\begin{proposition}\label[proposition]{prop:strong-semisimplicity-preserved}
Let $F:\mathcal D\to\mathcal E$ be an additive symmetric monoidal functor
between symmetric pseudo-tensor categories.  If an operadic Lie algebra
$\g\in\mathcal D$ is strongly semisimple, then $F(\g)$ is strongly
semisimple.
\end{proposition}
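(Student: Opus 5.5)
The plan is to show that $F$ carries every ingredient of the Killing form of $\g$ to the corresponding ingredient for $F(\g)$. First, since $F$ is symmetric monoidal, $F(\g)$ inherits an operadic Lie algebra structure with bracket
\[
F(\g)\otimes F(\g)\xrightarrow{\ \sim\ }F(\g\otimes\g)\xrightarrow{F(\ttb)}F(\g).
\]
Skew-symmetry holds because $F$ intertwines the symmetries $c_{\g,\g}$ and $c_{F(\g),F(\g)}$. The Jacobi identity holds because it is an identity of morphisms built from $\ttb$, associativity constraints and symmetries, all of which $F$ preserves. Additivity is needed so that sums and signs in these identities are preserved.

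Next I will use the standard fact that a monoidal functor between rigid categories preserves duals: $F(\g^*)$, with the images of $\ev$ and $\operatorname{coev}$ transported through the monoidal structure isomorphisms, is a dual of $F(\g)$. This gives a canonical isomorphism $F(\g^*)\cong F(\g)^*$ and hence $F(\underline{\End}(\g))\cong\underline{\End}(F(\g))$. Under these identifications, $F(\ad)$ corresponds to $\ad_{F(\g)}$, since $\ad$ is adjoint to the bracket and $F$ preserves adjunction data. The composition morphism is also preserved, since it is built from $\ev$.

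The key point is the trace. The categorical trace $\tr_{\g}:\g\otimes\g^*\to\one$ is $\ev$ composed with the symmetry, and $F$ preserves both, with $F(\one)\cong\one$. So under the identifications above, $F(K_{\g})$ becomes $K_{F(\g)}$. This is where the symmetric, not merely braided or monoidal, structure matters: with a general braiding the trace would involve a pivotal structure that $F$ need not preserve. Here the trace is defined purely via the symmetry, so no pivotal compatibility is required.

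Finally, $K_{\g}$ induces an isomorphism $\kappa:\g\to\g^*$. Hence $F(\kappa):F(\g)\to F(\g^*)\cong F(\g)^*$ is an isomorphism, since functors preserve isomorphisms. By naturality of the adjunction identifications, this composite is exactly the morphism induced by $K_{F(\g)}$.

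I expect the main obstacle to be the careful bookkeeping in this last step: checking that the isomorphism $F(\g^*)\cong F(\g)^*$ transports the morphism adjoint to $K_{\g}$ to the morphism adjoint to $K_{F(\g)}$. My approach is to write the adjoint of a pairing $B$ as $(\id_{\g^*}\otimes B)\circ(\operatorname{coev}\otimes\id_{\g})$ and observe that $F$ maps each factor to its counterpart. This is routine and uses no earlier result of the paper, only the definitions of the Killing form and of strong semisimplicity (\cref{def:strongly-semisimple}).
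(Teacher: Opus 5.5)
Your proposal is correct and follows the same route as the paper. The paper's proof is the one-line observation that a symmetric monoidal functor preserves duals, categorical traces, and hence the Killing form, so it carries the isomorphism $K_{\g}:\g\to\g^*$ to the isomorphism $K_{F(\g)}:F(\g)\to F(\g)^*$; you spell out the bookkeeping it leaves implicit (transporting the bracket, $\ad$, the trace via the symmetry, and the adjunction identification).
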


\begin{proof}
A symmetric monoidal functor preserves duals, categorical traces, and the
Killing form.  It sends the isomorphism $K_{\g}:\g\to\g^*$ to the
isomorphism
$K_{F(\g)}:F(\g)\longrightarrow F(\g)^*$,
and the statement follows.
\end{proof}

\subsection{Derivations} In this subsection we study derivations; the main argument here is formally identical to the classical
proof that every derivation of a finite-dimensional semisimple Lie
algebra in characteristic zero is inner; see
\cite[Section~5.3]{Humphreys}.

Let $\Der(\g)\subseteq\underline{\End}(\g)$ be the kernel of the Leibniz
defect
\[
 d\longmapsto
 d\circ[\ ,\ ]-[\ ,\ ]\circ(d\otimes\id+\id\otimes d).
\]
It is an operadic Lie subalgebra of $\underline{\End}(\g)$, and the
adjoint map $\ad:\g\to\Der(\g)$ is a morphism of operadic Lie
algebras.

\begin{theorem}\label{thm:all-derivations-inner}
If $\g$ is strongly semisimple, then
\(
 \ad:\g\xrightarrow{\sim}\Der(\g)
\)
is an isomorphism.
\end{theorem}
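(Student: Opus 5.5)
Following the classical argument in \cite[Section~5.3]{Humphreys}, set $D:=\Der(\g)$ and $I:=\ad(\g)\subseteq D$. First, $\ad$ is a monomorphism: its kernel is the center $Z(\g)$, an abelian ideal, which vanishes by \cref{lem:killing-no-abelian-ideal}. Next, $I$ is an ideal of $D$. Schematically, for $d\in D$ and $x\in\g$ the Leibniz rule gives
\[
 [d,\ad x]=\ad(d x),
\]
and the identity holds at the level of morphisms $D\otimes\g\to\underline{\End}(\g)$ because $D$ is the kernel of the Leibniz defect. Hence $[D,I]\subseteq I$.

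Let $K_D$ be the Killing form of $D$. It is symmetric and invariant, so by \cref{lemma:perp} the orthogonal $I^\perp\subseteq D$ is an ideal. Since $I$ is an ideal of $D$, the restriction of $K_D$ to $I$ equals $K_\g$ transported along $\ad$. To see this, note that for $d\in I$ the operator $\ad_D(x)\ad_D(y)$ maps $D$ into $I$. The trace of an endomorphism $T$ of $D$ with image in $I$ equals the trace of its restriction to $I$ (the identity $\tr_V(ib)=\tr_W(b(\id\otimes i))$ from the proof of \cref{lem:trace-nilpotent-subobject}). On $I\cong\g$ the restriction is $\ad x\,\ad y$. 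Since $K_\g$ is nondegenerate, the composite $I\to D\to D^*\to I^*$ is an isomorphism. This gives a retraction $D\to I$ with kernel $I^\perp$, and therefore $D=I\oplus I^\perp$ as objects, exactly as in the proof of \cref{thm:quadratic-semisimple}. Both summands are ideals, so $[I,I^\perp]\subseteq I\cap I^\perp=0$.

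Now let $d\in I^\perp$. For all $x\in\g$ we have $\ad(dx)=[d,\ad x]\in[I^\perp,I]=0$. Since $\ad$ is injective, $dx=0$. Categorically, the action morphism $I^\perp\otimes\g\to\g$ composed with the monomorphism $\ad$ is zero, so the action itself is zero. But $I^\perp\subseteq\underline{\End}(\g)$ is a subobject, so an action morphism that is zero forces $I^\perp=0$. Hence $D=I$, i.e.\ $\ad:\g\to\Der(\g)$ is an isomorphism.

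The main obstacle is the trace-restriction step. One has to verify at the morphism level that $K_D$ restricted to $I\otimes I$ agrees with $K_\g$ under $\ad$. This means writing the Killing form of $D$ as a trace over $D$ of a composite whose image lands in $I$, factoring it as $ib$ with $i:I\hookrightarrow D$, and applying cyclicity of categorical trace. The remaining steps are formal translations of the classical ones.
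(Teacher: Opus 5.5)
Your proof is correct. Its skeleton is the paper's: injectivity of $\ad$, the identity $[d,\ad x]=\ad(dx)$ making $\ad(\g)$ an ideal of $\Der(\g)$, the orthogonal splitting $\Der(\g)=\ad(\g)\oplus\ad(\g)^\perp$ into ideals, $[\ad(\g),\ad(\g)^\perp]=0$, and then killing $\ad(\g)^\perp$ via injectivity of $\ad$.

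The one genuine difference is the invariant form you put on $D=\Der(\g)$. You use its Killing form $K_D$, as in Humphreys. The paper instead uses the trace form $T(d,e)=\tr_\g(de)$ of the tautological representation $D\subseteq\underline{\End}(\g)$. With that choice, the restriction of $T$ to $\ad(\g)$ is $K_\g$ by the very definition of the Killing form. So the trace-restriction step you flag as the main obstacle never arises; one only needs $T$ symmetric and invariant under commutators, which follows from cyclicity of the categorical trace.

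Your route instead needs $K_D|_{I\otimes I}=K_I$ for the ideal $I=\ad(\g)$. You correctly obtain this from the factorization $\tr_V(ib)=\tr_W(b(\id\otimes i))$ with parameter object $I\otimes I$. It costs an extra step, but it establishes along the way the general fact that the Killing form restricts to the Killing form of any ideal.

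Two small differences in wording are harmless. You get injectivity from the no-abelian-ideals lemma; the paper notes the center lies in the radical of $K_\g$. Also, ``for $d\in I$ the operator $\ad_D(x)\ad_D(y)$'' should read ``for $x,y\in I$''.
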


\begin{proof}
The center of $\g$ lies in the radical of the Killing form, so it is zero.
Thus $\ad$ is injective.  On $\Der(\g)$ consider the trace form
\(
 T(d,e):=\tr_{\g}(de).
\)
Its restriction to $\ad(\g)$ is the Killing form.  Hence this restriction
is nondegenerate, and
\[
 \Der(\g)=\ad(\g)\oplus I,
 \qquad
 I:=\ad(\g)^\perp.
\]
The trace form is invariant under the commutator bracket.  Since
$\ad(\g)$ is an ideal of $\Der(\g)$, its orthogonal complement $I$ is also
an ideal, see Lemma \ref{lemma:perp}.  Therefore
\[
 [I,\ad(\g)]\subseteq I\cap\ad(\g)=0.
\]
For $d\in I$ and $x\in\g$, the derivation identity gives
\[
 [d,\ad(x)]=\ad(d(x)).
\]
Hence $d(x)$ is central for every $x$.  The center is zero, so $d=0$.
Thus $I=0$.
\end{proof}

\section{Rigidity of Lie algebra homomorphisms}\label{sec:homomorphism-rigidity}

In this section, let $\C$ be an arbitrary symmetric tensor category. Let $H^1_{\CE}(\g,M)$ denote the ordinary vector space of
$1$-cocycles $c:\g\to M$ modulo the coboundaries coming from classical
elements $m:\one\to M$.  Equivalently, it is the first cohomology of the
complex of morphism spaces obtained from the internal Chevalley--Eilenberg
complex \cite{EtingofHu} by applying $\Hom_{\C}(\one,-)$.

For finite-dimensional Lie algebras $\g$ and $\aLie$ in $\C$, the Lie algebra
homomorphisms $\g\to\aLie$ form a closed affine subscheme
\[
 \underline{\operatorname{Hom}}_{\mathrm{Lie}}(\g,\aLie)
 \subseteq \Hom_{\C}(\g,\aLie).
\]
The defining equations say that a linear morphism preserves the bracket.  It is clear that this affine scheme is of finite type, and we will use this fact in what follows. 

\begin{proposition}\label[proposition]{prop:finite-Hom-scheme}
Let $\g$ and $\aLie$ be finite-dimensional Lie algebras in $\C$.  Assume that
$\aLie$ is invariantless and that
\[
 H^1_{\CE}(\g,M)=0
\]
for every finite-dimensional invariantless $\g$-module $M$.  Then
$\underline{\operatorname{Hom}}_{\mathrm{Lie}}(\g,\aLie)$ is a finite reduced
scheme.  In particular, the set of Lie algebra homomorphisms $\g\to\aLie$ is
finite.
\end{proposition}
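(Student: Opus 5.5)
The strategy is to show that the scheme $X:=\underline{\operatorname{Hom}}_{\mathrm{Lie}}(\g,\aLie)$ is of finite type and has zero Zariski tangent space at every $\Bbbk$-point. Since $\Bbbk$ is algebraically closed and $X$ is of finite type, a finite type scheme whose tangent spaces at all closed points vanish is zero-dimensional and reduced at every point, hence a finite disjoint union of reduced points $\operatorname{Spec}\Bbbk$. So everything reduces to computing tangent spaces.

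Fix a point $\phi:\g\to\aLie$, a Lie algebra homomorphism in $\C$. A tangent vector at $\phi$ is a $\Bbbk[\epsilon]/(\epsilon^2)$-point $\phi+\epsilon c$ with $c\in\Hom_{\C}(\g,\aLie)$. Expanding the homomorphism condition to first order gives
\[
 c([x,y])=[\phi(x),c(y)]+[c(x),\phi(y)],
\]
with the bracket interpreted schematically. This is exactly the condition that $c$ is a $1$-cocycle of $\g$ with coefficients in the $\g$-module $M:=\aLie$, where $\g$ acts through $\ad\circ\phi$. Thus $T_\phi X\cong Z^1_{\CE}(\g,\aLie_\phi)$. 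We should check that these are the cocycles of the internal Chevalley--Eilenberg complex after applying $\Hom_{\C}(\one,-)$; since a morphism $\g\to M$ is a classical element of $\g^*\otimes M$ and the differential is the usual formula, this holds by construction.

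Next, $M=\aLie_\phi$ is finite-dimensional, and its underlying object is $\aLie$, which is invariantless. Hence the hypothesis gives $H^1_{\CE}(\g,M)=0$. The coboundaries are of the form $x\mapsto x\cdot m$ for classical elements $m:\one\to M$, and there are none other than $0$ because $\Hom_{\C}(\one,\aLie)=0$. So $B^1=0$, and therefore $Z^1=H^1=0$. Consequently $T_\phi X=0$ for every $\Bbbk$-point $\phi$.

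The main point needing care is the identification of tangent vectors with cocycles. It requires that the defining equations of $X$ inside the affine space $\Hom_{\C}(\g,\aLie)$ are precisely the bracket-preservation equations $\phi\circ[\ ,\ ]=[\ ,\ ]\circ(\phi\otimes\phi)$, which are quadratic in $\phi$ with values in the finite-dimensional vector space $\Hom_{\C}(\g\otimes\g,\aLie)$. Once this is granted, the first-order expansion above is immediate. To conclude, $X$ is of finite type with $T_\phi X=0$ at every closed point, so each local ring at a closed point is $\Bbbk$. Hence $X$ is finite and reduced, and its set of points, the Lie algebra homomorphisms $\g\to\aLie$, is finite.
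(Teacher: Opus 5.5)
Your proposal is correct and follows essentially the same argument as the paper. You identify the Zariski tangent space at a point $f$ with $Z^1_{\CE}(\g,\aLie_f)$, note that $B^1_{\CE}(\g,\aLie_f)=0$ because $\Hom_{\C}(\one,\aLie)=0$ so the hypothesis forces $Z^1=0$, and conclude that a finite-type affine scheme over an algebraically closed field whose tangent spaces vanish at all closed points is finite and reduced; your remarks on the quadratic defining equations and on each local ring being $\Bbbk$ just spell out what the paper states briefly.
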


\begin{proof}
Fix a homomorphism $f:\g\to\aLie$, and let $\aLie_f$ denote $\aLie$ with the
$\g$-action $\ad\circ f$.  A first-order deformation $f+\epsilon c$ over
$\Bbbk[\epsilon]/(\epsilon^2)$ preserves the bracket exactly when, schematically,
\[
 c([x,y])=x\cdot c(y)-y\cdot c(x).
\]
Thus the Zariski tangent space to the scheme at $f$ is
\[
 T_f\underline{\operatorname{Hom}}_{\mathrm{Lie}}(\g,\aLie)
 =Z^1_{\CE}(\g,\aLie_f).
\]
 Moreover,
\[
 C^0_{\CE}(\g,\aLie_f)=\Hom_{\C}(\one,\aLie)=0,
\]
so $B^1_{\CE}(\g,\aLie_f)=0$.  The hypothesis therefore gives
$Z^1_{\CE}(\g,\aLie_f)=0$.

Hence every Zariski tangent space of the finite-type affine scheme
$\underline{\operatorname{Hom}}_{\mathrm{Lie}}(\g,\aLie)$ is zero.  
Thus every closed point is reduced and isolated.  A finite-type affine scheme
over an algebraically closed field with this property is finite and reduced.
\end{proof}

\begin{corollary}\label[corollary]{cor:finite-Hom-linearly-reductive}
If $\g$ is linearly reductive and $\aLie$ is invariantless, then the set
$\operatorname{Hom}_{\mathrm{Lie}}(\g,\aLie)$ is finite.
\end{corollary}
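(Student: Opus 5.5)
The plan is to deduce the statement from \cref{prop:finite-Hom-scheme}. It suffices to check that $H^1_{\CE}(\g,M)=0$ for every finite-dimensional invariantless $\g$-module $M$ whenever $\g$ is linearly reductive. Then the proposition shows that $\underline{\operatorname{Hom}}_{\mathrm{Lie}}(\g,\aLie)$ is a finite reduced scheme. Its set of $\Bbbk$-points is exactly $\operatorname{Hom}_{\mathrm{Lie}}(\g,\aLie)$, so this set is finite.

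To get the vanishing, interpret $1$-cocycles as extensions. Given a cocycle $c:\g\to M$, form the $\g$-module $E_c:=M\oplus\one$ in $\C$. The action of $\g$ on $M$ is the given one, $\g$ acts on $\one$ trivially, and the off-diagonal component is $\g\otimes\one\cong\g\xrightarrow{c}M$. Schematically,
\[
x\cdot(m,t)=(x\cdot m+t\,c(x),0).
\]
The cocycle identity $c([x,y])=x\cdot c(y)-y\cdot c(x)$ is precisely what makes this a Lie algebra morphism $\g\to\mathfrak{gl}(E_c)$. One checks this on the single off-diagonal block of the commutator; the other blocks are automatic. We get a short exact sequence of finite-dimensional $\g$-modules
\[
0\to M\to E_c\to\one\to0.
\]

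Since $\g$ is linearly reductive, this sequence splits as $\g$-modules. A $\g$-equivariant section $s:\one\to E_c$ of the projection is a classical element of the form $s=(-m,1)$ with $m:\one\to M$. Equivariance of $s$ says that $x\cdot(-m,1)=(-x\cdot m+c(x),0)$ vanishes. Thus $c(x)=x\cdot m$, so $c=dm$ is a coboundary. Hence $H^1_{\CE}(\g,M)=0$ for every finite-dimensional $M$, and in particular for invariantless ones. This argument does not even use that $M$ is invariantless.

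The only step needing care is the first one: verifying in the categorical, element-free setting that the block-triangular action on $M\oplus\one$ is a representation exactly when $c$ is a cocycle. It is also the one place where the identification of $H^1_{\CE}$ (cocycles modulo coboundaries of classical elements) with $\operatorname{Ext}^1_{\g}(\one,M)$ is implicitly used. I would write it out with the schematic notation of \S2, which reduces it to the classical computation.
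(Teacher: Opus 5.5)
Your proposal is correct and matches the paper's proof. The paper also encodes a $1$-cocycle $c$ as the extension $0\to M\to M\oplus\one\to\one\to0$ with action $x\cdot(m,t)=x\cdot m+t\,c(x)$, splits it by linear reductivity to get $H^1_{\CE}(\g,M)=0$ for every finite-dimensional $M$, and then applies \cref{prop:finite-Hom-scheme}; your explicit section $(-m,1)$ and your check of the representation identity simply fill in details the paper leaves implicit.
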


\begin{proof}
Let $M$ be a finite-dimensional $\g$-module.  A $1$-cocycle
$c:\g\to M$ defines an extension of $\g$-modules
\[
 0\longrightarrow M\longrightarrow M\oplus\one
 \longrightarrow\one\longrightarrow0
\]
by the schematic formula
\[
 x\cdot(m,t)=x\cdot m+t c(x).
\]
By the linear reductivity assumption this extension splits, and so $c$ is a coboundary.  Hence $H^1_{\CE}(\g,M)=0$ for every $M$, and we can apply 
\cref{prop:finite-Hom-scheme}.
\end{proof}
\begin{remark}
    With our definition, the existence of a linearly reductive Lie
algebra in $\C$ forces $\C$ itself to be semisimple: every short
exact sequence in $\C$ can be equipped with trivial Lie algebra
actions. 
\end{remark}

\begin{remark}
The tangent space $Z^1_{\CE}(\g,\aLie_f)$ describes infinitesimal deformations
of $f$.  The subspace $B^1_{\CE}(\g,\aLie_f)$ consists of infinitesimal
conjugations by classical elements of $\aLie$.  Thus $H^1=0$ means that there
are no infinitesimal deformations modulo conjugation, while invariantlessness
of $\aLie$ removes the infinitesimal conjugations themselves.
\end{remark}

\section{Graded Lie algebras and category \texorpdfstring{$\mathcal O$}{O}}
\label{sec:graded}

Let $\C$ be an arbitrary simmetric tensor category. 
Next we develop several aspects of the representation theory of $\bZ$-graded Lie algebras. After some basic results we introduce the graded category $\cO$ and the Shapovalov determinant. We also include  examples where we explicitly compute the data involved here.

\subsection{$\bZ$-graded Lie algebras, Verma and simple modules}\label{subs: graded Lie-alg}

 Let $\g=\bigoplus_{i\in\mathbb Z}\g_i$ be a finite-dimensional $\mathbb Z$-graded Lie algebra.  Set
\begin{align*}
\h &:=\g_0, & \nplus&:=\bigoplus_{i>0}\g_i, & \nminus&:=\bigoplus_{i<0}\g_i, & \bLie^{\pm}&:=\h\oplus\mathfrak{n}^{\pm}.
\end{align*}
Thus $\g=\nminus\oplus\h\oplus\nplus$ as an object. The subalgebras $\nplus$ and $\nminus$ are nilpotent. The inclusions of $\nminus$, $\h$, $\nplus$, and $\bLie^\pm$
into $\g$ split as morphisms of underlying objects. By Lemma \ref{{lem:split-subalgebra-PBW}}, these subalgebras inherit the PBW property. Hence
\[
 U(\nminus)\otimes U(\h)\otimes U(\nplus)
 \xrightarrow{\sim}U(\g).
\]
Indeed, the associated graded map is the isomorphism induced by
\[
 S(\g)=S(\nminus)\otimes S(\h)\otimes S(\nplus).
\]

\smallbreak

Given a $\fg$-module $V$, we set $V^{\pm}$ as the maximal subobject of $V$ where $\fn_{\pm}$ acts trivially. Notice that $V^{\pm}$ is a $\fg_0$-submodule of $V$. 

\begin{lemma}\label{lem:submod-gen-from-V+}
Let $Y$ be a $\fh$-submodule of $V^+$. Then $W=U(\fn_-)\cdot Y$ is a $\fg$-submodule of $V$.
\end{lemma}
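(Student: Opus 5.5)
We need to show that $W$, defined as the image of the action morphism $U(\nminus)\otimes Y\to V$, is stable under $\g$. The plan is to use the triangular decomposition $U(\g)\cong U(\nminus)\otimes U(\h)\otimes U(\nplus)$ established just above, in the form of the multiplication isomorphism
\[
 U(\nminus)\otimes U(\h)\otimes U(\nplus)\xrightarrow{\sim}U(\g).
\]
From this we get $U(\g)\cdot Y=U(\nminus)U(\h)U(\nplus)\cdot Y$ as subobjects of $V$, meaning the image of the corresponding composite of action morphisms.

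We then handle the three factors from right to left.

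\begin{enumerate}
\item Since $Y\subseteq V^+$, the augmentation ideal $\nplus U(\nplus)$ acts by zero on $Y$. Hence the action morphism $U(\nplus)\otimes Y\to V$ factors through the counit $U(\nplus)\to\one$, and its image is $Y$.
\item Since $Y$ is an $\h$-submodule, the action $\h\otimes Y\to V$ factors through $Y$. By induction on tensor degree, $U(\h)\otimes Y\to V$ also has image $Y$.
\item Therefore $U(\g)\cdot Y=U(\nminus)\cdot Y=W$.
\end{enumerate}

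Finally, $U(\g)\cdot Y$ is a $\g$-submodule: the action $\g\otimes U(\g)\otimes Y\to V$ factors through multiplication $\g\otimes U(\g)\to U(\g)$, by associativity of the module structure. Hence $\g\cdot W\subseteq W$.

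The main care point is categorical. All the statements ``image equals $Y$'' must be phrased as factorizations of morphisms, not elementwise. In particular:
\begin{itemize}
\item for step (1), we use that $U(\nplus)=\one\oplus\nplus U(\nplus)$ and that $\nplus U(\nplus)$ is the image of $\nplus\otimes U(\nplus)$, whose action on $Y$ factors through $\nplus\otimes Y\to 0$;
\item images are compatible with composition, because tensor products are exact and so preserve epimorphisms onto images.
\end{itemize}

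If one prefers to avoid PBW altogether, an alternative is to check directly that $\h$ and $\nplus$ preserve $W$. This uses the commutation relations $[x,u]\in U(\nminus)\g$ together with an induction on the filtration degree of $U(\nminus)$. The route through the decomposition isomorphism is cleaner, though, since the paper has already established it.
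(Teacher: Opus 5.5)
Your proof is correct, but it is organized differently from the paper's. The paper does not use the triangular factorization of $U(\g)$. It checks directly that $W$ is stable under $\nminus$, $\h$ and $\nplus$ separately, using the commutation relations $\h\,U(\nminus)\subseteq U(\nminus)+U(\nminus)\h$ and $\nplus U(\nminus)\subseteq U(\bLie^-)+U(\nminus)\nplus$, together with $\h\cdot Y\subseteq Y$ and $\nplus\cdot Y=0$. This is essentially the alternative you sketch at the end.

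Your route computes the cyclic submodule in one step, $U(\g)\cdot Y=U(\nminus)U(\h)U(\nplus)\cdot Y=U(\nminus)\cdot Y$. This gives the slightly sharper statement that $W$ is the $\g$-submodule generated by $Y$, and it pushes all the work into the factorization recorded just before the lemma. The content is the same: those commutation relations are exactly what an induction on filtration degree uses to show that $U(\nminus)\otimes U(\h)\otimes U(\nplus)\to U(\g)$ is onto.

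You only need this surjectivity, not the PBW isomorphism. Surjectivity holds for any such triangular decomposition, because $S(\g)\to\gr U(\g)$ is always an epimorphism. It is worth saying so explicitly, since the lemma is later applied to graded Lie algebras in $\indcat{\C}$ (in Theorem~\ref{thm:irred-graded-modules} and to the algebras $\ttY_i$). The paper's direct check applies there without change.

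There is one small slip in step (1). You justify that the augmentation ideal kills $Y$ by writing it as the image of $\nplus\otimes U(\nplus)$ and factoring through $\nplus\otimes Y$. That is the wrong side: in $(xu)\cdot y=x\cdot(u\cdot y)$ the factor $u$ acts first, and $u\cdot y$ is not yet known to lie in $Y$. Describe the augmentation ideal instead as $U(\nplus)\nplus$, the image of $U(\nplus)\otimes\nplus$; both one-sided descriptions give the kernel of the counit. Its action on $Y$ then factors through $\nplus\otimes Y\to V$, which is zero. With this correction the argument is complete.
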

\begin{proof}
Clearly, $\fn_-\cdot W\subseteq W$, and also $\fh\cdot W\subseteq W$ since 
$\fh U(\fn_-)\subseteq U(\fn_-)+U(\fn_-)\fh$ and $Y$ is an $\fh$-submodule. Finally, as 
$\fn_+U(\fn_-) \subseteq U(\fb_-)+U(\fn_-)\fn_+$ and $\fn_+$ annihilates $Y$, we get that $\fn_+\cdot W\subseteq W$
\end{proof}

\smallbreak

Next we study $\bZ$-graded $\fg$-modules $M=\oplus_{k\in\bZ} M_k$. We define the shifted module $M\langle d\rangle$ by
\[
(M\langle d\rangle)_r:=M_{r-d}.
\]
Let $X$ be an irreducible $\fh$-module.  We regard $X$ as a $\bLie^+$-module by letting $\nplus$ act trivially.

\begin{definition}\label[definition]{def:verma-module}
For $d\in\mathbb Z$, the graded Verma module of highest weight $(X,d)$ is
\begin{equation}\label{eq:verma-definition}
\Delta(X,d):=U(\g)\otimes_{U(\bLie^+)}X\langle d\rangle.
\end{equation}
\end{definition}

\begin{theorem}\label{thm:simple-graded}
Let $X$ be an irreducible $\fh$-module, $d\in\mathbb Z$. The Verma module $\Delta(X,d)$ has a unique maximal proper graded submodule. 
Its graded-simple quotient is denoted by $L(X,d)$.
\end{theorem}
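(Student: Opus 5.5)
Since $U(\g)\cong U(\nminus)\otimes U(\h)\otimes U(\nplus)$, we get $\Delta(X,d)\cong U(\nminus)\otimes X\langle d\rangle$ as a graded $\nminus$-module and $\h$-module. To make this a grading of $\g$-modules, $X$ is placed in degree $d$ and $U(\nminus)$ is graded by the grading of $\g$; then $\g_i$ maps degree $r$ to degree $r+i$. The $U(\nminus)$-grading is concentrated in degrees $\le 0$. Its degree-$0$ component is $\one$, because $S(\nminus)$ has degree-$0$ part $\one$ and the PBW filtration is compatible with the $\bZ$-grading. Hence
\[
\Delta(X,d)_d=X,\qquad \Delta(X,d)_r=0\ \text{ for } r>d .
\]
Also $\Delta(X,d)$ is generated by $\Delta(X,d)_d$ as a $U(\g)$-module, and indeed as a $U(\nminus)$-module.

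The plan is the classical argument: a graded submodule $N\subseteq\Delta(X,d)$ is proper if and only if $N_d=0$. The degree-$d$ component $N_d=N\cap\Delta(X,d)_d\subseteq X$ is stable under $\h=\g_0$, since $\h$ preserves degree. As $X$ is irreducible over $\h$, either $N_d=0$ or $N_d=X$. If $N_d=X$, then $N$ contains the generating subobject $X$, so $N\supseteq U(\g)\cdot X=\Delta(X,d)$ and $N$ is not proper. Conversely, if $N_d=0$, then $N\neq\Delta(X,d)$ because $X\neq 0$.

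Next, define $J$ as the sum of all proper graded submodules. In $\indcat{\C}$ this is the image of their direct sum. It is graded, and each graded component is the sum of the corresponding components: the sum is taken componentwise because graded submodules are direct sums of their homogeneous pieces, and images commute with the degree decomposition. Therefore $J_d=\sum N_d=0$, so $J$ is proper. It is then the unique maximal proper graded submodule, and $L(X,d):=\Delta(X,d)/J$.

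Finally, $L(X,d)$ is graded-simple: a graded submodule of the quotient pulls back to a graded submodule of $\Delta(X,d)$ containing $J$, which is either proper (hence equal to $J$) or everything.

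The main obstacle is the categorical bookkeeping, in two places. First, one must justify $\Delta(X,d)_d=X$, that is, the degree-$0$ part of $U(\nminus)$ is $\one$; this uses PBW for $\nminus$ (Lemma~\ref{{lem:split-subalgebra-PBW}}) together with compatibility of gradings. Second, one must justify that an arbitrary (possibly infinite) sum of graded submodules in $\indcat{\C}$ satisfies $(\sum N)_d=\sum N_d$. Both steps are handled by working degreewise, since each $\Delta(X,d)_r\cong U(\nminus)_{r-d}\otimes X$ is finite-dimensional.
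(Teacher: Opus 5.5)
Your proposal is correct and follows essentially the same argument as the paper: $\Delta(X,d)_d=X$ is simple over $\h$ and generates $\Delta(X,d)$, so a graded submodule is proper exactly when its degree-$d$ component vanishes, and hence the sum of all proper graded submodules is again proper. Your extra bookkeeping makes explicit what the paper leaves implicit, namely PBW giving $\Delta(X,d)\cong U(\nminus)\otimes X\langle d\rangle$ and the degreewise computation of arbitrary sums of graded submodules in $\indcat{\C}$.
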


\begin{proof}
The component of degree $d$ of $\Delta(X,d)$ is the simple $\h$-module $X$, and it generates the Verma module.  A graded submodule is proper exactly when its component of degree $d$ is zero.  The sum of all proper graded submodules is therefore still proper and is the unique maximal proper submodule.
\end{proof}

\begin{lemma}\label{lem:L(S)+-is-S} With the notation of the theorem above, 
$L(X,d)^+=X$.
\end{lemma}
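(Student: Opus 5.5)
We need to prove $L(X,d)^+=X$, where $X$ sits in degree $d$. The idea is to show that $L^+$ is a graded $\h$-submodule whose components in degrees other than $d$ generate proper submodules, which must vanish by simplicity.

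First, $L:=L(X,d)$ is graded, with $L_d=X$. Since $\nplus$ is concentrated in positive degrees, the defining kernel of $L^+$ is a graded morphism, so $L^+=\bigoplus_k (L^+)_k$ with $(L^+)_k = L^+\cap L_k$. Each $(L^+)_k$ is an $\h$-submodule, because $\h=\g_0$ preserves degrees and $L^+$ is an $\h$-submodule.

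The inclusion $X\subseteq L^+$ comes from degrees. The Verma module satisfies $\Delta(X,d)_r=0$ for $r>d$, since $\Delta(X,d)\cong U(\nminus)\otimes X\langle d\rangle$ by the PBW decomposition $U(\g)\cong U(\nminus)\otimes U(\bLie^+)$. The same vanishing holds for the quotient $L$. Hence $\nplus\cdot L_d$ lands in degrees $>d$ and is zero, so $L_d=X\subseteq L^+$.

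For the reverse inclusion, fix $k<d$ and set $Y=(L^+)_k$. By \cref{lem:submod-gen-from-V+}, $W=U(\nminus)\cdot Y$ is a $\g$-submodule of $L$. It is graded, since $Y$ is homogeneous and $U(\nminus)$ is graded. Because $\nminus$ has negative degrees, $W$ is concentrated in degrees $\le k<d$, so $W_d=0$.

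- Its preimage in $\Delta(X,d)$ is then a graded submodule with degree-$d$ component equal to that of the maximal proper submodule, which is zero. So the preimage is proper.
- By maximality (\cref{thm:simple-graded}) the preimage equals the maximal submodule. Hence $W=0$, and so $Y=0$.

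For $k>d$ the component $(L^+)_k$ vanishes trivially. Together these give $L^+=(L^+)_d\subseteq L_d=X$, hence $L^+=X$.

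The main point to verify carefully is that $L^+$ decomposes by degree, since it is defined as a kernel in $\indcat{\C}$. This holds because the map $L\to(\nplus)^*\otimes L$ is a direct sum of morphisms between homogeneous components, and kernels commute with direct sums. Everything else is degree bookkeeping plus \cref{lem:submod-gen-from-V+}.
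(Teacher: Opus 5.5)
Your proof is correct and follows essentially the same route as the paper: decompose $L(X,d)^+$ into homogeneous $\h$-submodules, apply \cref{lem:submod-gen-from-V+} to each component in degree $k<d$ to get a graded $\g$-submodule whose degree-$d$ part is zero, and conclude by graded simplicity that it vanishes. The differences are cosmetic: you prove properness through the preimage in $\Delta(X,d)$, and you spell out why $L^+$ is graded and why $X\subseteq L^+$, both of which the paper states without argument.
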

\begin{proof}
Notice that $L(X,d)^+$ is a $\bZ$-graded $\fh$-submodule of $L(X,d)$: 
\begin{align*}
L(X,d)^+ &=\oplus_{k\ge0} L(X,d)^+_{d-k}, & L(X,d)^+_j&\subseteq L(X,d)_j, & L(X,d)^+_d&=X.
\end{align*}
For each $k>0$, $L(X,d)^+_{d-k}$ is an $\fh$-submodule of $L(X,d)^+$, so by Lemma \ref{lem:submod-gen-from-V+} $W=U(\fn_-)\cdot L(X,d)^+_{d-k}$ is a $\fg$-submodule of $L(X,d)$. In addition, $W$ is graded. As $W\subsetneq L(X,d)$, we have that $W=0$, and in particular $W_{d-k}=L(X,d)^+_{d-k}=0$ for $k>0$.
\end{proof}

Next we give a generalization of \cite[Theorem 4]{KacW-irred} (which in turn was generalized in \cite{AA-super} for Lie superalgebras) for Lie algebras in symmetric tensor categories. We characterize simple $\fg$-modules in $\cC$ (instead of $\indcat{\C}$) under mild hypotheses. We write $L(X)$ for the underlying ungraded module of $L(X,0)$, and similarly forget the grading of $\Delta(X,0)$ when discussing ungraded modules. We return to the notation $\Delta(X,d)$ and $L(X,d)$ in Subsection~\ref{subsec:cat-O}.

\begin{theorem}\label{thm:irred-graded-modules}
Let $\fg=\oplus_{k\in\bZ}\fg_k \in\indcat{\cC}$ be a $\bZ$-graded Lie algebra generated by
\begin{align}\label{eq:gloc-defn}
\fg_{\operatorname{loc}}\coloneqq \fg_{-1}\oplus\fh\oplus\fg_{1}.
\end{align}
Let $V\in\cC$ be a $\fg$-module. Then the following are equivalent:
\begin{enumerate}[leftmargin=*,label=\rm{(\roman*)}]
\item\label{item:irred-graded-modules-L(S)} There exists a simple $\fh$-module $X$ such that $V\simeq L(X)$.
\item\label{item:irred-graded-modules-nilp-action} $V$ is a simple $\fg$-module and the action of $\fn_{\pm}$ is nilpotent.
\end{enumerate}
Moreover, two simple $\fg$-modules $V,W\in\cC$ with nilpotent action of $\fn_{\pm}$ are isomorphic if and only if $V^+\simeq W^+$ as $\fh$-modules.
\end{theorem}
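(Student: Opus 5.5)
We prove (i)$\Rightarrow$(ii), then (ii)$\Rightarrow$(i), and finally the uniqueness statement. Throughout, $\g_{\operatorname{loc}}$ generates $\g$, so $\nplus$ is generated as a Lie algebra by $\g_1$ and $\nminus$ by $\g_{-1}$. Hence a subobject killed by $\g_1$ is killed by all of $\nplus$, and similarly for $\g_{-1}$ and $\nminus$.

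\emph{(i)$\Rightarrow$(ii).} Let $V\simeq L(X)$ with $V\in\C$.
\begin{enumerate}[label=(\alph*)]
\item \emph{Grading and nilpotence.} $L(X,0)$ is graded, generated by $X$ in degree $0$, and concentrated in degrees $\le 0$. Since $V$ is finite-dimensional, only finitely many graded components are nonzero. So $\nplus$ and $\nminus$ shift the grading of a finite filtration, and hence act nilpotently; the degree filtration is the one in Lemma~\ref{lem:nilpotent-filtration}.
\item \emph{Simplicity as an ungraded module.} Let $W\subseteq V$ be a nonzero $\g$-submodule. Since $\nplus$ acts nilpotently on $W$, Lemma~\ref{lem:nilpotent-filtration} gives $W^+\neq0$, and $W^+\subseteq V^+=X$ by Lemma~\ref{lem:L(S)+-is-S}. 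We need $W^+$ to be an $\h$-submodule. This holds because $\h$ normalizes $\nplus$, so the subobject killed by $\nplus$ is $\h$-stable. As $X$ is simple, $W^+=X$. Since $X$ generates $V$, we get $W=V$.
\end{enumerate}

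\emph{(ii)$\Rightarrow$(i).} Let $V$ be simple with $\nplus$ and $\nminus$ acting nilpotently.
\begin{enumerate}[label=(\alph*)]
\item \emph{A highest-weight submodule.} By Lemma~\ref{lem:nilpotent-filtration}, $V^+\neq0$. It is an $\h$-module in $\C$, so it contains a simple $\h$-submodule $X$.
\item \emph{Surjection from the Verma module.} By Lemma~\ref{lem:submod-gen-from-V+}, $U(\nminus)\cdot X$ is a nonzero submodule, hence equals $V$. By adjunction we get a surjection $\Delta(X,0)\twoheadrightarrow V$ of ungraded modules.
\item \emph{The kernel is graded.} This is the main obstacle. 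The kernel $K$ must be graded so that $V$ is a quotient of $L(X,0)$, and then equals it by simplicity. The first idea is to use the grading element, but a grading derivation need not be inner. Instead, use the eigen-decomposition of $V$ under the derivation $D$ acting by $k$ on $\g_k$. Adjoin $D$ to form $\tilde\g=\Bbbk D\ltimes\g$. The Verma module is naturally a $\tilde\g$-module with $D$ acting by degree, and its graded submodules are exactly the $D$-stable ones.
\item \emph{Alternative route in arbitrary characteristic.} Degrees are only defined mod $p$ there, so the $D$-argument can fail. In that case, argue directly. Let $M$ be the unique maximal graded submodule of $\Delta(X,0)$. Consider $K_{\le -n}$, the part of $K$ in degrees below $-n$. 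Since $V\in\C$ and $\nminus$ acts nilpotently, $\Delta(X,0)_{\le -n}\subseteq K$ for $n\gg0$. Then apply a top-degree-component argument. Take $u\in K$ and its highest-degree component $u_{top}$. Apply elements of $\g_1$ repeatedly to reach degree $0$. Any nonzero image in degree $0$ would lie in $X\cap K$, which is $0$ since $X\to V$ is injective. This yields that the graded submodule generated by the components of $K$ has zero degree-$0$ part, so it lies in $M$.
\item \emph{Conclusion.} $V$ surjects onto $\Delta(X,0)/(K+M)$, and $M$ maps into a proper submodule of $V$, hence to $0$. So $V\simeq L(X)$.
\end{enumerate}

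\emph{Uniqueness.} By Lemma~\ref{lem:L(S)+-is-S}, $L(X)^+=X$, since the ungraded and graded highest-weight spaces coincide by the argument in (i). So $V\simeq W$ implies $V^+\simeq W^+$. Conversely, part (ii) shows that $V^+$ is simple and $V\simeq L(V^+)$, so $V^+\simeq W^+$ gives $V\simeq W$.
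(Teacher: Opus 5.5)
Your argument for (i)$\Rightarrow$(ii) and for the uniqueness clause is essentially the paper's. However, (ii)$\Rightarrow$(i) has a genuine gap, exactly at the step you flag as the main obstacle. Nothing you write shows that the kernel $K$ of $\Delta(X,0)\twoheadrightarrow V$ is contained in the maximal graded submodule $M$, or equivalently that $V$ is gradable.

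\emph{Step (c).} This step presupposes that the degree derivation $D$ acts on $V$ compatibly with the $\g$-action. It acts on $\Delta(X,0)$, but $K$ is $D$-stable only if $K$ is already graded, so the argument is circular.

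\emph{Step (d).} This step fails for a concrete reason. Suppose $u\in K$ has top component $u_{\mathrm{top}}$ in degree $-k$. Then $\g_1^{k}\cdot u$ lies in $K$, but it is not homogeneous: its degree-$0$ part $\g_1^{k}\cdot u_{\mathrm{top}}$ comes together with the terms $\g_1^{k}\cdot u_{<-k}$ in negative degrees. So $\g_1^{k}\cdot u_{\mathrm{top}}$ is only a \emph{component} of an element of $K$, and $X\cap K=0$ says nothing about it. Moreover, to conclude $K\subseteq M$ you would need $U(\nplus)_j\cdot u_{-j}=0$ for \emph{every} component $u_{-j}$, not just the top one. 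Your observation that $\Delta(X,0)_{\le -n}\subseteq K$ for $n\gg0$ is correct, but it only reduces the problem to a bounded quotient.

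Step (e), and the simplicity of $V^+$ used in your uniqueness argument, both rest on this unproved inclusion.

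The missing idea is to construct the grading on $V$ directly, instead of trying to prove that $K$ is graded. The paper does this as follows.
\begin{enumerate}
\item Set $V_j:=U(\nminus)_{-j}\cdot V^+$ for $j\ge0$. The PBW decomposition gives $\g_i\cdot V_j\subseteq V_{j-i}$, so $\sum_j V_j$ is a nonzero submodule and hence equals $V$.
\item Simplicity together with nilpotence of $\nminus$ gives $V^+\cap\nminus V=0$. Otherwise Lemma~\ref{lem:submod-gen-from-V+} forces $V=\nminus V$, and iterating this contradicts nilpotence.
\item Let $Z_j:=V_j\cap\sum_{k<j}V_k$. By induction on $j$, $\g_iZ_j\subseteq Z_{j-i}=0$ for $0<i<j$, and $\g_iZ_j=0$ for $i\ge j$ for degree reasons. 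Hence $Z_j\subseteq V^+\cap\nminus V=0$.
\item Therefore $V=\bigoplus_jV_j$ is a $\g$-module grading. Comparing degree-$0$ parts shows that $V^+$ is a simple $\h$-module.
\item The graded epimorphism $\Delta(V^+,0)\twoheadrightarrow V$ onto the graded-simple module $V$ identifies $V$ with $L(V^+)$.
\end{enumerate}
This argument works in every characteristic and needs no grading derivation.
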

\begin{proof}
\ref{item:irred-graded-modules-L(S)} $\implies$ \ref{item:irred-graded-modules-nilp-action} Let $X$ be a simple $\fh$-module such that $L(X)\in\cC$. As $L(X)$ is $\bZ$-graded with a finite number of non-zero homogeneous components, the action of both $\fn_+$ and $\fn_-$ is nilpotent. 

It remains to prove that $L(X)$ is simple. Let $W\ne 0$ be a $\fg$-submodule of $L(X)$. Choose the largest
$j\ge0$ for which the iterated image $(\nplus)^jW$ is nonzero.
This image is annihilated by $\nplus$, so $W^+\ne0$.
By \cref{lem:L(S)+-is-S},
$W^+\subseteq L(X)^+=X$ as $\h$-modules. Simplicity of $X$ gives
$W^+=X$, and hence
$L(X)=U(\g)X\subseteq W$. Thus $W=L(X)$.

\noindent\ref{item:irred-graded-modules-nilp-action} $\implies$ \ref{item:irred-graded-modules-L(S)}
Again, nilpotence gives $V^+\ne0$. Set
\[
V_j:=U(\nminus)_{-j}\cdot V^+\quad(j\ge0),\qquad V_j:=0\quad(j<0).
\]
The PBW decomposition implies
\[
\g_i\cdot V_j\subseteq V_{j-i}\qquad(i\in\mathbb Z,\ j\ge0).
\]
Thus $\sum_{j\ge0}V_j$ is a nonzero $\g$-submodule and equals $V$.

Note that  the $\h$-submodule $V^+\cap\nminus V=0$. In fact, if it were nonzero, Lemma~\ref{lem:submod-gen-from-V+} and simplicity would imply
\[
V=U(\nminus)\cdot(V^+\cap\nminus V)\subseteq\nminus V,
\]
and iterating would contradict nilpotence of the action of $\nminus$.

For $j>0$, let
\[
Z_j:=V_j\cap\sum_{0\le k<j}V_k.
\]
We prove $Z_j=0$ by induction on $j$. For $i>0$,
\[
\g_iZ_j\subseteq V_{j-i}\cap\sum_{0\le k<j}V_{k-i}.
\]
If $0<i<j$, the right-hand side is $Z_{j-i}$ and vanishes by induction; if $i\ge j$, it is zero directly. Hence $Z_j\subseteq V^+$. Since $V_j\subseteq\nminus V$ for $j>0$, the preceding intersection result gives $Z_j=0$. Therefore $V=\bigoplus_{j\ge0}V_j$, and placing $V_j$ in degree $-j$ defines a $\g$-module grading.

Let $0\ne Y\subseteq V^+$ be an $\h$-submodule. Lemma~\ref{lem:submod-gen-from-V+} gives $U(\nminus)\cdot Y=V$. Comparing the components of degree zero yields $Y=V^+$, so $V^+$ is simple as an $\h$-module. The canonical graded epimorphism
$\Delta(V^+)\twoheadrightarrow V$,
therefore identifies $V$ with $L(V^+)$. Finally, the classification assertion follows from $L(X)^+=X$.
\end{proof}

\begin{corollary}\label{coro:simple-graded-module-bilinear-form}
Let $V\in\cC$ be a simple $\fg$-module such that the action of $\fn_{\pm}$ is nilpotent. Then $V$ admits a non-degenerate invariant bilinear form $V\otimes V\to \one$ if and only if $V^+\simeq (V^-)^*$ as $\fh$-modules.
\end{corollary}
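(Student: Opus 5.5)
The plan is to reduce the existence of a form to the existence of a $\g$-module isomorphism $V\cong V^*$, and then to compare highest-weight data using the classification at the end of \cref{thm:irred-graded-modules}.

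First, a non-degenerate invariant bilinear form $V\otimes V\to\one$ is the same as an isomorphism of $\g$-modules $V\xrightarrow{\sim}V^*$. Here $V^*$ carries the contragredient action $(a\cdot f)(v)=-f(a\cdot v)$, and invariance in the sense of \eqref{eq:invariant-form} says precisely that the adjoint map $V\to V^*$ is $\g$-linear. Since $V$ is simple and $V\in\C$ is rigid, $V^*$ is again a simple $\g$-module: a submodule of $V^*$ is the annihilator of a quotient of $V$. On $V^*$ the action of $\nplus$ is the negative transpose of the action on $V$, so it is nilpotent, and likewise for $\nminus$. Thus $V^*$ satisfies condition \ref{item:irred-graded-modules-nilp-action} of \cref{thm:irred-graded-modules}. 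By the final assertion of that theorem, $V\cong V^*$ if and only if $V^+\cong (V^*)^+$ as $\h$-modules.

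The remaining task is to identify $(V^*)^+$ with $(V^-)^*$. I will argue that $(V^*)^+$, the largest subobject of $V^*$ killed by $\nplus$, equals $(V/\nplus V)^*$, the annihilator of $\nplus V$. Schematically, $f$ is killed by $\nplus$ if and only if $f(\nplus\cdot V)=0$; categorically, this is the statement that the kernel of $V^*\to(\nplus)^*\otimes V^*$ is dual to the cokernel of the action $\nplus\otimes V\to V$. This identification is $\h$-equivariant, since $\nplus V$ is an $\h$-submodule.

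Next I use the grading $V=\bigoplus_{j\ge 0}V_j$, with $V_j$ placed in degree $-j$, constructed in the proof of \cref{thm:irred-graded-modules}. Applying the same result with the roles of $\nplus$ and $\nminus$ swapped (equivalently, reversing the grading of $\g$) gives a second description: $V$ is generated from $V^-$ by $U(\nplus)$, and $V^-$ is the bottom graded piece. The claim to prove is that $V/\nplus V\cong V^-$ as $\h$-modules. In the grading with $V^+=V_0$ on top, $V/\nplus V$ should be the lowest-degree component $V_{-N}$, where $N$ is maximal with $V_N\neq 0$. Indeed, $\nplus V$ contains every component $V_j$ with $j<N$: this follows from the analogue of $V^+\cap\nminus V=0$, applied to $V^-$ and $\nplus V$, together with simplicity. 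Also $V^-=V_N$ by the lowest-weight analogue of \cref{lem:L(S)+-is-S}.

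I expect the main obstacle to be this identification $V/\nplus V\cong V^-$. It requires checking that $\nplus V=\bigoplus_{j<N}V_j$, i.e. that $\nplus V$ has zero component in degree $-N$ and contains all higher components. For the first part, $\nplus$ raises degree, so $\nplus V$ has no component in the lowest degree. For the second, $\nplus V$ is a graded $\h$-submodule, and $U(\nminus)$ applied to $V^+$ stays inside $V^+ + \nminus V$. I would argue that $V':=\bigoplus_{j<N}V_j+\nplus V$ collapses, using simplicity and the $\nplus$-generation from $V^-$. Concretely, $V=U(\nplus)V^-$ gives $V=V^-+\nplus V$, so $V/\nplus V$ is a quotient of $V^-$. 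Since $V^-$ is simple and $\nplus V\neq V$ by nilpotence, the quotient map $V^-\to V/\nplus V$ is an isomorphism. This shortcut avoids the graded bookkeeping, and I would try it first. Dualizing then gives $(V^*)^+\cong (V^-)^*$, which completes the proof.
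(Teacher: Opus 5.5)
Your argument is correct. Its outer frame matches the paper's. A non-degenerate invariant form is the same as a $\g$-isomorphism $V\cong V^*$. The dual $V^*$ is again simple with nilpotent $\nplus$- and $\nminus$-actions. The classification in \cref{thm:irred-graded-modules} then reduces the question to identifying $(V^*)^+$.

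The difference lies in that identification.

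\emph{The paper's route.} It works with the grading $V=\bigoplus_{r=a}^{b}V_r$ supplied by \cref{thm:irred-graded-modules}. It applies the theorem to the reversed grading, together with \cref{lem:L(S)+-is-S}, to get $V^+=V_b$ and $V^-=V_a$. It then reads off $(V^*)^+=(V_a)^*$ as the top component of the dual grading $(V^*)_r=(V_{-r})^*$.

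\emph{Your route.} You use exactness of duality to write $(V^*)^+=(V/\nplus V)^*$: invariants of the dual are the dual of coinvariants. You then show $V/\nplus V\cong V^-$ directly. Since $V=U(\nplus)V^-=V^-+\nplus V$, the composite $V^-\to V/\nplus V$ is an $\h$-linear epimorphism. It is nonzero because $\nplus V\neq V$ by nilpotence, and $V^-$ is $\h$-simple, so it is an isomorphism.

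\emph{Comparison.} Your route never needs to know that the top graded piece of $V^*$ coincides with $(V^*)^+$. It also adapts without change when $\nplus$ is only an ind-object, by testing annihilation on finite subobjects of $\nplus$. The paper's route is shorter once the grading is available.

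Two small points. First, cite the $\h$-simplicity of $V^-$ explicitly: it is \cref{thm:irred-graded-modules} applied to the reversed grading, combined with $L(X)^+=X$. Second, the graded bookkeeping in your last paragraph before the shortcut is unnecessary, since the shortcut already gives the isomorphism.
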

\begin{proof}
By Theorem~\ref{thm:irred-graded-modules}, we can choose a grading
$ V=\bigoplus_{r=a}^{b}V_r$, with $V_a, V_b\ne0.$ 
Applying the same theorem with the grading reversed, and using Lemma~\ref{lem:L(S)+-is-S}, gives
\[
V^+=V_b,\qquad V^-=V_a.
\]
The dual module is simple, with grading $(V^*)_r=(V_{-r})^*$, so
 $(V^*)^+=(V_a)^*=(V^-)^*.$ 
A nondegenerate invariant bilinear form on $V$ is equivalent to an isomorphism $V\cong V^*$ of $\g$-modules. By the classification in Theorem~\ref{thm:irred-graded-modules}, such an isomorphism exists exactly when $V^+\cong(V^*)^+=(V^-)^*$.
\end{proof}

Next we generalize \cite[Proposition 1.2.5]{Kac}.

\begin{proposition}\label{prop:simple-graded-Lie-alg-bilinear-form}
Assume that $\fg\in\cC$, so $\fg=\oplus_{-n\le k\le m}\fg_k$ for some $m,n\ge 0$ such that $\fg_{-n},\fg_m \ne 0$, and $\fg$ is simple. Then $\fg$ admits a non-degenerate invariant bilinear form $\ttB:\fg\otimes\fg\to\one$ if and only if $\fg_m^* \simeq \fg_{-n}$ as $\fh$-modules. If this happens, then $\ttB(\fg_{h}\otimes\fg_k)=0$ for all $h,k$ such that $h+k\ne m-n$.
\end{proposition}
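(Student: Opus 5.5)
The plan is to reduce the statement to \cref{coro:simple-graded-module-bilinear-form}, applied to the adjoint module $V=\g$. Since $\g$ is simple, its only ideals are $0$ and $\g$, so the adjoint representation is a simple $\g$-module. The action of $\fn_{\pm}$ on $\g$ is nilpotent, because the grading is bounded between $-n$ and $m$: $\fn_+$ raises degree by at least one, so any $m+n+1$ consecutive brackets with elements of $\fn_+$ vanish, and similarly for $\fn_-$. The same holds for the iterated action morphisms in $\C$, since they are graded. Thus the corollary applies, and $\g$ admits a nondegenerate invariant form if and only if $\g^+\simeq(\g^-)^*$ as $\h$-modules.

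It remains to identify $\g^+$ and $\g^-$. The proof of \cref{coro:simple-graded-module-bilinear-form} shows that for a simple module with a grading $V=\bigoplus_{r=a}^b V_r$ compatible with the $\g$-action, one has $V^+=V_b$ and $V^-=V_a$. For this I will note that the given grading on $\g$ makes $\g$ a graded module, so by the uniqueness part of \cref{thm:irred-graded-modules} and \cref{lem:L(S)+-is-S} it is $L(\g^+)$ up to shift, with top component $\g^+$. Concretely, $\g_m\subseteq\g^+$ holds trivially. Conversely, $\g^+$ is a graded $\h$-submodule, and any homogeneous piece $(\g^+)_k$ with $k<m$ would generate a proper graded submodule $U(\fn_-)(\g^+)_k$ by \cref{lem:submod-gen-from-V+}, contradicting simplicity. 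Hence $\g^+=\g_m$, and likewise $\g^-=\g_{-n}$. This gives the criterion $\g_m\simeq\g_{-n}^*$, which is equivalent to $\g_m^*\simeq\g_{-n}$ by dualizing.

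For the vanishing statement, let $\ttB$ be such a form. By \cref{prop:forms-on-simple} it is symmetric and unique up to scalar. Consider the $\Bbbk^\times$-action on $\g$ given by $t\cdot x=t^k x$ for $x\in\g_k$. This acts by Lie algebra automorphisms, because the bracket is graded. Transporting $\ttB$ along it gives invariant forms $\ttB_t(x,y)=\ttB(t\cdot x,t\cdot y)$, which by uniqueness equal $\chi(t)\ttB$ for a character $\chi$. On $\g_h\otimes\g_k$ this reads $t^{h+k}\ttB=\chi(t)\ttB$, so $\ttB$ vanishes there unless $h+k$ equals a single integer $c$ with $\chi(t)=t^c$. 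Since $\g_m\ne0$ and $\ttB$ is nondegenerate, $\g_m$ must pair nontrivially with some $\g_k$, so $c=m+k$ for some $k\geq -n$. Symmetrically, $c=-n+k'$ for some $k'\le m$. Hence $m-n\le c\le m-n$, which forces $c=m-n$.

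The step I expect to need the most care is the identification $\g^+=\g_m$. The homogeneous decomposition of $\g^+$ must be justified at the level of subobjects, and simplicity must be used for submodules generated by a homogeneous component. The rest is formal.
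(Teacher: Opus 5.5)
Your proof is correct. For the criterion you follow the paper: you apply \cref{coro:simple-graded-module-bilinear-form} to the adjoint module, with $\g^+=\g_m$ and $\g^-=\g_{-n}$. Your direct verification of this identification via \cref{lem:submod-gen-from-V+} fills in a step the paper leaves implicit.

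For the orthogonality assertion your route differs. The paper decomposes the isomorphism $\widetilde B:\g\to\g^*$ into homogeneous components. Each component is a $\g$-module map, hence zero or an isomorphism by simplicity. An isomorphism of degree $s$ must send the top degree $m$ of $\g$ to the top degree $n$ of $\g^*$, which forces $s=n-m$.

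You instead let $\Bbbk^\times$ act through the grading, writing $\phi_t$ for the automorphism acting by $t^k$ on $\g_k$. Since the invariant forms on $\g$ form a line, $\ttB$ is an eigenvector, hence concentrated in a single total degree $c$. Nondegeneracy at $\g_m$ and at $\g_{-n}$ then pins $c$ to $m-n$.

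The two arguments are closely related. One has $\ttB\circ(\phi_t\otimes\phi_t)=\sum_s t^s\ttB_{(s)}$, where $\ttB_{(s)}$ is the component of total degree $s$, so your eigenvector condition says exactly that only one homogeneous component survives. The paper's version needs only that a nonzero map between simple modules is invertible. Yours uses the sharper fact $\End_{\g}(\g)=\Bbbk$ and that $\Bbbk$ is infinite; both hold here.

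One small caveat: \cref{prop:forms-on-simple} is proved under $\operatorname{char}\Bbbk\neq2$. You only use its uniqueness part, whose proof (radicals are ideals, then Schur) is characteristic-free. You should therefore drop the appeal to symmetry, which your argument never uses anyway.
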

\begin{proof}
The first statement follows as in loc. cit. using Corollary  \ref{coro:simple-graded-module-bilinear-form} with $V=\fg$. For the second assertion, let $\widetilde B:\g\to\g^*$ be the isomorphism corresponding to the form, and decompose it into homogeneous $\g$-module maps. Every nonzero homogeneous component is an isomorphism by simplicity. A homogeneous isomorphism of degree $s$ must take the highest degree $m$ of $\g$ to the highest degree $n$ of $\g^*$, so $s=n-m$. Hence every nonzero component has this degree, and
\[
\widetilde B(\g_i)\subseteq (\g^*)_{i+n-m}=\g_{m-n-i}^*.
\]
This is exactly the stated orthogonality condition.
\end{proof}

\subsection{The graded category \texorpdfstring{$\mathcal O$}{O}}\label{subsec:cat-O}
Let $\g$ be as in Subsection \ref{subs: graded Lie-alg}. Write
\[
U:=U(\g),\qquad U^0:=U(\h),\qquad
U^+:=U(\nplus),\qquad U^-:=U(\nminus).
\]  The PBW theorem gives an
isomorphism of graded objects
\begin{equation}\label{eq:ordered-pbw}
U^-\otimes U^0\otimes U^+\xrightarrow{\sim}U.
\end{equation}
Let
\[
\Pwt:=\Irr\bigl(\Rep_{\C}(\h)\bigr).
\]

\begin{definition}\label[definition]{def:category-O}
The category $\Ocat^{\mathrm{gr}}(\g)$ consists of graded $U$-modules
$M=\bigoplus_rM_r$ in $\indcat{\C}$ such that
\begin{enumerate}[label=\textup{(\roman*)}]
\item $M$ is finitely generated as a graded $U$-module;
\item $M_r=0$ for $r\gg0$;
\item $M_r$ is finite-dimensional, that is, $M_r\in\C$, for every $r$.
\end{enumerate}
Morphisms preserve the grading.
\end{definition}

\begin{remark}\label{rem:semisimple-subcategory-O}
Condition \textup{(iii)} does not say
that the $\h$-modules $M_r$ are semisimple.  Neither $\C$ nor 
$\Rep_{\C}(\h)$ are assumed semisimple.  The
definition above is independent of an auxiliary choice of semisimple
$\h$-modules and allows nonsplit $\h$-module extensions in individual
degrees.  The arguments below require only that each $M_r$ have finite
length as an $\h$-module.  This follows from $M_r\in\C$, since every chain
of $\h$-submodules of $M_r$ is a chain of subobjects of $M_r$ in $\C$.

We may also construct a restrictive semisimple version.  Namely, let
$\W\subseteq\Rep_{\C}(\h)$ be a full semisimple subcategory that is closed under 
subquotients.  Suppose that
\[
U^-_{-r}\otimes X\in\W
\qquad\text{for every $X\in\W$ and $r\geq0$.}
\]
Define
\[
\Ocat^{\mathrm{gr}}_{\W}(\g)
:=\bigl\{M\in\Ocat^{\mathrm{gr}}(\g):M_r\in\W
\text{ for every $r$}\bigr\}.
\]
Under the noetherian hypothesis in
\cref{prop:O-abelian-noetherian}, this is an abelian full subcategory of
$\Ocat^{\mathrm{gr}}(\g)$, and the argument of
\cref{thm:classification-simple-O} identifies its graded-simple objects
with the modules $L(X,d)$ for
$(X,d)\in\Irr(\W)\times\mathbb Z$.  
\end{remark}

The boundedness condition implies that $\nplus$ acts locally nilpotently.

\begin{proposition}\label[proposition]{prop:O-abelian-noetherian}
If $U(\g)$ is left noetherian, then
$\Ocat^{\mathrm{gr}}(\g)$ is abelian.
\end{proposition}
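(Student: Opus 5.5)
The category $\Ocat^{\mathrm{gr}}(\g)$ is a full subcategory of the abelian category of graded $U$-modules in $\indcat{\C}$, and that ambient category already has kernels, cokernels and images, computed degreewise in $\indcat{\C}$. So it suffices to show that $\Ocat^{\mathrm{gr}}(\g)$ is closed under graded submodules and graded quotients. Closure under direct sums is immediate from the three conditions.

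\emph{Quotients.} Let $M\in\Ocat^{\mathrm{gr}}(\g)$ and let $N\subseteq M$ be a graded submodule. The quotient $M/N$ is generated by the images of generators of $M$, so condition (i) holds. Since $(M/N)_r$ is a quotient of $M_r$, it vanishes for $r\gg0$ and lies in $\C$, because $\C$ is closed under quotients in $\indcat{\C}$. This gives (ii) and (iii).

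\emph{Submodules.} Conditions (ii) and (iii) pass to $N$ because $N_r\subseteq M_r$ and $\C$ is closed under subobjects in $\indcat{\C}$. The only nontrivial point is (i), finite generation of $N$, and this is where the noetherian hypothesis is used. I would argue as follows.
\begin{enumerate}[label=\textup{(\alph*)}]
\item Since $M$ is finitely generated, there is a graded epimorphism $\bigoplus_{k=1}^{s}U\otimes G_k\langle d_k\rangle\twoheadrightarrow M$ with $G_k\in\C$. Each $U\otimes G_k$ is a quotient of $U\otimes\one^{\oplus n}$ up to using duals; more precisely, $U\otimes G$ embeds into $U\otimes G\otimes G^*\otimes G$.
\item It therefore suffices to show that a graded submodule of a finitely generated module is finitely generated. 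Equivalently, every ascending chain of graded submodules of $M$ stabilizes.
\item For this I would use the noetherian property of $U$, interpreted categorically: ascending chains of left ideals in $U$ stabilize. This extends to modules of the form $U\otimes G$ with $G\in\C$ via the adjunction $\Hom(U\otimes G,-)\cong\Hom(U,-\otimes G^*)$. Submodules of $U\otimes G$ correspond to submodules of $U\otimes G\otimes G^*$ containing a fixed piece, and $U\otimes(G\otimes G^*)$ has a finite filtration by copies of $U\otimes S$ with $S$ simple in $\C$.
\item Noetherianity then passes to quotients and finite extensions by the usual argument.
\end{enumerate}
Finally, the maximal elements of the chain of finitely generated submodules of $N$ give $N$ itself.

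I expect step (c) to be the main obstacle. In $\indcat{\C}$, ``left noetherian'' must be read as noetherianity of $U$ in the category of $U$-modules in $\indcat{\C}$, and finitely generated modules need not be quotients of $U^{\oplus n}$. The plan is to read the hypothesis as saying that the $U$-modules $U\otimes Y$, for $Y\in\C$, are noetherian objects. If it is stated only for $U$, I would first deduce the statement for $U\otimes Y$ by the rigidity trick above, using that $-\otimes Y$ is exact with exact adjoint $-\otimes Y^*$ and that $U\otimes Y$ is a direct summand of $U\otimes Y\otimes Y^*\otimes Y$. With noetherianity in hand, kernels and cokernels of morphisms in $\Ocat^{\mathrm{gr}}(\g)$ lie in $\Ocat^{\mathrm{gr}}(\g)$, and the category is abelian.
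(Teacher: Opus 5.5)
Your proof is correct and is essentially the paper's argument. Kernels and cokernels are formed in graded $U$-modules in $\indcat{\C}$, and conditions (ii) and (iii) pass degreewise to graded subobjects and quotients. The only remaining input is that a graded submodule of a finitely generated module is finitely generated, and this is exactly the paper's definition of left noetherian: every submodule of a finitely generated left module is finitely generated. So your steps (a)--(c) can be dropped.

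Be aware that the fallback in (c) would not work if one assumed only ACC on left ideals of $U$. Embedding the submodules of $U\otimes Y$ into those of $U\otimes Y\otimes Y^*$ and filtering by the $U\otimes S$ only reduces the question to noetherianity of $U\otimes S$ for simple $S\neq\one$, which is the same problem again.
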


\begin{proof}
Let $f:M\to N$ be a morphism in $\Ocat^{\mathrm{gr}}(\g)$.  The
kernel is a graded submodule of the finitely generated module $M$, hence is
finitely generated.  It is bounded above, and
 each homogeneous component $(\ker f)_r$ is a subobject of
$M_r\in\C$, so it belongs to $\C$.  The same
properties for the cokernel are immediate.  
\end{proof}

We now prove the noetherianity statement needed for Lie algebras in a
Verlinde category.  Let $p\ge5$ and let $\g\in\Verp$.  Since $\Verp$ is
semisimple, there is a canonical decomposition
\begin{equation}\label{eq:classical-nonclassical}
\g=\g_{\mathrm{cl}}\oplus\g_{\mathrm{nc}},
\end{equation}
where $\g_{\mathrm{cl}}$ is a sum of copies of $\one$ and
$\g_{\mathrm{nc}}$ has no copy of $\one$.  The first summand is an ordinary
finite-dimensional Lie algebra, and it preserves the second summand.

An algebra object $A$ in $\indcat{\C}$ is called left noetherian if every
submodule of a finitely generated left $A$-module is finitely generated.
Right noetherianity is defined analogously.

\begin{theorem}\label{thm:noetherian-enveloping}
Let $\g$ be a finite-dimensional Lie algebra in $\Verp$.  Then $U(\g)$ is
left and right noetherian.  More precisely, it is finite as a left and as a
right module over the ordinary enveloping algebra
$U(\g_{\mathrm{cl}})$.
\end{theorem}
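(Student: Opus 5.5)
The approach is to reduce everything to an associated-graded statement about symmetric algebras, using that $\g$ has the PBW property. The key categorical input, announced in the introduction, is that $S(\g_{\mathrm{nc}})$ is finite-dimensional in $\Verp$. Granting this, $U(\g)$ is generated over $U(\g_{\mathrm{cl}})$ by the image of a finite-dimensional object. Since $U(\g_{\mathrm{cl}})$ is an ordinary noetherian algebra, a finiteness argument then gives noetherianity.

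\emph{Step 1: finiteness of $S(\g_{\mathrm{nc}})$.} Decompose $\g_{\mathrm{nc}}=\bigoplus_i L_{m_i}$ into simple objects of $\Verp$ with $m_i\ge 2$. Since $S(A\oplus B)=S(A)\otimes S(B)$, it suffices to treat a single simple $L_m$ with $2\le m\le p-1$. For these one uses the known structure of symmetric powers in $\Verp$: $S^n(L_m)=0$ for $n>p-m$. This vanishing is standard from the semisimplification of tilting modules, or from the computation of $S(L_m)$ via $\Verp^+$ and $\mathrm{sVec}$. Hence $S(L_m)$ is finite-dimensional, and the total degree of $S(\g_{\mathrm{nc}})$ is bounded by some $D$.

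\emph{Step 2: finite generation over $U(\g_{\mathrm{cl}})$.} The subalgebra $\g_{\mathrm{cl}}$ is split as an object, so it has PBW by Lemma~\ref{{lem:split-subalgebra-PBW}}. The multiplication map
\[
U(\g_{\mathrm{cl}})\otimes \bigoplus_{n\le D}\g_{\mathrm{nc}}^{\otimes n}\to U(\g)
\]
is surjective. To see this, pass to the PBW filtration: $\gr U(\g)=S(\g)=S(\g_{\mathrm{cl}})\otimes S(\g_{\mathrm{nc}})$, and $S(\g_{\mathrm{nc}})$ vanishes in degrees $>D$. Surjectivity on associated graded lifts to surjectivity by induction on filtration degree. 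Let $F\subseteq U(\g)$ be the image of $\bigoplus_{n\le D}\g_{\mathrm{nc}}^{\otimes n}$; this is a finite-dimensional object of $\Verp$. Then $U(\g)=U(\g_{\mathrm{cl}})\cdot F$, and symmetrically $U(\g)=F\cdot U(\g_{\mathrm{cl}})$. Writing $F$ as a quotient of a finite sum of simples $L_j$ with multiplicity spaces $\Hom(L_j,F)$, $U(\g)$ becomes a quotient of $U(\g_{\mathrm{cl}})\otimes F$. This is a finitely generated left $U(\g_{\mathrm{cl}})$-module in the appropriate categorical sense, namely a free module $U(\g_{\mathrm{cl}})\otimes F$ on a finite-dimensional object.

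\emph{Step 3: noetherianity.} Let $M$ be a finitely generated left $U(\g)$-module, a quotient of $U(\g)\otimes Y$ with $Y\in\Verp$. By Step 2, $M$ is then a quotient of $U(\g_{\mathrm{cl}})\otimes F\otimes Y$. Now decompose into isotypic components. For each simple $L$, the multiplicity space $\Hom(L,-)$ turns $U(\g_{\mathrm{cl}})\otimes Z$ into an ordinary finitely generated $U(\g_{\mathrm{cl}})$-module; the action is well defined because $\g_{\mathrm{cl}}\subseteq\Hom(\one,\g)$ acts by classical elements. Since $U(\g_{\mathrm{cl}})$ is noetherian, being a classical enveloping algebra of a finite-dimensional Lie algebra, every $U(\g_{\mathrm{cl}})$-submodule of $M$ is finitely generated isotypic componentwise. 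Only finitely many simples occur in $\Verp$, so such a submodule is finitely generated overall. A $U(\g)$-submodule is in particular a $U(\g_{\mathrm{cl}})$-submodule, so it is finitely generated over $U(\g_{\mathrm{cl}})$, hence over $U(\g)$. Right noetherianity is proved symmetrically.

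The main obstacle is Step 1: establishing the precise vanishing $S^n(L_m)=0$ for large $n$ (equivalently, finite-dimensionality of $S(L_m)$ for every nonunit simple $L_m$). I would first try to cite the known structure of symmetric algebras of simples in $\Verp$. Failing that, I would compute the character by working in the Grothendieck ring: symmetric powers of $L_2$ are $L_{n+1}$ for $n\le p-2$ and vanish afterwards, and I would then handle general $L_m$ by realizing $L_m$ as a summand of $S^{m-1}L_2$ and using the grading-compatible structure.
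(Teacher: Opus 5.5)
Your proposal is correct and follows essentially the same route as the paper. PBW gives $\gr U(\g)\cong S(\g_{\mathrm{cl}})\otimes S(\g_{\mathrm{nc}})$, and finite-dimensionality of $S(\g_{\mathrm{nc}})$ together with filtered lifting makes $U(\g)$ finite over $U(\g_{\mathrm{cl}})$; the paper simply cites \cite[Section~5.1]{EtingofPBW} for the finiteness, and your explicit bound $S^n(L_m)=0$ for $n>p-m$ agrees with it, e.g.\ $S(L_3)=\one\oplus L_3\oplus\one$ in $\Ver_5$. Noetherianity then comes from the isotypic decomposition into ordinary $U(\g_{\mathrm{cl}})$-modules, where your Step~3 merges the paper's last two paragraphs (noetherianity of finitely generated $U(\g_{\mathrm{cl}})$-modules in $\indcat{\Verp}$, then passage to a finite extension) into one argument.
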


\begin{proof}
By the PBW property and decomposition  \eqref{eq:classical-nonclassical},
\begin{equation}\label{eq:gr-U-classical}
\gr U(\g)
\cong S(\g)
\cong S(\g_{\mathrm{cl}})\otimes S(\g_{\mathrm{nc}}),
\end{equation}
since  symmetric
algebras turn direct sums into tensor products. The symmetric algebra of a non-trivial simple object is finite-dimensional by \cite[Section~5.1]{EtingofPBW}, so  
$S(\g_{\mathrm{nc}})$ is finite-dimensional.  It  follows from
\eqref{eq:gr-U-classical}, by the usual filtered lifting argument, that
$U(\g)$ is finite as a left and as a right
$U(\g_{\mathrm{cl}})$-module.

The ordinary algebra $R:=U(\g_{\mathrm{cl}})$ is left and right noetherian. If $M$ is a finitely generated left $R$-module in $\indcat{\Verp}$, write
\[
M=\bigoplus_{S\in\Irr(\Verp)}S\otimes M_S.
\]
Since $R$ is classical, its action preserves each isotypic component, and each multiplicity space $M_S$ is a finitely generated ordinary $R$-module. Thus every $M_S$ is noetherian. Submodules are described by their multiplicity submodules, so $M$ is noetherian. The right-module argument is identical.

Finally, an algebra that is a finite module over a left or right noetherian algebra is
noetherian on the corresponding side.  This proves the theorem.
\end{proof}

The following is a direct consequence of the previous theorem.

\begin{corollary}\label[corollary]{cor:O-abelian-Verp}
If $\g$ is a finite-dimensional graded Lie algebra in $\Verp$, then
 $\Ocat^{\mathrm{gr}}(\g)$ is abelian.
\end{corollary}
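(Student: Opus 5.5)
The plan is to combine \cref{thm:noetherian-enveloping} with \cref{prop:O-abelian-noetherian}. The only point needing care is to check that the hypotheses line up. Let $\g$ be a finite-dimensional graded Lie algebra in $\Verp$. The grading is not used in \cref{thm:noetherian-enveloping}, so applying it to the underlying Lie algebra shows that $U(\g)$ is left and right noetherian in $\indcat{\Verp}$. Here the classical/nonclassical decomposition \eqref{eq:classical-nonclassical} is taken for the ungraded object.

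Next, \cref{prop:O-abelian-noetherian} needs $U(\g)$ to be left noetherian as a graded algebra. More precisely, its proof uses that a graded submodule of a finitely generated graded module is finitely generated as a graded module. Take $M\in\Ocat^{\mathrm{gr}}(\g)$ generated by finitely many homogeneous pieces, say a finite-dimensional graded subobject $G\subseteq M$ with $U\cdot G=M$, and let $K\subseteq M$ be a graded submodule. Forgetting the grading, $M$ is a finitely generated $U$-module, so $K$ is finitely generated by noetherianity. That is, $K=U\cdot G'$ for some finite-dimensional subobject $G'\subseteq K$.

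Since $G'$ is finite-dimensional, it lies in $\bigoplus_{|r|\le N}K_r$ for some $N$. The finite sum $\bigoplus_{|r|\le N}K_r$ is then a graded finite-dimensional generating subobject of $K$, because each $K_r\subseteq M_r\in\C$. So $K$ is finitely generated as a graded module. The graded version of the noetherian hypothesis is exactly what the proof of \cref{prop:O-abelian-noetherian} uses, and the conclusion follows.

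The only mild obstacle is this passage from ungraded to graded finite generation. It is handled by the observation that homogeneous components lie in $\C$ and that a generating subobject lies in finitely many degrees. The remaining verifications for kernels and cokernels (bounded above, components in $\C$) are already done in \cref{prop:O-abelian-noetherian}.
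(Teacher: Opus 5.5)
Your proof is correct and matches the paper's argument: the paper treats this corollary as a direct consequence of \cref{thm:noetherian-enveloping} combined with \cref{prop:O-abelian-noetherian}. You also check that ungraded noetherianity gives graded finite generation of graded submodules, using that a finite generating subobject lies in finitely many homogeneous components, each in $\C$. The paper leaves this step implicit, and your argument for it is sound.
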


We can extend this beyond $\Ver_p$ as follows. 

\begin{corollary}
   Let $p\ge5$, and suppose that $\C$ admits an exact faithful symmetric
tensor functor $F:\C\to\Verp$. For every finite-dimensional Lie
algebra $\g\in\C$, the algebra $U(\g)$ is left and right
noetherian. If $\g$ is graded, then
$\Ocat^{\mathrm{gr}}(\g)$ is abelian. 
\end{corollary}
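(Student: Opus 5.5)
The plan is to reduce to \cref{thm:noetherian-enveloping} by pushing $\g$ forward along $F$, and then to pull noetherianity back to $\C$.

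First, $F$ is a symmetric tensor functor, so it commutes with tensor products, symmetric and tensor powers, and quotients; these are all exact constructions built from the monoidal and symmetric structure. Hence $F(\g)$ is an operadic Lie algebra in $\Verp$ and $F(U(\g))\cong U(F(\g))$. Applying $F$ to the PBW isomorphism $S(\g)\to\gr U(\g)$ shows that $F(\g)$ also has the PBW property. Extending $F$ to ind-objects, \cref{thm:noetherian-enveloping} then says that $U(F(\g))=F(U(\g))$ is left and right noetherian in $\indcat{\Verp}$.

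Next I transfer noetherianity back to $\C$. Let $M$ be a finitely generated left $U(\g)$-module in $\indcat{\C}$. It has a presentation as a quotient of $U(\g)\otimes Y$ with $Y\in\C$. Then $F(M)$ is a quotient of $U(F(\g))\otimes F(Y)$, so it is finitely generated. Now take an ascending chain $N_1\subseteq N_2\subseteq\cdots$ of submodules of $M$. Since $F$ is exact, it gives an ascending chain $F(N_1)\subseteq F(N_2)\subseteq\cdots$ of submodules of $F(M)$, and this chain stabilizes by noetherianity. Because $F$ is exact and faithful, $F(N_{i+1}/N_i)=0$ forces $N_{i+1}=N_i$. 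So the original chain stabilizes, and $M$ is noetherian. This ACC formulation is equivalent to the paper's "submodules finitely generated" definition, since a submodule is the directed union of its finitely generated submodules $U(\g)\cdot Z$ with $Z\in\C$ a subobject. The right-module case is symmetric. Abelianness of $\Ocat^{\mathrm{gr}}(\g)$ for graded $\g$ then follows from \cref{prop:O-abelian-noetherian}.

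The main obstacle is making sure that the extension of $F$ to $\indcat{\C}\to\indcat{\Verp}$ is still exact and faithful, and that it commutes with the filtered colimits defining $U(\g)$ and the directed unions used above. I expect this to be formal: the ind-extension of an exact functor commutes with filtered colimits and is exact, and faithfulness on ind-objects follows because a nonzero ind-object has a nonzero subobject in $\C$, which $F$ does not kill. I would check this carefully, then assemble the steps above in that order.
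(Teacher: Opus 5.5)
Your proposal is correct and follows essentially the same route as the paper. You extend $F$ to an exact, faithful functor on ind-objects, use $F(U(\g))\cong U(F(\g))$ and the PBW property of $F(\g)$ to invoke \cref{thm:noetherian-enveloping}, pull the ascending chain condition back via exactness and faithfulness, convert it to finite generation of submodules through directed unions, and conclude with \cref{prop:O-abelian-noetherian}.
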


\begin{proof}
    The extension of $F$ to ind-objects is exact and faithful, and
\[
 F(U(\g))\cong U(F(\g)).
\]
Moreover, $F(\g)$ has the PBW property, since $F$ preserves the defining
constructions and their filtrations. If $M$ is a finitely generated
$U(\g)$-module, then $F(M)$ is finitely generated over $U(F(\g))$.
By \cref{thm:noetherian-enveloping}, every ascending chain of
submodules of $F(M)$ stabilizes. Exactness and faithfulness therefore
imply that every ascending chain of submodules of $M$ stabilizes.
Every ind-module is the directed union of its submodules generated
by finite subobjects; consequently every submodule of $M$ is finitely
generated. This proves left noetherianity. The right-module argument
is identical, and \cref{prop:O-abelian-noetherian} gives the last
assertion.
\end{proof}

\begin{remark}\label{rk: vermas in O}
For $X\in\Pwt$ and $d\in\mathbb Z$, the Verma module
$\Delta(X,d)$ is an object of
$\Ocat^{\mathrm{gr}}(\g)$, and thus so is every quotient, in particular its simple graded quotient $L(X,d)$. Indeed, by the triangular decomposition  \eqref{eq:ordered-pbw},
\begin{equation}\label{eq:verma-pbw}
\Delta(X,d)\cong U^-\otimes X\langle d\rangle,
\qquad
\Delta(X,d)_{d-r}\cong U^-_{-r}\otimes X.
\end{equation}
Since $U^-_{-r}$ and $X$ belong to $\C$, every homogeneous component of
$\Delta(X,d)$ belongs to $\C$. Thus $\Delta(X,d)$ belongs to $\Ocat^{\mathrm{gr}}(\g)$.
\end{remark}

For any $M$ in this category, induction gives the universal property
\begin{equation}\label{eq:verma-universal-property}
\Hom_{\g}^{\mathrm{gr}}(\Delta(X,d),M)
\cong
\Hom_{\h}\bigl(X,(M_d)^{\nplus}\bigr).
\end{equation}

Next we characterize simple modules in
 $\Ocat^{\mathrm{gr}}(\g)$. This resembles
\cref{thm:irred-graded-modules}, but the hypotheses are slightly different.

\begin{theorem}\label{thm:classification-simple-O}
The graded-simple objects of $\Ocat^{\mathrm{gr}}(\g)$ are, up to
isomorphism, the modules
\[
L(X,d),\qquad (X,d)\in\Pwt\times\mathbb Z.
\]
Moreover, these modules are pairwise nonisomorphic.
\end{theorem}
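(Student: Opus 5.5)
The proof splits into three parts: each $L(X,d)$ is a graded-simple object of $\Ocat^{\mathrm{gr}}(\g)$; every graded-simple object is of this form; and the pairs $(X,d)$ are recovered from $L(X,d)$.

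\emph{$L(X,d)$ lies in the category and is graded-simple.} By \cref{rk: vermas in O}, $\Delta(X,d)$ and hence its quotient $L(X,d)$ belong to $\Ocat^{\mathrm{gr}}(\g)$. Graded simplicity is exactly \cref{thm:simple-graded}: a nonzero graded submodule of $L(X,d)$ pulls back to a graded submodule of $\Delta(X,d)$ strictly containing the maximal proper one, so it is everything.

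\emph{Every graded-simple object is some $L(X,d)$.} Let $M\neq0$ be graded-simple in $\Ocat^{\mathrm{gr}}(\g)$. By condition (ii) of \cref{def:category-O}, there is a largest $d$ with $M_d\neq0$. Since $\nplus$ raises degree, $\nplus M_d\subseteq\bigoplus_{r>d}M_r=0$, so $M_d\subseteq(M_d)^{\nplus}$. The component $M_d$ lies in $\C$ and is a nonzero $\h$-module of finite length (\cref{rem:semisimple-subcategory-O}), so it contains a simple $\h$-submodule $X\in\Pwt$. The universal property \eqref{eq:verma-universal-property} gives a nonzero graded morphism $\Delta(X,d)\to M$. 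Its image is a nonzero graded submodule, hence all of $M$ by simplicity. So $M$ is a graded-simple quotient of $\Delta(X,d)$. The kernel is a proper graded submodule, hence contained in the unique maximal one from \cref{thm:simple-graded}. Since $M$ is graded-simple, the kernel equals that maximal submodule, and $M\cong L(X,d)$.

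\emph{Pairwise nonisomorphic.} The pair $(X,d)$ is an invariant of $L(X,d)$. Indeed, $d$ is the top degree of $L(X,d)$: the component $\Delta(X,d)_d=X$ survives in the quotient, and everything above degree $d$ vanishes by \eqref{eq:verma-pbw}. The top component $L(X,d)_d$ is $X$ as an $\h$-module (one could also cite $L(X,d)^+=X$ from \cref{lem:L(S)+-is-S}). A graded isomorphism $L(X,d)\cong L(X',d')$ preserves top degrees and top components, so $d=d'$ and $X\cong X'$.

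The main subtle point is the passage from $M\neq0$ to a simple $\h$-submodule of $(M_d)^{\nplus}$. This relies on boundedness above (not just local nilpotence of $\nplus$) together with finite length of $M_d$, which holds because $M_d\in\C$. Finite generation of $M$ is not actually needed here. Also note that abelianness of the category is not assumed; simplicity is meant with respect to graded submodules inside $\Ocat^{\mathrm{gr}}(\g)$, and quotients of $\Delta(X,d)$ stay in the category by \cref{rk: vermas in O}.
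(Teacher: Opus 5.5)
Your proof is correct and takes essentially the same route as the paper. Both take the top nonzero degree $d$, choose a simple $\h$-submodule $X\subseteq M_d$ (killed by $\nplus$ for degree reasons), use the universal property to get a surjection $\Delta(X,d)\to M$, and recover $(X,d)$ from the top degree and its component. The only cosmetic difference is that you conclude $M\cong L(X,d)$ by identifying the kernel with the unique maximal proper graded submodule, whereas the paper phrases this step as $M_d=X$ being the image of the top component of the Verma module.
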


\begin{proof}
Let $L$ be graded simple.  Choose the largest $d$ with $L_d\ne0$.
Since $L_d\in\C$, it has finite length as an $\h$-module.
Choose a simple $\h$-subobject $X\subseteq L_d$.  Since $\nplus X=0$,
the universal property gives a non-zero map $\Delta(X,d)\to L$, which  is
surjective.  Its image in degree $d$ is $X$, so surjectivity also shows
that $L_d=X$ is simple.  Hence $L\cong L(X,d)$.
Conversely, $L(X,d)$ is graded-simple by
\cref{thm:simple-graded}, and Remark \ref{rk: vermas in O} shows that it belongs
to $\Ocat^{\mathrm{gr}}(\g)$. The highest degree and its $\h$-module
recover $(X,d)$, proving uniqueness.
\end{proof}

We conclude this subsection with a discussion on gradability of simple modules over finite graded algebras. The following lemma resembles Theorem \ref{thm:irred-graded-modules}, but here we assume the algebra is in $\mathcal C$, and that $\mathcal C$ is a fusion category.

\begin{lemma}\label{lem:gradability}
Let $B=\bigoplus_{n\in\mathbb Z}B_n$ be a $\mathbb Z$-graded
algebra object in a finite tensor category $\C$.    Then every simple 
$B$-module $V$ admits a grading.  Any two gradings on $V$ differ
by a shift.
\end{lemma}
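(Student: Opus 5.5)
Since $B$ is an object of $\C$, only finitely many $B_n$ are nonzero, so the grading comes from an action of the multiplicative group on $B$; equivalently, $B$ is an algebra in $\C\boxtimes\Rep(\mathbb G_m)$. The strategy is to compare $V$ with a graded simple module through an induced module. Forgetting the grading gives the subalgebra $B_0$. Choose a simple $B_0$-submodule $X\subseteq V$; it exists because $V\in\C$ has finite length, hence so does any $B_0$-submodule of it. Form the graded module
\[
\widetilde V:=B\otimes_{B_0}X,
\]
with $X$ placed in degree $0$. This is finite-dimensional, since it is a quotient of $B\otimes X\in\C$, and it is graded by $(B\otimes_{B_0}X)_n=\operatorname{im}(B_n\otimes X)$. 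The inclusion $X\hookrightarrow V$ induces an ungraded $B$-map $\widetilde V\to V$. This map is nonzero, so it is surjective by simplicity of $V$.

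Next I would show that $\widetilde V$ has a unique maximal graded submodule, and that the corresponding graded-simple quotient is simple as an ungraded module. I would work instead with the graded radical. Because $\widetilde V$ has finite length in $\C$, the map $\widetilde V\to V$ factors through a simple ungraded quotient. To transfer gradings, I would use the standard fact for finite-dimensional graded algebras: the Jacobson radical $J$ of $B$ is a graded ideal. I would prove this categorically by letting the grading torus act. For $t\in\Bbbk^\times$, the automorphism $\sigma_t$ acts by $t^n$ on $B_n$, and it preserves the largest nilpotent ideal; this ideal exists because $B\in\C$ is finite length and sums of nilpotent ideals are nilpotent. Since $\Bbbk$ is infinite and only finitely many degrees occur, a Vandermonde argument shows that an ideal stable under all $\sigma_t$ is graded.

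This step is the main obstacle. In a non-semisimple $\C$, one has to decide what the correct ``radical'' is, namely the intersection of annihilators of simple modules. One then has to check that $B/J$ acts faithfully on a finite sum of simple modules and that each simple $B$-module is a direct summand of $B/J$ viewed as a module. If $\C$ is fusion, as the paper indicates, a Wedderburn-type decomposition is available: $B/J$ is semisimple in $\C$, and $V$ is a summand of $B/J$ tensored with a simple object. The grading would then come from writing $V$ as the image of a homogeneous idempotent-like morphism. Alternatively, the grading may be obtained more cleanly from the $\sigma_t$-twists $V^{\sigma_t}$. These form a connected family of simple modules, and a finite algebra in a fusion category has only finitely many simples up to isomorphism. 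Hence $V^{\sigma_t}\cong V$ for all $t$. Choosing isomorphisms compatibly, by lifting to a projective, and hence linear, $\mathbb G_m$-action, gives a grading.

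For uniqueness, suppose we have two gradings on $V$. The identity map decomposes into homogeneous components $\phi_s:V\to V$, each of degree $s$, from the first grading to the second. Each $\phi_s$ is a $B$-module map. By Schur's lemma, each $\phi_s$ is zero or an isomorphism; $\End_B(V)=\Bbbk$ because $\Bbbk$ is algebraically closed and $V$ has finite-dimensional Hom spaces. A nonzero scalar multiple of the identity that is homogeneous of degree $s$ means that the second grading is the first shifted by $s$. Since the components $\phi_s$ sum to the identity, at least one of them is nonzero, and this gives the claim.
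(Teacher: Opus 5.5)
\textbf{There is a gap in the existence part.}

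Your first route, through $\widetilde V=B\otimes_{B_0}X$ and the graded radical, stops where the content lies. You would need two facts: that $\widetilde V\twoheadrightarrow V$ kills the maximal graded submodule of $\widetilde V$, and that the graded-simple quotient is simple as an ungraded module. Together these carry the whole content of the lemma, and you establish neither. The Wedderburn step is also only available for fusion $\C$, whereas the lemma is stated for finite tensor categories. Already for $B=\one$ in a non-semisimple $\C$ there is no nonzero nilpotent ideal, yet $B$-modules are not semisimple.

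Your second route, via the twists ${}^{\sigma_t}V$, is the paper's route. The step ${}^{\sigma_t}V\cong V$ is fine once you make two adjustments. First, replace ``fusion'' by a finiteness argument valid in any finite $\C$: each simple $V$ is a quotient of $B\otimes S$ for a simple subobject $S\subseteq V$ (the paper uses $B\otimes P$ for a projective generator $P$). Second, note that the divisible group $\Bbbk^\times$ acts trivially on a finite set.

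The decisive sentence, ``choosing isomorphisms compatibly, by lifting to a projective, and hence linear, $\mathbb G_m$-action'', is asserted rather than proved, and it is where almost all of the paper's proof is spent. To have a projective $\mathbb G_m$-action you must show that $t\mapsto[F_t]$ is a morphism of algebraic groups into $\operatorname{Aut}_{\C}(V)/\Bbbk^\times$. To make it linear you must split the resulting extension of $\mathbb G_m$ by the scalars. The paper does both. The space $\Hom_B(V,{}^{\sigma_t}V)$ is the kernel of a Laurent-polynomial family of linear maps with one-dimensional kernels. So these lines form a line bundle, and the pairs $(t,F)$ form an algebraic group $\widetilde T_V$. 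The paper then shows $\widetilde T_V^\circ$ is a torus, so the extension splits. This yields $F_t$ with $F_sF_t=F_{st}$, whose weight spaces give the grading.

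\textbf{How to close it without algebraic groups.} The step can be closed elementarily, but some argument is required.
Let $a_n:B_n\otimes V\to V$ be the action. Any isomorphisms $F_t:V\to{}^{\sigma_t}V$ satisfy $F_t\circ a_n=t^n\,a_n\circ(\id_{B_n}\otimes F_t)$.

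Since $F_sF_t$ and $F_tF_s$ are both isomorphisms $V\to{}^{\sigma_{st}}V$, Schur gives $F_sF_t=\kappa(s,t)F_tF_s$ with $\kappa$ bimultiplicative. Taking determinants of left multiplication on $\End_{\C}(V)$ gives $\kappa^D=1$ with $D=\dim\End_{\C}(V)$. Hence $\kappa=1$ by divisibility of $\Bbbk^\times$, so the $F_t$ commute.

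Decompose $V$ along the primitive idempotents of the commutative subalgebra of $\End_{\C}(V)$ generated by all $F_t$. This gives pieces $V^j$ on which each $F_t$ has a single generalized eigenvalue $\chi_j(t)$, and the functions $\chi_j$ are pairwise distinct. The relation above shows that $a_n$ maps $B_n\otimes V^j$ into the piece, if any, with eigenvalue function $t\mapsto t^n\chi_j(t)$.

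Fix one piece with function $\chi_0$. The functions $t\mapsto t^m\chi_0(t)$, $m\in\mathbb Z$, are pairwise distinct because $\chi_0$ never vanishes and $\Bbbk^\times$ is infinite. The sum of the pieces with these functions is a nonzero submodule, hence equals $V$. Placing the $m$-th piece in degree $m$ gives the grading.

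\textbf{Uniqueness.} Your uniqueness argument is correct as written. It is slightly more direct than the paper's, which compares the two families $F_t,F'_t$ and shows that $F_t^{-1}F'_t$ is a character $t^d$.
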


\begin{proof}
For each $t\in\mathbb G_m$, the grading of $B$ defines an algebra
automorphism
\[
\phi_t:B\longrightarrow B,
\qquad
\phi_t|_{B_n}=t^n\id_{B_n}.
\]
If $V$ is a $B$-module, let ${}^{\phi_t}V$ denote the module with the same
underlying object as $V$ and with action twisted by $\phi_t$. 
Let $P$ be a projective generator of $\C$. Then $B\otimes P$ is
a projective generator of the category of finite $B$-modules, because
\[
 \Hom_B(B\otimes P,M)\cong\Hom_{\C}(P,M).
\]
Indeed, the right side is exact, and an epimorphism
$P^{\oplus n}\twoheadrightarrow M$ induces an epimorphism
$(B\otimes P)^{\oplus n}\twoheadrightarrow M$.
Since $B\otimes P$ has finite length, the category of finite
$B$-modules has only finitely many isomorphism classes of simples.
Twisting defines an action of $\Bbbk^\times$ on this finite set. Since $\Bbbk$ is algebraically closed, $\Bbbk^\times$ is a divisible abelian group. Its image in a finite permutation group is therefore a finite divisible abelian group and must be trivial. Consequently, ${}^{\phi_t}V\cong V$ for every $t\in\Bbbk^\times$.

We choose these isomorphisms compatibly as $t$ varies. Consider the algebraic group
\[
\widetilde T_V
=
\bigl\{
(t,F):
t\in\mathbb G_m,\ 
F:V\xrightarrow{\sim}{}^{\phi_t}V
\text{ is a $B$-module isomorphism}
\bigr\}.
\]
The spaces $\Hom_B(V,{}^{\phi_t}V)$ are the kernels of a polynomial
family of linear maps between fixed finite-dimensional morphism
spaces. Every fiber has dimension one, by Schur's lemma and the
isomorphisms ${}^{\phi_t}V\cong V$ already proved. These kernels
therefore form a line bundle over $\mathbb G_m$. Its complement of
the zero section is precisely $\widetilde T_V$, since every nonzero
map in such a fiber is an isomorphism. Thus $\widetilde T_V$ is a
smooth $\mathbb G_m$-torsor over $\mathbb G_m$.

Projection onto the first coordinate gives a surjective homomorphism
\(
\pi:\widetilde T_V\longrightarrow\mathbb G_m.
\)
The kernel consists of the $B$-module automorphisms of $V$. Since $V$ is
simple, Schur's lemma gives
$\ker\pi=\operatorname{Aut}_B(V)=\mathbb G_m$, acting by scalar automorphisms. Taking the identity component $\widetilde T_V^\circ$, we obtain an exact sequence
\[
1\longrightarrow\mathbb G_m
\longrightarrow\widetilde T_V^\circ
\xrightarrow{\pi}\mathbb G_m
\longrightarrow1.
\]

We claim that $\widetilde T_V^\circ$ is a torus. Its unipotent radical maps
trivially to the quotient $\mathbb G_m$ and hence lies in the kernel,
which is also a torus. Therefore, the unipotent radical is trivial, so
$\widetilde T_V^\circ$ is reductive. Its derived subgroup also maps
trivially to the abelian quotient and hence lies in the central torus
$\ker\pi$. Since a semisimple group cannot be a nontrivial subgroup of a
torus, the derived subgroup is trivial. Thus
$\widetilde T_V^\circ$ is a torus.

Since $\Bbbk$ is algebraically closed, such an extension splits, so there is a homomorphism
\[
s:\mathbb G_m\longrightarrow\widetilde T_V^\circ,
\qquad
s(t)=(t,F_t),
\]
such that $\pi\circ s=\id_{\mathbb G_m}$. The maps $F_t$ define an action
of $\mathbb G_m$ on the underlying object of $V$, and hence a weight
decomposition
\[
V=\bigoplus_{m\in\mathbb Z}V_m,
\qquad
F_t|_{V_m}=t^m\id_{V_m}.
\]
It remains to check that this weight decomposition is a $B$-module
grading. If $b\in B_n$ and $v\in V_m$, then the fact that
$F_t:V\to{}^{\phi_t}V$ is a $B$-module map gives
\[
F_t(bv)
=
\phi_t(b)F_t(v)
=
t^n b\,t^m v
=
t^{n+m}bv.
\]
Therefore,
\(
B_nV_m\subseteq V_{n+m},
\)
as desired.

Finally, suppose that two gradings of $V$ give compatible families
$(F_t)$ and $(F'_t)$. For each $t$, the automorphism
$F_t^{-1}F'_t:V\longrightarrow V$
is $B$-linear and hence scalar by Schur's lemma. Compatibility with
multiplication in $\mathbb G_m$ shows that these scalars form a character
of $\mathbb G_m$, so they have the form $t\mapsto t^d$ for some
$d\in\mathbb Z$. Thus
$F'_t=t^dF_t,$ 
which shifts every weight by the same integer $d$. 
\end{proof}

The category $\Ocat^{\mathrm{gr}}(\g)$ need not be a highest
weight category in the sense of \cite{CPS}.  For example, take
$\C=\operatorname{Vec}_{\Bbbk}$ and let $\g=\Bbbk f$ be one-dimensional abelian,
concentrated in degree $-1$.  Then
\[
\Delta(\one,d)=\Bbbk[f]\langle d\rangle
\]
has the strictly descending chain
\[
\Delta(\one,d)\supset f\Delta(\one,d)\supset
f^2\Delta(\one,d)\supset\cdots.
\]
Thus $\Ocat^{\mathrm{gr}}(\g)$ is not artinian.

\begin{question}\label[question]{q:highest-weight-truncations}
For integers $a\le b$, let $\Ocat^{\mathrm{gr}}_{[a,b]}(\g)$ be the full subcategory of modules supported in degrees $a,\ldots,b$. Under what additional hypotheses is this a highest weight category, with standard objects given by the largest quotients of the Verma modules supported in $[a,b]$?
\end{question}

\subsection{Shapovalov maps and generic graded irreducibility}
We follow the notation of Subsection \ref{subs: graded Lie-alg}.
Let
\[
\mathfrak X(\h)
:=
\Hom_{\mathrm{Lie}}(\h,\one)
=
\Hom_{\C}(\h/[\h,\h],\one)
\]
be the vector space of characters of $\h$. For
$\lambda\in\mathfrak X(\h)$ and $i>0$, define
\[
\beta_{\lambda,i}:\g_i\otimes\g_{-i}\longrightarrow\one,
\qquad
\beta_{\lambda,i}(x,y)=\lambda([x,y]).
\]
We extend the pairings \(\beta_{\lambda,i}\) to a pairing $\beta_\lambda:\nplus\otimes\nminus\longrightarrow\one$
by setting
\[
\left.\beta_\lambda\right|_{\g_i\otimes\g_{-j}}
 =
 \begin{cases}
   \beta_{\lambda,i},& i=j,\\
   0,& i\neq j.
 \end{cases}
\]
Thus homogeneous components of different absolute degrees are
orthogonal.

\begin{definition}\label[definition]{def:nondegenerate-grading}
The graded Lie algebra $\g$ is \emph{nondegenerate} if there is a character
$\lambda\in\mathfrak X(\h)$ for which every pairing
$\beta_{\lambda,i}$ is nondegenerate, or equivalently $\beta_{\lambda}$ is nondegenerate.
\end{definition}

The locus
\[
 \mathfrak X(\h)_{\mathrm{nd}}
 :=\{\lambda:\beta_{\lambda,i}\text{ is nondegenerate for all }i>0\}
\]
is Zariski open. Indeed, for each $i$ such that
$\g_i\cong\g_{-i}^*$, choose an isomorphism
$c_i:\g_i\xrightarrow{\sim}\g_{-i}^*$ and define
\[
 D_i(\lambda)
 :=
 \det_{\Bbbk}\!\left(
 L_{c_i^{-1}\beta_{\lambda,i}}:
 \End_{\C}(\g_i)\longrightarrow\End_{\C}(\g_i)
 \right).
\]
This is a polynomial in $\lambda$, and it is nonzero precisely when
$\beta_{\lambda,i}$ is an isomorphism. If
$\g_i\not\cong\g_{-i}^*$, the locus is empty. Since only finitely many
degrees occur, $\mathfrak X(\h)_{\mathrm{nd}}$ is Zariski open.
Thus $\g$ is nondegenerate exactly when this open set is nonempty.

\begin{remark}
Strong semisimplicity gives a sufficient condition for the grading
to be nondegenerate. Assume that $\operatorname{char}\Bbbk\neq2$ and that
$\g$ is strongly semisimple. Then the degree derivation is $\ad(h)$ for
some $h\in\h$ by Theorem \ref{thm:all-derivations-inner}, and the Killing form $B$ has degree zero, i.e., $B(\g_i,\g_j)=0$ when $i+j\ne0.$ The character
\(
\lambda_h(z):=B(h,z)
\)
satisfies
\(
\beta_{\lambda_h,i}(x,y)=iB(x,y).
\)
Consequently, the grading is nondegenerate in characteristic zero. It is
also nondegenerate in characteristic $p$ provided that
$\g_i=0$ for every non-zero $i\in p\mathbb Z.$
\end{remark}

Let $X$ be an $\h$-module. 
For $\lambda\in\mathfrak X(\h)$, let $\one_\lambda$ be the corresponding
one-dimensional $\h$-module and set
\[
X_\lambda:=X\otimes\one_\lambda.
\]
Notice that if $X$ is simple, then so is $X_\lambda$.

The triangular decomposition of $\g$, together with the PBW theorem, gives
an isomorphism of graded objects
\[
m:U^-\otimes U(\h)\otimes U^+
\xrightarrow{\sim}U(\g).
\]
The associated Harish--Chandra projection is
\[
\HC:U(\g)\longrightarrow U(\h)
\qquad
\HC
=
(\varepsilon^-\otimes\id_{U(\h)}\otimes\varepsilon^+)\circ m^{-1},
\]
that is, we take the component with no factor from $\nminus$ or $\nplus$.

\begin{definition}
For $r>0$, define the (degree $r$) \emph{Shapovalov map}
\begin{equation}\label{eq:shapovalov-map}
\mathsf S_{r,X}(\lambda):
U^-_{-r}\otimes X_{\lambda}
\longrightarrow
(U^+_r)^*\otimes X_{\lambda}
\end{equation}
by
\begin{equation}\label{eq:shapovalov-formula}
\bigl(\mathsf S_{r,X}(\lambda)(u\otimes x)\bigr)(v)
:=\HC(vu)\cdot x,
\qquad x\in X_{\lambda}, \, u\in U^-_{-r}, \, v\in U^+_r.
\end{equation}
\end{definition}

From  now on, assume $X$ is simple. In the rest of this subsection,  we extend the classical theory of Shapovalov maps to our context. 

\begin{lemma}\label[lemma]{lem:shapovalov-detects-simplicity}
If $\mathsf S_{r,X}(\lambda)$ is injective for every $r>0$, then
$\Delta(X_\lambda,d)$ is graded-simple.
\end{lemma}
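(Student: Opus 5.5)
We argue by contradiction, using the description of proper graded submodules from the proof of \cref{thm:simple-graded}. Suppose $N\subseteq\Delta(X_\lambda,d)$ is a non-zero proper graded submodule. A graded submodule is proper exactly when its degree-$d$ component vanishes. Choose the largest $r$ with $N_{d-r}\neq0$. Then $r>0$, and $N_{d-r}$ is a non-zero subobject of
\[
\Delta(X_\lambda,d)_{d-r}\cong U^-_{-r}\otimes X_\lambda,
\]
using \eqref{eq:verma-pbw}.

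The central step is to identify the Shapovalov map with the action of $U^+_r$ on this component. Consider the composite
\[
U^+_r\otimes U^-_{-r}\otimes X_\lambda
\longrightarrow \Delta(X_\lambda,d)_{d}\cong X_\lambda,
\qquad
v\otimes u\otimes x\longmapsto vu\cdot x .
\]
We claim it equals $v\otimes u\otimes x\mapsto \HC(vu)\cdot x$. To see this, write $vu$ in the ordered PBW basis \eqref{eq:ordered-pbw}, which is a morphism-level decomposition of $U^+_rU^-_{-r}\subseteq U_0$. Terms with a positive-degree factor from $U^+$ on the right kill the highest-weight generator $1\otimes X_\lambda$. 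The remaining terms lie in $U^-\otimes U(\h)$ and have total degree $0$, so any term with a nontrivial $U^-$ factor has negative $U^-$-degree. Such a term sends $X_\lambda$ into a lower degree than $d$, so it does not contribute to the degree-$d$ component. Only the component $\varepsilon^-\otimes\id\otimes\varepsilon^+$ survives, and this is exactly $\HC$. Taking adjoints, the morphism $U^-_{-r}\otimes X_\lambda\to (U^+_r)^*\otimes X_\lambda$ adjoint to the action is precisely $\mathsf S_{r,X}(\lambda)$ from \eqref{eq:shapovalov-formula}.

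We then finish by maximality of $r$. For every $s$ with $0<s<r$, the degree-$s$ part $\g_s$ maps $N_{d-r}$ into $N_{d-r+s}$, which is zero by the choice of $r$. Hence $\nplus N_{d-r}=0$. In particular $U^+_r$ sends $N_{d-r}$ into $N_d=0$. By the identification above, this means that the restriction of $\mathsf S_{r,X}(\lambda)$ to the non-zero subobject $N_{d-r}$ is zero. This contradicts the injectivity of $\mathsf S_{r,X}(\lambda)$.

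The main obstacle is the identification in the second paragraph, which must be phrased at the level of morphisms rather than elements. We have to check two things categorically. First, $U^+_{>0}$ acts by zero on $1\otimes X_\lambda$, which holds by the definition of induction from $\bLie^+$. Second, $\HC$ correctly extracts the $U(\h)$-component, including the action of $U(\h)$ on $X_\lambda$ through the twisted module structure. Grading bookkeeping ensures that cross terms land in degrees below $d$. The rest of the argument is formal.
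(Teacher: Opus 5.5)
This is essentially the paper's proof. A nonzero proper graded submodule $N$ has $N_d=0$. The Shapovalov map is adjoint to the action $U^+_r\otimes\Delta(X_\lambda,d)_{d-r}\to\Delta(X_\lambda,d)_d$, so it vanishes on a nonzero $N_{d-r}$. Your PBW check that this pairing is $v\otimes u\otimes x\mapsto\HC(vu)\cdot x$ is correct, and it spells out a step the paper leaves implicit.

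There is one slip in the choice of $r$. You should choose $r>0$ \emph{minimal} with $N_{d-r}\neq0$, as the paper does, not the largest. A largest such $r$ need not exist, since $N$ can be unbounded below. Even when it exists, the vanishing of $N_{d-r+s}$ for $0<s<r$ need not hold.

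In fact no extremal choice is needed at all. For any $r>0$ with $N_{d-r}\neq0$, you have $U^+_r\cdot N_{d-r}\subseteq N_d=0$ directly, because $N$ is a graded submodule. That inclusion is the only input your final contradiction actually uses.
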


\begin{proof}
Let $N=\oplus_{j\le d} N_j$ be a non-zero proper graded submodule. Since the simple object 
$X_\lambda=\Delta(X_\lambda,d)_d$ generates the Verma module, $N_d=0$.
Choose $r>0$ minimal such that $N_{d-r}\ne0$. Then
$\nplus\cdot N_{d-r}=0$, so $\mathsf S_{r,X}(\lambda)$ vanishes on the
non-zero subobject $N_{d-r}$. This contradicts injectivity.
\end{proof}

The Shapovalov maps depend algebraically on the character $\lambda$, and
we will use their determinants to study injectivity for generic
$\lambda$.  This requires evaluating them at the generic point of
$\mathfrak X(\h)$, which is defined over the function field
$\Bbbk (\mathfrak X(\h))$ rather than over $\Bbbk$.  The resulting maps therefore
live in a scalar extension of $\C$. 

For a field extension $K/\Bbbk$, we use the abelian scalar extension
$\C_K$. It is again a symmetric tensor category, the scalar-extension
functor is exact, and
\[
 \Hom_{\C_K}(A_K,B_K)
 \cong K\otimes_{\Bbbk}\Hom_{\C}(A,B);
\]
see \cite[Theorem~1.3.18 and Proposition~2.2.2]{StalderScalar}. 

When $\C$ is semisimple, writing objects of \(\C_K\) as
 $ A\simeq\bigoplus_{S} S_K\otimes_K V_S$  and 
 $B\simeq\bigoplus_{S} S_K\otimes_K W_S,$ 
a morphism \(f:A\to B\) is determined by linear maps
$ f_S:V_S\longrightarrow W_S$
between the corresponding multiplicity spaces.  When every \(f_S\) is
square, we define
$ \det(f):=\prod_S\det(f_S).$ 
Although this determinant depends on the chosen decompositions and bases
up to a non-zero scalar, its vanishing does not.

We take the filtration of $U^+$ and $U^-$ given by the number of Lie
algebra factors in a PBW monomial.  This induces filtrations on the source
and target of each Shapovalov map \eqref{eq:shapovalov-map}. We will need the following technical results.

\begin{lemma}
\label[lemma]{lem:shapovalov-leading-term}
Let $\lambda\in\mathfrak X(\h)_{\mathrm{nd}}$. On the PBW length-\(q\) piece, write
$ u=x_1\cdots x_q\in U^-_{-r}$ and $v=y_1\cdots y_q\in U^+_r.$ The coefficient of $t^q$ in
$\mathsf S_{r,X}(t\lambda)$ is given by
\begin{equation}\label{eq:fock-pairing}
(y_1\cdots y_q,x_1\cdots x_q)
\longmapsto
\sum_{\sigma\in S_q}
\prod_{j=1}^q
\beta_{\lambda}(y_j,x_{\sigma(j)}).
\end{equation}
\end{lemma}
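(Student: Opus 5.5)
The plan is to compute $\HC(vu)$ for PBW monomials by moving the factors of $v$ past those of $u$, and to track the polynomial dependence on $t$. First, the $\lambda$-dependence. The degree-zero part $U(\h)$ acts on $X_{t\lambda}=X\otimes\one_{t\lambda}$. An element $h\in\h$ acts as $h\cdot x+t\lambda(h)x$. A PBW monomial in $U(\h)$ of length $k$ therefore acts by a polynomial in $t$ of degree at most $k$. Its top coefficient $t^k$ is $\lambda(h_1)\cdots\lambda(h_k)\,\id_X$, since $\lambda$ is a character.

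So the $t^q$ coefficient of $\mathsf S_{r,X}(t\lambda)(u\otimes x)(v)$ is obtained from the part of $\HC(vu)$ with filtration degree exactly $q$ in $U(\h)$, evaluated by $\lambda$.

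The core step is a filtration estimate. With $u=x_1\cdots x_q$ and $v=y_1\cdots y_q$, I claim $\HC(vu)$ lies in $U(\h)_{\le q}$. Moreover, modulo $U(\h)_{\le q-1}$ it equals
\[
\sum_{\sigma\in S_q}\prod_{j}[y_j,x_{\sigma(j)}]_{\h},
\]
where $[\ ,\ ]_{\h}$ is the $\h$-component of the bracket. A product of $q$ such $\h$-elements in $U(\h)$ is well defined modulo lower filtration, because $\gr U(\h)=S(\h)$ is commutative.

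I would prove this by working in $\gr U(\g)\cong S(\g)$ with the Poisson (Kirillov--Kostant) structure, or by induction on $q$. Moving $y_1$ to the right through $x_1\cdots x_q$ produces commutators $[y_1,x_k]$. Only terms in which $y_1$ is contracted with some $x_k$ can survive $\HC$, since an uncontracted $y_1$ ends on the right and is killed by $\varepsilon^+$.

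Each contraction reduces the total PBW length of the $\nplus$ and $\nminus$ factors by two and creates at most one $\h$-factor. Nested commutators, and reorderings of $\h$-factors past $\nminus$-factors, yield terms of strictly smaller $\h$-length. Only the perfect matchings between $\{y_j\}$ and $\{x_k\}$ in which every bracket lands in $\h$ achieve $\h$-length $q$.

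A bracket $[y_j,x_k]$ lands in $\h$ exactly when $\deg y_j=-\deg x_k$, and otherwise it has no $\h$-component. Applying $\lambda$ to the $\h$-component then gives $\beta_\lambda(y_j,x_k)$, with the convention that pairings of different absolute degrees are zero. This recovers \eqref{eq:fock-pairing}.

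Categorically, all of this is an identity between morphisms $U^+_r\otimes U^-_{-r}\otimes X\to X$. I will phrase it via the schematic calculus, inserting symmetry morphisms wherever factors are permuted. The sum over $\sigma$ is then a sum over the permutations of tensor factors, each composed with the braiding $c_\sigma$. Because the category is symmetric, these braidings are well defined.

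Finally, I must check that the leading term is independent of the chosen lifts of $S^q$ to PBW monomials. This holds because reordering within $u$ or $v$ changes them only by terms of lower PBW length, and those contribute lower powers of $t$.

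The main obstacle is the bookkeeping in the filtration estimate. I need to show rigorously that every non-matching contribution, and every contribution from reordering $\h$-factors past $\nminus$-factors, has $\h$-degree at most $q-1$. I also need to do this in a way valid in positive characteristic and in the categorical setting. I would handle it by passing to $\gr$ of the PBW filtration and using that $\gr U(\g)=S(\g)$ is graded-commutative, with commutators lowering filtration degree by one.
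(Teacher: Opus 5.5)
Your proposal is correct and follows the same route as the paper. You commute the $y_j$ past the $x_k$, observe that $t^q$ arises only from complete pairings whose brackets land in $\h=\g_0$ and act through $t\lambda$, and read off the sum over $\sigma\in S_q$; your extra checks (that $\HC(vu)\in U(\h)_{\le q}$, the top $t$-coefficient of the $U(\h)$-action on $X_{t\lambda}$, and independence of the chosen PBW lifts) are sound and make the paper's sketch more explicit. One caution on the bookkeeping you single out as the main obstacle: the ordinary PBW filtration only places $\HC(vu)$ in $U(\h)_{\le 2q}$, so passing to $\gr U(\g)=S(\g)$ will not give the bound. Instead, use that the quantity (number of $\h$-letters) $+\min(\#\nplus\text{-letters},\#\nminus\text{-letters})$ never increases under a commutation move. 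Starting from $\#\nplus=\#\nminus=q$, it drops strictly at the first move that is neither a plain swap nor a contraction $[y,x]\mapsto[y,x]_{\h}$, which gives the bound and the leading term at once.
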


\begin{proof}
By \eqref{eq:shapovalov-formula}, the matrix coefficient is
obtained by applying \(\HC\) to \(vu\).  We move the positive generators
\(y_1,\ldots,y_q\) past the negative generators
\(x_1,\ldots,x_q\).  A contribution of degree \(q\) in \(t\) can arise
only by pairing every \(y_j\) with one of the \(x_i\) and letting the
resulting degree-zero commutator act on \(X_{t\lambda}\) through the
character \(t\lambda\).  Then the possible complete pairings are indexed by
\(\sigma\in S_q\), and we obtain  the formula in the statement. 
\end{proof}

\begin{remark}\label{rem:generic-shapovalov-isomorphism} 
Assume $\operatorname{char}\Bbbk=0$ and
$\mathfrak X(\h)_{\mathrm{nd}}\ne\varnothing$. Then, for every $r>0$,
the Shapovalov map \(\mathsf S_{r,X}(\lambda)\) is an isomorphism at the generic point
\(\lambda\) of \(\mathfrak X(\h)\), after extending scalars to
\(K=\Bbbk(\mathfrak X(\h))\).

Indeed, fix $\lambda_0\in\mathfrak X(\h)_{\mathrm{nd}}$ and set
\[
 A_r=U^-_{-r}\otimes X,
 \qquad
 B_r=(U^+_r)^*\otimes X.
\]
PBW symmetrization and the perfect pairing $\beta_{\lambda_0}$ give
an isomorphism $c_r:A_r\xrightarrow{\sim}B_r$. Define
\[
 D_r(\lambda)
 :=
 \det_{\Bbbk}\!\left(
 L_{c_r^{-1}\mathsf S_{r,X}(\lambda)}:
 \End_{\C}(A_r)\longrightarrow\End_{\C}(A_r)
 \right).
\]
Thus $D_r(\lambda)\ne0$ precisely when
$\mathsf S_{r,X}(\lambda)$ is an isomorphism.

To see that $D_r$ is not identically zero, restrict to
$\lambda=t\lambda_0$ and decompose $A_r$ and $B_r$ by PBW length.
The block pairing terms of lengths $p$ and $q$ has degree at most
$\min(p,q)$ in $t$, since each occurrence of $\lambda$ uses one
positive and one negative factor. When $p=q$, its leading coefficient
is the full contraction induced by $\beta_{\lambda_0}$, multiplied by
$q!$ and tensored with $\id_X$, and is therefore invertible. After
rescaling the target summand of length $q$ by $t^{-q}$, the matrix at
$t=\infty$ is triangular with invertible diagonal blocks. Hence
$\mathsf S_{r,X}(t\lambda_0)$ is invertible over $\Bbbk(t)$, so
$D_r$ is not the zero polynomial.

Consequently,
\[
 \{\lambda\in\mathfrak X(\h):
   \mathsf S_{r,X}(\lambda)\text{ is an isomorphism}\}
\]
is a nonempty Zariski open subset of $\mathfrak X(\h)$.
\end{remark}

We prove now our first main result in this subsection.  Let \(\eta\) be the generic point of \(\mathfrak X(\h)\), and let
$  K=\Bbbk(\mathfrak X(\h))$ 
be its function field.  Denote by \(\C_K\) the scalar extension of
\(\C\) from \(\Bbbk\) to \(K\).  Viewing \(\eta\) as a \(K\)-valued
character of \(\h_K\), set
\[ X_\eta:=X_K\otimes\one_\eta.
\]
For completeness, a simple $\h$-module $X$ remains simple after
this scalar extension. Viewed over $\Bbbk$, $K\otimes_{\Bbbk}X$
is a direct sum of copies of $X$, and
$\End_{\h}(X)=\Bbbk$. Any finite underlying subobject is contained
in finitely many copies of $X$, so the $\h$-submodule it generates
is still finite. Thus $\h$-subobjects of this isotypic ind-object
correspond to $\Bbbk$-subspaces of $K$. A $K$-linear subobject
corresponds to a subspace stable under multiplication by $K$,
and is consequently zero or the whole object. Hence $X_K$, and
therefore $X_\eta$, is simple.

\begin{theorem}[Generic irreducibility in characteristic zero]\label{thm:generic-irreducibility-char0} Assume that $\operatorname{char}\Bbbk=0$.
Let $\g$ be a nondegenerate graded Lie algebra, let $X\in\Pwt$, and let
$d\in\mathbb Z$.  Then
\begin{enumerate}[leftmargin=*,label=\rm{(\roman*)}]
    \item\label{item:generic-irreducibility-char0-graded-simple} $\Delta(X_\eta,d)$ 
is graded-simple as a \(\g_K\)-module.
\item\label{item:generic-irreducibility-char0-singularvectors} For every \(N\geq1\), there is a nonempty Zariski open subset
$  U_N\subseteq\mathfrak X(\h)$ 
such that, for every \(\lambda\in U_N\), the Verma module
\(\Delta(X_\lambda,d)\) has no non-zero singular vectors in degrees
\(d-1,\ldots,d-N\).
\item\label{item:generic-irreducibility-char0-countableunion} If $\Bbbk$ is uncountable, there is a countable union $Z$ of proper closed
subsets of $\mathfrak X(\h)$ such that $\Delta(X_\lambda,d)$ is
graded-simple for $\lambda\notin Z$.  In particular, such characters exist.
\item\label{item:generic-irreducibility-char0-simple} Every graded-simple Verma module $\Delta(X_\lambda,d)$ is simple as a $\g$-module.
\end{enumerate}
\end{theorem}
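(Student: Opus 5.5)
The plan is to derive all four parts from \cref{rem:generic-shapovalov-isomorphism} together with \cref{lem:shapovalov-detects-simplicity}.

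For \ref{item:generic-irreducibility-char0-graded-simple}, we work in $\C_K$ with the graded Lie algebra $\g_K$. Its PBW decomposition, Harish--Chandra projection and Shapovalov maps are obtained from those of $\g$ by scalar extension, and the scalar-extension functor is exact. Consequently $\mathsf S_{r,X}(\eta)$ is the base change of the polynomial family $\lambda\mapsto\mathsf S_{r,X}(\lambda)$ to the generic point. By \cref{rem:generic-shapovalov-isomorphism}, the polynomial $D_r$ is not identically zero. Hence $D_r(\eta)\ne0$ in $K$, so $\mathsf S_{r,X}(\eta)$ is an isomorphism for every $r>0$. We have already seen that $X_\eta$ is simple. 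The proof of \cref{lem:shapovalov-detects-simplicity} uses only that the top degree component is simple and generates the module, and both facts survive in $\C_K$. Applying that lemma over $K$ therefore shows that $\Delta(X_\eta,d)$ is graded-simple.

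For \ref{item:generic-irreducibility-char0-singularvectors}, put $U_N:=\bigcap_{r=1}^N\{D_r\ne0\}$. This is a finite intersection of nonempty Zariski open subsets of the irreducible variety $\mathfrak X(\h)$, so it is nonempty and open. Suppose $\lambda\in U_N$ and $s$ is a singular vector of degree $d-r$ with $1\le r\le N$. Here a singular vector means a subobject of $\Delta(X_\lambda,d)_{d-r}\cong U^-_{-r}\otimes X_\lambda$ killed by $\nplus$. Then $\nplus$, and hence $U^+_r$, kills $s$. Moving $v\in U^+_r$ past $u$ and projecting to degree $d$ recovers $\HC(vu)\cdot x$, so $\mathsf S_{r,X}(\lambda)$ vanishes on $s$. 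Since $D_r(\lambda)\ne0$, this map is injective, and therefore $s=0$.

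For \ref{item:generic-irreducibility-char0-countableunion}, take $Z:=\bigcup_{r\ge1}\{D_r=0\}$. Each set $\{D_r=0\}$ is a proper closed subset because $D_r\not\equiv0$. If $\lambda\notin Z$, then every $\mathsf S_{r,X}(\lambda)$ is injective, and \cref{lem:shapovalov-detects-simplicity} gives graded simplicity. An irreducible variety over an uncountable algebraically closed field is not a countable union of proper closed subsets, so the complement of $Z$ is nonempty. (If $\mathfrak X(\h)=0$, nondegeneracy forces $\nplus=\nminus=0$ and the statement is trivial.)

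For \ref{item:generic-irreducibility-char0-simple}, the anticipated main obstacle is to show that a non-graded submodule cannot exist. My approach is to use the degree derivation. In characteristic zero, nondegeneracy should let us realize the grading by an element of $\h$ acting semisimply with distinct eigenvalues on distinct degrees. I would first try to build it from $\lambda$ and the pairings $\beta_{\lambda,i}$. Failing that, I would argue directly: let $N\subseteq\Delta(X_\lambda,d)$ be a nonzero submodule, and take the highest degree $d-r$ in which its projection is nonzero. Pick a finite subobject of $N$ whose image there, the leading part, is nonzero. Apply $U^+_{r}$ and project to degree $d$. Terms in lower degrees land in degrees below $d$, while the leading part contributes $\mathsf S_{r,X}(\lambda)$ applied to it, which is nonzero because $\mathsf S_{r,X}(\lambda)$ is injective. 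This injectivity must itself be established from graded simplicity, since the kernel of $\mathsf S_{r,X}(\lambda)$ generates a proper graded submodule. Thus $N$ meets degree $d$ nontrivially after projection. Next I would use that $U(\h)$ acts on the image and that $\nplus$ raises degree, and iterate downward to reach the top component, which is contained in $N$ modulo lower degrees. The remaining work is to show that $N$ contains the whole of $X_\lambda$. I would handle this with a minimal-length argument over the finitely many degrees involved, combined with simplicity of $X_\lambda$. This final step is where I expect the real difficulty to lie.
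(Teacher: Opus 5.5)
Your parts (i)--(iii) are correct and follow the paper's own argument. You use the nonvanishing of the polynomials $D_r$ from Remark~\ref{rem:generic-shapovalov-isomorphism}: at the generic point, on the finite intersections $U_N$, or off the countable union of their zero loci. Combined with Lemma~\ref{lem:shapovalov-detects-simplicity}, this gives all three parts.

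The gap is in part (iv), which you leave unfinished. Your first route does not work in general, because nondegeneracy does not make the degree derivation inner. For example, take the Heisenberg algebra with $\g_{-1}=\Bbbk y$, $\g_0=\Bbbk c$, $\g_1=\Bbbk x$, and $[x,y]=c$ central. Every $\lambda$ with $\lambda(c)\ne0$ lies in $\mathfrak X(\h)_{\mathrm{nd}}$, yet $\ad(c)=0$. Your second route only shows that a nonzero submodule $N$ has nonzero projection to degree $d$, hence equal to $X_\lambda$. That does not give $X_\lambda\subseteq N$. The ``minimal-length argument'' meant to remove the lower-degree tails is exactly the missing step.

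The idea you need is to work with $\nplus$-invariants rather than with projections to the top degree. Since $M=\Delta(X_\lambda,d)$ is graded-simple, $M\cong L(X_\lambda,d)$. Lemma~\ref{lem:L(S)+-is-S} then gives $M^+=X_\lambda$: the $\nplus$-invariants of $M$ are concentrated in degree $d$. Now let $0\ne W\subseteq M$ be any ungraded submodule, and let $0\ne A\subseteq W$ be a finite subobject. Then $A$ lies in finitely many homogeneous components. Since the grading is bounded above and $\nplus$ raises degree, the iterated images $(\nplus)^jA$ vanish for $j\gg0$. The last nonzero one is a nonzero subobject of $W\cap M^+\subseteq X_\lambda$. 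This intersection is an $\h$-submodule of the simple $\h$-module $X_\lambda$, so $X_\lambda\subseteq W$. Since $X_\lambda$ generates $M$, we get $W=M$. This is the paper's argument, and it needs neither injectivity of the Shapovalov maps nor any realization of the grading inside $\h$.
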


\begin{proof} Part \ref{item:generic-irreducibility-char0-graded-simple} follows from Remark \ref{rem:generic-shapovalov-isomorphism} and 
\cref{lem:shapovalov-detects-simplicity}. For part \ref{item:generic-irreducibility-char0-singularvectors}, let \(N\geq1\) and set
\[
 U_N:=\bigcap_{r=1}^N
 \{\lambda\in\mathfrak X(\h):D_r(\lambda)\ne0\}.
\]
By Remark \ref{rem:generic-shapovalov-isomorphism}, every $D_r$ is a
nonzero polynomial, so $U_N$ is a nonempty Zariski open subset of
$\mathfrak X(\h)$.  For
\(\lambda\in U_N\), the maps
\(\mathsf S_{r,X}(\lambda)\) are injective for \(1\leq r\leq N\), so
\(\Delta(X_\lambda,d)\) has no non-zero singular vectors in degrees
\(d-1,\ldots,d-N\).  

We now prove part \ref{item:generic-irreducibility-char0-countableunion}. Assume \(\Bbbk\) is uncountable. A finite-dimensional affine space over
\(\Bbbk\) cannot be the union of countably many proper closed subsets.
Therefore
$  \bigcap_{N\geq1}U_N\neq\varnothing.$ 
For every \(\lambda\) in this intersection, all the Shapovalov maps are
injective, and hence
\(\Delta(X_\lambda,d)\) is graded-simple by
\cref{lem:shapovalov-detects-simplicity}.

Lastly, we look at part \ref{item:generic-irreducibility-char0-simple}. Let $M=\Delta(X_\lambda,d)$ be graded-simple. By Lemma~\ref{lem:L(S)+-is-S}, its $\nplus$-invariants are exactly its simple highest-weight submodule $X_\lambda$. If $0\ne W\subseteq M$ is an ungraded $\g$-submodule, choose a nonzero finite subobject $A\subseteq W$. The image of $A$ is contained in a finite sum of homogeneous components of $M$. Since the grading is bounded above and $\nplus$ raises degrees, a sufficiently long iterated action of $\nplus$ on $A$ is zero. Its last nonzero image is therefore a nonzero subobject of $W^+$. Thus $0\ne W^+\subseteq X_\lambda$, and simplicity of $X_\lambda$ gives $X_\lambda\subseteq W$. Since the top generates $M$, we have $W=M$.
\end{proof}

Assume now that $\C=\Verp$ and that the grading of $\g$ is nondegenerate. In the classical setting, Verma modules in characteristic $p$  often have singular vectors
coming from central $p$-th powers.  We now define the baby
Verma modules in $\Verp$. Write $\nplus_{\mathrm{cl}}$ and $\nminus_{\mathrm{cl}}$ for the
classical parts of $\nplus$ and $\nminus$. Suppose that 
\begin{equation}\label{eq:p-nilpotent-assumption}
(\ad x)^p=0
\quad\text{ for every }
x\in\nplus_{\mathrm{cl}}\cup\nminus_{\mathrm{cl}}.
\end{equation}
Then $x^p$ is central in $U(\g)$, since
$[x^p,y]=(\ad x)^p(y)$.    Let $J^-\subseteq U^-$ be the
ideal generated by $x^p$ for all  homogeneous  $x\in\nminus_{\mathrm{cl}}$. We define the \emph{baby Verma module} as
\begin{equation}\label{eq:baby-verma}
\Delta_b(X_\lambda,d)
:=\Delta(X_\lambda,d)/J^-\Delta(X_\lambda,d).
\end{equation}

Let $J^+\subseteq U^+$ be generated by $x^p$ for homogeneous
$x\in\nplus_{\mathrm{cl}}$.  This ideal acts by zero on $\Delta_b(X_\lambda,d)$: its generators are
central in $U(\g)$ and annihilate the highest weight subobject.  Set
$\overline U^\pm:=U^\pm/J^\pm.$ 
Formula \eqref{eq:shapovalov-formula} therefore induces baby Shapovalov maps
\begin{equation}\label{eq:baby-shapovalov-map}
\mathsf S^b_{r,X}(\lambda):
\overline U^-_{-r}\otimes X_{\lambda}
\longrightarrow
(\overline U^+_r)^*\otimes X_{\lambda}.
\end{equation}
As in \cref{lem:shapovalov-detects-simplicity}, injectivity of all these
maps implies graded simplicity of the baby Verma module.

Fix $\lambda_0\in\mathfrak X(\h)_{\mathrm{nd}}$. Then $Z(\g)\cap\nplus=Z(\g)\cap\nminus=0$, since every central homogeneous subobject of nonzero degree lies in the radical of the corresponding pairing $\beta_{\lambda_0,i}$. For  $x,y\in\nplus_{\mathrm{cl}}$, respectively or $x,y\in\nminus_{\mathrm{cl}}$, the Jacobson identity says that $(x+y)^p-x^p-y^p$ is a Lie element in the same nilpotent part. By \eqref{eq:p-nilpotent-assumption} it is central in $U(\g)$, and hence is zero. Therefore the map $x\mapsto x^p$ on each classical part is Frobenius-semilinear. Since the generators $x^p$ are homogeneous for both the PBW filtration and the $\mathbb Z$-grading, the associated graded ideals of $J^\pm$ are identified with the ideals generated by the corresponding $p$-th
powers.  Thus the associated graded positive and negative parts are
\begin{equation}\label{eq:baby-symmetric-algebra}
S_b(\nplus):=S(\nplus)\big/(x^p:x\in\nplus_{\mathrm{cl}}),
\qquad
S_b(\nminus):=S(\nminus)\big/(x^p:x\in\nminus_{\mathrm{cl}}).
\end{equation}
They are finite-dimensional.  Indeed, the classical polynomial components 
are truncated at degree $p$, while the symmetric algebra of each
nonclassical part is finite-dimensional in $\Verp$.

\begin{lemma}\label[lemma]{lem:reduced-contraction-pairing}
Let $\lambda_0\in\mathfrak X(\h)_{\mathrm{nd}}$.  The pairing
$\beta_{\lambda_0}$ induces a nondegenerate pairing
\[
S_b(\nplus)\otimes S_b(\nminus)\longrightarrow\one.
\]
\end{lemma}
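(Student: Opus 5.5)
The plan is to identify the pairing as the restriction of a Hopf-type (Fock space) pairing on full symmetric algebras, and then reduce its nondegeneracy to the classical and nonclassical factors separately. By \cref{lem:shapovalov-leading-term}, the relevant form on $S(\nplus)\otimes S(\nminus)$ is
\[
 (y_1\cdots y_q,\,x_1\cdots x_q)\longmapsto\sum_{\sigma\in S_q}\prod_j\beta_{\lambda_0}(y_j,x_{\sigma(j)}),
\]
with distinct PBW lengths orthogonal. This is the unique extension of $\beta_{\lambda_0}$ to a Hopf pairing of the cocommutative and commutative Hopf algebras $S(\nplus)$ and $S(\nminus)$: primitives pair via $\beta_{\lambda_0}$, products pair with coproducts. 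Equivalently, it is the map $S(\nminus)\to S(\nplus)^{*}$ induced from $\tilde\beta:\nminus\xrightarrow{\sim}(\nplus)^*$ composed with the canonical morphism $S((\nplus)^*)\to S(\nplus)^{*}$ (degreewise, the symmetrization map from coinvariants to invariants).

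First, descend this pairing to $S_b$. For $x\in\nplus_{\mathrm{cl}}$, the element $x^p$ is primitive in $S(\nplus)$ because $\binom pk\equiv0$. Hence $(x^p\cdot a,b)=(x^p\otimes a,\Delta b)$, and this vanishes since $x^p$ pairs only with components of $\Delta b$ of length $p$, where it is evaluated against a product, and $(x^p,b_1\cdots b_p)=p!\prod\beta=0$. So the ideal generated by the $x^p$ lies in the left radical, and symmetrically on the other side. The pairing therefore descends to $S_b(\nplus)\otimes S_b(\nminus)$.

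Second, prove nondegeneracy. Since $\lambda_0$ has degree zero, $\beta_{\lambda_0}$ pairs $\g_i$ with $\g_{-i}$ and commutes with the $\h$-decomposition. In $\Verp$, $\tilde\beta$ respects the canonical decomposition \eqref{eq:classical-nonclassical}, and so identifies $\nminus_{\mathrm{cl}}\cong(\nplus_{\mathrm{cl}})^*$ and $\nminus_{\mathrm{nc}}\cong(\nplus_{\mathrm{nc}})^*$. Then $S_b(\nplus)\cong S_b(\nplus_{\mathrm{cl}})\otimes S(\nplus_{\mathrm{nc}})$, and likewise for $\nminus$. Because the Hopf pairing of a tensor product of Hopf algebras is the tensor product of the pairings, nondegeneracy reduces to two cases.

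\begin{enumerate}[label=(\alph*)]
\item The classical truncated case. For a vector space $V$ with dual basis, $k[V]/(v^p)$ paired with $k[V^*]/(\xi^p)$ gives the divided-power pairing restricted to exponents below $p$. Monomials $y^a$ and $x^b$ pair to $\delta_{ab}\prod a_i!$, which is nonzero since every $a_i<p$. Frobenius-semilinearity of $x\mapsto x^p$, proved above, shows that the truncation ideal is generated by a basis's $p$-th powers.
\item The nonclassical case. Here one must show that $S(Y^*)\to S(Y)^*$ is an isomorphism for $Y$ with no trivial summand. By the cited fact that $\wedge^n V\to\Lambda^nV$ is an isomorphism in $\Verplus$, and the analogous statement for symmetric powers in $\Verp$ (obtained by tensoring with the fermionic twist, which swaps $S$ and $\wedge$ on $\Verp=\Verplus\boxtimes\operatorname{sVec}$), the canonical map from symmetric coinvariants to invariants is an isomorphism in every degree.
\end{enumerate}

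The main obstacle is case (b): justifying that $S^n(Y)\to\Gamma^n(Y)$ is an isomorphism for nonclassical $Y$ in $\Verp$, including when $Y$ is a sum of several nonclassical simples and has mixed parity. The approach will be to reduce to simple $Y$ via the exponential property $S(Y\oplus Y')=S(Y)\otimes S(Y')$ and its dual. For simple $Y$, the plan is to use the explicit description of the finite-dimensional $S(Y)$ from \cite[Section~5.1]{EtingofPBW}, whose graded pieces are simple in $\Verp$, so the canonical map is nonzero hence invertible wherever both sides are nonzero. Mixed classical/nonclassical terms cause no trouble, because the factorization in the previous step is a factorization of the whole pairing.
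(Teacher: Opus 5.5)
Your reduction matches the paper's: split $\nplus,\nminus$ via $\tilde\beta$ into classical and nonclassical parts, handle the truncated polynomial factor by the factorials $\prod a_i!$ with $a_i<p$, and tensor the two pairings. Your descent argument through the primitive elements $x^p$ is correct, and it fills in a step the paper leaves implicit. The gap is in (b), which carries the entire nonclassical content of the lemma. The paper settles (b) by citing \cite[Corollary~3.4(ii)]{EtingofWeyl}: the Weyl algebra acts on $S(Y)$ through an isomorphism onto $\underline{\End}(S(Y))$, which is equivalent to nondegeneracy of the contraction pairing.

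Your first justification does not give what you assert. Twisting by the odd line $L_{p-1}$ turns the cited statement ``$\wedge^nV\to\Lambda^nV$ is an isomorphism for $V\in\Verplus$'' into ``$S^nW\to\Gamma^nW$ is an isomorphism'' only for $W\in\Verplus\otimes L_{p-1}$, that is, for sums of $L_2,L_4,\dots,L_{p-1}$. Now take a nonclassical summand lying in $\Verplus$ itself, such as $L_3$ in the paper's $\Ver_5^+$ example. There the twist converts what you need into a claim about $\wedge^n$ of objects of $\Verplus\otimes L_{p-1}$. The cited lemma does not provide that claim, and it fails on that half in general: for $L_{p-1}$ the map $\wedge^p\to\Lambda^p$ is multiplication by $p!=0$.

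Your fallback for simple $Y$ rests on a false premise and an unproved step. The graded pieces of $S(L_m)$ need not be simple. For instance, in $\Ver_7$ one has $S^2(L_3)\cong\one\oplus L_5$, the image of $S^2$ of the $3$-dimensional $SL_2$-module. Even for a simple piece, you give no reason why the canonical map is nonzero. That is exactly the point at issue, since $S^p(\one)\to\Gamma^p(\one)$ vanishes.

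What makes (b) true is a degree bound. For $2\le m\le p-1$ one has $S^n(L_m)=0$ for $n>p-m$. Hence both $S^n(L_m)$ and $\Gamma^n(L_m)\cong S^n(L_m^*)^*$ vanish for $n\ge p-1$. In the remaining degrees $n!$ is a unit, so the norm map $S^n\to\Gamma^n$ is invertible: composing it with the natural map $\Gamma^n\to S^n$, in either order, gives multiplication by $n!$. With this in place, your reduction via $S(Y\oplus Y')\cong S(Y)\otimes S(Y')$ completes (b). Alternatively, invoke the Weyl-algebra result as the paper does.
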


\begin{proof}
We split $\nplus$ and $\nminus$ into their classical and nonclassical parts.
On the classical parts, the pairing is the usual perfect pairing between
truncated polynomial rings: monomials have exponents between $0$ and
$p-1$, so the relevant factorials are non-zero in $\Bbbk$.

On the nonclassical parts, the pairing identifies the negative part with
the dual of the positive part.  Their symmetric algebras have finite
length.  The natural action of the corresponding Weyl algebra on the symmetric
algebra identifies the Weyl algebra with its full endomorphism algebra by
\cite[Corollary~3.4(ii)]{EtingofWeyl}.  Equivalently, the contraction
pairing of the two symmetric algebras is nondegenerate.  We tensor the
classical and nonclassical pairings to obtain the desired nondegenerate pairing.
\end{proof}

We now prove our second main result of this subsection.

\begin{theorem}[Generic irreducibility in $\Verp$]\label{thm:generic-irreducibility-verp}
Assume $\C=\Verp$ and  $p\ge5$.
Let $\g$ be a nondegenerate graded Lie algebra satisfying \eqref{eq:p-nilpotent-assumption}, let $X\in\Pwt$, and let
$d\in\mathbb Z$.
 Then there is a nonempty Zariski
open subset $U\subseteq\mathfrak X(\h)$ such that
$\Delta_b(X_\lambda,d)$ 
is graded-simple for every $\lambda\in U$.
\end{theorem}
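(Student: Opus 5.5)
The plan is to mimic Remark~\ref{rem:generic-shapovalov-isomorphism}, with $U^\pm$ replaced by the truncated algebras $\overline U^\pm$. The key advantage is that $\overline U^\pm$ are finite-dimensional, so there are only finitely many degrees $r$ with $\overline U^-_{-r}\ne0$. Consequently a finite intersection of nonempty open sets suffices, and no countability argument is needed.

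Fix $\lambda_0\in\mathfrak X(\h)_{\mathrm{nd}}$. For each $r>0$ with $\overline U^-_{-r}\neq0$, set
\[
A_r=\overline U^-_{-r}\otimes X, \qquad B_r=(\overline U^+_r)^*\otimes X.
\]
The first step is to produce an isomorphism $c_r:A_r\xrightarrow{\sim}B_r$. I would take $c_r$ to be the associated graded of the PBW filtration, which identifies $\overline U^\pm$ with $S_b(\nplus)$ and $S_b(\nminus)$ degree by degree. Then I apply the nondegenerate pairing of \cref{lem:reduced-contraction-pairing}, which is homogeneous for the $\mathbb Z$-grading because $\beta_{\lambda_0}$ pairs $\g_i$ only with $\g_{-i}$. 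With $c_r$ in hand, I define
\[
D_r(\lambda)=\det_\Bbbk\bigl(L_{c_r^{-1}\mathsf S^b_{r,X}(\lambda)}\bigr)
\]
on $\End_\C(A_r)$, exactly as in the Remark. This is a polynomial in $\lambda$, and it is nonzero precisely when $\mathsf S^b_{r,X}(\lambda)$ is an isomorphism.

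The second step is to show that each $D_r$ is not identically zero. I restrict to the line $\lambda=t\lambda_0$ and decompose source and target by PBW length $q$. Since each factor of $\lambda$ consumes one positive and one negative generator, the block between lengths $p'$ and $q$ has $t$-degree at most $\min(p',q)$. By \cref{lem:shapovalov-leading-term}, which passes to the quotient because $J^\pm$ are homogeneous and the leading term only involves contractions, the $t^q$ coefficient of the diagonal block is the contraction pairing induced by $\beta_{\lambda_0}$ on length-$q$ pieces, tensored with $\id_X$. This is precisely the degree-$q$ component of the pairing in \cref{lem:reduced-contraction-pairing}. Rescaling the length-$q$ target block by $t^{-q}$ makes the matrix at $t=\infty$ block triangular with invertible diagonal blocks, so $\mathsf S^b_{r,X}(t\lambda_0)$ is invertible over $\Bbbk(t)$ and $D_r\ne0$.

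Finally, I set $U=\bigcap_r\{D_r\neq0\}$ over the finitely many relevant $r$. This is a nonempty Zariski open set, and for $\lambda\in U$ the injectivity criterion stated after \eqref{eq:baby-shapovalov-map} gives graded simplicity of $\Delta_b(X_\lambda,d)$.

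The main obstacle is the leading-term identification modulo $J^\pm$. One must check that reducing modulo $x^p$ does not destroy the filtration compatibility: the associated graded of $\overline U^\pm$ must be $S_b$, which was established before the lemma, and the symmetric-group sum in \eqref{eq:fock-pairing} must agree with the contraction pairing whose nondegeneracy was proved. On classical parts this is where the truncation at exponents $\le p-1$ matters, because it makes the factorials invertible. On nonclassical parts it is where the Weyl-algebra result enters, since the naive factor $q!$ may vanish in characteristic $p$ and one cannot argue via the symmetrization as in characteristic zero.
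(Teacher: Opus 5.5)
Your proposal is correct and follows essentially the same route as the paper. Each baby Shapovalov determinant is shown to be a nonzero polynomial via the leading-term lemma together with the nondegenerate truncated contraction pairing of \cref{lem:reduced-contraction-pairing}; then finitely many nonempty opens are intersected, using finite-dimensionality of $\overline U^\pm$, and the baby injectivity criterion is applied. You simply spell out the block-triangular rescaling argument that the paper's terse proof implicitly borrows from \cref{rem:generic-shapovalov-isomorphism}; note that defining $c_r$ requires choosing a splitting of the PBW filtration, which exists because $\Verp$ is semisimple.
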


\begin{proof}
By \cref{lem:shapovalov-leading-term} and
\cref{lem:reduced-contraction-pairing},   the determinant of
each map \eqref{eq:baby-shapovalov-map} is a non-zero polynomial in
$\lambda$. Only finitely many degree components of $\overline U^\pm$ are nonzero. Hence the intersection of the nonvanishing loci of the finitely many relevant Shapovalov determinants is a nonempty Zariski open subset. On this subset all baby Shapovalov maps are isomorphisms. We then apply the baby version of
\cref{lem:shapovalov-detects-simplicity} to prove the statement.
\end{proof}

\subsection{Example:   restricted representations of 
\texorpdfstring{$\mathfrak{sl}(\one\oplus L_3)$ in $\Ver_5^+$}
{sl(1+L3) in Ver5+}}
\label{subsec:sl-one-plus-L3}

We now illustrate the preceding constructions in a single example.  We
compute the baby Verma modules and their simple quotients, use Shapovalov
maps to determine exactly which baby Verma modules are simple, and then
describe the projective covers and blocks.  In particular, the example
shows that a category built from highest-weight modules need not itself be
a highest weight category.

Assume that $\operatorname{char}\Bbbk=5$, put
$X=L_3\in\Ver_5^+$, and let
\[
 \g=\mathfrak{sl}(\one\oplus X).
\]
The block decomposition of $\one\oplus X$ gives a $\mathbb Z$-grading
\[
 \g=\g(-1)\oplus\g(0)\oplus\g(1).
\]
Let
\(
h:=2\id_{\one}\oplus\id_X
\in\End(\one\oplus X).
\)
Thus $h$ acts by the scalar $2$ on $\one$ and by the scalar $1$ on $X$.
Since $\dim(X)=3$ in $\Ver_5^+$ and $\operatorname{char}\Bbbk=5$, we have
\[
\tr(h)=2\dim(\one)+\dim(X)=2+3=0.
\]
Hence $h\in\g=\mathfrak{sl}(\one\oplus X)$.
Moreover,
\(
 \g(0)=\Bbbk h\oplus X,
\)
and $\g(1)$ and $\g(-1)$ are the upper and lower copies of
$X$, respectively.  Thus $[h,\g(i)]=i\g(i)$.

The element $h^5-h$ is central and primitive in $U(\g)$.  We consider the
restricted enveloping algebra $u(\g):=U(\g)/(h^5-h)$. 
Set $\mathfrak s:=\mathfrak{sl}(X)\subseteq\g(0)$. For $M\in\Ver_5^+$, we define $a_M:\mathfrak s\otimes M\to M$ on the isotypic summands of $M$ using the trivial action on $\one$ and the natural action on $X$. This is natural in $M$ and compatible with tensor products. In this example we consider the full subcategory
\[
 \mathcal R:=\left\{(M,\rho_M)\in\Rep_{\Ver_5^+}(u(\g)):\;\rho_M|_{\mathfrak s\otimes M}=a_M\right\}.
\]

\subsubsection{Baby Verma modules}

For
$n\in\mathbb F_5$ and $Z\in\{\one,X\}$, let $Z_n$ be the simple
$\g(0)$-module on which $h$ acts by $n$ and the copy of $X$ acts
canonically. The corresponding highest-weight
baby Verma module is
\begin{equation}\label{eq:sl1L3-Verma-definition}
 \Delta(Z,n):=u(\g)\otimes_{u(\mathfrak b^+)}Z_n.
\end{equation}
Since
\[
 S(X)=\one\oplus X\oplus\one,
 \qquad X\otimes X=\one\oplus X,
\]
we get the graded objects
\begin{equation}\label{eq:sl1L3-Verma-characters}
 \begin{aligned}
\Delta(\one,n)&=\one_n\oplus X_{n-1}\oplus\one_{n-2},
\\
\Delta(X,n)&=X_n\oplus(\one_{n-1}\oplus X_{n-1})\oplus X_{n-2},
 \end{aligned}
\end{equation}
where subscripts record the $h$-weight, read modulo $5$.

The algebra $u(\g)$ is finite and graded.  Hence
\cref{lem:gradability} shows that every simple object of $\mathcal R$
admits a grading.  Its highest homogeneous component is one of the ten
simple $u(\g(0))$-modules $Z_n$, with
$Z\in\{\one,X\}$ and $n\in\mathbb F_5$.  Therefore the simple quotients
of the ten baby Verma modules in
\eqref{eq:sl1L3-Verma-characters} exhaust the simple objects of
$\mathcal R$.  We determine these quotients using Shapovalov maps.

\subsubsection{Shapovalov maps and simple modules}
\label{subsec:sl-one-plus-L3-shapovalov}

 The Shapovalov calculation is particularly short: the
positive and negative parts are nonclassical, and only two degrees occur.
Indeed,
\[
 \g_{-1}\cong X,
 \qquad
 \h=\g_0\cong\Bbbk h\oplus X,
 \qquad
 \g_1\cong X.
\]
Since $\Hom_{\C}(X,\one)=0$, a character of $\h$ is determined by its
value on $h$.  For $a\in\Bbbk$, let $\lambda_a$ be the character given by
\[
 \lambda_a(h)=a,
 \qquad
 \lambda_a|_X=0.
\]
Thus $\mathfrak X(\h)\cong\mathbb A^1$.  The pairing
$\beta_{\lambda_a,1}:X\otimes X\to\one$ is a non-zero scalar multiple of
$a\,\ev_X$.  Consequently,
\[
 \mathfrak X(\h)_{\mathrm{nd}}=\mathbb A^1\setminus\{0\},
\]
and in particular the grading is nondegenerate.

For the Shapovalov calculation, we allow arbitrary $a\in\Bbbk$ and place
the highest-weight object $Z_a$ in degree $d$; the grading shift is
suppressed in the notation for the baby Verma modules above.  Recall 
$  S(X)=\one\oplus X\oplus\one,$ 
and the summands have degrees $0,1,2$.  Hence
\[
 U^-_{-1}\cong U^+_1\cong X,
 \qquad
 U^-_{-2}\cong U^+_2\cong\one,
\]
and there are no Shapovalov maps in higher degrees.  Moreover,
$\nplus$ and $\nminus$ are abelian and have no classical summands, so $J^+=J^-=0$.
Thus the ordinary and baby Shapovalov maps coincide in this example.

Using \eqref{eq:shapovalov-formula} and $X\otimes X=\one\oplus X$, we obtain the following determinants. Here $f\doteq g$ means equality up to a non-zero scalar independent of $a$.

\begin{center} 
\vspace{10pt}
\renewcommand{\arraystretch}{1.2} \begin{tabular}{ccl} \toprule highest-weight object & degree & Shapovalov determinant\\ \midrule $\one_a$ & $1$ & $\det\mathsf S_{1,\one}(\lambda_a)\doteq a$\\ $\one_a$ & $2$ & $\det\mathsf S_{2,\one}(\lambda_a)\doteq a(a-2)$\\ $X_a$ & $1$ & $\det\mathsf S_{1,X}(\lambda_a)\doteq(a-3)(a-4)$\\ $X_a$ & $2$ & $\det\mathsf S_{2,X}(\lambda_a)\doteq(a-3)(a-4)$\\ \bottomrule \end{tabular} 
\vspace{10pt}
\end{center}

We include here a brief explanation of this table. 
First, $\mathsf S_{1,\one}(\lambda_a)$ is a scalar endomorphism of $X$. Since $\HC(yx)=[y,x]$ for $y\in\g_1$ and $x\in\g_{-1}$, only the $h$-component of $[y,x]$ acts on $\one_a$, giving the factor $a$.
Here, we identify $\mathfrak{sl}(X)$ with $\mathfrak{sl}(L_2)$ through the latter's action on $X=S^2L_2$. We normalize the Casimir using the trace form on $L_2$.
For \(Z=X\), the degree-one source is
$ X\otimes X=\one\oplus X.$ 
The map \(\mathsf S_{1,X}(\lambda_a)\) is
\(\g(0)\)-equivariant, so it acts by a scalar on each summand.  More
precisely, up to an overall non-zero scalar, it is
$\frac a2\id+\Omega,$ 
where \(\Omega\) is the tensor Casimir for the copy of \(X\) in
\(\g(0)\).  The Casimir acts by \(1\) on the \(\one\)-summand and by
\(-2\) on the \(X\)-summand.  Therefore, 
\[
 \left.\mathsf S_{1,X}(\lambda_a)\right|_{\one}
 \doteq
 \left(\frac a2+1\right)\id_{\one}
 \doteq(a-3)\id_{\one}, \quad \text{and} \quad 
 \left.\mathsf S_{1,X}(\lambda_a)\right|_X
 \doteq
 \left(\frac a2-2\right)\id_X
 \doteq(a-4)\id_X.
\]
Thus $  \det\mathsf S_{1,X}(\lambda_a)
 \doteq(a-3)(a-4).$

For the degree-two maps, $\mathsf S_{2,Z}(\lambda_a)$ is a scalar endomorphism of $Z$.
Moving two positive generators past two negative generators in
\eqref{eq:shapovalov-formula} gives
\[
 \mathsf S_{2,Z}(\lambda_a)
 \doteq
 \bigl(a^2-2a-2c_Z\bigr)\id_Z,
\]
where \(c_Z\) is the scalar by which the Casimir of the copy of
\(X\) in \(\g(0)\) acts on \(Z\).  Since
$ c_{\one}=0,$ and  $c_X=4$,
we obtain
$  \det\mathsf S_{2,\one}(\lambda_a)
 \doteq a(a-2)$ 
and 
 $\det\mathsf S_{2,X}(\lambda_a)
 \doteq(a-3)(a-4).$

It follows that
\begin{align*}
 \Delta(\one_a,d)\text{ is graded-simple }
 &\Longleftrightarrow a\notin\{0,2\},
&
 \Delta(X_a,d)\text{ is graded-simple }
 &\Longleftrightarrow a\notin\{3,4\}.
\end{align*}

At the exceptional values, the first singular subobjects occur as follows:
\[
\begin{array}{c|c|c}
 Z&a&\text{first singular subobject}\\
\hline
 \one&0&X\text{ in degree }d-1\\
 \one&2&\one\text{ in degree }d-2\\
 X&3&\one\text{ in degree }d-1\\
 X&4&X\text{ in degree }d-1
\end{array}
\]
We now impose $h^5-h=0$, so the parameter $a$ lies in $\mathbb F_5$.
Write $I_n$ and $Y_n$ for the simple quotients of $\Delta(\one,n)$ and
$\Delta(X,n)$, respectively.  At the four exceptional values, following
the singular subobject through the remaining homogeneous degree and using
\eqref{eq:sl1L3-Verma-characters} gives
\begin{equation}\label{eq:sl1L3-four-atypical-simples}
 \begin{aligned}
  I_0&=\one_0,&
  Y_4&=X_4\oplus\one_3,\\
  Y_3&=X_3\oplus X_2,&
  I_2&=\one_2\oplus X_1.
 \end{aligned}
\end{equation}
and the four nonsplit exact sequences
\begin{equation}\label{eq:sl1L3-atypical-Verma-sequences}
 \begin{gathered}
  0\longrightarrow Y_4\longrightarrow \Delta(\one,0)
    \longrightarrow I_0\longrightarrow0,\\
  0\longrightarrow Y_3\longrightarrow \Delta(X,4)
    \longrightarrow Y_4\longrightarrow0,\\
  0\longrightarrow I_2\longrightarrow \Delta(X,3)
    \longrightarrow Y_3\longrightarrow0,\\
  0\longrightarrow I_0\longrightarrow \Delta(\one,2)
    \longrightarrow I_2\longrightarrow0.
 \end{gathered}
\end{equation}

For the other six values, every Shapovalov map is an isomorphism.
Therefore \cref{lem:shapovalov-detects-simplicity} shows that
\[
 \begin{aligned}
 \Delta(\one,1)&=I_1,&
 \Delta(\one,3)&=I_3,&
 \Delta(\one,4)&=I_4,\\
 \Delta(X,0)&=Y_0,&
 \Delta(X,1)&=Y_1,&
 \Delta(X,2)&=Y_2
 \end{aligned}
\]
are simple.  More explicitly, as graded $\g(0)$-modules,
\[
 \begin{aligned}
 I_n&=\one_n\oplus X_{n-1}\oplus\one_{n-2}
 &&\text{for }n\in\{1,3,4\},\\
 Y_n&=X_n\oplus(\one_{n-1}\oplus X_{n-1})\oplus X_{n-2}
 &&\text{for }n\in\{0,1,2\}.
 \end{aligned}
\]
Note that, although $\beta_{\lambda_a,1}$ is nondegenerate for
$a\neq0$, the higher Shapovalov maps introduce the additional exceptional
values $2,3,4$.  For each $Z$, the complement of the exceptional set is
the nonempty open set predicted by
\cref{thm:generic-irreducibility-verp}.

\subsubsection{Projectives and blocks}

We now determine which simple modules are projective. 
Let
$  G=SL(\one\oplus X).$ 
Then \(\g=\Lie(G)\), and in this example
$  u(\g)=\operatorname{Dist}(G_1).$ 
Consequently,
\[
 \mathcal R
 \simeq
 \Rep_{\Ver_5^+}(G_1).
\]

To determine the projective objects, we compare with the Kac modules
studied in \cite[Section~9]{Utiralova}.  Since
$  L_3\cong L_2\otimes L_4,$ 
 her supergroup convention identifies
\(GL(L_1\oplus L_3)\) with \(GL(L_1\mid L_2)\) \footnote{Put
$  \widetilde G=GL(L_1\oplus L_3),$ 
$  G=SL(\one\oplus X)\subseteq\widetilde G.$ 
We choose the Borel of $\widetilde G$ whose positive and negative
nilradicals agree with the grading of $\g$.}. Let
\(K(Z,m)\) denote the Kac module with highest-weight object \(Z_m\),
where \(Z\in\{\one,X\}\) and \(m\in\mathbb Z\).  Its PBW decomposition is
\[
 K(Z,m)\cong S(\g(-1))\otimes Z_m.
\]
This is the same PBW construction as for \(\Delta(Z,m)\).
Upon restriction to \(G_1\), the element \(h\) satisfies \(h^5-h=0\).
Therefore it is enough to consider 
$\overline m\in\mathbb F_5$, and we obtain
\[
 \left.K(Z,m)\right|_{G_1}
 \cong
 \Delta(Z,\overline m).
\]
Thus passing from Kac modules to our baby Verma modules is equivalent 
to reducing the \(h\)-weights modulo \(5\).

In Utiralova's terminology, a highest weight is \emph{typical} if it
avoids the exceptional values at which an additional singular vector can
occur.  Her typicality criterion in 
\cite[Corollary~9.17 and Theorem~9.35]{Utiralova} specializes in this
example to
\[
 \begin{aligned}
 K(\one,m)\text{ is typical}
 &\iff \overline m\notin\{0,2\},\\
 K(X,m)\text{ is typical}
 &\iff \overline m\notin\{3,4\}.
 \end{aligned}
\]
These are exactly the six classes for which our Shapovalov
calculation showed that \(\Delta(Z,\overline m)\) is simple.
Moreover, an irreducible Kac module is projective.
Restriction from \(G\) to \(G_1\) preserves projective objects:
its right adjoint is coinduction, which is exact because \(G/G_1\) is
affine.  It follows that
\[
 \begin{gathered}
 P(I_1)=I_1,\qquad P(I_3)=I_3,\qquad P(I_4)=I_4,\\
 P(Y_0)=Y_0,\qquad P(Y_1)=Y_1,\qquad P(Y_2)=Y_2.
 \end{gathered}
\]
Since \(\mathcal R\) is a finite tensor category, projective objects are
also injective.  Hence each of these six simple-projective modules forms
a block by itself.
In particular, the module $Y_1$ is the adjoint module $\g$.  

The only nonsemisimple block is the principal block, with simple objects
$I_0,Y_4,Y_3,I_2$.  Write
$P I_n:=P(I_n)$ and $P Y_n:=P(Y_n)$.  The four exact sequences \eqref{eq:sl1L3-atypical-Verma-sequences},
together with BGG reciprocity \cite[Corollary~9.5]{Utiralova} and 
\cite[Theorem~9.46]{Utiralova}, determine the
Loewy series of its indecomposable projective covers:
\begin{equation}\label{eq:sl1L3-projectives}
 \begin{array}{c@{\qquad}c}
 PI_0&PY_4\\[1mm]
 \begin{matrix}
 I_0\\ I_2\oplus Y_4\\ I_0
 \end{matrix}
 &
 \begin{matrix}
 Y_4\\ I_0\oplus Y_3\\ Y_4
 \end{matrix}
 \end{array}
 \qquad
 \begin{array}{c@{\qquad}c}
 PY_3&PI_2\\[1mm]
 \begin{matrix}
 Y_3\\ Y_4\oplus I_2\\ Y_3
 \end{matrix}
 &
 \begin{matrix}
 I_2\\ Y_3\oplus I_0\\ I_2.
 \end{matrix}
 \end{array}
\end{equation}
In the cyclic order $I_0,Y_4,Y_3,I_2$, the Cartan matrix is
\begin{equation}\label{eq:sl1L3-Cartan}
 C=
 \begin{pmatrix}
 2&1&0&1\\
 1&2&1&0\\
 0&1&2&1\\
 1&0&1&2
 \end{pmatrix}.
\end{equation}

Notice that $\det C=0$. 
However, the Cartan matrix of a highest weight category with finitely many simple
objects has determinant $1$.  Consequently, $\mathcal R$ is not a highest
weight category in the sense of \cite{CPS}, despite its construction from
highest-weight baby Verma modules.  

\subsubsection{Tensor products and the Grothendieck ring}
The preceding calculations describe the simple objects, the block decomposition, and the Loewy series of the indecomposable projective covers of $\mathcal R$. We now record the tensor products of the simple modules.

 The decompositions of
the simple modules above give the dualities
\[
 I_2^*=Y_4,
 \qquad I_4^*=I_3,
 \qquad Y_0^*=Y_2,
\]
and the other simple modules are self-dual.  In the Grothendieck ring,
\begin{equation}\label{eq:sl1L3-projective-classes}
 \begin{aligned}
 [P I_0]&=2[I_0]+[I_2]+[Y_4],&
 [P I_2]&=[I_0]+2[I_2]+[Y_3],\\
 [P Y_4]&=[I_0]+2[Y_4]+[Y_3],&
 [P Y_3]&=[I_2]+[Y_4]+2[Y_3].
 \end{aligned}
\end{equation}
Using these characters, the projective covers above, and the fact that
tensoring preserves projectives gives the following decompositions.  Since
the tensor product is symmetric, the lower-triangular table determines all
products of simple modules.  In the row labeled $Z$, the entries are
$Z\otimes W$ for the preceding column labels $W$.

\vspace{10pt}
\begin{center}
\renewcommand{\arraystretch}{1.35}
\resizebox{\textwidth}{!}{$
\begin{array}{|c|c|c|c|c|c|c|c|c|c|}
\hline
 &I_1&I_2&I_3&I_4&Y_0&Y_1&Y_2&Y_3&Y_4
\\ \hline
I_1&P I_0\oplus Y_1\\ \cline{1-3}
I_2&I_3\oplus Y_2
 &I_4\oplus Y_3
\\ \cline{1-4}
I_3&I_4\oplus P I_2&P Y_4&I_1\oplus I_4\oplus Y_0\\ \cline{1-5}
I_4&I_3\oplus P Y_4&I_1\oplus Y_0&P I_0\oplus Y_1
 &I_3\oplus I_1\oplus Y_2
\\ \cline{1-6}
Y_0&\begin{matrix}P Y_4\oplus Y_1\oplus Y_0\end{matrix}
 &I_1\oplus Y_2\oplus Y_1
 &P I_2\oplus Y_2\oplus Y_1
 &P Y_3\oplus I_3\oplus Y_2
 &\begin{matrix}P Y_4\oplus P Y_3\oplus I_4\oplus I_3\oplus Y_0 \end{matrix}
\\ \cline{1-7}
Y_1&\begin{matrix}I_1\oplus Y_2\oplus Y_1\oplus Y_0\end{matrix}
 &P I_2\oplus Y_2
 &P Y_3\oplus I_3\oplus Y_2
 &P Y_3\oplus I_4\oplus Y_0
 &\begin{matrix}P Y_4\oplus2Y_0\oplus I_4\oplus I_1\oplus Y_1\end{matrix}
 &\begin{matrix}P I_0\oplus2Y_1\oplus Y_2\oplus I_1\oplus Y_0\end{matrix}
\\ \cline{1-8}
Y_2&\begin{matrix}P I_2\oplus Y_1\oplus Y_2\end{matrix}
 &I_3\oplus P Y_3
 &P Y_3\oplus I_4\oplus Y_0
 &P Y_4\oplus Y_1\oplus Y_0
 &\begin{matrix}P I_0\oplus2Y_1\oplus Y_2\oplus I_1\oplus Y_0\end{matrix}
 &\begin{matrix}P I_2\oplus2Y_2\oplus I_3\oplus I_1\oplus Y_1\end{matrix}
 &\begin{matrix} P I_2\oplus P Y_3\oplus I_4\oplus I_3\oplus Y_2\end{matrix}
\\ \cline{1-9}
Y_3&P Y_3
 &Y_0\oplus Y_4
 &Y_1\oplus Y_0
 &Y_2\oplus Y_1
 &\begin{matrix}P I_2\oplus I_3\oplus Y_2\end{matrix}
 &\begin{matrix}P Y_3\oplus I_4\oplus I_3\end{matrix}
 &\begin{matrix}P Y_4\oplus I_4\oplus Y_0\end{matrix}
 &\begin{matrix}Y_1\oplus I_0\oplus I_1\end{matrix}
\\ \cline{1-10}
Y_4&I_4\oplus Y_0
 &I_0\oplus Y_1
 &I_1\oplus Y_2
 &P I_2
 &I_4\oplus P Y_3
 &P Y_4\oplus Y_0
 &I_1\oplus Y_0\oplus Y_1
 &I_2\oplus Y_2
 &I_3\oplus Y_3
\\ \hline
\end{array}
$}
\end{center}
\vspace{10pt}

\section{Contragredient Lie algebras}
Throughout this section, $\C$ is assumed quasi-semisimple. We study contragredient Lie algebras, as introduced in  \cite{APS-contragredient}.
 After recalling their construction and basic properties, we
prove a criterion for simplicity for the derived
algebra. Lastly, we  develop a family of reflections which allow us to build Weyl groupoids and
root systems.

\subsection{Basic properties of contragredient Lie algebras}\label{subsec:contragredient-defs} In this section we set our notation. 
Given a Lie algebra $(\fh,\ttb_{\fh})$ in $\cC$, a \emph{contragredient datum} \cite{APS-contragredient} over $\fh$ is a pair $(\rho, \ttd)$ where:
\begin{enumerate}[leftmargin=*,label=\rm{(D\arabic*)}]
\item\label{item:Ver-contragredient-1} $\rho: \fh \otimes \ttV \to \ttV$ is an $\fh$-module in $\cC$, and  
\item \label{item:Ver-contragredient-2} $\ttd : \ttV \otimes \ttV^*\to \fh$ is a morphism in $\cC$, 
\end{enumerate}
satisfying the following compatibility:
\begin{align}\label{eq:contragredient-compatibility}
\ttb_{\fh}(\id_{\fh} \otimes \ttd) = \ttd(\rho \otimes \id_{\ttV^*})+\ttd(\id_{\ttV} \otimes \rho^{\vee})(c_{\fh, \ttV} \otimes \id_{\ttV^*}).
\end{align}
Here, $\rho^{\vee}: \fh \otimes \ttV^* \to \ttV^*$ is the dual module structure.

We will always assume $\fh$ has  \emph{enough modules}, i.e., for every simple subobject $S\subseteq \fh$ there exists an $\fh$-module $\lambda:\fh\otimes \ttW\to\ttW$ such that $\restr{\lambda}{S\otimes\ttW}\ne 0$. 

\medskip
For $X\in\cC$, let $\FLie(X)\subseteq T(X)$ be the Lie subalgebra generated by $X$. This is the free Lie algebra and has the PBW property. An action of $\fh$ on $X$ induces an embedding
\[
\FLie(X)\rtimes\fh\ \hookrightarrow\ T(X)\#U(\fh),
\]
where the right side is the associative smash product for the induced action on $T(X)$; its underlying object is $T(X)\otimes U(\fh)$. Since $\cC$ is quasi-semisimple, \cite[Proposition~7.7]{EtingofPBW} gives the PBW property for this semidirect product. Its quotients have the PBW property by \cite[Lemma~7.6(i)]{EtingofPBW}. This applies to the constructions below with $X=\ttV\oplus\ttV^*$.

Mimicking the construction of Kac-Moody Lie algebras, we denote by $\widetilde{\fg}(\rho, \ttd)$ the Lie algebra obtained as the quotient of $\FLie(\ttV\oplus \ttV^* )\rtimes_\rho \fh$ by the ideal generated by the image of 
\[(\ttb_{\ttV \otimes \ttV^*},-\ttd) : \ttV\otimes \ttV^* \longrightarrow \FLie(\ttV\oplus \ttV^* )\rtimes_\rho \fh,
\]
where $\ttb_{\ttV \otimes \ttV^*}$ denotes the restriction of the Lie bracket $\ttb$ of $\FLie(\ttV\oplus \ttV^* )$ to $\ttV\otimes   \ttV^*$.

Then \cite[Theorem 4.18]{APS-contragredient} states the following:
\begin{enumerate}[leftmargin=*,label=\rm{(\alph*)}]
\item 
We denote by $\widetilde{\fn}_+$ and $\widetilde{\fn}_-$ the subalgebras of $\widetilde{\fg}(\rho, \ttd)$ generated by $\ttV$ and $\ttV^*$, respectively. Then $\FLie(\ttV)\cong \widetilde{\fn}_+$ and $\FLie(\ttV^*)\cong\widetilde{\fn}_-$ as Lie algebras.
\item We have a triangular decomposition $\widetilde{\fg}=\widetilde{\fn}_-\oplus \fh \oplus\widetilde{\fn}_+$, and 
\begin{align}\label{eq:gtilde-verp-triangular}
\widetilde{\fn}_- &= \bigoplus_{k<0} \widetilde{\fg}_k, & \fh &= \widetilde{\fg}_0, & \widetilde{\fn}_+ &= \bigoplus_{k>0} \widetilde{\fg}_k. 
\end{align} 
\end{enumerate}
Moreover, there exists a unique maximal $\bZ$-homogeneous ideal $\ttm$ among all ideals of $\widetilde{\fg}$ trivially intersecting $\fh$, and $\ttm=\ttm_+\oplus \ttm_-$, where $\ttm_{\pm}=\ttm\cap\widetilde{\fn}_{\pm}$.

\smallskip

The next step is to take the quotient by $\ttm$. We expect for the quotient to keep $\ttV$, respectively $\ttV^*$, as the component of degree $1$, respectively $-1$. One way to ensure this property is to require that the datum $(\rho,\ttd)$ is \emph{reduced}, i.e. that for any simple subobjects $S\subseteq\ttV$ and $T\subseteq\ttV^*$, we have that $\restr{\ttd}{S\otimes\ttV^*}\ne 0$ and $\restr{\ttd}{\ttV\otimes T}\ne 0$. 

Assume then that the datum is reduced. By \cite[Lemma~4.28]{APS-contragredient}, $\ttI\cap\ttV=0$ and $\ttI\cap\ttV^*=0$ for every ideal $\ttI$ of $\widetilde{\fg}(\rho,\ttd)$ such that $\ttI\cap\fh=0$. In this context, the \emph{contragredient Lie algebra} associated to the triple $(\fh, \rho, \ttd)$ is the ind-Lie algebra $\fg(\rho, \ttd)\coloneqq \widetilde{\fg}(\rho, \ttd)/\ttm$.

The defining ideal of $\fg(\rho, \ttd)$ is homogeneous, so $\fg$ inherits a $\bZ$-grading
\begin{align}\label{eq:contragredient-Z-graded}
\fg(\rho, \ttd)&=\bigoplus_{k\in\bZ} \fg_k(\rho, \ttd),
\end{align}
where $\fg_k(\rho, \ttd)$ is the image of $\widetilde{\fg}_k(\rho, \ttd)$. We denote by $\fn_+$ and $\fn_-$ the subalgebras of $\fg(\rho, \ttd)$ generated by $\ttV$ and $\ttV^*$, respectively.

\begin{theorem}{\cite[Theorem 4.33]{APS-contragredient}}\label{thm:g-contragredient-verp}
\begin{enumerate}[leftmargin=*,label=\rm{(\alph*)}]
\item \label{item:g-contragredient-verp-triangular} 
We have $\fg=\fn_-\oplus \fh \oplus\fn_+$ as objects in $\indcat{\cC}$. Moreover,
\begin{align}\label{eq:g-contragredient-verp-triangular}
\fn_- &= \bigoplus_{k<0} \fg_k, & \fh &= \fg_0, & \fn_+ &= \bigoplus_{k>0} \fg_k. 
\end{align}
\item\label{item:g-contragredient-verp-trivial-bracket} If $Y$ is a $\bZ$-homogeneous subobject of $\fn_+$ (respectively, $\fn_-$) such that $[Y,\ttV^*]=0$ (respectively, $[Y,\ttV]=0$), then $Y=0$.
\end{enumerate}
\end{theorem}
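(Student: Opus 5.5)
Part \ref{item:g-contragredient-verp-triangular} will be obtained by pushing the triangular decomposition of $\widetilde{\fg}=\widetilde{\fg}(\rho,\ttd)$, recorded in \eqref{eq:gtilde-verp-triangular}, through the quotient map $\pi:\widetilde{\fg}\to\fg=\widetilde{\fg}/\ttm$. The ideal $\ttm$ is $\bZ$-homogeneous and satisfies $\ttm=\ttm_+\oplus\ttm_-$ with $\ttm_\pm\subseteq\widetilde{\fn}_\pm$, and $\ttm\cap\fh=0$. Hence $\ttm_0=0$, and in each degree $\fg_k=\widetilde{\fg}_k/\ttm_k$. In particular $\fg_0\cong\fh$. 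Quotients of direct sums by graded subobjects compatible with the decomposition split again, so
\[
\fg\cong\widetilde{\fn}_-/\ttm_-\oplus\fh\oplus\widetilde{\fn}_+/\ttm_+ .
\]
It then remains to identify $\pi(\widetilde{\fn}_\pm)$ with $\fn_\pm$. This is clear, because $\widetilde{\fn}_+$ is generated by $\ttV$, and the image of a generated subalgebra is the subalgebra generated by the image. By reducedness and \cite[Lemma~4.28]{APS-contragredient}, $\ttV$ injects into $\fg_1$, so this image is nonzero. The degree identities \eqref{eq:g-contragredient-verp-triangular} then follow degreewise.

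For part \ref{item:g-contragredient-verp-trivial-bracket}, I treat $Y\subseteq\fn_+$; the other case is symmetric. I may assume $Y\subseteq\fg_k$ with $k>0$, since the bracket with $\ttV^*$ is homogeneous and the condition splits degreewise. Let $\widetilde Y:=\pi^{-1}(Y)\cap\widetilde{\fg}_k$. The plan is to show that the ideal generated by $\widetilde Y$ in $\widetilde{\fg}$ meets $\fh$ trivially. Maximality of $\ttm$ then forces $\widetilde Y\subseteq\ttm$, that is, $Y=0$.

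To control this ideal, consider the subobject
\[
J:=\sum_{j\ge0} U(\widetilde{\fn}_+)_j\cdot\widetilde Y+\ttm
\]
of $\widetilde{\fn}_+$, that is, the $\widetilde{\fn}_+\rtimes\fh$-submodule generated by $\widetilde Y$ together with $\ttm$. I will check that $J$ is an ideal of $\widetilde{\fg}$ by examining the action of each generator:
\begin{itemize}
\item $\fh$ preserves $\widetilde Y$ modulo $\ttm$. Indeed, $[\fh,Y]$ is again homogeneous of degree $k$, and $[[\fh,Y],\ttV^*]=0$ follows from Jacobi and $[\fh,\ttV^*]\subseteq\ttV^*$. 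So I first replace $Y$ by its $\fh$-stable hull, which still satisfies the hypothesis.
\item $\ttV$ acts inside $J$ by construction.
\item $\ttV^*$ sends $\widetilde Y$ into $\ttm$ by hypothesis. By Jacobi, it sends $[\ttV,\ldots,[\ttV,\widetilde Y]]$ into the span of terms of the form $[\ttV,\ldots,[\fh,\ldots\widetilde Y]]$ plus $\ttm$, which again lie in $J$.
\end{itemize}
This is the standard Kac argument. Since $J$ lies in positive degrees, $J\cap\fh=0$, so maximality gives $J=\ttm$ and therefore $\widetilde Y\subseteq\ttm$.

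I expect the main obstacle to be making the Jacobi induction rigorous in categorical form. One has to express ``$J$ is stable under $\ttV^*$'' as the vanishing of a morphism $\ttV^*\otimes J\to\widetilde{\fg}/J$, and argue by induction on the number of $\ttV$ factors using the compatibility \eqref{eq:contragredient-compatibility} and the relation $[\ttV,\ttV^*]=\ttd$. Quasi-semisimplicity is used only to ensure that sums and images of these subobjects behave well, with PBW for the relevant quotients.
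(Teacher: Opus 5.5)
Your proposal is correct and follows essentially the standard argument behind the cited result. Part (a) pushes the triangular decomposition of $\widetilde{\fg}$ through the homogeneous ideal $\ttm=\ttm_+\oplus\ttm_-$, and part (b) is Kac's argument that the $U(\fn_+)$-span of the $\fh$-stable hull of $Y$ is an ideal with no degree-$0$ part; adding $\ttm$ to $J$ turns this into a clean use of maximality. There are only minor slips: $J$ is not inside $\widetilde{\fn}_+$ because it contains $\ttm_-$, although its degree-$0$ part is still zero, and sums and images of subobjects exist in any abelian category, so quasi-semisimplicity enters only through the cited structure results for $\widetilde{\fg}$ and $\ttm$.
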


\smallskip

Next we record a recognition criterion for the contragredient quotient; it will be used repeatedly in what follows.

\begin{proposition}{\cite[Proposition 4.39]{APS-contragredient}}\label{prop:contragredient-bilinear-form}
Let $\ttq=\bigoplus_{k\in\bZ}\ttq_k$ be a quotient of $\widetilde{\fg}(\rho,\ttd)$ by a $\bZ$-homogeneous ideal $\ttI$ satisfying $\ttI\cap\fh=0$. Assume that $\ttq$ has a degree zero non-degenerate symmetric invariant form $\ttB_{\ttq}:\ttq\otimes\ttq\to\one$. Then $\ttI=\ttm$, that is $\ttq\cong\fg(\rho,\ttd)$.
\end{proposition}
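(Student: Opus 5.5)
The plan is to show that the radical of $\ttB_{\ttq}$ is forced to contain the image of $\ttm$, so nondegeneracy kills it. Write $\pi:\widetilde{\fg}\twoheadrightarrow\ttq$ for the quotient map. By maximality of $\ttm$ among ideals meeting $\fh$ trivially, we have $\ttI\subseteq\ttm$. Hence $\pi(\ttm)=\ttm/\ttI$ is a $\bZ$-homogeneous ideal of $\ttq$ with $\pi(\ttm)\cap\fh=0$. Since $\ttm=\ttm_+\oplus\ttm_-$, its image has no degree-zero part. The goal is to prove $\pi(\ttm)\subseteq\operatorname{rad}\ttB_{\ttq}$; then $\pi(\ttm)=0$, i.e.\ $\ttm=\ttI$.

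First use that $\ttB_{\ttq}$ has degree zero, so $\ttB_{\ttq}(\ttq_k,\ttq_l)=0$ unless $k+l=0$. It therefore suffices to show $\ttB_{\ttq}(\pi(\ttm)_k,\ttq_{-k})=0$ for each $k\neq0$; by symmetry of the form and of the decomposition $\ttm=\ttm_+\oplus\ttm_-$, treat only $k>0$. Argue by induction on $k\geq1$. For $k=1$, $\pi(\ttm)_1$ is a subobject of the image of $\ttV$, and $\ttm\cap\ttV=0$ by \cite[Lemma~4.28]{APS-contragredient}, which uses reducedness, so there is nothing to prove. (This base case is also covered by the general step below.)

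For the inductive step, use that $\ttq_{-k}$ for $k\geq1$ is spanned by brackets: $\widetilde{\fn}_-$ is generated by $\ttV^*$, so $\ttq_{-k}=[\ttV^*,\ttq_{-k+1}]$, with $\ttq_0=\fh$ when $k=1$. Schematically, for $m\in\pi(\ttm)_k$, $y\in\ttV^*$ and $z\in\ttq_{-k+1}$, invariance gives
\[
\ttB_{\ttq}(m,[y,z])=\ttB_{\ttq}([m,y],z).
\]
Now $[m,y]\in\pi(\ttm)_{k-1}$ because $\pi(\ttm)$ is an ideal. If $k=1$ this lies in $\pi(\ttm)\cap\fh=0$. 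If $k>1$, then $\ttB_{\ttq}(\pi(\ttm)_{k-1},\ttq_{-k+1})=0$ by induction. In categorical terms this is the composite $\pi(\ttm)_k\otimes\ttV^*\otimes\ttq_{-k+1}\to\one$, and precomposing with the epimorphism $\ttV^*\otimes\ttq_{-k+1}\twoheadrightarrow\ttq_{-k}$ shows the restricted pairing vanishes, since tensoring preserves epimorphisms. The negative-degree case is identical with $\ttV$ in place of $\ttV^*$.

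Thus $\pi(\ttm)$ is orthogonal to all of $\ttq$, and nondegeneracy gives $\pi(\ttm)=0$. The main point needing care is the surjectivity $\ttV^*\otimes\ttq_{-k+1}\twoheadrightarrow\ttq_{-k}$. This follows from the free Lie algebra $\FLie(\ttV^*)\cong\widetilde{\fn}_-$ being spanned in degree $-k$ by left-normed brackets, a standard consequence of the Jacobi identity that passes to quotients. The other point is translating the element-wise argument into morphism identities, which is routine with the schematic notation.
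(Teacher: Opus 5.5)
Your argument is correct. The paper does not reprove this result; it quotes it from \cite[Proposition~4.39]{APS-contragredient}. Your proof is the standard Kac-style radical argument for it: $\ttm/\ttI$ is a homogeneous ideal with zero degree-zero part, and invariance together with $\ttq_{-k}=[\ttV^*,\ttq_{-k+1}]$ for $k\geq2$ places it in the radical of $\ttB_{\ttq}$.

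One small correction concerns your parenthetical claim that the case $k=1$ is also covered by the general step. That is false for an arbitrary reduced datum. The identity $\ttq_{-1}=[\ttV^*,\fh]$ requires the $\fh$-action on $\ttV^*$ to be surjective. This holds under condition \ref{item:symmetrizable-def-1}, but fails for example for the Heisenberg datum, where $\fh=\Bbbk c$ acts trivially on $\ttV=\Bbbk e$ and $\ttd(e,f)=c$. So the base case genuinely rests on \cite[Lemma~4.28]{APS-contragredient}, which is exactly how you also handle it.
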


\medskip

Next we assume that $\fh$ is equipped with a symmetric non-degenerate invariant form. A \emph{symmetrizable contragredient datum} over $\fh$ is a pair $(\rho, \ttK)$ where
\begin{enumerate}[leftmargin=*,label=\rm{(SD\arabic*)}]
\item\label{item:symmetrizable-def-1} $\rho:\fh\otimes\ttV\to\ttV$ is an $\fh$-module in $\C$
such that $\rho(\fh\otimes S)\ne0$ for every simple subobject
$S\subseteq\ttV$, and $\pi\rho\ne0$ for every epimorphism
$\pi:\ttV\twoheadrightarrow T$ onto a simple object $T$ of $\C$.
\item\label{item:symmetrizable-def-2} $\ttK: \fh \otimes \fh \to \one$ is a symmetric non-degenerate invariant form.
\end{enumerate}

Equivalently, condition \ref{item:symmetrizable-def-1} says
$\ttV^{\fh}=0$ and
$\ima(\rho:\fh\otimes\ttV\to\ttV)=\ttV$, or, by duality,
\[
 \ttV^{\fh}=(\ttV^*)^{\fh}=0.
\]
Indeed, a nonzero invariant subobject has a simple subobject, while
a nonzero cokernel of $\rho$ has a simple quotient. The simple-quotient
formulation agrees with the original direct-summand formulation
when $\C$ is semisimple.

Given a symmetrizable datum $(\rho,\ttK)$, we define
$  \ttd_{\rho,\ttK}:\ttV\otimes\ttV^*\longrightarrow\fh$
to be the unique morphism satisfying
\begin{equation}\label{eq:symmetrizable-dmap-characterization}
 \ttK(\id_{\fh}\otimes\ttd_{\rho,\ttK})
 =
 \ev_{\ttV}(\rho\otimes\id_{\ttV^*})
 \quad\text{in }
 \Hom_{\cC}(\fh\otimes\ttV\otimes\ttV^*,\one).
\end{equation}
The existence and uniqueness of $\ttd_{\rho,\ttK}$ follow from the
non-degeneracy of $\ttK$.  As shown in
\cite{APS-contragredient}, the pair
$(\rho,\ttd_{\rho,\ttK})$ is a reduced contragredient datum.

\begin{remark}\label{rem:SD1-holds-ss}
Assumption \ref{item:symmetrizable-def-1} holds whenever $\ttV$ is a
finite direct sum of simple $\fh$-modules with nontrivial action.  Such a
module has no trivial simple $\fh$-submodule, which gives the first
condition.  For the second condition, let $\pi:\ttV\twoheadrightarrow T$
be any epimorphism onto a simple object of $\C$. If
$\pi\rho=0$, then $\pi$ is a morphism of $\fh$-modules from $\ttV$ to
$T$ equipped with the trivial action.  This is impossible because
$\ttV$ is a direct sum of nontrivial simple $\fh$-modules.
\end{remark}

We fix a decomposition of $\rho: \fh \otimes \ttV \to \ttV$ as a sum of $\fh$-submodules, say $\ttV=\mathop{\oplus}\limits_{1\leq i \leq r} \ttV_i$. For the associated reduced datum $(\rho, \ttd)$, we have that
\begin{align}\label{eq:symmetrizable-datum-ortogonal-summands}
\ttd(\ttV_i\otimes \ttV_j^*)&=0 \text{ for all }i\ne j, & \text{so }\ttd(\ttV_i\otimes \ttV_i^*) &\ne 0 \text{  for all }1\le i\le r.
\end{align}
Let $Q=\bigoplus_{j=1}^{r}\bZ\alpha_j$ be the grading lattice, and $Q^\pm:=\mathop{\oplus}\limits_{1\leq i \leq r} \pm\bN_0\alpha_i$. 

\begin{theorem}{\cite[Theorems 4.50 and 4.54]{APS-contragredient}}\label{thm:contragredient-symmetrizable-grading}
The Lie algebra $\fg:= \fg(\rho,\ttd)$ is $Q$-graded, with
\begin{align}\label{eq:contragradient-symmetrizable-grading-def} 
\fg&=\bigoplus_{\alpha \in Q} \fg_\alpha, &\deg \fh&=0, &\deg \ttV_i&=\alpha_i, &\deg \ttV_i^*&=-\alpha_i, & 1&\leq i\leq r.
\end{align}
Moreover, $\fg_0=\fh$, $\fn_{\pm}=\bigoplus_{\alpha \in Q^{\pm},\, \alpha \neq 0} \fg_\alpha$. 

In addition, $\fg$ has an invariant symmetric form $\ttB:\fg\otimes\fg\to\one$ such that $\restr{\ttB}{\fh\otimes\fh}=\ttK$ and $\restr{\ttB}{\fg_\alpha\otimes\fg_\beta}=0$ if $\alpha+\beta\ne0$; thus, $\restr{\ttB}{\fg_\alpha\otimes\fg_{-\alpha}}$ is non-degenerate for all $\alpha \in Q$.
\end{theorem}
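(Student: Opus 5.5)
The plan is to realize the $Q$-grading first on $\widetilde{\fg}=\widetilde{\fg}(\rho,\ttd)$ and then pass it to the quotient by $\ttm$. Give $\FLie(\ttV\oplus\ttV^*)$ the $Q$-grading with $\ttV_i$ in degree $\alpha_i$ and $\ttV_i^*$ in degree $-\alpha_i$, and put $\fh$ in degree $0$. Since $\fh$ preserves each $\ttV_i$, this makes $\FLie(\ttV\oplus\ttV^*)\rtimes_\rho\fh$ a $Q$-graded Lie algebra. The defining relations $[x,y]=\ttd(x\otimes y)$ are homogeneous by \eqref{eq:symmetrizable-datum-ortogonal-summands}. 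On $\ttV_i\otimes\ttV_j^*$ with $i\ne j$ the relation reads $[x,y]=0$, which sits in degree $\alpha_i-\alpha_j$. On $\ttV_i\otimes\ttV_i^*$ it equates two degree-$0$ terms. Thus the ideal is $Q$-homogeneous and $\widetilde{\fg}$ is $Q$-graded.

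By \cite[Theorem 4.18]{APS-contragredient}, $\widetilde{\fn}_\pm\cong\FLie(\ttV)$, $\FLie(\ttV^*)$. These are generated by elements of degrees in $Q^\pm$, so $\widetilde{\fn}_\pm$ lies in $\bigoplus_{\alpha\in Q^\pm\setminus 0}$, and $\widetilde{\fg}_0=\fh$. The $\bZ$-grading is recovered from the $Q$-grading through the height map $\alpha_i\mapsto 1$.

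Next I show that $\ttm$ is $Q$-homogeneous. The sum $\ttm'$ of the $Q$-homogeneous components of elements of $\ttm$ is still an ideal meeting $\fh$ trivially, and it is $\bZ$-homogeneous. To make this precise categorically, I would use the torus action: for $t\in(\Bbbk^\times)^r$, the automorphism $\phi_t$ acts by $t^\alpha$ on degree $\alpha$. Each $\phi_t(\ttm)$ is again an ideal with trivial intersection with $\fh$, so by maximality $\phi_t(\ttm)\subseteq\ttm$. Since $\Bbbk$ is infinite, a Vandermonde argument on each finite-dimensional $\bZ$-component shows that $\ttm$ is a sum of its $Q$-components. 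Hence $\fg$ inherits the $Q$-grading, with $\fg_0=\fh$ and $\fn_\pm$ as stated. This uses that $\ttm\cap\fh=0$ and Theorem~\ref{thm:g-contragredient-verp}.

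For the form, I would build $\ttB$ on $\widetilde{\fg}$ degree by degree, following the Kac–Moody construction. Set $\ttB|_{\fh\otimes\fh}=\ttK$ and $\ttB|_{\ttV\otimes\ttV^*}=\ev$. Then extend to $\widetilde{\fg}_{\alpha}\otimes\widetilde{\fg}_{-\alpha}$ by induction on the height of $\alpha$, using the formula $\ttB([x,y],z)=\ttB(x,[y,z])$ with $x\in\ttV_i$ and $[y,z]$ of smaller height. All other pairs of degrees are declared orthogonal.

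The main obstacle is checking that this inductive definition is well defined and invariant. The base case is exactly \eqref{eq:symmetrizable-dmap-characterization}. The key identity is $\ttK(h,\ttd(x\otimes y))=\ev(\rho(h\otimes x),y)$, and together with invariance of $\ttK$ it gives invariance in degrees $0,\pm\alpha_i$. The induction step then proceeds as in \cite[Theorem 2.2]{Kac}, with Jacobi written as morphism identities. Symmetry follows from symmetry of $\ttK$ and of the construction.

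The radical of $\ttB$ is a $Q$-homogeneous ideal, and $\ttB|_{\fh\otimes\fh}=\ttK$ is nondegenerate, so the radical meets $\fh$ trivially. The radical is therefore contained in $\ttm$. Conversely, $\ttm$ lies in the radical: the image $\ttB(\ttm,\widetilde{\fg})$ is an ideal-like subobject, and $\ttB(\ttm_\alpha,\widetilde{\fg}_{-\alpha})$ can be reduced by invariance to pairings with $\fh$, which vanish. Hence $\ttB$ descends to a nondegenerate form on $\fg$. Since the only nonzero pairings are between $\fg_\alpha$ and $\fg_{-\alpha}$, each restriction $\ttB|_{\fg_\alpha\otimes\fg_{-\alpha}}$ is nondegenerate.
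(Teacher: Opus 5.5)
The paper does not prove this statement. It quotes it from \cite[Theorems~4.50 and~4.54]{APS-contragredient}, so there is no in-paper argument to compare with. Your outline is the standard Kac-type argument and is sound:
\begin{itemize}
\item the relations are $Q$-homogeneous by \eqref{eq:symmetrizable-datum-ortogonal-summands};
\item the automorphisms $\phi_t$ preserve $\ttm$ by maximality;
\item Kac's inductive extension of $\ttK$ and $\ev$ works at the level of morphisms.
\end{itemize}
For the last point, what you need is this: a morphism $A\otimes B\to\one$ that vanishes on $\ker a\otimes B$ and on $A\otimes\ker b$ (for epimorphisms $a,b$) factors through $a\otimes b$. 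This holds because exactness of $\otimes$ gives $\ker(a\otimes b)=\ker a\otimes B+A\otimes\ker b$. It is the morphism-level form of Kac's ``depends only on $x$ and $y$''. You should state it, since it is the whole content of well-definedness here.

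The one step whose justification is wrong as written is $\ttm\subseteq\operatorname{rad}\ttB$. Invariance does not reduce $\ttB(\ttm_n,\widetilde{\fg}_{-n})$ to pairings with $\fh$. Using $\widetilde{\fg}_{-n}=[\ttV^*,\widetilde{\fg}_{-n+1}]$ and $[\ttm_n,\ttV^*]\subseteq\ttm_{n-1}$, it reduces it to $\ttB(\ttm\cap\ttV,\ttV^*)$, and dually to $\ttB(\ttm\cap\ttV^*,\ttV)$. To finish you need $\ttm\cap\ttV=0$ and $\ttm\cap\ttV^*=0$, which is where the hypotheses enter. You can cite reducedness \cite[Lemma~4.28]{APS-contragredient}, or argue directly. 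For a subobject $m\in\ttm\cap\ttV$ and $y\in\ttV^*$, one has $\ev(\rho(h\otimes m),y)=\ttK(h,[m,y])=0$, because $[m,y]\in\ttm\cap\fh=0$. Hence $m\in\ttV^{\fh}=0$ by \ref{item:symmetrizable-def-1}, and dually for $\ttV^*$. This input is essential: without \ref{item:symmetrizable-def-1}, a summand of $\ttV$ with trivial $\fh$-action would generate an ideal inside $\ttm$ that pairs nontrivially with its dual.

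Finally, your torus argument is correct but redundant once the form exists. Your construction makes $\ttB$ pair only $\widetilde{\fg}_\alpha$ with $\widetilde{\fg}_{-\alpha}$, so its radical is automatically $Q$-homogeneous. The equality $\operatorname{rad}\ttB=\ttm$ then already gives the $Q$-grading on $\fg$.
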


\subsection{Splitting} We now study how the contragredient construction behaves under a
decomposition of its generating module.  This is the categorical analogue
of separating a subset of simple roots in the classical theory, and
parallels the corresponding construction for Nichols algebras.

Assume that $\ttV=\ttU\oplus\ttU'$ as $\fh$-modules, and identify $\ttV^*=\ttU^*\oplus(\ttU')^*$. Consider the induced $\bZ^2$-grading in  $\fg$ given by
\begin{align*}
\deg\ttU&=(1,0), & \deg\ttU'&=(0,1), & \deg\ttU^*&=(-1,0), & \deg(\ttU')^*&=(0,-1).
\end{align*} 
We will also use the $\bZ$-grading induced by fixing the second coordinate:
\begin{equation}\label{eq:splitting-coarse-grading}
\fg_n\coloneqq\bigoplus_{m\in\bZ}\fg_{m,n}.
\end{equation}
Let $\ttY\subseteq\fg$ be the Lie subalgebra generated by $\ttU$, $\fh$, and $\ttU^*$. Let $\rho_{\ttU}:\fh\otimes\ttU\to\ttU$ be the restriction of the action and set
\[
\ttd_{\ttU}\coloneqq
\restr{\ttd}{\ttU\otimes\ttU^*}
=\ttd_{\rho_{\ttU},\ttK}:\ttU\otimes\ttU^*\longrightarrow\fh
\subseteq\ttY,
\]
where the equality follows from
\eqref{eq:symmetrizable-dmap-characterization}.

\begin{lemma}\label{lem:Y-contragredient}
We have $\ttY=\fg_0$, and the inclusion $\ttY\hookrightarrow\fg$ induces an isomorphism
\begin{align*}
\ttY\cong\fg(\rho_{\ttU},\ttd_{\ttU}).
\end{align*}
\end{lemma}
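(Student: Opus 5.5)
Two things need proving: that $\ttY=\fg_0$, and that $\ttY$ is the contragredient quotient of $\widetilde{\fg}(\rho_{\ttU},\ttd_{\ttU})$.

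\emph{Step 1: $\ttY=\fg_0$.} The inclusion $\ttY\subseteq\fg_0$ is immediate, since $\ttU$, $\fh$ and $\ttU^*$ all have second coordinate $0$. For the reverse inclusion, use the triangular decomposition of \cref{thm:g-contragredient-verp}. By \cref{thm:contragredient-symmetrizable-grading}, $\fn_+$ is spanned by iterated brackets of the generators $\ttV=\ttU\oplus\ttU'$, and each such bracket is homogeneous. An iterated bracket lies in second degree $0$ only if it involves no copy of $\ttU'$. So $(\fn_+)_{\ast,0}$ is generated by $\ttU$, and likewise $(\fn_-)_{\ast,0}$ is generated by $\ttU^*$. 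Together with $\fh=\fg_{0,0}$ this gives $\fg_0\subseteq\ttY$.

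\emph{Step 2: a surjection onto $\ttY$.} The datum $(\rho_{\ttU},\ttd_{\ttU})$ is the reduced datum attached to the symmetrizable datum $(\rho_{\ttU},\ttK)$. Condition \ref{item:symmetrizable-def-1} passes to the direct summand $\ttU$, because invariants and simple quotients of a summand are invariants and simple quotients of $\ttV$. The defining relations of $\widetilde{\fg}(\rho_{\ttU},\ttd_{\ttU})$ hold in $\fg$, since $\ttd_{\ttU}$ is the restriction of $\ttd$. The universal property of $\FLie(\ttU\oplus\ttU^*)\rtimes\fh$ therefore gives a Lie algebra map $\widetilde{\fg}(\rho_{\ttU},\ttd_{\ttU})\to\fg$, and its image is $\ttY$. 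Its kernel $\ttI$ is homogeneous for the grading of \eqref{eq:gtilde-verp-triangular}, which on $\ttY$ is just the first coordinate. Moreover $\ttI\cap\fh=0$, because $\fh\to\fg$ is injective.

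\emph{Step 3: identify $\ttI$ with $\ttm$.} Apply \cref{prop:contragredient-bilinear-form} with $\ttq=\ttY$, equipped with the restriction of the form $\ttB$ of \cref{thm:contragredient-symmetrizable-grading}. This restriction is symmetric and invariant. It has degree zero for the first coordinate: $\ttB$ pairs $\fg_\alpha$ only with $\fg_{-\alpha}$ in the $Q$-grading, which refines the $\bZ^2$-grading. Nondegeneracy is the main point to check. Since $\ttB$ pairs $\fg_{m,0}$ nondegenerately with $\fg_{-m,0}$, and $\fg_0=\bigoplus_m\fg_{m,0}$ is closed under taking negative degrees, the restriction of $\ttB$ to $\ttY=\fg_0$ is a direct sum of nondegenerate pairings between finite-dimensional homogeneous components. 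Hence it is nondegenerate on the ind-object componentwise, which is the sense in which the proposition applies. The proposition then gives $\ttY\cong\fg(\rho_{\ttU},\ttd_{\ttU})$.

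I expect the main care to be needed in Step 1, in justifying within $\C$ that the degree-$(\ast,0)$ part of $\fn_\pm$ is generated by $\ttU^{\pm}$ alone; here $\ttU^{+}$ means $\ttU$ and $\ttU^{-}$ means $\ttU^*$. I would argue this via $\fn_+$ being a quotient of $\FLie(\ttV)\subseteq T(\ttU\oplus\ttU')$. The bidegree decomposition of $T(\ttU\oplus\ttU')$ puts the tensor words containing no $\ttU'$ exactly in second degree $0$, and that piece is $T(\ttU)$. So $\FLie(\ttV)_{\ast,0}=\FLie(\ttV)\cap T(\ttU)$, which is spanned by brackets of $\ttU$ and contains $\FLie(\ttU)$. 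Taking images in $\fg$ finishes the argument.
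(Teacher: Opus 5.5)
Your proposal is correct and follows essentially the same route as the paper. Like the paper, you first show $\ttY=\fg_0$ from the generation of $\fn_\pm$, then apply \cref{prop:contragredient-bilinear-form} to the canonical surjection $\widetilde{\fg}(\rho_{\ttU},\ttd_{\ttU})\to\ttY$ equipped with the restriction of $\ttB$, which is nondegenerate as the sum of the pairings $\fg_{m,0}\otimes\fg_{-m,0}\to\one$. The only difference is cosmetic: the paper obtains $\fg_{m,0}\subseteq\ttY$ by induction from $\fg_{m,0}=\sum_{k=1}^{m-1}[\fg_{k,0},\fg_{m-k,0}]$, whereas you read it off from the bidegree decomposition of $\FLie(\ttU\oplus\ttU')$.
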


\begin{proof}
As $\ttU=\fg_{1,0}$, $\fh=\fg_{0,0}$ and $\ttU^*=\fg_{-1,0}$, we have that $\ttY\subseteq\fg_0$. 
On the other hand, $\fg_{1,0}=\ttU$ and for $m>1$, the equality
$\fg_{m,0}=\sum_{k=1}^{m-1}[\fg_{k,0},\fg_{m-k,0}]$
and induction show that $\fg_{m,0}$ belongs to the Lie subalgebra generated by $\ttU$.
The case $m<0$ is analogous with $\ttU^*$ in place of $\ttU$, while $\fg_{0,0}=\fh$. Thus $\fg_0=\bigoplus_{m\in\bZ}\fg_{m,0}\subseteq\ttY$.

Next we show that the pair $(\rho_{\ttU},\ttK)$ is symmetrizable: \ref{item:symmetrizable-def-1} follows since $\ttU$ is a direct summand of $\ttV$ while \ref{item:symmetrizable-def-2} holds by hypothesis. 

Now, there is a canonical Lie algebra graded map $\Phi_{\ttU}:\widetilde{\fg}(\rho_{\ttU},\ttd_{\ttU})\to\ttY$, which is the identity on $\fh$. Notice that $\Phi_{\ttU}$ is surjective since $\ttU$, $\fh$, and $\ttU^*$ belong to its image, and its kernel is homogeneous and intersects $\fh$ trivially. The restriction of $\ttB$ to $\ttY$ is invariant and symmetric, and $\restr{\ttB}{\fg_{m,0}\otimes\fg_{-m,0}}$ is non-degenerate for every $m$. By Proposition~\ref{prop:contragredient-bilinear-form}, $\ker\Phi_{\ttU}$ is the maximal defining ideal, so $\Phi_{\ttU}$ induces an isomorphism $\fg(\rho_{\ttU},\ttd_{\ttU})\simeq\ttY$.
\end{proof}

\begin{lemma}\label{lem:W-splitting}
\begin{enumerate}[leftmargin=*,label=\rm{(\alph*)}]
\item\label{item:W-decomposes} The $\ttY$-submodule $\ttW$ of  $\fg$ generated by $\ttU'$ coincides with $\fg_1=\oplus_{m\geq 0} \fg_{m,1}$.

\item \label{item:W*-decomposes} The $\ttY$-submodule of  $\fg$ generated by $(\ttU')^*$ coincides with $\fg_{-1}=\oplus_{m\geq 0} \fg_{-m,-1}$. If $\ttW\in\cC$, the form $\ttB$ induces a $\ttY$-module isomorphism $\ttW^*\simeq\fg_{-1}$. 
\end{enumerate} 
\end{lemma}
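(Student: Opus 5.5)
The plan is to prove \ref{item:W-decomposes} with a two-sided inclusion, deduce the first half of \ref{item:W*-decomposes} by symmetry, and then use the form $\ttB$ for the duality.

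\textbf{Part \ref{item:W-decomposes}.} We begin with bookkeeping in the $\bZ^2$-grading. Since $\ttY=\fg_0$ by \cref{lem:Y-contragredient} and $\ttU'\subseteq\fg_{0,1}\subseteq\fg_1$, the coarse grading \eqref{eq:splitting-coarse-grading} gives $[\ttY,\fg_1]\subseteq\fg_1$. Hence $\ttW\subseteq\fg_1$.

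For the reverse inclusion we note first that $\fg_{m,1}=0$ for $m<0$. Indeed, $\fn_+$ is generated by $\ttU\oplus\ttU'$, so every component $\fg_{m,n}$ with $n>0$ has $m\ge0$; this is where the splitting $\fg_1=\bigoplus_{m\ge0}\fg_{m,1}$ comes from. We then show $\fg_{m,1}\subseteq\ttW$ by induction on $m$. Since $\fn_+$ is generated in degree one, the component $\fg_{m,1}$ with $m+1\ge2$ is spanned by brackets of $\ttU=\fg_{1,0}$ or $\ttU'=\fg_{0,1}$ with $\fg_{m-1,1}$ or $\fg_{m,0}$ respectively. Concretely,
\[
\fg_{m,1}=[\ttU,\fg_{m-1,1}]+[\ttU',\fg_{m,0}].
\]
The first summand lies in $\ttW$ by induction. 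For the second, skew-symmetry rewrites it as $[\fg_{m,0},\ttU']$, which lies in $\ttW$ because $\fg_{m,0}\subseteq\ttY$. The base case is $\fg_{0,1}=\ttU'$, since $\fg_{0,0}=\fh$ preserves $\ttU'$.

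\textbf{First half of \ref{item:W*-decomposes}.} The same argument applies with $\fn_-$ and the generators $\ttU^*$, $(\ttU')^*$ in place of $\fn_+$ and $\ttU$, $\ttU'$.

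\textbf{The duality statement.} By \cref{thm:contragredient-symmetrizable-grading}, $\ttB$ pairs $\fg_\alpha$ with $\fg_\beta$ trivially unless $\alpha+\beta=0$, and its restriction to $\fg_\alpha\otimes\fg_{-\alpha}$ is nondegenerate. In $\bZ^2$-degrees this says that $\ttB$ pairs $\fg_{m,1}$ nondegenerately with $\fg_{-m,-1}$ and kills all other pairs of components; the $\bZ^2$-grading is a coarsening of the $Q$-grading, so this follows by summing over $Q$-degrees. Hence $\ttB$ restricts to a nondegenerate pairing $\fg_1\otimes\fg_{-1}\to\one$. Invariance of $\ttB$ under $\ttY\subseteq\fg$, in the form \eqref{eq:invariant-form}, makes the induced map $\fg_{-1}\to\fg_1^*=\ttW^*$ a morphism of $\ttY$-modules.

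\textbf{The main obstacle.} The delicate step is to conclude that this nondegenerate pairing gives an isomorphism and not just a monomorphism in the ind-category. Here we use $\ttW\in\cC$: each $\fg_{m,1}$ then lies in $\cC$ and is nonzero for only finitely many $m$. Nondegeneracy on each finite piece $\fg_{m,1}\otimes\fg_{-m,-1}$ gives $\fg_{-m,-1}\cong\fg_{m,1}^*$. Since only finitely many $m$ contribute, these componentwise isomorphisms assemble to $\fg_{-1}\cong\ttW^*$.
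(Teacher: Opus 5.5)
Your proof is correct and follows the paper's route. The grading together with the triangular decomposition gives $\ttW\subseteq\fg_1=\bigoplus_{m\ge0}\fg_{m,1}$, an induction on $m$ gives the reverse inclusion, and the duality comes from the $\ttB$-orthogonality of components of non-opposite degrees plus nondegeneracy. You differ only in small ways: you use $\fg_{m,1}=[\ttU,\fg_{m-1,1}]+[\ttU',\fg_{m,0}]$ with $\fg_{m,0}\subseteq\ttY$ directly, where the paper uses Jacobi to place $\fg_{m,1}$ inside iterated brackets of $\fg_{1,0}$ with $\fg_{0,1}$; and you make explicit that $\ttW\in\cC$ is what makes $\ttW^*$ a finite direct sum of the duals $\fg_{m,1}^*$.
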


\begin{proof}
\ref{item:W-decomposes} By the triangular decomposition of $\fg$, we know that $\oplus_{m\geq 0} \fg_{m,1}=\oplus_{m\in \bZ} \fg_{m,1}$. Thus $\oplus_{m\geq 0}\fg_{m,1}$ is a $\ttY$-submodule that contains $\ttU'$, which proves the inclusion $\ttW\subseteq\oplus_{m\geq 0} \fg_{m,1}$. 

Also, by antisymmetry and the Jacobi identity, one can prove recursively that 
\[  \fg_{m,1}\subseteq [\fg_{1,0}, \cdots [\fg_{1,0}, \fg_{0,1}]]  \subseteq \ttW,\]
which proves the converse inclusion.

The first statement in \ref{item:W*-decomposes} follows by the same argument, and the second one since $\fg_\alpha$ and $\fg_\beta$ are $\ttB$-orthogonal unless $\alpha=-\beta$ for $\alpha$ and $\beta$ in $\bZ^2$, and $\ttB$ is non-degenerate.
\end{proof}

Assume from now on in this subsection that $\ttY,\ttW\in\cC$ and that
$\ttY$ has enough modules. Put
\[
\ttK_{\ttY}\coloneqq\restr{\ttB}{\ttY\otimes\ttY},
\]
and let $\eta:\ttY\otimes\ttW\to\ttW$ be the restricted adjoint action.

\begin{lemma}\label{lem:splitting-symmetrizable-datum}
The pair $(\eta,\ttK_{\ttY})$ is a symmetrizable datum over $\ttY$. Under
the identification $\ttW^*\cong\fg_{-1}$ from
Lemma~\ref{lem:W-splitting}, its associated morphism is the bracket:
\[
\ttd_{\eta,\ttK_{\ttY}}
=\restr{[\, , \, ]}{\ttW\otimes\ttW^*}:\ttW\otimes\ttW^*\longrightarrow\ttY.
\]
\end{lemma}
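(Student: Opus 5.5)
We must check three things: (SD2) for $\ttK_{\ttY}$, (SD1) for $\eta$, and the identification of $\ttd$.

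\emph{Step 1: (SD2).} The form $\ttK_{\ttY}$ is symmetric and invariant because $\ttB$ is. For nondegeneracy, recall from Lemma~\ref{lem:Y-contragredient} that $\ttY=\fg_0=\bigoplus_m\fg_{m,0}$. By Theorem~\ref{thm:contragredient-symmetrizable-grading}, $\ttB$ pairs $\fg_{m,0}$ only with $\fg_{-m,0}$, and each such pairing is nondegenerate. Hence $\ttK_{\ttY}:\ttY\to\ttY^*$ is block antidiagonal with isomorphisms as blocks, so it is an isomorphism. Here we use $\ttY\in\C$.

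\emph{Step 2: (SD1).} By the reformulation after the definition, it suffices to show $\ttW^{\ttY}=0$ and $(\ttW^*)^{\ttY}=0$.

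For the first, let $Z\subseteq\ttW^{\ttY}$. Write $\ttW=\fg_1$ with its $\bZ$-grading in the first coordinate, $\ttW=\bigoplus_{m\ge0}\fg_{m,1}$. Since $\fh\subseteq\ttY$ acts by degree-zero maps, and the degree derivation is $\bZ^2$-homogeneous, I would pass to homogeneous components: the $\fh$-invariants of a graded module form a graded subobject, and $\ttY$-invariance of $Z$ can be tested componentwise because $\ttU,\ttU^*$ are homogeneous. So it is enough to treat a homogeneous $Z\subseteq\fg_{m,1}$. Such a $Z$ satisfies $[Z,\ttU^*]=0$, and also $[Z,(\ttU')^*]\subseteq\fg_{m,0}\subseteq\ttY$.

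Here is the main obstacle: we need $[Z,(\ttU')^*]=0$ in order to conclude $Z=0$ from Theorem~\ref{thm:g-contragredient-verp}\ref{item:g-contragredient-verp-trivial-bracket}. I would get this from invariance of $\ttB$. For $y\in\ttY$, $z\in Z$ and $w\in(\ttU')^*$,
\[
\ttB(y,[z,w])=\ttB([y,z],w)=0,
\]
since $[y,z]=0$. Nondegeneracy of $\ttK_{\ttY}$ from Step 1 then forces $[Z,(\ttU')^*]=0$. Together with $[Z,\ttU^*]=0$, this gives $[Z,\ttV^*]=0$, hence $Z=0$.

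The argument for $(\ttW^*)^{\ttY}=0$ is symmetric. It uses $\ttW^*\cong\fg_{-1}$ from Lemma~\ref{lem:W-splitting} and the $\fn_-$ version of Theorem~\ref{thm:g-contragredient-verp}\ref{item:g-contragredient-verp-trivial-bracket}.

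\emph{Step 3: identification of $\ttd$.} The bracket maps $\fg_{1}\otimes\fg_{-1}$ into $\fg_0=\ttY$. By the uniqueness in \eqref{eq:symmetrizable-dmap-characterization}, it suffices to check that for $y\in\ttY$, $w\in\ttW$ and $\xi\in\ttW^*\cong\fg_{-1}$,
\[
\ttK_{\ttY}(y,[w,\xi])=\ev(\eta(y\otimes w)\otimes\xi).
\]
The left side equals $\ttB([y,w],\xi)$ by invariance. The right side is $\ttB([y,w],\xi)$ by the very definition of the identification $\fg_{-1}\cong\ttW^*$ through $\ttB$. Finally, the reducedness of the resulting datum follows from the general statement in \cite{APS-contragredient} quoted above.
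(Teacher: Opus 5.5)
Your proposal is correct and follows essentially the same route as the paper. Both use the same three steps: nondegeneracy of $\ttK_{\ttY}$ from the graded pairings $\fg_{m,0}\otimes\fg_{-m,0}\to\one$; invariance of $\ttB$ combined with Theorem~\ref{thm:g-contragredient-verp}\ref{item:g-contragredient-verp-trivial-bracket} to exclude pieces with trivial $\ttY$-action; and uniqueness in \eqref{eq:symmetrizable-dmap-characterization} to identify $\ttd_{\eta,\ttK_{\ttY}}$ with the bracket. The differences are only cosmetic: you pass to a homogeneous component before applying invariance, where the paper applies it to a simple $S$ and then projects, and you treat the quotient half of \ref{item:symmetrizable-def-1} through the dual condition $(\ttW^*)^{\ttY}=0$ via the $\ttY$-module isomorphism $\ttW^*\cong\fg_{-1}$, which the paper covers by ``the same argument.''
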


\begin{proof}
The form $\ttK_{\ttY}$ is symmetric and invariant. It is non-degenerate because it is the direct sum of the non-degenerate pairings $\fg_{m,0}\otimes\fg_{-m,0}\to\one$.

Let $S\subseteq\ttW$ be simple and suppose that $\eta(\ttY\otimes S)=0$. Since $[S,\fg_{-1}] \subseteq \ttY$ and that $\ttB$ is symmetric and invariant, we have that
\[
\ttK_{\ttY}\bigl( [S, \fg_{-1}], \ttY\bigr) =\ttB\bigl( \eta(\ttY\otimes S), \fg_{-1}\bigr)=0.
\]
Thus $[S, \fg_{-1}]=0$ since $\ttK_{\ttY}$ is non-degenerate. We choose $m$ such that the composite of $S\hookrightarrow\ttW$ with the projection to $\fg_{m,1}$ is non-zero, and we denote its image by $S_m$. Then $S_m$ is simple. Since $\ttU^* \subset \ttY$ and $(\ttU')^* =\fg_{0,-1}\subset \fg_{-1}$, we have that
\begin{align*}
[S_m,\ttU^*]&= -\eta(\ttU^*\otimes S_m) = 0, 
&  [S_m, (\ttU')^*]&=0.
\end{align*}
Hence $[S_m,\ttV^*]=0$, which contradicts Theorem~\ref{thm:g-contragredient-verp}~\ref{item:g-contragredient-verp-trivial-bracket}. Therefore $\eta(\ttY\otimes S)\ne0$.

The same argument proves that  $\pi\eta\neq 0$, where $\pi:\ttW\twoheadrightarrow T$ is an epimorphism onto a simple object.
Thus \ref{item:symmetrizable-def-1} holds, and $(\eta,\ttK_{\ttY})$ is symmetrizable.

Finally, for $y\in\ttY$, $w\in\ttW$, and $\varphi\in\ttW^*\cong\fg_{-1}$, since $\ttB$ is invariant, 
\[
\ttK_{\ttY}(y,[w,\varphi]) =\ttB([y,w],\varphi) =\ev_{\ttW}(\eta(y,w),\varphi).
\]
By \eqref{eq:symmetrizable-dmap-characterization}, this identity uniquely characterizes $\ttd_{\eta,\ttK_{\ttY}}$, so $\ttd_{\eta,\ttK_{\ttY}}=\restr{[\, , \, ]}{\ttW\otimes\ttW^*}$.
\end{proof}

We now arrive at our main result in this subsection.

\begin{theorem}\label{thm:splitting-contragredient}
There is a canonical isomorphism of $\bZ$-graded Lie algebras
\[
\fg(\rho,\ttd)\cong \fg(\eta,\ttd_{\eta,\ttK_{\ttY}}).
\]
\end{theorem}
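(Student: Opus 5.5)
The idea is to realize $\fg=\fg(\rho,\ttd)$, with its coarse $\bZ$-grading \eqref{eq:splitting-coarse-grading}, as a quotient of $\widetilde{\fg}(\eta,\ttd_{\eta,\ttK_{\ttY}})$, and then recognize that quotient with Proposition~\ref{prop:contragredient-bilinear-form}. By Lemma~\ref{lem:splitting-symmetrizable-datum}, $(\eta,\ttd_{\eta,\ttK_{\ttY}})$ is the reduced contragredient datum attached to the symmetrizable pair $(\eta,\ttK_{\ttY})$ over $\ttY$. By Lemmas~\ref{lem:Y-contragredient} and \ref{lem:W-splitting} we have $\fg_0=\ttY$, $\fg_1=\ttW$, and $\fg_{-1}\cong\ttW^*$ via $\ttB$.

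First, I will construct a graded Lie algebra map
\[
\Psi:\FLie(\ttW\oplus\ttW^*)\rtimes_\eta\ttY\longrightarrow\fg .
\]
It is the identity on $\ttY$, the inclusion on $\ttW=\fg_1$, and the identification $\ttW^*\cong\fg_{-1}$ on $\ttW^*$. This is well defined by the universal property of the free Lie algebra together with the semidirect product: the maps on $\ttW$ and $\ttW^*$ are $\ttY$-equivariant, since the identification with $\fg_{-1}$ is induced by the invariant form $\ttB$. Next, Lemma~\ref{lem:splitting-symmetrizable-datum} says that $\ttd_{\eta,\ttK_{\ttY}}$ is the restriction of the bracket $\ttW\otimes\ttW^*\to\ttY$. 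Hence $\Psi$ kills the generators $(\ttb_{\ttW\otimes\ttW^*},-\ttd)$ and descends to a graded map $\overline\Psi:\widetilde{\fg}(\eta,\ttd_{\eta,\ttK_{\ttY}})\to\fg$. Here we give $\ttW$ degree $1$, $\ttW^*$ degree $-1$, and $\ttY$ degree $0$, matching the coarse grading.

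Second, I check that $\overline\Psi$ is surjective. The image contains $\fh\subseteq\ttY$, $\ttU\subseteq\ttY$, $\ttU^*\subseteq\ttY$, $\ttU'\subseteq\ttW$, and $(\ttU')^*\subseteq\ttW^*$. These generate $\fg$, since $\fg$ is a quotient of $\widetilde{\fg}(\rho,\ttd)$, which is generated by $\ttV\oplus\ttV^*$ and $\fh$. The kernel $\ttI=\ker\overline\Psi$ is $\bZ$-homogeneous because $\overline\Psi$ is graded. It meets the degree-zero part $\ttY$ trivially, because $\overline\Psi$ restricts to the identity on $\ttY$.

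Third, I apply Proposition~\ref{prop:contragredient-bilinear-form} to $\ttq=\fg\cong\widetilde{\fg}(\eta,\ttd_{\eta,\ttK_{\ttY}})/\ttI$, over the base $\ttY$. This base has enough modules by our standing assumption. For the form I take $\ttB$ itself, which is symmetric, invariant and nondegenerate on $\fg$ by Theorem~\ref{thm:contragredient-symmetrizable-grading}. It has degree zero for the coarse grading, because $\ttB(\fg_\alpha,\fg_\beta)=0$ unless $\alpha+\beta=0$ in $Q$, and a fortiori unless the second coordinates sum to zero. The proposition then gives $\ttI=\ttm$, so $\overline\Psi$ induces $\fg(\eta,\ttd_{\eta,\ttK_{\ttY}})\cong\fg(\rho,\ttd)$. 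The map is canonical because it is determined by the identity on $\ttY\oplus\ttW$ and by the $\ttB$-identification on $\ttW^*$.

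The main obstacle I expect is checking the hypotheses of Proposition~\ref{prop:contragredient-bilinear-form} in this ind-setting. The earlier results are stated with $\fh$ an honest Lie algebra in $\cC$ carrying a nondegenerate form. Here the base is $\ttY$, which we have assumed lies in $\cC$ and has enough modules. We still need $\ttK_{\ttY}$ to be nondegenerate (shown in Lemma~\ref{lem:splitting-symmetrizable-datum}) and $\ttB$ to be nondegenerate on each graded piece $\fg_n\otimes\fg_{-n}$. The latter follows from the $Q$-graded orthogonality, since $\fg_n\otimes\fg_{-n}$ is a sum of the blocks $\fg_{m,n}\otimes\fg_{-m,-n}$. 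A secondary point is to confirm that the PBW/quasi-semisimplicity input, namely that $\widetilde{\fg}(\eta,\ttd_{\eta,\ttK_{\ttY}})$ is a Lie algebra, is not needed beyond what the proposition already uses.
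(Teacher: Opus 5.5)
Your proposal is correct and follows essentially the same route as the paper. The paper likewise builds the graded map $\widetilde{\fg}(\eta,\ttd_{\eta,\ttK_{\ttY}})\to\fg$ from the inclusions of $\ttY=\fg_0$, $\ttW=\fg_1$, $\ttW^*\cong\fg_{-1}$, using Lemma~\ref{lem:splitting-symmetrizable-datum} to see that the defining relations hold. It then gets surjectivity from the generators $\fh,\ttU,\ttU^*,\ttU',(\ttU')^*$, notes that the kernel is homogeneous and meets $\ttY$ trivially, and concludes with Proposition~\ref{prop:contragredient-bilinear-form}, using that $\ttB$ on $\fg_n\otimes\fg_{-n}$ is the direct sum of the nondegenerate pairings $\fg_{m,n}\otimes\fg_{-m,-n}\to\one$.
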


\begin{proof}
The inclusions of $\ttY=\fg_0$, $\ttW=\fg_1$, and
$\ttW^*=\fg_{-1}$ into $\fg$, together with
Lemma~\ref{lem:splitting-symmetrizable-datum}, define a graded map
\[
\widetilde{\Phi}:
\widetilde{\fg}(\eta,\ttd_{\eta,\ttK_{\ttY}})\longrightarrow\fg.
\]
Its image contains $\ttY$, hence $\fh,\ttU,\ttU^*$. It also contains $\ttW$ and $\ttW^*$, hence $\ttU'$ and $(\ttU')^*$. Thus $\widetilde{\Phi}$ is surjective. Its kernel is homogeneous and intersects $\ttY$
trivially. Notice that the restriction of $\ttB$ to $\fg_n\otimes\fg_{-n}$ is the direct sum of the non-degenerate pairings
\[
\fg_{m,n}\otimes\fg_{-m,-n}\longrightarrow\one.
\]
Applying Proposition~\ref{prop:contragredient-bilinear-form} to $\ttY$, we get that $\ker\tilde\Phi$ is the maximal defining ideal. Hence $\widetilde{\Phi}$ induces an isomorphism $\Phi:\fg(\eta,\ttd_{\eta,\ttK_{\ttY}})\overset{\sim}{\longrightarrow}\fg$.
\end{proof}

\subsection{On the simplicity of contragredient Lie algebras}

As before, $(\rho, \ttK)$ denotes a symmetrizable contragredient datum over $\fh$. Set $\fg\coloneqq \fg(\rho,\ttd)$.  We now introduce a criterion for the simplicity of $\fg$.

Even when finite-dimensional, a contragredient Lie algebra may have a
non-zero center and need not be perfect, as already occurs for
contragredient Lie algebras and Lie superalgebras in positive
characteristic.  Since one of our objectives is to obtain simple Lie
algebras, we are therefore led to consider the derived algebra and its
quotient by its center. 

Let $\fh'_{\ttd}\subseteq\fh$ be the Lie
subalgebra generated by $\fh'$ and $\ima(\ttd)$.  As observed in
\cite{APS-contragredient}, if $\rho$ is surjective, then $\fg'$ is
generated by $\ttV^*,\fh'_{\ttd},\ttV$, and
$  \fg'=\fn_-\oplus\fh'_{\ttd}\oplus\fn_+.$

\begin{lemma}\label{lem:center-contragredient} Assume $\rho$ is surjective. With the notation above, 
let $\fc:=\fc(\fg)$ and $\fc':=\fc(\fg')$. Then
\begin{align*}
\fc&=\fc(\fh)\cap\Ann\ttV, & \fc'&=\fc(\fh'_{\ttd})\cap\Ann\ttV.
\end{align*} 
In particular, if $\fh$ is abelian, then 
\begin{align*}
\fc&=\Ann\ttV, & \fc'&=\ima(\ttd)\cap\Ann\ttV.
\end{align*} 
\end{lemma}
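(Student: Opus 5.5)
We compute the center of $\fg$ and of $\fg'$ in two steps. First we show that every central element lies in the degree-zero part. Then we identify which degree-zero elements are central.

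\emph{Step 1: central elements have degree zero.} The center is an ideal. It is stable under $\ad\fh$, so it is stable under the $\bZ$-grading, using the grading derivation in the usual way. More carefully, we work with subobjects and argue as follows. Let $Y=\fc\cap\fn_+$, and let $Y_k$ be its component in degree $k>0$. If $Y_k$ were nonzero, it would be a homogeneous subobject of $\fn_+$ with $[Y_k,\ttV^*]=0$. This contradicts Theorem~\ref{thm:g-contragredient-verp}\ref{item:g-contragredient-verp-trivial-bracket}. The same argument applies to $\fn_-$ using $\ttV$.

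To reduce to homogeneous pieces, we need that $\fc$ is a graded subobject. We get this as follows. For $z\in\fc$, the bracket $[z,\ttV]=0$ holds componentwise, because brackets with $\ttV$ shift degree by exactly $1$ and the grading is a direct sum decomposition. Hence each homogeneous component of $z$ also commutes with $\ttV$, and likewise with $\ttV^*$ and $\fh$. So the projection of $\fc$ to each $\fg_k$ is central. This shows $\fc\subseteq\fh$.

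For $\fg'$, the same argument works. The inputs are the decomposition $\fg'=\fn_-\oplus\fh'_{\ttd}\oplus\fn_+$ and the fact that $\ttV,\ttV^*\subseteq\fg'$. It gives $\fc'\subseteq\fh'_{\ttd}$.

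\emph{Step 2: identify the central elements of degree zero.} An element $z\in\fh$ is central in $\fg$ if and only if it commutes with the generators $\fh$, $\ttV$, and $\ttV^*$. This holds because $\fg$ is generated by these, and by Jacobi the centralizer of a generating set is the centralizer of everything. Now $[z,\fh]=0$ means $z\in\fc(\fh)$, and $[z,\ttV]=0$ means $z\in\Ann\ttV$. Moreover, $\Ann\ttV=\Ann\ttV^*$, since $\rho^\vee$ is the dual action. This gives $\fc=\fc(\fh)\cap\Ann\ttV$.

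For $\fg'$, the generating set is $\ttV^*$, $\fh'_{\ttd}$, and $\ttV$, as recalled from \cite{APS-contragredient} under the surjectivity of $\rho$. This yields $\fc'=\fc(\fh'_{\ttd})\cap\Ann\ttV$.

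\emph{Step 3: the abelian case.} If $\fh$ is abelian, then $\fc(\fh)=\fh$, so $\fc=\Ann\ttV$. Also $\fh'=0$, so $\fh'_{\ttd}$ is generated by $\ima(\ttd)$. Since $\fh$ is abelian, the Lie subalgebra generated by $\ima(\ttd)$ is $\ima(\ttd)$ itself. It is abelian, hence equal to its own center, which gives the second formula.

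\emph{Main obstacle.} The main obstacle is the gradedness claim in Step 1. We must verify categorically that the components of $\fc$ are central, meaning that $\fc=\bigoplus_k(\fc\cap\fg_k)$. The plan is to take the kernel of the morphism
\[
\fg\to\underline{\End}(\fg),\qquad z\mapsto\ad z,
\]
restricted to the generators. This morphism is homogeneous of degree $0$ as a map of graded objects into
\[
(\ttV^*\otimes\fg)\oplus(\ttV\otimes\fg)\oplus(\fh^*\otimes\fg),
\]
with the grading shifted appropriately. The kernel of a homogeneous morphism of graded objects is graded, and this settles the point.
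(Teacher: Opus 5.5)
Your proof is correct and follows the paper's argument. You show $\fc$ is graded via the homogeneous projections and kill its components of nonzero degree with Theorem~\ref{thm:g-contragredient-verp}\ref{item:g-contragredient-verp-trivial-bracket}. You then identify the degree-zero part by testing against the generators $\ttV^*,\fh,\ttV$ (respectively $\ttV^*,\fh'_{\ttd},\ttV$, which is where surjectivity of $\rho$ enters), just as the paper does. The only cosmetic difference is that you test the projections against the generators rather than against every $\fg_\ell$.

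One small caution: your opening heuristic that stability under $\ad\fh$ yields gradedness is not valid in general. The degree derivation need not be inner, and in characteristic $p$ its eigenvalues only separate degrees modulo $p$. Your actual argument does not depend on it.
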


\begin{proof}
We first show that $\fc$ is a graded subobject of $\fg$. Let $p_k:\fg\to\fg_k$ denote the homogeneous projection. Since the bracket is homogeneous, for every $\ell\in\mathbb Z$ we have
\[
 [p_k(\fc),\fg_\ell] = p_{k+\ell}\left([\fc,\fg_\ell] \right) = 0.
\]
Hence $p_k(\fc)\subseteq\fc$. Therefore $\fc=\bigoplus_{k\in\mathbb Z}\fc_k$, where $\fc_k:=\fc\cap\fg_k$.

Suppose that $k>0$. Then $\fc_k$ is a homogeneous subobject of $\bigoplus_{r>0}\fg_r$ and $[\fc_k,\ttV^*]=0$. Thus $\fc_k=0$ by \Cref{thm:g-contragredient-verp} \ref{item:g-contragredient-verp-trivial-bracket}.
Similarly, if $k<0$, the same result gives $\fc_k=0$. Consequently,
\[
 \fc=\fc_0\subseteq\fg_0=\fh.
\]
Since $\fc$ is central in $\fg$, it is central in $\fh$ and acts
trivially on $\ttV$ and $\ttV^*$. Therefore
\[
\fc \subseteq \fc(\fh)\cap\Ann\ttV\cap\Ann\ttV^*  = \fc(\fh)\cap\Ann\ttV.
\]

On the other hand, $\fc(\fh)\cap\Ann\ttV$ is central in $\fh$ and acts trivially on both $\ttV$ and $\ttV^*$. Since $\fg_{-1}\oplus\fg_0\oplus\fg_1=\ttV^*\oplus\fh\oplus\ttV$ generate $\fg$ as a Lie algebra, it follows that $\fc(\fg)=\fc(\fh)\cap\Ann\ttV$.

As $\fg'$ is generated by $\ttV^*$, $\fh'_d$, and $\ttV$,  the same argument shows that $\fc'=\fc(\fh'_d)\cap\Ann\ttV$.

Finally, if $\fh$ is abelian, then $\fc(\fh)=\fh$ and $\fh'_{\ttd}=\operatorname{im}\ttd$, so the last statement is immediate.
\end{proof}

From now on we assume that $\ttV$ is semisimple as $\fh$-module, so by Remark \ref{rem:SD1-holds-ss} condition \ref{item:symmetrizable-def-1} holds. To draw a parallel with the classical theory of contragredient Lie algebras, we fix a decomposition $\ttV=\mathop{\oplus}_{i=1}^r \tte_i$ as a sum of simple $\fh$-submodules, and set $\ttf_i=\tte_i^*$, so $\ttV^*=\mathop{\oplus}_{i=1}^r \ttf_i$, and $\tth_i=[\tte_i,\ttf_i]$, so $\ima \ttd=\sum_{i=1}^r \tth_i$. 

We consider a quiver $Q_{\rho,\ttd}$ with vertices $1,\cdots,r$ and arrows $i\to j$ for each pair of vertices $i,j$ such that $\rho(\tth_i\otimes\tte_j)\ne0$. 
For each arrow $i\to j$ we have that $\rho(\tth_i\otimes\tte_j)=\tte_j$ since $\rho(\tth_i\otimes\tte_j)$ is a non-zero $\fh$-submodule of $\tte_j$.

\begin{definition}
We say that $(\rho,\ttd)$ is \emph{indecomposable} if $Q_{\rho,\ttd}$ is connected; that is, if for each pair $i,j\in\{1,\cdots,r\}$, there exist $i_0=i$, $i_1$, $\cdots, i_k=j$ such that $\rho(\tth_{i_{t-1}}\otimes\tte_{i_t})\ne0$ for all $t=1,\cdots,k$.
\end{definition}

\begin{lemma}\label{lem:ideals-contragredient-indec}
Let $(\rho,\ttd)$ be indecomposable such that $\fg\in\cC$, the quiver has at least one arrow and $\fg_0'$ is generated by the subobjects $\tth_i$, $1\leq i\leq r$, and each $\tte_i$ is simple after restriction to $\fg'_0$.
Then every ideal $\ttI$ of $\fg'$ satisfies either
$\ttI\subseteq\Ann\ttV\cap\fg'_0$ or $\ttI=\fg'$.
\end{lemma}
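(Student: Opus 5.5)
We follow the classical Kac argument for ideals of contragredient algebras. Let $\ttI\subseteq\fg'$ be an ideal. Since $\fg'_0=\fh'_{\ttd}$ acts on $\fg'$ through the grading operators only via the $\bZ$-grading of $\fg$, we first show that $\ttI$ is $\bZ$-homogeneous. To do this we use that the grading of $\fg$ is recovered by a degree element: the quiver has an arrow and the $\tth_i$ act on $\tte_j$ by nonzero $\fh$-endomorphisms, hence scalars; combining with the element of $\fh$ (or a suitable derivation of $\fg'$ induced from $\fh$, which preserves $\ttI$ since $[\fh,\fg']\subseteq\fg'$ and $\ttI$ is stable under $\ad\fh$ only if we argue via the invariant form) we would project onto homogeneous components. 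If no degree element is available in $\fh'_{\ttd}$, we instead argue directly: take any homogeneous projection $p_k(\ttI)$, and use that $\ttI$ is stable under the $\mathbb G_m$-action given by the grading after showing that the grading derivation of $\fg$ restricts to a derivation of $\fg'$ that is a limit of inner ones. This homogeneity step is the one I expect to be the main obstacle, and I would first try to prove it by the argument of \cref{lem:center-contragredient}: apply $\ad$ of $\ttV$ and $\ttV^*$ repeatedly to lower/raise a nonzero piece of $\ttI$ into degree $0$, which suffices for what follows even without full homogeneity.

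Concretely, suppose $\ttI\not\subseteq\Ann\ttV\cap\fg'_0$. First we claim $\ttI\cap\fg'_0\neq0$ or $\ttI$ meets $\ttV\oplus\ttV^*$. Take a nonzero component of $\ttI$ of extremal degree $k$ (using finite-dimensionality, $\fg\in\cC$). If $k>0$, by \cref{thm:g-contragredient-verp}\ref{item:g-contragredient-verp-trivial-bracket} the bracket with $\ttV^*$ is nonzero, so bracketing with $\ttV^*$ lowers degree while staying in $\ttI$; iterate until reaching degree $0$ or $1$. Similarly for $k<0$. Thus we obtain $0\neq\ttI\cap(\fg'_0)$ not annihilating $\ttV$, or $\ttI\cap\tte_j\neq0$ for some $j$ (hence $\tte_j\subseteq\ttI$ by simplicity over $\fg'_0$), or symmetric with $\ttf_j$. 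In the degree-$0$ case: an element of $\ttI\cap\fg'_0$ not annihilating $\ttV$ acts nonzero on some $\tte_j$, giving $\rho$-image a nonzero $\fg'_0$-submodule of $\tte_j$, hence $\tte_j\subseteq\ttI$ by the simplicity hypothesis.

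Now from $\tte_j\subseteq\ttI$ we get $\tth_j=[\tte_j,\ttf_j]\subseteq\ttI$ (nonzero by reducedness). Along an arrow $j\to l$, $\rho(\tth_j\otimes\tte_l)=\tte_l$, so $\tte_l\subseteq\ttI$, and then $\ttf_l$ too: $\rho^\vee(\tth_j\otimes\ttf_l)\neq0$ since $\ttf_l$ is dual of $\tte_l$, giving $\ttf_l\subseteq\ttI$. To move backwards along arrows $i\to j$, use invariance of $\ttB$: $\ttB(\tth_i,[\tte_j,\ttf_j])=\ttB([\tth_i,\tte_j],\ttf_j)\neq0$, and symmetrically $\ttK(\tth_i,\tth_j)\ne0$ means $\rho(\tth_j\otimes\tte_i)\neq0$ as well (by \eqref{eq:symmetrizable-dmap-characterization}), so arrows are effectively symmetric. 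Connectedness of $Q_{\rho,\ttd}$ then puts all $\tte_i,\ttf_i,\tth_i$ into $\ttI$; the case starting from $\ttf_j$ is analogous. Since $\fg'_0$ is generated by the $\tth_i$ and $\fg'$ is generated by $\ttV^*$, $\fg'_0$, $\ttV$, we conclude $\ttI=\fg'$. The hypothesis that the quiver has an arrow ensures the starting $j$ has an outgoing arrow (connectivity plus existence of an arrow, with symmetry), so the propagation actually begins.
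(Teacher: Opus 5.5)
There is a genuine gap, and it is the one you flag yourself: you identify the homogeneity problem but never resolve it. The argument you actually run treats the projections $p_k(\ttI)$ as if they were subobjects of $\ttI$. When $\ttI$ is not homogeneous, bracketing $\ttI$ with $\ttV^*$ gives a subobject of $\ttI$ whose extremal homogeneous component is nonzero, but it still has components in other degrees. So you never obtain $\ttI\cap\fg'_0\neq0$ or $\ttI\cap\ttV\neq0$. Your propagation step (first $\tth_j\subseteq\ttI$, then $\tte_l,\ttf_l\subseteq\ttI$ along arrows, which are indeed symmetric here) is fine, but it has nothing to start from.

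The fixes you suggest are not available. The algebra $\fh'_{\ttd}$ need not contain a degree element, and $\ttI$ is only $\ad\fg'$-stable, not $\ad\fh$-stable. In characteristic $p$, even a degree element separates degrees only modulo $p$. Asserting $\mathbb G_m$-stability of $\ttI$ is exactly the homogeneity you are trying to prove. This is not a formality: your argument uses $\fg\in\cC$ only to guarantee an extremal degree, so it would apply verbatim to an untwisted affine Kac--Moody algebra in characteristic zero. There $\fg'/\fc$ is the loop algebra $\mathfrak s\otimes\Bbbk[t,t^{-1}]$, and $\mathfrak s\otimes(t-1)\Bbbk[t,t^{-1}]$ is a non-homogeneous proper ideal.

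The missing idea is to use the top degree of $\fg$ to cut off tails.
\begin{itemize}
\item The paper passes to $\overline{\fg}=\fg'/\mathfrak z$ with $\mathfrak z=\Ann\ttV\cap\fg'_0$, which is central. This makes a degree-zero leading term act faithfully on $\ttV$.
\item For a nonzero ideal $J$, it takes the \emph{least} $k$ with $p_k(J)\neq0$. It builds $Y^{(\ell)}\subseteq J\cap\bigoplus_{n\ge1}\overline{\fg}_n$ whose lowest homogeneous image is $\tte_\ell$, and propagates this to every vertex along arrows.
\item Let $N$ be the largest degree of $\fg$, and choose indices with $[\tte_{i_1},\ldots,[\tte_{i_{N-1}},\tte_{i_N}]]\neq0$. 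Bracketing $Y^{(i_N)}$ with $\tte_{i_1},\ldots,\tte_{i_{N-1}}$ pushes every higher-degree tail above degree $N$, where it vanishes. This leaves a genuinely homogeneous nonzero subobject of $J$, which is then lowered into $J\cap\ttV$.
\item This yields only $\ttI+\mathfrak z=\fg'$ for an ideal $\ttI\not\subseteq\mathfrak z$. To conclude $\ttI=\fg'$ one still needs that $\fg'$ is perfect, which holds because every vertex has an incoming arrow. This step is also absent from your proposal.
\end{itemize}

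A smaller point: $\ttI\cap\ttV\neq0$ does not give $\ttI\cap\tte_j\neq0$, because the $\tte_i$ may be isomorphic over $\fg'_0$. Instead, project onto some $\tte_i$ and bracket with $\ttf_i$. Since $[\tte_j,\ttf_i]=0$ for $j\neq i$, this gives $\tth_i\subseteq\ttI$.
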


\begin{proof}
Set $\mathfrak a_0:=\fg_0'$ and $\mathfrak z=\Ann\ttV\cap\mathfrak a_0$. As $\ttd$ is equivariant, we have that $\ima\ttd=\sum_i\tth_i$ is an ideal of $\fh$. Since $\mathfrak a_0$ is generated by the $\tth_i$, it follows that $\mathfrak a_0=\ima\ttd$. The subobject $\mathfrak z$ acts trivially
on both $\ttV$ and $\ttV^*$. It therefore commutes with
$[\ttV,\ttV^*]=\ima\ttd=\mathfrak a_0$, and hence is central in $\fg'$. Set $\overline{\fg}=\fg'/\mathfrak z$. The nonzero-degree components are unchanged, while the action of $\overline{\fg}_0=\mathfrak a_0/\mathfrak z$ on $\ttV$ is faithful.

We first prove that every nonzero ideal $J$ of $\overline{\fg}$ is the whole of $\overline{\fg}$. 
Let $p_k:\overline{\fg}\to\overline{\fg}_k$ be the homogeneous
projections, and choose the least integer $k$ such that $p_k(J)\ne0$.
Since $\overline{\fg}$ has finite graded support,
\[
 J\subseteq\bigoplus_{n\ge k}\overline{\fg}_n.
\]
Set $X=J$ and $X_k=p_k(J)$. The projection is
$\mathfrak a_0$-linear, so $X_k$ is automatically an
$\mathfrak a_0$-submodule. In schematic notation, we write $x=x_k+x_{>k}$. We consider three cases.

\begin{itemize}[leftmargin=*]
\renewcommand{\labelitemi}{$\circ$}

\item If $k<0$, repeated bracketing with suitable summands $\tte_i$,
using
\Cref{thm:g-contragredient-verp}%
\ref{item:g-contragredient-verp-trivial-bracket}, gives a subobject
\[
W\subseteq
J\cap\bigoplus_{n\geq-1}\overline{\fg}_n
\]
whose lowest homogeneous image
$W_{-1}\subseteq\ttV^*$ is nonzero. Some projection
$W_{-1}\to\ttf_i$ is nonzero and hence surjective, because $\ttf_i$
is simple as an $\mathfrak a_0$-module. Choose an arrow
$i\to\ell$. Since $[\tte_j,\ttf_i]=0$ for $j\ne i$, the subobject
\[
Y^{(\ell)}
:=
[\tte_\ell,[\tte_i,W]]
\subseteq
J\cap\bigoplus_{n\geq1}\overline{\fg}_n
\]
has lowest homogeneous image $\tte_\ell$ in degree $1$.

\item If $k>0$, repeated bracketing with suitable summands $\ttf_i$
gives a subobject
\[
W\subseteq
J\cap\bigoplus_{n\geq1}\overline{\fg}_n
\]
with nonzero lowest homogeneous image $W_1\subseteq\ttV$. Some
projection $W_1\to\tte_i$ is surjective. Choosing an arrow
$i\to\ell$, the subobject
\[
Y^{(\ell)}
:=
[\tte_\ell,[\ttf_i,W]]
\subseteq
J\cap\bigoplus_{n\geq1}\overline{\fg}_n
\]
has lowest homogeneous image $\tte_\ell$.

\item If $k=0$, then $X_0$ acts nontrivially on $\ttV$, since the
action of $\overline{\fg}_0$ on $\ttV$ is faithful. Thus
$[X_0,\tte_\ell]\ne0$ for some $\ell$. This image is a nonzero
$\mathfrak a_0$-submodule of $\tte_\ell$, so the simplicity
hypothesis gives
 $[X_0,\tte_\ell]=\tte_\ell.$ 
Therefore
\[
Y^{(\ell)}
:=
[X,\tte_\ell]
\subseteq
J\cap\bigoplus_{n\geq1}\overline{\fg}_n
\]
has lowest homogeneous image $\tte_\ell$.

\end{itemize}

Thus, in every case we have constructed some
$Y^{(\ell)}\subseteq J$ whose lowest term is $\tte_\ell$. If
$\ell\to\ell'$ is an arrow, then
\[
Y^{(\ell')}
:=
[\tte_{\ell'},[\ttf_\ell,Y^{(\ell)}]]
\subseteq
J\cap\bigoplus_{n\geq1}\overline{\fg}_n
\]
has lowest homogeneous image $\tte_{\ell'}$. Following directed paths
gives such a subobject $Y^{(i)}$ for every vertex $i$.

Since $\fg\in\cC$, there is a largest $N>0$ for which
\(
\fg_N=\overline{\fg}_N\ne0.
\)
The algebra $\fn_+$ is generated by $\ttV$, so for some
$i_1,\ldots,i_N$ the image
\(
[\tte_{i_1},\ldots,
 [\tte_{i_{N-1}},\tte_{i_N}]]
\)
is nonzero. Replacing the last factor by $Y^{(i_N)}$ gives
\[
\begin{aligned}
[\tte_{i_1},\ldots,
 [\tte_{i_{N-1}},Y^{(i_N)}]]
&=
[\tte_{i_1},\ldots,
 [\tte_{i_{N-1}},\tte_{i_N}]] \subseteq J\cap\overline{\fg}_N.
\end{aligned}
\]
Indeed, every term of $Y^{(i_N)}$ above degree $1$ would produce a
term above degree $N$, and hence vanishes. We have therefore obtained
an actual nonzero homogeneous subobject of $J$.

Repeated bracketing with suitable $\ttf_j$, using
\Cref{thm:g-contragredient-verp}%
\ref{item:g-contragredient-verp-trivial-bracket}, now gives a
nonzero subobject of $J\cap\ttV$. This intersection is an
$\mathfrak a_0$-submodule. Some projection
\(
J\cap\ttV\longrightarrow\tte_i
\)
is nonzero, hence surjective by the simplicity hypothesis. Since
$[\tte_j,\ttf_i]=0$ for $j\ne i$, bracketing with $\ttf_i$ shows that
\(
\overline{\tth}_i\subseteq J.
\)

For an arrow $i\to j$, we have
\(
[\overline{\tth}_i,\tte_j]=\tte_j.
\)
Following directed paths, and bracketing each new $\tte_j$ with
$\ttf_j$, gives all $\tte_j$ and all $\overline{\tth}_j$ in $J$.
If $i\to j$, duality gives
\(
[\overline{\tth}_i,\ttf_j]=\ttf_j
\)
as an equality of images. Hence all $\ttf_j$ lie in $J$. These
subobjects generate $\overline{\fg}$, and therefore
\(
J=\overline{\fg}.
\)

Finally, let $\ttI$ be an ideal of $\fg'$ not contained in
$\mathfrak z$. Its image in $\overline{\fg}$ is nonzero, so the result
just proved gives
\(
\ttI+\mathfrak z=\fg'.
\)
It remains to remove the possible central summand $\mathfrak z$.
Every vertex has an incoming arrow. If $i\to j$, then the images of
$[\tth_i,\tte_j]$ and $[\tth_i,\ttf_j]$ are respectively $\tte_j$
and $\ttf_j$. Therefore every $\tte_j$ and $\ttf_j$ belongs to
$[\fg',\fg']$, and their brackets generate
$\mathfrak a_0=\ima\ttd$. Hence $\fg'$ is perfect. Since
$\mathfrak z$ is central,
\[
\fg'
=
[\fg',\fg']
=
[\ttI+\mathfrak z,\ttI+\mathfrak z]
\subseteq\ttI.
\]
Thus $\ttI=\fg'$, as required.
\end{proof}

Now we are ready to prove our simplicity criterion.

\begin{theorem}\label{thm:contragredient-indec-simple}
Let $(\rho,\ttd)$ be indecomposable such that $\fg\in\cC$ and $\fg_0'$ is generated by $\tth_i$, $1\leq i \leq r$, as Lie algebra, and its quiver has at least one arrow.
Assume in addition that each $\tte_i$ is simple after restriction to $\fg'_0$.
Then $\fg'/(\Ann\ttV\cap \fg_0')$ is a simple Lie algebra.
\end{theorem}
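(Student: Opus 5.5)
The natural route is to deduce the theorem directly from Lemma~\ref{lem:ideals-contragredient-indec}, whose hypotheses are exactly those assumed here. Put $\mathfrak z:=\Ann\ttV\cap\fg_0'$ and $\overline{\fg}:=\fg'/\mathfrak z$.

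The first step is to check that $\mathfrak z$ is an ideal of $\fg'$, indeed a central one, so that the quotient makes sense. As in the proof of Lemma~\ref{lem:ideals-contragredient-indec}:
\begin{itemize}
\item $\mathfrak z$ kills $\ttV$, and by duality it also kills $\ttV^*$;
\item since $\fg_0'=\ima\ttd=[\ttV,\ttV^*]$, the Jacobi identity shows that $\mathfrak z$ commutes with $\fg_0'$;
\item as $\fg'$ is generated by $\ttV^*$, $\fg'_0$ and $\ttV$, it follows that $\mathfrak z$ is central.
\end{itemize}
Alternatively, one can cite Lemma~\ref{lem:center-contragredient} directly.

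Next, take a nonzero ideal $\overline J\subseteq\overline{\fg}$ and let $\ttI$ be its preimage in $\fg'$. This is an ideal not contained in $\mathfrak z$, so Lemma~\ref{lem:ideals-contragredient-indec} gives $\ttI=\fg'$, hence $\overline J=\overline{\fg}$. Thus $\overline{\fg}$ has no nonzero proper ideals.

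It remains to show that $\overline{\fg}$ is nonabelian, which is the only real content beyond the lemma. I would use the assumed arrow $i\to j$: then $[\tth_i,\tte_j]=\tte_j\neq0$ in $\fg'$. Since the graded components of nonzero degree are unchanged in the quotient, and $\tte_j\subseteq\fg_1$ meets $\mathfrak z\subseteq\fg_0$ trivially, this bracket stays nonzero in $\overline{\fg}$. Hence $\overline{\fg}$ is nonabelian, and therefore simple in the sense of Definition~\ref{def:simple-semisimple-perfect}.

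Finally, to call it a Lie algebra rather than merely an operadic one, I would invoke Proposition~\ref{prop:PBW-kernel-central}, since $\C$ is quasi-semisimple in this section. Alternatively, $\overline{\fg}$ is a quotient of a PBW algebra, so \cite[Lemma~7.6(i)]{EtingofPBW} applies.

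The main obstacle is negligible here, because the work is done in the lemma. The only points needing care are the nonabelian check, namely that $\tte_j$ survives in the quotient, and making sure the correspondence between ideals of $\overline{\fg}$ and ideals of $\fg'$ containing $\mathfrak z$ is used correctly.
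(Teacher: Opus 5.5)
Your proposal is correct and follows the paper's proof essentially step for step. You identify $\mathfrak z=\Ann\ttV\cap\fg_0'$ as central, either directly or via Lemma~\ref{lem:center-contragredient}, using $\fg_0'=\ima\ttd$. You then pull back ideals of the quotient and apply Lemma~\ref{lem:ideals-contragredient-indec}, and use an arrow $i\to j$, with $\tte_j$ surviving in the quotient, to get nonabelianness. Your extra remark on the PBW property via Proposition~\ref{prop:PBW-kernel-central} is a harmless addition that the paper leaves implicit.
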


\begin{proof}

Set $\mathfrak z=\Ann\ttV\cap\fg'_0$. As $\ttd$ is $\fh$-linear, $\ima\ttd=\sum_i\tth_i$ is an ideal of $\fh$, and by our hypothesis, $\fg'_0=\ima\ttd$. The ideal $\Ann\ttV$ centralizes $\ima\ttd$, so Lemma~\ref{lem:center-contragredient} gives $\mathfrak z=\fc(\fg')$. The preimage in $\fg'$ of any ideal of $\fg'/\mathfrak z$ contains $\mathfrak z$; by Lemma~\ref{lem:ideals-contragredient-indec}, it is either $\mathfrak z$ or $\fg'$. Hence the quotient has no nonzero proper ideals. Finally, an arrow $i\to j$ gives
$[\tth_i,\tte_j]=\tte_j\ne0$, and this bracket remains nonzero after quotienting by $\mathfrak z\subseteq\fg'_0$. Thus the quotient is nonabelian and is simple.
\end{proof}

\begin{remark}
The simple Lie algebra   $\fg'/(\Ann\ttV\cap \fg_0')$ is $\bZ$-graded with $\fg_0'/\Ann\ttV\cap \fg_0'$ as homogeneous component of degree $0$ and the same components of degree $n\ne 0$ as $\fg$. This follows since $\Ann\ttV\cap \fg_0'$ is an ideal contained in $\fg_0'$.
\end{remark}

\subsection{Reflections} We now use our previous results to construct a family of reflections. 
As in the previous subsection, fix $\ttV=\bigoplus_{j=1}^{r}\tte_j$ and  $\ttV^*=\bigoplus_{j=1}^{r}\ttf_j$, where $\ttf_j=\tte_j^*$, $\tth_i = [\tte_i,\ttf_i]$, and each $\tte_j$ is a simple $\fh$-module. Let $Q=\bigoplus_{j=1}^{r}\bZ\alpha_j$ be the grading lattice, and $\he:Q\to\bZ$ the usual height function, $\he \left(\sum_j n_j\alpha_j\right):=\sum_j n_j$.

Let $\ttY_i$ be the Lie subalgebra generated by $\fh,\tte_i,\ttf_i$. By \Cref{lem:Y-contragredient}, $\ttY_i\simeq \fg(\rho_{\tte_i},\ttd_{\tte_i})$.
Write $\fn_i^{\pm}\coloneqq\ttY_i\cap\fn_{\pm}$.

We want to define a Cartan matrix attached to $\ttV$, for which we first need to check some finiteness obstructions. Fix $i$, and set $c_{ii}^{\ttV}:=2$. For $i\ne j$ define
\begin{equation}\label{eq:defn-Nij}
 N_{ij}^{\ttV}:= \sup\{n\in\bN_0:\fg_{\alpha_j+n\alpha_i}\ne0\} \in\bN_0\cup\{\infty\}.
\end{equation}
When $N_{ij}^{\ttV}<\infty$, let
\begin{align}\label{eq:defn-cij}
 c_{ij}^{\ttV} & :=-N_{ij}^{\ttV}.
\end{align}
For $n\geq0$, write
\begin{align*}
P_{ij,n}&\coloneqq\fg_{\alpha_j+n\alpha_i}, & P^-_{ij,n}&\coloneqq\fg_{-\alpha_j-n\alpha_i}.
\end{align*}

\begin{lemma}\label{lem:galphaj+nalphai-simple}
Assume $N=N_{ij}^{\ttV}<\infty$.
\begin{enumerate}[leftmargin=*,label=\rm{(\alph*)}]
\item\label{item:interval} For every $n\geq0$, $P_{ij,n}=(\ad\tte_i)^n(\tte_j)$, $P^-_{ij,n}=(\ad\ttf_i)^n(\ttf_j)$. Consequently,
\begin{align*}
\{n\in\bN_0:P_{ij,n}\ne0\}=\{0,1,\ldots,N\}.
\end{align*}
\item\label{item:galphaj+nalphai-simple} The object $M_{ij}\coloneqq\bigoplus_{n=0}^{N}P_{ij,n}$ is a simple $\ttY_i$-module. Its top component $P_{ij,N}$ is a simple $\fh$-module.
\end{enumerate}
\end{lemma}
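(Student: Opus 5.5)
For part (a) I will work with the $Q$-grading of Theorem~\ref{thm:contragredient-symmetrizable-grading} and the triangular decomposition. The component $\fg_{\alpha_j+n\alpha_i}$ with $n\ge 0$ lies in $\fn_+$, and $\fn_+$ is generated by the $\tte_k$. Hence $\fg_{\alpha_j+n\alpha_i}$ is spanned by iterated brackets of $n$ copies of $\tte_i$ and one copy of $\tte_j$, in some order. I will prove $P_{ij,n}=(\ad\tte_i)^n(\tte_j)$ by induction on $n$. Any iterated bracket of these generators is a sum of right-normed brackets $[x_1,[x_2,\ldots,x_{n+1}]]$, by Jacobi and antisymmetry, exactly as in the proof of Lemma~\ref{lem:W-splitting}. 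A right-normed bracket whose innermost factors are $\tte_i,\tte_i$ lands in $\fg_{2\alpha_i}$. This will be handled as follows: rewrite so that $\tte_j$ is innermost, since $[\tte_i,[\ldots]]$ applied to degree $\alpha_j+(n-1)\alpha_i$ gives, by induction, $(\ad \tte_i)^n(\tte_j)$. Concretely, I use $P_{ij,n}=[\tte_i,P_{ij,n-1}]$, which follows from $\fg_{\beta}=\sum_k[\tte_k,\fg_{\beta-\alpha_k}]$ for $\beta\in Q^+$ and the observation that $\fg_{\alpha_j+(n-1)\alpha_i+\alpha_i-\alpha_k}$ vanishes unless $k\in\{i,j\}$. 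The case $k=j$ contributes $[\tte_j,\fg_{n\alpha_i}]$. I expect this to be the main obstacle: the term $[\tte_j,\fg_{n\alpha_i}]$ must be shown to lie in $(\ad\tte_i)^n(\tte_j)$. I would handle it by induction using Jacobi, writing $\fg_{n\alpha_i}=[\tte_i,\fg_{(n-1)\alpha_i}]$ and moving $\tte_i$ outward, so that $[\tte_j,[\tte_i,z]]=[\tte_i,[\tte_j,z]]-[[\tte_i,\tte_j],z]$. Each resulting term then falls into the inductive hypothesis after a nested induction on $n$. The statement for $P^-_{ij,n}$ is symmetric.

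The interval property follows immediately: if $P_{ij,n}=0$ then $P_{ij,n+1}=[\tte_i,P_{ij,n}]=0$. Since $P_{ij,0}=\tte_j\neq0$ and $N$ is the supremum, the nonzero set is exactly $\{0,\ldots,N\}$.

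For part (b), $M_{ij}$ equals $\bigoplus_{m\in\bZ}\fg_{\alpha_j+m\alpha_i}$, because $\alpha_j+m\alpha_i$ with $m<0$ lies in neither $Q^+$ nor $Q^-$. It is therefore stable under $\ttY_i$, which is generated by $\fh,\tte_i,\ttf_i$. I will apply the irreducibility criterion Theorem~\ref{thm:irred-graded-modules} to $\ttY_i$ with the grading by the $\alpha_i$-coefficient. Since $\ttY_i\cong\fg(\rho_{\tte_i},\ttd_{\tte_i})$, it is generated by its local part. The module $M_{ij}$ is finite with finitely many degrees, so $\fn_i^\pm$ acts nilpotently.

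By the theorem, it suffices to show that $M_{ij}$ is generated by a simple top and has no other $\fn_i^+$-invariants. I will use the shortcut of showing that $M_{ij}^-$ is $\tte_j$ and $M_{ij}^+$ is $P_{ij,N}$. Consider a homogeneous piece $Y\subseteq P_{ij,n}$ with $n<N$ annihilated by $\tte_i$. Then $Y$ is killed by $\ttf_k$ for $k\ne i$, since $\alpha_j+n\alpha_i-\alpha_k\notin Q^\pm$ unless $k=j$, $n=0$. If also $[Y,\ttf_i]$ vanished appropriately, Theorem~\ref{thm:g-contragredient-verp}\ref{item:g-contragredient-verp-trivial-bracket} would force $Y=0$. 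I will instead use the nondegenerate pairing $\ttB$ between $P_{ij,n}$ and $P^-_{ij,n}$. Invariance gives $\ttB([\tte_i,P_{ij,n}],P^-_{ij,n+1})=\pm\ttB(P_{ij,n},[\tte_i,P^-_{ij,n+1}])$. Part (a) for $P^-$ shows that $P^-_{ij,n+1}=[\ttf_i,P^-_{ij,n}]$, and this lets me check that the $\tte_i$-invariants of $M_{ij}$ are concentrated in degree $N$; symmetrically the $\ttf_i$-invariants are in degree $0$.

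Any nonzero submodule then contains $\ttf_i$-invariants, hence meets $\tte_j$, and so contains $\tte_j$ by simplicity of $\tte_j$ over $\fh$. It therefore contains everything by (a). For the simplicity of the top $P_{ij,N}$ over $\fh$, I use the $\ttB$-duality $P_{ij,N}\cong (P^-_{ij,N})^*$ and the same argument with $\ttf_i$ lowering $P_{ij,N}$ injectively down to $\tte_j$. Finally, Lemma~\ref{lem:L(S)+-is-S} identifies the top with the simple $\fh$-module $L^+$.
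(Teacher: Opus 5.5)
Your proof is correct in substance, and its core is the paper's argument. Part (a) uses the recursive Jacobi argument. Part (b) uses that, by (a), $P^-_{ij,n}=[\ttf_i,P^-_{ij,n-1}]$. Invariance of $\ttB$ then makes any $\ttf_i$-invariant subobject of $P_{ij,n}$ with $n>0$ orthogonal to $P^-_{ij,n}$, hence zero.

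Your concern about $[\tte_j,\fg_{n\alpha_i}]$ in (a) is legitimate, since $\fg_{n\alpha_i}$ can be nonzero here. It is settled at once: $\ad$ maps the subalgebra generated by $\tte_i$ into the associative algebra generated by $\ad\tte_i$, so $[\fg_{n\alpha_i},\tte_j]\subseteq(\ad\tte_i)^n(\tte_j)$.

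Your variations are fine and slightly cleaner than the paper in places:
\begin{itemize}
\item Stability of $M_{ij}$ follows directly from the $Q$-grading, because $\alpha_j+m\alpha_i\notin Q^\pm$ for $m<0$.
\item For an arbitrary submodule $W$, nilpotence of $\fn_i^-$ gives $W^-\ne0$. Also $M_{ij}^-$ is graded, being the kernel of a homogeneous map, and so equals $\tte_j$. Hence $W\supseteq\tte_j$ and $W=M_{ij}$ by (a). This replaces the paper's associated-graded and length argument.
\end{itemize}

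Two points need correcting.

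First, the pairing does not show that the $\tte_i$-invariants are concentrated in degree $N$. From $[\tte_i,Y]=0$ you only get $\ttB(Y,[\tte_i,P^-_{ij,n+1}])=0$. Concluding $Y=0$ would require $[\tte_i,P^-_{ij,n+1}]=P^-_{ij,n}$, which (a) does not provide. By duality that equality is equivalent to $\ad\tte_i$ being injective on $P_{ij,n}$, which is essentially what you are trying to prove. The claim is not needed; it becomes available only afterwards, from Lemma~\ref{lem:L(S)+-is-S}.

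Simplicity of the top should instead be deduced after simplicity of $M_{ij}$. A nonzero $\fh$-submodule $S\subseteq P_{ij,N}$ is killed by $\tte_i$. By Lemma~\ref{lem:submod-gen-from-V+} it generates $U(\fn_i^-)\cdot S$, which must be $M_{ij}$ and has degree-$N$ component $S$. This is the paper's argument. Equivalently, Theorem~\ref{thm:irred-graded-modules} shows that $M_{ij}^+\supseteq P_{ij,N}$ is a simple $\fh$-module. Injectivity of $\ttf_i$ in positive degrees does not by itself give simplicity of the top.

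Second, your abandoned aside that $Y$ is killed by $\ttf_j$ is false. In fact $[P_{ij,n},\ttf_j]\subseteq\fg_{n\alpha_i}$, and this is nonzero in general. For $n=1$ it equals $[\tte_i,\tth_j]\subseteq\fg_{\alpha_i}$, which is nonzero as soon as $\tth_j$ acts nontrivially on $\tte_i$.
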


\begin{proof}
\ref{item:interval} The fact that $P_{ij,n}=(\ad\tte_i)^n(\tte_j)$ and $P^-_{ij,n}=(\ad\ttf_i)^n(\ttf_j)$ follows arguing as in Lemma \ref{lem:W-splitting} \ref{item:W-decomposes}. In particular, $P_{ij,n}=0$ implies $P_{ij,n+1}=0$. Since $P_{ij,0}=\tte_j\ne0$ and $N$ is the largest non-zero index, the support is exactly $\{0,\ldots,N\}$.

\noindent \ref{item:galphaj+nalphai-simple}  Note that the object $M_{ij}$ is stable under $\fh$ and $\tte_i$. It is also stable under $\ttf_i$, since for $n=0$, $[P_{ij,0},\ttf_i]=[\tte_j,\ttf_i]=0$, while for $n>0$, $[P_{ij,n},\ttf_i]\subseteq P_{ij,n-1}$. Hence $M_{ij}$ is a $\ttY_i$-submodule.

Now we check that $M_{ij}$ is simple. First let $0\ne M\subseteq M_{ij}$ be a graded $\ttY_i$-submodule, and choose the least $n$ with $M_n\coloneqq M\cap P_{ij,n}\ne0$. If $n>0$, then $[\ttf_i,M_n]=0$. Since
$P^-_{ij,n}=[\ttf_i,P^-_{ij,n-1}]$, invariance gives
\[
 \ttB(M_n, P^-_{ij,n}) =\ttB([M_n,\ttf_i], P^-_{ij,n-1})=0,
\]
which is a contradiction since the pairing $P_{ij,n}\otimes P^-_{ij,n}\to\one$ is non degenerate. Thus $n=0$. The object $M_0$ is a non-zero $\fh$-submodule of $\tte_j$, so $M_0=\tte_j$, and $M=M_{ij}$.

For an arbitrary non-zero $\ttY_i$-submodule $M$, use the filtration on $M$ induced by 
\[
 F_{n}M=M \cap \left(\bigoplus_{0\leq r\leq n}P_{ij,r}\right),
\]
and the degree filtration on $\ttY_i$. The associated graded module $\gr M$ is a non-zero graded $\ttY_i$-submodule and hence $\gr M = M_{ij}$. Since the passage to the associated graded module preserves length, $M=M_{ij}$.

Finally, let $0\ne S\subseteq P_{ij,N}$ be an $\fh$-submodule. As $\fn_i^{+}$ annihilates $S$, Lemma~\ref{lem:submod-gen-from-V+} shows that the $\ttY_i$-submodule generated by $S$ is $U(\fn_i^-)\cdot S$, so $U(\fn_i^-)\cdot S=M_{ij}$. Its component of degree $N$ is exactly $S$, hence $S=P_{ij,N}$. Thus $S$ is simple.
\end{proof}

Following the terminology used for Nichols algebras \cite{HS}, we say that an ordered module $\ttV=(\tte_1,\ldots,\tte_r)$ is \emph{$i$-finite} if $N_{ij}^{\ttV}<\infty$ for every $j\ne i$. For an $i$-finite module $\ttV$, set
\begin{align*}
 R_i(\ttV)&:=(\tte_1^{(i)},\ldots,\tte_r^{(i)}), & &\text{where} & \tte_i^{(i)}&=\ttf_i, & \tte_j^{(i)}&=P_{ij,N_{ij}^{\ttV}}\quad(j\ne i).
\end{align*}
Let $\rho^{(i)}$ denote the $\fh$-action on $R_i(\ttV)$. Define also
\begin{align*}
s_i^{\ttV}: & \, \bZ^{r}\to \bZ^{r}, & s_i^{\ttV}(\alpha_j)&:=\alpha_j-c_{ij}^{\ttV}\alpha_i, &  &1\leq j\leq r.
\end{align*}
Since $s_i^{\ttV}(\alpha_i)=-\alpha_i$, we have that $(s_i^{\ttV})^2=\id_Q$.

\begin{lemma}\label{lem:reflected-datum-symmetrizable}
If $\ttV$ is $i$-finite, then the pair $(\rho^{(i)},\ttK)$ is a symmetrizable datum. 
\end{lemma}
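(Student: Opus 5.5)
We have to check two things. Condition \ref{item:symmetrizable-def-2} holds by hypothesis, since $\ttK$ is unchanged. So everything reduces to \ref{item:symmetrizable-def-1} for the $\fh$-module $R_i(\ttV)=\ttf_i\oplus\bigoplus_{j\ne i}P_{ij,N_{ij}^{\ttV}}$. By \cref{rem:SD1-holds-ss}, it suffices to show that $R_i(\ttV)$ is a finite direct sum of simple $\fh$-modules, each with nontrivial $\fh$-action. I would verify this summand by summand.

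\emph{The summand $\ttf_i=\tte_i^*$.} It is the dual of a simple $\fh$-module, hence simple. Suppose $\fh$ acted trivially on $\ttf_i$. Then, dually, it would act trivially on $\tte_i$, and $\tte_i$ would be a trivial simple $\fh$-submodule of $\ttV$. This contradicts \ref{item:symmetrizable-def-1} for the original datum $(\rho,\ttK)$.

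\emph{The summands $P_{ij,N}$ with $j\ne i$ and $N=N_{ij}^{\ttV}$.} By \cref{lem:galphaj+nalphai-simple}\ref{item:galphaj+nalphai-simple}, $P_{ij,N}$ is a simple $\fh$-module. The main point is nontriviality of the action. Suppose $\fh$ acts trivially on $P_{ij,N}$. For $S:=P_{ij,N}$ and $\varphi\in P^-_{ij,N}$, I would use $\ttB$ together with the identification $\fh=\fg_0$. For $h\in\fh$, invariance gives
\[
\ttK\bigl(h,[S,\varphi]\bigr)=\ttB\bigl([h,S],\varphi\bigr)=0 .
\]
The bracket $[S,P^-_{ij,N}]$ lands in $\fg_0=\fh$, and $\ttK$ is nondegenerate on $\fh$, so $[S,P^-_{ij,N}]=0$.

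Next I would show that this contradicts nondegeneracy of $\ttB$ on $\fg_{\alpha_j+N\alpha_i}\otimes\fg_{-\alpha_j-N\alpha_i}$. The plan is the following: since $P^-_{ij,N}$ is a nonzero $\fh$-submodule of degree $-(\alpha_j+N\alpha_i)$, it suffices to find a nonzero element of the form $[S,y]$ that pairs nontrivially with it. The cleanest route is to write $S=[\tte_i,P_{ij,N-1}]$ when $N>0$ (or $S=\tte_j$ when $N=0$, where nontriviality follows directly from \ref{item:symmetrizable-def-1}). Then use \cref{thm:g-contragredient-verp}\ref{item:g-contragredient-verp-trivial-bracket}: $S$ is a nonzero homogeneous subobject of $\fn_+$, so $[S,\ttf_k]\ne0$ for some $k$.

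If $k\ne i,j$, then $[S,\ttf_k]$ lies in degree $\alpha_j+N\alpha_i-\alpha_k\notin Q^+\cup Q^-$, so it vanishes. Hence $k=i$ or $k=j$. Now pair with the negative side using invariance,
\[
\ttB\bigl([S,\ttf_k],z\bigr)=\ttB\bigl(S,[\ttf_k,z]\bigr),
\]
and, since the degree-$(\alpha_j+N\alpha_i-\alpha_k)$ pairing is nondegenerate, get an element $z$ such that $[\ttf_k,z]$ pairs nontrivially with $S$.

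I expect this step to be the main obstacle: converting ``$\fh$ acts trivially on $S$'' into a contradiction. The key identity I would aim for is
\[
\ttB\bigl([[S,\varphi'],h],\cdot\bigr)\quad\text{versus}\quad \ttK\bigl(h,[S,\varphi]\bigr),
\]
in the manner of the proof of \cref{lem:splitting-symmetrizable-datum}. If that fails, I would instead apply \cref{thm:splitting-contragredient} with $\ttU=\tte_i$. This realizes $M_{ij}$ as a simple summand of the $\ttY_i$-module $\ttW$ attached to the datum $(\eta,\ttK_{\ttY_i})$. \cref{lem:splitting-symmetrizable-datum} then gives that $\ttY_i$ acts nontrivially on $M_{ij}$. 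Finally, I would argue that a simple $\ttY_i$-module whose highest component has trivial $\fh$-action is forced, by the argument of \cref{lem:L(S)+-is-S}, to have trivial top weight pairing, contradicting the nondegeneracy of $\ttB$ in degree $\alpha_j+N\alpha_i$.
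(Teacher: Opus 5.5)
\paragraph{Verdict.} The proposal has a genuine gap: the one nontrivial step is not carried out.

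\paragraph{What matches the paper.} Your reduction is the paper's. Condition \ref{item:symmetrizable-def-2} is inherited, and \cref{rem:SD1-holds-ss} reduces everything to showing that each summand of $R_i(\ttV)$ is a simple $\fh$-module with nonzero action. The summand $\ttf_i$ and the case $N=0$ go exactly as you say, and simplicity of $P_{ij,N}$ comes from \cref{lem:galphaj+nalphai-simple}.

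\paragraph{The gap.} What is missing is nontriviality of the $\fh$-action on $S=P_{ij,N}$ when $N>0$.
\begin{itemize}
\item \emph{First route.} You correctly derive $[S,P^-_{ij,N}]=0$, and you find $k\in\{i,j\}$ with $[S,\ttf_k]\ne0$. But the pairing step $\ttB([S,\ttf_k],z)=\ttB(S,[\ttf_k,z])$ never uses the trivial-action assumption again. So no contradiction can come out of it.
\item \emph{Fallback route.} \cref{thm:splitting-contragredient} and \cref{lem:splitting-symmetrizable-datum} need $\ttY_i\in\cC$ and that $\ttY_i$ has enough modules. $i$-finiteness gives neither. Even granting them, \cref{lem:splitting-symmetrizable-datum} only says that $\ttY_i$ acts nontrivially on simple subobjects of $\ttW$. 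That does not single out the $\fh$-action on $P_{ij,N}$.
\item The closing appeal to \cref{lem:L(S)+-is-S} is not an argument.
\end{itemize}

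\paragraph{The missing idea.} Combine the Jacobi identity with the simplicity of $M_{ij}$ as a $\ttY_i$-module. Put $Q_0:=[\ttf_i,S]\subseteq P_{ij,N-1}$; it is $\fh$-stable. If $\fh$ kills $S$, then
\[
[\tte_i,Q_0]\subseteq[[\tte_i,\ttf_i],S]+[\ttf_i,[\tte_i,S]]=0,
\]
since $[\tte_i,\ttf_i]\subseteq\fh$ and $[\tte_i,S]\subseteq P_{ij,N+1}=0$. Thus $\fn_i^+$ annihilates $Q_0$.
\begin{itemize}
\item If $Q_0\ne0$, \cref{lem:submod-gen-from-V+} makes $U(\fn_i^-)\cdot Q_0$ a nonzero $\ttY_i$-submodule of $M_{ij}$ supported in degrees $<N$. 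This contradicts simplicity.
\item Hence $Q_0=0$. Then $S$ is killed by $\fh$, $\tte_i$ and $\ttf_i$, so it is a nonzero $\ttY_i$-submodule of $M_{ij}$. So $S=M_{ij}$, forcing $N=0$, a contradiction.
\end{itemize}
This is the paper's argument.

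\paragraph{Shortcut available to you.} Your invariance idea can replace the case $Q_0=0$. Schematically, $\ttB([f,s],\varphi)=-\ttB(s,[f,\varphi])$ for $f\in\ttf_i$, $s\in S$, $\varphi\in P^-_{ij,N-1}$. Also $[\ttf_i,P^-_{ij,N-1}]=P^-_{ij,N}$ pairs nondegenerately with $S$. Together these show $Q_0\ne0$ outright, and the highest-weight argument above then finishes.
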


\begin{proof} We use schematic notation. 
The summand $\tte_i^{(i)}=\ttf_i$ is a simple $\fh$-module with non-zero action by duality and condition \ref{item:symmetrizable-def-1} for the original datum. Fix $j\ne i$ and set $N=N_{ij}^{\ttV}$. By Lemma~\ref{lem:galphaj+nalphai-simple}, $\tte_j^{(i)}$ is a simple $\fh$-module. We claim that the $\fh$-action on $\tte_j^{(i)}$ is non-zero. For $N=0$, $\tte_j^{(i)}=\tte_j$ and the action is nontrivial by the original hypotheses. Suppose that $N>0$ and the $\fh$-action on $\tte_j^{(i)}$ is zero. Since $\tte_j^{(i)}$ is the top of the $i$-string, $[\tte_i,\tte_j^{(i)}]=0$. Set $Q_0\coloneqq[\ttf_i,\tte_j^{(i)}]\subseteq P_{ij,N-1}$, which is $\fh$-stable. Then  $[\tte_i,\ttf_i]\in\fh$, and the assumed trivial $\fh$-action on $\tte_j^{(i)}$ gives
\[
 [\tte_i,Q_0]
 \in  [[\tte_i,\ttf_i],\tte_j^{(i)}]+[\ttf_i,[\tte_i,\tte_j^{(i)}]]=0.
\]
Since $\fn_i^+$ is generated by $\tte_i$, it follows that $\fn_i^+$ annihilates $Q_0$. If $Q_0\ne0$, then Lemma~\ref{lem:submod-gen-from-V+} shows that $U(\fn_i^-)\cdot Q_0$ is a non-zero $\ttY_i$-submodule of
$M_{ij}$, which is contained in $\bigoplus_{0\leq r<N}P_{ij,r}$, and is therefore proper. This contradicts Lemma~\ref{lem:galphaj+nalphai-simple}. Hence $Q_0=0$.

It follows that $\tte_j^{(i)}$ is annihilated by $\fh$, $\tte_i$, and $\ttf_i$, so $\tte_j^{(i)}$ is a non-zero $\ttY_i$-submodule of $M_{ij}$. Again, as $M_{ij}$ is simple, $\tte_j^{(i)}=M_{ij}$. Then, $N=0$, which says that $\tte_j^{(i)}=\tte_j$ has trivial $\fh$-action, contradicting \ref{item:symmetrizable-def-1}. This proves the $\fh$-action on $\tte_j^{(i)}$ is non-zero.

Consequently, $ R_i(\ttV)$ is a finite direct sum of simple $\fh$-modules with nontrivial action. Then Remark \ref{rem:SD1-holds-ss} applies, so \ref{item:symmetrizable-def-1} holds.
Condition \ref{item:symmetrizable-def-2} is satisfied by hypothesis.
\end{proof}

Assume that $\ttV= (\rho,\ttd)$ is $i$-finite. 
Set 
$\ttd^{(i)}:=\ttd_{\rho^{(i)},\ttK}$, $R_i(\ttV) \coloneqq(\rho^{(i)},\ttd^{(i)})$.
Using the bilinear form we identify the dual of $\tte_j^{(i)}$ with the opposite homogeneous component,
\begin{align*}
(\tte_i^{(i)})^* & \cong\tte_i, & (\tte_j^{(i)})^*\cong P^-_{ij,N_{ij}^{\ttV}}=(\ad\ttf_i)^{N_{ij}^{\ttV}}(\ttf_j), \quad j\ne i.
\end{align*}

Thanks to the previous lemma we can consider the Lie algebras $\widetilde{\fg} \left(R_i(\ttV) \right)$ and $\fg \left(R_i(\ttV)\right)$. We have the following preliminary result.

\begin{lemma}\label{lem:Titilde}
If $\ttV$ is $i$-finite, there is a Lie algebra map
\begin{align}\label{eq:T_i def}
\widetilde{T}_i^{\,\ttV}: \widetilde{\fg}(R_i(\ttV))\longrightarrow\fg(\ttV),
\end{align}
which is the identity on $\fh$, the canonical inclusion for each $\tte_j^{(i)}$, and sends the dual $(\tte_j^{(i)})^*$ to the opposite component specified above. Moreover,
\[
 \widetilde{T}_i^{\,\ttV} \bigl(\widetilde{\fg}(R_i(\ttV))_{\beta}\bigr) \subseteq\fg(\ttV)_{s_i^{\ttV}(\beta)} \qquad\text{for all }\beta\in Q.
\]
\end{lemma}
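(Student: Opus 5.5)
We will define $\widetilde{T}_i^{\,\ttV}$ through the universal property of $\widetilde{\fg}(R_i(\ttV))$. That algebra is the quotient of $\FLie(R_i(\ttV)\oplus R_i(\ttV)^*)\rtimes_{\rho^{(i)}}\fh$ by the ideal generated by the image of $(\ttb_{R_i(\ttV)\otimes R_i(\ttV)^*},-\ttd^{(i)})$.

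First, the free Lie algebra gives a unique Lie algebra morphism
\[
\FLie\bigl(R_i(\ttV)\oplus R_i(\ttV)^*\bigr)\to\fg(\ttV).
\]
On $\tte_i^{(i)}=\ttf_i$ it is the inclusion of $\ttf_i=\fg_{-\alpha_i}$. On $(\tte_i^{(i)})^*\cong\tte_i$ it is the inclusion of $\fg_{\alpha_i}$. For $j\ne i$ it sends $\tte_j^{(i)}=P_{ij,N}$ and $(\tte_j^{(i)})^*\cong P^-_{ij,N}$ to their canonical inclusions. All these maps are $\fh$-linear. For $P^-_{ij,N}$ this uses that the identification with the dual is induced by the invariant form $\ttB$, which is $\fh$-equivariant by \cref{thm:contragredient-symmetrizable-grading}. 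We extend to the semidirect product by the identity on $\fh$. This is a Lie map because the generators are $\fh$-equivariant and the semidirect product bracket is $[h,x]=\rho^{(i)}(h\otimes x)$, which agrees with $\ad$ in $\fg(\ttV)$.

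Next we check the defining relations, i.e.\ that the bracket in $\fg(\ttV)$ of $\tte_j^{(i)}$ with $(\tte_k^{(i)})^*$ equals $\ttd^{(i)}$ on that summand. There are two cases.

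\emph{Case $j\ne k$.} The bracket has degree $\pm(\alpha_j-\alpha_k)+(\text{multiple of }\alpha_i)$. The coefficients of $\alpha_j$ and $\alpha_k$ then have opposite signs, so this degree lies in neither $Q^+$ nor $Q^-$ and is nonzero. By \cref{thm:contragredient-symmetrizable-grading} the bracket vanishes. When one index equals $i$, say $[\ttf_i,P^-_{ij,N}]$ or $[\tte_i,P_{ij,N}]$, the degree is $\mp(\alpha_j+(N+1)\alpha_i)$. That component is zero by the definition of $N=N_{ij}^{\ttV}$ and its mirror statement, the latter following from nondegeneracy of $\ttB$ on $\fg_\beta\otimes\fg_{-\beta}$. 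The remaining mixed terms, such as $[\ttf_i,P^-_{ij,N}]$ with indices arranged oppositely, are handled the same way. Meanwhile $\ttd^{(i)}(\tte_j^{(i)}\otimes(\tte_k^{(i)})^*)=0$ for $j\ne k$ by \eqref{eq:symmetrizable-datum-ortogonal-summands}.

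\emph{Case $j=k$.} The bracket $[\tte_j^{(i)},(\tte_j^{(i)})^*]$ lands in $\fg_0=\fh$. Invariance of $\ttB$ with $\restr{\ttB}{\fh\otimes\fh}=\ttK$ gives
\[
\ttK(h,[x,\varphi])=\ttB([h,x],\varphi)=\ev(\rho^{(i)}(h\otimes x),\varphi).
\]
Here the dual pairing is exactly the $\ttB$-identification, with a sign check for $\tte_i^{(i)}=\ttf_i$ coming from symmetry of $\ttB$. By the uniqueness in \eqref{eq:symmetrizable-dmap-characterization}, this bracket equals $\ttd^{(i)}$. Hence the map factors through $\widetilde{\fg}(R_i(\ttV))$. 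Strictly, we must also account for the $\fh$-relations being consistent with the semidirect product, which holds since the ideal is generated inside.

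Finally, for the degree statement, $\widetilde{\fg}(R_i(\ttV))$ is $Q$-graded with $\deg\tte_j^{(i)}=\alpha_j$ and $\deg\fh=0$. The generators are sent to degrees $s_i^{\ttV}(\alpha_j)$: namely $\ttf_i\mapsto-\alpha_i=s_i(\alpha_i)$ and $P_{ij,N}\mapsto\alpha_j+N\alpha_i=\alpha_j-c_{ij}\alpha_i$, with the duals going to the negatives. Since $s_i^{\ttV}$ is additive and the bracket is homogeneous in both algebras, the containment follows for all $\beta$ by induction on bracket length.

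The main obstacle is the orthogonality case $j\ne k$ involving $i$. It must be verified that every mixed degree is either outside $Q^\pm$ or equal to $\pm(\alpha_j+(N+1)\alpha_i)$, so that one of the two vanishing mechanisms always applies.
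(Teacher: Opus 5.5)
Your proposal is correct and follows essentially the same route as the paper. Both arguments extend the $\fh$-equivariant identifications to $\FLie(R_i(\ttV)\oplus R_i(\ttV)^*)\rtimes\fh$. Both kill the off-diagonal relations by degree: either the degree lies outside $Q^+\cup Q^-$, or it equals $\pm(\alpha_k+(N_{ik}^{\ttV}+1)\alpha_i)$, which vanishes by maximality of $N_{ik}^{\ttV}$ together with nondegeneracy of $\ttB$. Both then obtain the diagonal relation from invariance of $\ttB$ and the uniqueness in \eqref{eq:symmetrizable-dmap-characterization}.

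The ``main obstacle'' you flag at the end is already settled by your own case split. For $j\ne k$, either both indices differ from $i$ or exactly one of them equals $i$, and you treat both cases.
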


\begin{proof}
The identifications $R_i(\ttV), R_i(\ttV)^* \hookrightarrow \fg(\ttV)$ are of $\fh$-modules, so they induce a map $\widehat{T}_i^{\,\ttV}:\FLie(R_i(\ttV)\oplus R_i(\ttV)^* )\rtimes_\rho \fh\to \fg(\ttV)$. It remains to check that the defining relations of $\widetilde{\fg}(R_i(\ttV))$ in \S \ref{subsec:contragredient-defs} belong to the kernel of this map. 

\begin{enumerate}[leftmargin=*,label=\rm{(\alph*)}]
\item If $j,k\ne i$ and $j\ne k$, then $s_i^{\ttV}(\alpha_j)-s_i^{\ttV}(\alpha_k)=\alpha_j-\alpha_k+(N_{ij}^{\ttV}-N_{ik}^{\ttV})\alpha_i$, so the corresponding homogeneous component of $\fg$ is zero. Then 
\begin{align*}
\widehat{T}_i^{\,\ttV}([\tte_j,\ttf_k])=\left[(\ad\tte_i)^{N_{ij}^{\ttV}}(\tte_j),(\ad\ttf_i)^{N_{ik}^{\ttV}}(\ttf_k)\right]=0.
\end{align*}
\item If $j=i$ and $k\ne i$, the degree of $\widehat{T}_i^{\,\ttV}([\tte_i,\ttf_k])$ is $-\alpha_k-(N_{ik}^{\ttV}+1)\alpha_i$, so the homogeneous component of this degree is zero by maximality of $N_{ik}^{\ttV}$. Then $\widehat{T}_i^{\,\ttV}([\tte_i,\ttf_k])=0$. The case $j\ne i$, $k=i$ is analogous.

\item For $j=k$, we take $u\in\tte_j^{(i)}$ and $v\in(\tte_j^{(i)})^*$. We have that
\begin{align*}
\ttK(x,[u,v]) &=\ttB(x,[u,v]) = \ttB([x,u],v) =\ev_{\tte_j^{(i)}}(\rho^{(i)}(x,u),v) & \text{for all }& x\in\fh.
\end{align*}
By the identity \eqref{eq:symmetrizable-dmap-characterization}, this says precisely that $[u,v]=\ttd^{(i)}(u,v)$.
\end{enumerate}
Thus all defining relations hold, and then $\widehat{T}_i^{\,\ttV}$ induces the desired map $\widetilde{T}_i^{\,\ttV}$. 
The statement about the grading follows from its values on the generators.
\end{proof}

We now arrive at the main result of this subsection, where we construct the desired reflections.

\begin{theorem}\label{thm:Ti-isomorphism}
If $\ttV$ is $i$-finite, the map \eqref{eq:T_i def} induces an isomorphism
\[
 T_i^{\ttV}: \fg(R_i(\ttV))\xrightarrow{\sim}\fg(\ttV)
\]
such that $T_i^{\ttV}\bigl(\fg(R_i(\ttV))_{\beta}\bigr)=\fg(\ttV)_{s_i^{\ttV}(\beta)}$ for all $\beta\in Q$.
\end{theorem}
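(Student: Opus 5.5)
The plan is to show that $\widetilde{T}_i^{\,\ttV}$ is surjective, that its kernel is $\bZ$-homogeneous (for the height grading of the reflected datum) and meets $\fh$ trivially, and that $\fg(\ttV)$ carries a suitable degree-zero form. Then Proposition~\ref{prop:contragredient-bilinear-form} identifies the kernel with the maximal ideal $\ttm$ of $\widetilde{\fg}(R_i(\ttV))$. This yields an isomorphism $T_i^{\ttV}:\fg(R_i(\ttV))\to\fg(\ttV)$.

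\textbf{Kernel.} The kernel is $Q$-homogeneous by the degree statement of Lemma~\ref{lem:Titilde}. Since $\widetilde T_i^{\,\ttV}$ is the identity on $\fh=\widetilde\fg(R_i(\ttV))_0$, the kernel meets $\fh$ trivially.

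\textbf{Surjectivity.} The image is a Lie subalgebra containing $\fh$, $\tte_i=(\tte_i^{(i)})^*$ and $\ttf_i=\tte_i^{(i)}$, hence it contains $\ttY_i$. For $j\neq i$ it contains $P_{ij,N}$ with $N=N_{ij}^{\ttV}$. By Lemma~\ref{lem:galphaj+nalphai-simple}, $M_{ij}$ is a simple $\ttY_i$-module, so the $\ttY_i$-submodule generated by $P_{ij,N}$ is all of $M_{ij}$, which contains $\tte_j=P_{ij,0}$. Dually, the image contains $P^-_{ij,N}$. The module $\bigoplus_n P^-_{ij,n}$ is simple by the same lemma with the roles of $\tte_i$ and $\ttf_i$ swapped, or alternatively because it is dual to $M_{ij}$ via $\ttB$. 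Hence the image contains $\ttf_j$. Since $\fg(\ttV)$ is generated by $\fh$, the $\tte_j$ and the $\ttf_j$, the map is surjective.

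\textbf{Form and conclusion.} Proposition~\ref{prop:contragredient-bilinear-form} requires a degree-zero nondegenerate symmetric invariant form on the quotient $\widetilde{\fg}(R_i(\ttV))/\ker\cong\fg(\ttV)$, where degree is taken with respect to the $Q$-grading of $R_i(\ttV)$. I would pull back the form $\ttB$ of Theorem~\ref{thm:contragredient-symmetrizable-grading} along the isomorphism. It is symmetric, invariant and nondegenerate. Because $s_i^{\ttV}$ is a linear involution, the condition $\ttB(\fg_\alpha,\fg_\beta)=0$ for $\alpha+\beta\neq0$ is preserved after relabelling degrees by $s_i^{\ttV}$, so the pulled-back form has degree zero. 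The proposition then gives $\ker\widetilde{T}_i^{\,\ttV}=\ttm$, and $T_i^{\ttV}$ is an isomorphism. The degree statement upgrades from inclusion to equality: each map $\fg(R_i(\ttV))_\beta\to\fg(\ttV)_{s_i^{\ttV}(\beta)}$ is injective, and surjectivity together with the bijectivity of $s_i^{\ttV}$ on $Q$ forces each graded piece to map onto its target.

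\textbf{Expected obstacle.} I expect the main difficulty to be making sure the quotient genuinely sits in the setting of Proposition~\ref{prop:contragredient-bilinear-form}. The pieces $\fg(\ttV)_{s_i^{\ttV}(\beta)}$ for $\beta\in Q^+$ need not be concentrated in positive height of the original grading, so the triangular structure is not obviously preserved. The fix I would try first is to note that the proposition only uses the $\bZ$-grading (the height grading of $R_i(\ttV)$) together with the degree-zero form, and that the pulled-back grading provides both. Failing that, I would rerun the argument of Theorem~\ref{thm:splitting-contragredient} directly. That argument uses nondegeneracy of the pairings $\fg_\gamma\otimes\fg_{-\gamma}$ and Theorem~\ref{thm:g-contragredient-verp}\ref{item:g-contragredient-verp-trivial-bracket} to show that a homogeneous ideal meeting $\fh$ trivially lies in the radical of the form.
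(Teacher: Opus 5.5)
Your proposal is correct and follows essentially the paper's proof. The paper gets surjectivity from the simple $\ttY_i$-modules $M_{ij}$ and their duals, then applies Proposition~\ref{prop:contragredient-bilinear-form} to $\fg(\ttV)$ with the height grading transported by $s_i^{\ttV}$, namely $\fg^{(i)}_n=\bigoplus_{\he(\beta)=n}\fg_{s_i^{\ttV}(\beta)}$, for which $\ttB$ has degree zero. Your anticipated ``obstacle'' is a non-issue for exactly the reason in your first fix, and your surjectivity-plus-bijectivity argument for equality of the graded pieces fills in what the paper leaves to its one-line appeal to Lemma~\ref{lem:Titilde}.
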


\begin{proof}
The image of $\widetilde{T}_i^{\,\ttV}$ contains $\fh,\tte_i,\ttf_i$, and hence $\ttY_i$. For $j\ne i$, it also contains $P_{ij,N_{ij}^{\ttV}}$. By Lemma~\ref{lem:galphaj+nalphai-simple}, $P_{ij,N_{ij}^{\ttV}}$ generates the simple $\ttY_i$-module $M_{ij}$, so $\tte_j$ is also contained in the image. A dual argument proves that $\ttf_j\subseteq \ima \widetilde{T}_i^{\,\ttV}$. Hence $\ima \widetilde{T}_i^{\,\ttV}$ contains $\fh,\ttV,\ttV^*$, and then $\widetilde{T}_i^{\,\ttV}$ is surjective.

We consider a new grading on $\fg=\fg(\ttV)$ by setting
\begin{align*}
\fg^{(i)}_n&\coloneqq \bigoplus_{\he(\beta)=n}\fg_{s_i^{\ttV}(\beta)}, & &n\in\bZ.
\end{align*}
The map $\widetilde{T}_i^{\,\ttV}$ is surjective and homogeneous for this grading; in particular, its kernel is homogeneous and intersects $\fh$ trivially. The form $\ttB$
is non-degenerate when we restrict to $\fg^{(i)}_n \otimes \fg^{(i)}_{-n}$, since it is the direct sum of the non-degenerate pairings $\fg_{s_i^{\ttV}(\beta)}\otimes \fg_{-s_i^{\ttV}(\beta)}\longrightarrow\one$.
By Proposition~\ref{prop:contragredient-bilinear-form}, the kernel of $\widetilde{T}_i^{\,\ttV}$ is the maximal ideal defining $\fg(R_i(\ttV))$, so it induces the isomorphism $T_i^{\ttV}$.
The statement about the grading follows from  Lemma \ref{lem:Titilde}.
\end{proof}

\begin{corollary}\label{lem:reflection}
If $\ttV$ is $i$-finite, then $R_i(\ttV)$ is also $i$-finite and
\begin{align*}
c_{ij}^{R_i(\ttV)} &=c_{ij}^{\ttV} \quad \text{ for each }j\ne i,  & R_i^2(\ttV) &\cong\ttV.
\end{align*}
\end{corollary}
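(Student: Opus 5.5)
We plan to transport everything through the isomorphism $T_i^{\ttV}:\fg(R_i(\ttV))\xrightarrow{\sim}\fg(\ttV)$ of Theorem~\ref{thm:Ti-isomorphism}. It is graded in the sense that it sends degree $\beta$ to degree $s_i^{\ttV}(\beta)$. Consequently, for $j\ne i$ and $n\ge0$,
\[
\fg(R_i(\ttV))_{\alpha_j+n\alpha_i}\cong\fg(\ttV)_{s_i^{\ttV}(\alpha_j)-n\alpha_i}=\fg(\ttV)_{\alpha_j+(N_{ij}^{\ttV}-n)\alpha_i}.
\]
By Lemma~\ref{lem:galphaj+nalphai-simple}\ref{item:interval}, the right-hand side is nonzero exactly for $0\le n\le N_{ij}^{\ttV}$. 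Hence $N_{ij}^{R_i(\ttV)}=N_{ij}^{\ttV}<\infty$. This gives $i$-finiteness of $R_i(\ttV)$ and the equality $c_{ij}^{R_i(\ttV)}=c_{ij}^{\ttV}$. It also yields $s_i^{R_i(\ttV)}=s_i^{\ttV}$.

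For $R_i^2(\ttV)\cong\ttV$, we compute the summands of $R_i(R_i(\ttV))$ inside $\fg(R_i(\ttV))$ and push them into $\fg(\ttV)$ via $T_i^{\ttV}$. The $i$-th summand of $R_i^2(\ttV)$ is the dual of $\tte_i^{(i)}=\ttf_i$, namely the degree $\alpha_i$ component of $\fg(R_i(\ttV))$, and this component is $\tte_i$. For $j\ne i$, the $j$-th summand is the top of the $i$-string,
\[
\fg(R_i(\ttV))_{\alpha_j+N\alpha_i},\qquad N=N_{ij}^{\ttV},
\]
and $T_i^{\ttV}$ maps it isomorphically onto $\fg(\ttV)_{\alpha_j}=\tte_j$. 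Since $T_i^{\ttV}$ is the identity on $\fh$ and these restrictions are isomorphisms of $\fh$-modules, we obtain $\fh$-module isomorphisms $\tte_j^{(i)(i)}\cong\tte_j$ for every $j$. These respect the ordering of the summands, so they assemble to $R_i^2(\ttV)\cong\ttV$ as $\fh$-modules.

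The remaining point is that the datum $R_i^2(\ttV)$ is $(\rho^{(i)(i)},\ttd_{\rho^{(i)(i)},\ttK})$. Its $\ttd$-map is determined by $\rho$ and $\ttK$ through \eqref{eq:symmetrizable-dmap-characterization}. Therefore an $\fh$-module isomorphism automatically intertwines the $\ttd$-maps, once duals are identified via the transposed isomorphisms.

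The main obstacle is the first step. We must check that the $\fh$-module structure on each graded component of $\fg(R_i(\ttV))$ agrees with that on the corresponding component of $\fg(\ttV)$. This holds because $T_i^{\ttV}$ restricts to the identity on $\fh$ and is a Lie algebra map, so it is $\fh$-equivariant. A second point to verify is that the string-top in $\fg(R_i(\ttV))$ defining $\tte_j^{(i)(i)}$ is computed with respect to the same index $i$ and the same ordered decomposition. This follows from the definitions, since $R_i$ preserves the index positions.
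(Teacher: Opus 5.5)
Your proposal is correct and follows the paper's own proof: you transport the $i$-string through $T_i^{\ttV}$ using $s_i^{\ttV}(\alpha_j+n\alpha_i)=\alpha_j+(N-n)\alpha_i$, read off $N_{ij}^{R_i(\ttV)}=N_{ij}^{\ttV}$, and then observe that the new string top is sent to $\fg(\ttV)_{\alpha_j}=\tte_j$ while $(\ttf_i)^*\cong\tte_i$. Two small slips need fixing: the $i$-th entry $(\ttf_i)^*$ of $R_i^2(\ttV)$ is the degree $-\alpha_i$ component of $\fg(R_i(\ttV))$, not the degree $\alpha_i$ component, and $T_i^{\ttV}$ sends it to $\fg(\ttV)_{\alpha_i}=\tte_i$; also, for $n>N$ the vanishing of $\fg(\ttV)_{\alpha_j-(n-N)\alpha_i}$ comes from the $Q$-grading being supported in $Q^+\cup Q^-$, not from Lemma~\ref{lem:galphaj+nalphai-simple}\ref{item:interval}.
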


\begin{proof}
Put $N=N_{ij}^{\ttV}$. By Theorem~\ref{thm:Ti-isomorphism},
\[
 T_i^{\ttV}\bigl(\fg(R_i(\ttV))_{\alpha_j+n\alpha_i}\bigr)
 =\fg(\ttV)_{s_i^{\ttV}(\alpha_j+n\alpha_i)}
 =\fg(\ttV)_{\alpha_j+(N-n)\alpha_i}.
\]
For $n\geq0$, the last component is non-zero exactly when $0\leq n\leq N$. Thus the $i$-th string of $R_i(\ttV)$ has the same number of non-zero components as the $i$-th string of $\ttV$, so $c_{ij}^{R_i(\ttV)}=c_{ij}^{\ttV}$
and $R_i(\ttV)$ is $i$-finite by Lemma
\ref{lem:galphaj+nalphai-simple}   \ref{item:interval}. Its top component is sent to $\fg_{\alpha_j}=\tte_j$, while $(\ttf_i)^*\cong\tte_i$. Consequently, $R_i^2(\ttV)\cong\ttV$.
\end{proof}

\begin{remark}
After choosing the identification $R_i^2(\ttV)\cong\ttV$ and normalizing the duality identifications, we can choose the second reflection map to be the
inverse of $T_i^{\ttV}$, so
\begin{align*}
T_i^{\ttV}\circ T_i^{R_i(\ttV)} &=\id_{\fg(\ttV)},  & T_i^{R_i(\ttV)}\circ T_i^{\ttV}=\id_{\fg(R_i(\ttV))}.
\end{align*}
\end{remark}

\subsection{The Weyl groupoid and the root system}
\label{subsec:weyl-groupoid}

Our next task is to define the Weyl groupoid of a semisimple $\fh$-module $\ttV$. The reflections $R_i$ constructed in the previous subsection depend on the fixed decomposition of $\ttV$ as sum of simple submodules (and the enumeration of the summands). Hence, we will deal with Weyl groupoids of ordered tuples of simple modules, as in \cite[Chapter 12]{HS}.

Fix $r\in\bN$, and set $I=\{1,\ldots,r\}$, $Q=\bigoplus_{i\in I}\bZ\alpha_i$ as in the previous subsection, $\fh \in \Lie(\cC)$, $\ttK\colon \fh \otimes \fh \to \one$ a symmetric non-degenerate invariant form. 

\begin{definition}
An \emph{ordered presentation over $(\fh,\ttK)$} is a tuple
$(\tte_1,\ldots,\tte_r)$ of simple $\fh$-modules with non-trivial action. 
\end{definition}

We identify two ordered presentations when their corresponding entries are isomorphic as $\fh$-modules. 
Depending on the context, we may use the tuple notation $(\tte_1,\ldots,\tte_r)$ or the direct sum $\ttV=\tte_1\oplus\ldots\oplus\tte_r$.

Fix a class $x=[\ttV]$ and choose a representative $\ttV$. 
Let $\rho^{\ttV}$ denote the action of $\fh$ on $\ttV$. Then $(\rho^{\ttV},\ttK)$ is a symmetrizable pair by Remark \ref{rem:SD1-holds-ss}, so we can consider $\ttd^x=\ttd_{\rho^{\ttV},\ttK}$, and the contragredient Lie algebra
\begin{align*}
\fg^x &\coloneqq\fg(\rho^x,\ttd^x), & \fg^x=\bigoplus_{\beta\in Q}\fg^x_\beta,
\end{align*}
which is well defined up to $Q$-graded isomorphism of contragredient Lie algebras. Consequently, the support, the integers $N_{ij}^x$, and all constructions below are independent of the chosen representative. Following the notation in \cite{AA-super}, let
\begin{align}
\varDelta_{\pm}^x &\coloneqq \{\beta\in Q^{\pm}\setminus\{0\}:\fg^x_\beta\ne0\}, & \varDelta^x & \coloneqq\varDelta_+^x\cup\varDelta_-^x,
\label{eq:roots-contragredient}
\end{align}
The non-degenerate pairings $\fg^x_\beta\otimes\fg^x_{-\beta}\to\one$ imply $\varDelta_-^x=-\varDelta_+^x$. 
Whenever $x$ is $i$-finite, we define $R_i(x)\coloneqq[R_i(\ttV)]$, again well-defined on isomorphism classes.

\begin{definition}
Let $\cX(\ttV)$ be the set of classes obtainable from $[\ttV]$ by a finite sequence of reflections $R_i$. We say that $\ttV$ is \emph{regular} if every $x\in\cX(\ttV)$ is $i$-finite for every $i\in I$, i.e. if $N_{ij}^x$ is finite for all $i\ne j\in I$ and all $x\in\cX(\ttV)$. 
\end{definition}

Hence, for a regular $\ttV$, the maps $R_i\colon\cX(\ttV)\longrightarrow\cX(\ttV)$ are well defined for all $i\in I$ as well as the matrices $C^x=(c_{ij}^x)_{i,j\in I}$ as defined in \eqref{eq:defn-cij}, $x\in\cX(\ttV)$.

A particular case of a regular $\ttV$ is when $\fg^{[\ttV]}\in\cC$.

\begin{definition}
The \emph{Weyl groupoid} of $\ttV$ is the subgroupoid $\cW(\ttV)$ of $\cX(\ttV)\times \operatorname{GL}(Q)\times\cX(\ttV)$ generated by
\begin{align*}
\sigma_i^x & \coloneqq (x, s_i^x, R_i(x)), & i\in I, \, & x\in\cX(\ttV).
\end{align*}
\end{definition}

We regard $(x,w,y)$ as a morphism $y\to x$, with composition
\[
(x,w,y)\circ(y,v,z)=(x,wv,z).
\]
Note that the inverse of $\sigma_i^x$ is $\sigma_i^{R_i(x)}=(R_i(x), s_i^x, x)$, see Corollary~\ref{lem:reflection}.

Next we introduce the set of real roots of $x\in\cX(\ttV)$, in analogy with the classical case:
\begin{align*}
\varDelta^{x,\re}& \coloneqq \{ \omega(\alpha_i) \, | \, i\in I, \, (x,\omega,y)\in\cW(\ttV), \, y\in\cX(\ttV) \}.
\end{align*}
That is, the set of real roots contains all the elements of $Q$ obtained after applying compositions of $\sigma_i^y$, ending at $x$.
The subsets of positive, respectively negative, real roots are $\varDelta^{x,\re}_{\pm}=\varDelta^{x,\re}\cap(\pm \bN_0^I)$, and
\begin{align*}
m_{ij}^{x} \coloneqq | \varDelta^{x,\re} \cap (\bN_0\alpha_i+\bN_0\alpha_j) |. 
\end{align*}

Now we are ready to check that all the previous data give rise to a root system in the sense of \cite{HS}.

\begin{theorem}\label{th:cartan-graph-root-system}
Let $\ttV$ be regular. 

\begin{enumerate}[leftmargin=*,label=\rm{(\alph*)}]

\item\label{item:cartan-graph} $\cG(\ttV)=\bigl(I,\cX(\ttV),(R_i)_{i\in I},(C^x)_{x\in\cX(\ttV)}\bigr)$ is a Cartan graph, cf. \cite[Definition 9.1.14]{HS}. More precisely,

\begin{enumerate}[leftmargin=*,label=\rm{CG\arabic*)}]
\item\label{item:cartan-graph1} $R_i^2=\id_{\cX(\ttV)}$ for every $i \in I$;
\item\label{item:cartan-graph2} each $C^x$ is a generalized Cartan matrix and $c_{ij}^x=c_{ij}^{R_i(x)}$ for all $x\in\cX(\ttV)$ and $i,j\in I$.
\item\label{item:cartan-graph3} $\varDelta^{x,\re}=\varDelta^{x,\re}_+\cup \varDelta^{x,\re}_-$ for all $x\in\cX(\ttV)$;
\item\label{item:cartan-graph4} for each $x\in\cX(\ttV)$ and $i \ne j\in I$ such that $m_{ij}^x<\infty$, $(R_iR_j)^{m_{ij}^x}(x)=x$.
\end{enumerate}

\item\label{item:root-system} $\cR(\ttV)=\bigl(\cG(\ttV),(\varDelta^x)_{x\in\cX(\ttV)}\bigr)$ is a root system, cf. \cite[Definition 10.4.1]{HS}. More precisely:
\begin{enumerate}[leftmargin=*,label=\rm{RS\arabic*)}]
\item\label{item:root-system1} $0\notin \varDelta^x$, $\alpha_i\in\varDelta^x$ for all $i\in I$, $x\in\cX(\ttV)$.
\item\label{item:root-system2} $\varDelta^x\subset \bN_0^I \cup (-\bN_0^I)$ for all $x\in\cX(\ttV)$.
\item\label{item:root-system3} $s_i^x(\varDelta^x)=\varDelta^{R_i(x)}$ for all $i\in I$, $x\in\cX(\ttV)$.
\end{enumerate}

\end{enumerate}
Consequently, $\cW(\ttV)$ is the Weyl groupoid of the contragredient datum $\ttV$.
\end{theorem}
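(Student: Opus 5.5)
The plan is to derive everything from the reflection isomorphisms $T_i^{x}$ of Theorem~\ref{thm:Ti-isomorphism}, together with Corollary~\ref{lem:reflection}. Then we appeal to the general theory of \cite{HS}, where a family of root sets satisfying RS1--RS3 for a structure satisfying CG1--CG2 already forces the remaining Cartan graph axioms.

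First, the Cartan graph axioms CG1 and CG2. By Corollary~\ref{lem:reflection}, $R_i^2(x)=x$ and $c_{ij}^{R_i(x)}=c_{ij}^x$. The classes are identified up to isomorphism of the entries, so CG1 holds on $\cX(\ttV)$. For CG2:
\begin{itemize}
\item $c_{ii}^x=2$ by definition.
\item $c_{ij}^x=-N_{ij}^x\le0$ is an integer by regularity.
\item If $c_{ij}^x=0$, then $\fg_{\alpha_j+\alpha_i}=0$, so $[\tte_i,\tte_j]=0$.
\end{itemize}
To get $c_{ji}^x=0$ from the last point, we use $[\tte_j,\tte_i]=-[\tte_i,\tte_j]$ up to the symmetry. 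This shows $\fg_{\alpha_i+\alpha_j}=0$, hence $N_{ji}^x=0$.

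Next, the root system axioms. RS1 holds because $0\notin\varDelta^x$ by definition, and $\fg^x_{\alpha_i}=\tte_i\ne0$. RS2 follows from the $Q$-graded triangular decomposition in Theorem~\ref{thm:contragredient-symmetrizable-grading}: $\fn_\pm$ lives in degrees $Q^\pm$. RS3 is the key step. Theorem~\ref{thm:Ti-isomorphism} gives $T_i^x(\fg^{R_i(x)}_\beta)=\fg^x_{s_i^x(\beta)}$, so $s_i^x(\varDelta^{R_i(x)})=\varDelta^x$. Since $s_i^x=s_i^{R_i(x)}$ (the Cartan entries agree) and $(s_i^x)^2=\id$, this is equivalent to RS3.

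Finally, CG3 and CG4. Since each $\sigma_i^y$ maps $\varDelta^{y'}$ bijectively onto $\varDelta^{y}$ (by RS3), iterating shows that $\omega(\alpha_i)\in\varDelta^x$ for every morphism $(x,\omega,y)$ of $\cW(\ttV)$, using RS1 at $y$. Hence $\varDelta^{x,\re}\subseteq\varDelta^x\subseteq\bN_0^I\cup(-\bN_0^I)$ by RS2, which is CG3.

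CG4 is the main obstacle, because it is a statement about objects rather than roots. I would not prove it directly. Instead I would invoke the result of \cite{HS} (the analogue of \cite[Theorem 10.4.?]{HS} used for Nichols algebras) that for a \emph{semi-Cartan graph} with a family of sets satisfying RS1--RS3, the object-level axiom $(R_iR_j)^{m_{ij}^x}(x)=x$ holds whenever $m_{ij}^x<\infty$. Alternatively, one could use the HS result that the existence of a root system implies the semi-Cartan graph is a Cartan graph, with real roots contained in the roots. If only the version requiring finiteness of $\varDelta^x\cap(\bN_0\alpha_i+\bN_0\alpha_j)$ is available, I would check that $m_{ij}^x<\infty$ suffices, by restricting to the rank-two subgroupoid.

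The closing sentence, that $\cW(\ttV)$ is the Weyl groupoid, then follows by definition.
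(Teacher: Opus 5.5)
Your arguments for CG1--CG3 and RS1--RS3 match the paper's. For CG2, the antisymmetry detour is unnecessary, since $\alpha_i+\alpha_j=\alpha_j+\alpha_i$. What you actually need is the interval property of Lemma~\ref{lem:galphaj+nalphai-simple}\,(a), to pass from $\fg^x_{\alpha_i+\alpha_j}=0$ to $N_{ji}^x=0$.

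The genuine gap is CG4. You propose to quote a result of \cite{HS} deriving $(R_iR_j)^{m_{ij}^x}(x)=x$ from CG1--CG2 and RS1--RS3, but no such implication holds for abstract semi-Cartan graphs. For a counterexample, take the one-object Cartan graph of type $A_2$ and unfold it to the object set $\bZ$. Set $r_1(n)=-n$ and $r_2(n)=1-n$, keep the Cartan matrix constant, and use the constant root sets $\{\pm\alpha_1,\pm\alpha_2,\pm(\alpha_1+\alpha_2)\}$. Every path of the original graph lifts, so CG1--CG3 and RS1--RS3 hold and $m_{12}^n=3$. Yet $(r_1r_2)^3(n)=n-3\neq n$. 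Hence any ``root system implies Cartan graph'' statement either already includes the object-level axiom among its hypotheses, so invoking it is circular, or it is false. Restricting to a rank-two subgroupoid does not help either: root combinatorics controls only the linear maps, never the objects. CG4 is a statement about isomorphism classes of ordered presentations, and it must be proved from the Lie algebras $\fg^x$.

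This is what the paper does, in three steps. Put $x_0=x$, $x_{n+1}=R_iR_j(x_n)$, and let $w_m\in\operatorname{GL}(Q)$ be the linear part of the resulting composite of reflections.
\begin{enumerate}
\item The rank-two argument of \cite{HY}, as used in \cite[Theorem~6.11]{HS-PLMS}, needs only RS1--RS3. It gives $w_m(\alpha_i)=\alpha_i$ and $w_m(\alpha_j)=\alpha_j$.
\item For $k\notin\{i,j\}$ write $w_m(\alpha_k)=\alpha_k+a\alpha_i+b\alpha_j\in\varDelta^{x_m}$. Then $w_m^{-1}(\alpha_k)=\alpha_k-a\alpha_i-b\alpha_j\in\varDelta^{x}$, so RS2 forces $a=b=0$, i.e.\ $w_m=\id_Q$.
\item Composing the inverses of the isomorphisms of Theorem~\ref{thm:Ti-isomorphism} yields a Lie algebra isomorphism $\Psi:\fg^x\to\fg^{x_m}$. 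It is the identity on $\fh$ and maps $\fg^x_\beta$ onto $\fg^{x_m}_{w_m(\beta)}=\fg^{x_m}_\beta$. Being $\fh$-equivariant, it identifies $\fg^x_{\alpha_k}\cong\fg^{x_m}_{\alpha_k}$ as $\fh$-modules for every $k$, so $x_m=x$.
\end{enumerate}
The missing ingredient in your proposal is this object-level use of the reflections: they are realized by $\fh$-linear isomorphisms of contragredient Lie algebras, and that is what pins down $x_m=x$.
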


\begin{proof}
We prove \ref{item:root-system} first. Property  \ref{item:root-system1} is clear, and \ref{item:root-system2} follows from the triangular decomposition of $\fg^x$. Lastly,  \ref{item:root-system3} is a consequence of the isomorphism $T_i^x$ in Theorem \ref{thm:Ti-isomorphism}.

Now we check \ref{item:cartan-graph}.  
We first check that $\cG(\ttV)$ is a semi-Cartan graph, cf. \cite[Definition 9.1.1]{HS}; that is, it satisfies \ref{item:cartan-graph1} and \ref{item:cartan-graph2}. The equality $R_i^2=\id$ and the equality of the $i$-th rows of $C^x$ and $C^{R_i(x)}$ follow from Corollary~\ref{lem:reflection}. For $i\ne j$, Lemma~\ref{lem:galphaj+nalphai-simple} gives
\begin{align*}
c_{ij}^x&=0 && \Longleftrightarrow & \fg^x_{\alpha_i+\alpha_j}&=0 && \Longleftrightarrow & c_{ji}^x&=0.
\end{align*}
Thus $C^x$ is a generalized Cartan matrix for all $x\in\cX(\ttV)$.

Property \ref{item:cartan-graph3} is clear, as $\varDelta^{x,\re}\subseteq \varDelta^{x}$ and we already checked that \ref{item:root-system2} holds. It remains to prove \ref{item:cartan-graph4}. 
Fix $i\ne j$, put $m=m_{ij}^x<\infty$, and define
\[
x_0=x,\qquad x_{n+1}=R_iR_j(x_n),\qquad
w_0=\id_Q,\qquad
w_{n+1}=s_i^{R_j(x_n)}s_j^{x_n}w_n.
\]
Then the properties R1--R3 used in the proof of \cite[Theorem~6.11]{HS-PLMS} are satisfied, and notice that the rank-two argument in that proof, based on the proof of \cite[Lemma~5]{HY}, uses only these properties. Hence, following their argument, 
$w_m(\alpha_i)=\alpha_i$ and $w_m(\alpha_j)=\alpha_j.$ 
For $k\notin\{i,j\}$ write
$w_m(\alpha_k)=\alpha_k+a\alpha_i+b\alpha_j$.
This belongs to $\varDelta^{x_m}$, so RS2 gives $a,b\geq0$. Since $w_m$ fixes $\alpha_i,\alpha_j$ and $w_m^{-1}(\varDelta^{x_m})=\varDelta^x$, we also have
\[
w_m^{-1}(\alpha_k)=\alpha_k-a\alpha_i-b\alpha_j\in\varDelta^x.
\]
RS2 gives $a=b=0$, hence $w_m=\id_Q$.

The inverse reflection isomorphisms compose to an isomorphism
$\Psi:\fg^x\to\fg^{x_m}$ which is the identity on $\fh$ and sends
$\fg^x_\beta$ to $\fg^{x_m}_{w_m(\beta)}$.
Therefore $\Psi$ identifies the degree-$\alpha_k$ components as $\fh$-modules for every $k$. The corresponding ordered presentations are isomorphic, so $x_m=x$, which is CG4.
\end{proof}


As in the classical setting, we say that the Cartan graph $\cG(\ttV)$ is \emph{finite} if $\varDelta^{x,\re}$ is finite for all $x\in\cX(\ttV)$. Also, the root system $\cR(\ttV)$ is \emph{reduced} if for all $x\in\cX(\ttV)$ and all $\alpha\in\varDelta^x$, $\bQ\alpha\cap \varDelta^x=\{\pm\alpha\}$. We can construct a reduced root system $\cR^{\operatorname{red}}(\ttV)$ by considering
\begin{align*}
\varDelta^{x,\operatorname{red}} &\coloneqq \{\alpha\in\varDelta^x \, | \, q\alpha\notin \varDelta^x \text{ for all }q\in(0,1)\}, & &x\in\cX(\ttV).
\end{align*}

\begin{corollary}\label{cor:finite-categorical-weyl-groupoid}
If $\fg(\ttV)\in\cC$, then $\ttV$ is regular. The root system $\mathcal R(\ttV)$ and its Weyl groupoid $\mathcal W(\ttV)$ are finite. In particular, $\varDelta^{\ttV,\operatorname{red}}=\varDelta^{\ttV,\re}$.
\end{corollary}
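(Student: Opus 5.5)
The plan is to deduce everything from the fact that a contragredient Lie algebra lying in $\cC$ has finite graded support, and that the reflections $T_i$ of Theorem~\ref{thm:Ti-isomorphism} preserve this.

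\emph{Regularity.} If $\fg(\ttV)\in\cC$, then only finitely many $\fg_\beta$ are nonzero, so every $N_{ij}^{\ttV}$ is finite and $\ttV$ is $i$-finite for all $i$. By Theorem~\ref{thm:Ti-isomorphism}, $\fg(R_i(\ttV))\cong\fg(\ttV)$ as Lie algebras in $\indcat{\cC}$, with the grading permuted by $s_i^{\ttV}$. Hence $\fg(R_i(\ttV))\in\cC$ as well. An induction on the length of a sequence of reflections then shows that every $x\in\cX(\ttV)$ has $\fg^x\in\cC$ and is $i$-finite for all $i$. Thus $\ttV$ is regular, and Theorem~\ref{th:cartan-graph-root-system} applies.

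\emph{Finiteness.} Every $\fg^x$ is isomorphic to $\fg(\ttV)$ up to a regrading by an element of $\operatorname{GL}(Q)$, namely a composite of the $s_i$. Therefore $|\varDelta^x|=|\varDelta^{\ttV}|<\infty$ for all $x$, and each $\varDelta^x$ is finite. Real roots are images $\omega(\alpha_i)$ with $(x,\omega,y)\in\cW(\ttV)$. Since $\alpha_i\in\varDelta^y$ and RS3 iterates to $\omega(\varDelta^y)=\varDelta^x$, we get $\varDelta^{x,\re}\subseteq\varDelta^x$, which is finite. So the Cartan graph is finite, and by the standard result of \cite{HS} a connected Cartan graph with finite real roots has a finite Weyl groupoid. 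If needed, one can see this directly: each $\omega$ is determined by its values on the $\alpha_i$, which lie in the finite set $\varDelta^x$, so there are finitely many morphisms into each object.

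For finiteness of the object set $\cX(\ttV)$, note that each object is determined by the tuple $(\fg^{\ttV}_{\omega(\alpha_k)})_k$ of $\fh$-modules, through the isomorphisms $T$. Hence there are at most finitely many objects.

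\emph{Equality $\varDelta^{\ttV,\operatorname{red}}=\varDelta^{\ttV,\re}$.} For a finite root system in the sense of \cite{HS}, the known result (Cuntz--Heckenberger) is that $\varDelta^{x}_{\re}$ coincides with the set of roots of the reduced root system. I will therefore show that $\cR^{\operatorname{red}}(\ttV)$ is again a root system over the same Cartan graph. RS1 and RS2 are clear. RS3 holds because the $s_i^x$ are linear and so preserve the condition that no $q\alpha$ with $q\in(0,1)$ is a root. Then I invoke \cite{HS}: for a finite root system that is reduced, the root sets equal the real roots. The inclusion $\varDelta^{\re}\subseteq\varDelta^{\operatorname{red}}$ needs a separate check, because no $q\,\omega(\alpha_i)$ with $q\in(0,1)$ can be a root. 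This follows by transporting via $\omega^{-1}$ to multiples $q\alpha_i$, which are not in $\bZ^I$.

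The main obstacle is this last step: matching the paper's $\varDelta^{\operatorname{red}}$ with the hypotheses of the \cite{HS} theorem that a finite reduced root system consists only of real roots. I would first cite that theorem directly, checking that the reduced subsets again satisfy RS1–RS3 for the same Cartan graph $\cG(\ttV)$.
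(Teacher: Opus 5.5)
Your proposal is correct and takes essentially the same route as the paper. Regularity and finiteness come from the reflection isomorphisms $T_i$, which preserve the condition $\fg\in\cC$. The equality $\varDelta^{\ttV,\operatorname{red}}=\varDelta^{\ttV,\re}$ comes from the Heckenberger--Schneider theorem that a finite reduced root system over a Cartan graph consists of real roots; the paper cites \cite[Theorem~10.4.7]{HS}. You also make explicit what the paper's two-line proof leaves implicit: the induction showing $\fg^x\in\cC$ for every $x\in\cX(\ttV)$, the finiteness of the objects and morphisms of $\cW(\ttV)$, and the check that $\cR^{\operatorname{red}}(\ttV)$ again satisfies RS1--RS3.
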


\begin{proof}
As observed before, $\ttV$ is regular. As $\varDelta^x$ is finite for all $x\in\cX(\ttV)$, the corresponding reduced root system is univocally determined as the root systems of real roots by \cite[Theorem 10.4.7]{HS}.
\end{proof}

\begin{remark}\label{rem:not-reduced-root-systems}
The root system of a contragredient Lie algebra need not be reduced. For example, $\mathfrak{osp}(1|2)$ has root system $\{\pm\alpha,\pm2\alpha\}$. Furthermore, the Lie algebras in $\Verp$, $p>3$, obtained by semisimplification of the simple ones of type $B_2$ and $G_2$ with respect to $e_2$ in \cite[Examples 4.33, 4.34]{APS-ss} are contragredient Lie algebras whose root system is $\{\pm \alpha_1,\pm 2\alpha_1\}$, respectively $\{\pm \alpha_1,\pm 2\alpha_1, \pm 3\alpha_1\}$.
\end{remark}

\subsection{An example in \texorpdfstring{$\mathrm{Ver}_5$}{Ver5}}
\label{subsec:example-weyl-gpd}

We conclude by illustrating the constructions of this section with an example.  The point is to see explicitly how the homogeneous
components determine the Cartan matrix, how reflections change the
ordered presentation without changing the Lie algebra, and how the
resulting presentations assemble into a Weyl groupoid.

Assume that $\operatorname{char}\Bbbk=5$ and let
$\cC=\mathrm{Ver}_5$.  Set
\[
 U_1=\one,\qquad U_2=L_2,\qquad
 U=U_1\oplus U_2,\qquad W=U\oplus U^*.
\]
Equip $W$ with the canonical non-degenerate skew form for which $U$ and
$U^*$ are isotropic, and let
\[
 \fg=\mathfrak{sp}(W).
\]

\smallskip
\noindent\emph{The contragredient presentation.}
The block-diagonal subalgebra
\[
 \fh=\mathfrak{gl}(U_1)\oplus\mathfrak{gl}(U_2)
 =\mathfrak{sl}(L_2)\oplus\Bbbk t_1\oplus\Bbbk t_2
\]
acts on $W$ by $x\mapsto x\oplus(-x^*)$.  Here $t_a$ acts as the
identity on $U_a$ and as minus the identity on $U_a^*$.  Let
$\varepsilon_a$ record the eigenvalue of $t_a$, and put
\begin{align*}
\alpha_1&:=\varepsilon_1-\varepsilon_2,
&
\alpha_2&:=2\varepsilon_2,
&
Q&:=\mathbb Z\alpha_1\oplus\mathbb Z\alpha_2.
\end{align*}

Let $T=(\mathbb G_m)^2$ act on $W$ by $z_a$ on $U_a$ and by
$z_a^{-1}$ on $U_a^*$.  This action preserves the skew form and hence
induces an action of $T$ on $\fg$ by conjugation.  The differential of
the $a$-th factor of $T$ is the element $t_a\in\fh$; thus $t_a$ acts as
the identity on $U_a$ and as minus the identity on $U_a^*$.
Let $\varepsilon_1,\varepsilon_2\in X^*(T)$ be the standard integral
characters, and put
\begin{align*}
 \alpha_1&:=\varepsilon_1-\varepsilon_2,
 &
 \alpha_2&:=2\varepsilon_2,
 &
 Q&:=\mathbb Z\alpha_1\oplus\mathbb Z\alpha_2.
\end{align*}
Then the
$T$-weight decomposition defines a $Q$-grading
$  \fg=\bigoplus_{\beta\in Q}\fg_\beta.$ 
For $y\in\fg_\beta$, differentiating the $T$-action gives
$  [t_a,y]=\beta(t_a)y$ for  $a=1,2,$ 
where the integer $\beta(t_a)$ is viewed as an element of $\Bbbk$.

Since $T$ centralizes $\fh$, every homogeneous component
$\fg_\beta$ is an $\fh$-submodule.  We take $\alpha_1,\alpha_2$ to be the simple positive
degrees.
The positive homogeneous components of $\fg$ are
\[
\begin{array}{c|c}
 \beta & \fg_\beta 
 \\
 \hline
 \alpha_1 & U_1\otimes U_2^*\cong L_2
   \\
 \alpha_2 & S^2U_2\cong L_3
   \\
 \alpha_1+\alpha_2 & U_1\otimes U_2\cong L_2
    \\
 2\alpha_1+\alpha_2 & S^2U_1\cong\one
\end{array}
\]
Moreover, $\fg_0=\fh$ and $\fg_{-\beta}\cong\fg_\beta^*$.  Thus
\[
 \ttV=\tte_1\oplus\tte_2
 :=\fg_{\alpha_1}\oplus\fg_{\alpha_2}
 \cong L_2\oplus L_3
\]
is a sum of two simple $\fh$-modules with nontrivial action.

Let $\ttK$ be the restriction to $\fh$ of the trace form on
$\mathfrak{gl}(W)$.  Its restriction to each block
$\mathfrak{gl}(U_a)$ is twice the canonical trace pairing, and is
therefore non-degenerate.  Moreover, $\fh$ acts faithfully on $W$, so
it has enough modules.  Hence $(\rho,\ttK)$, where $\rho$ is the
adjoint action of $\fh$ on $\ttV$, is a symmetrizable datum.  Under the
identification of $\ttV^*$ with
$\fg_{-\alpha_1}\oplus\fg_{-\alpha_2}$ induced by the trace form, the
map $\ttd_{\rho,\ttK}$ is the bracket.

The remaining positive root spaces are generated successively:
\[
 [\fg_{\alpha_1},\fg_{\alpha_2}]
   =\fg_{\alpha_1+\alpha_2},
 \qquad
 [\fg_{\alpha_1},\fg_{\alpha_1+\alpha_2}]
   =\fg_{2\alpha_1+\alpha_2}.
\]
Indeed, these brackets are the usual contraction
$L_2^*\otimes S^2L_2\to L_2$ and evaluation
$L_2^*\otimes L_2\to\one$, and both are nonzero.  The negative
components are generated dually.  The trace form restricts to a
non-degenerate invariant form on $\fg$: indeed, $\fg$ is the
$1$-eigenspace of the trace-form-preserving involution
$x\mapsto-x^\dagger$, where $\dagger$ denotes the adjoint with respect
to the skew form, and the two eigenspaces are orthogonal.  Therefore
Proposition~\ref{prop:contragredient-bilinear-form} gives
\[
 \fg\cong\fg(\rho,\ttd_{\rho,\ttK}).
\]

\smallskip
Notice that $N_{12}=2$ and $N_{21}=1$, so the Cartan matrix is $C^{\ttV}=
 \begin{pmatrix}
  2&-2\\
  -1&2
 \end{pmatrix}$, of type $C_2$.

\smallskip
\noindent\emph{Reflections.}
Let $x_0=[(\tte_1,\tte_2)]$ be the class of the presentation above.
The reflection at the first vertex replaces $\tte_1$ by its dual and
$\tte_2$ by the top of its $\alpha_1$-string:
\[
 R_1(x_0)
 =
 \left[
 \left(
 \ttf_1,\,
 (\operatorname{ad}\tte_1)^2(\tte_2)
 \right)
 \right]
 =
 \left[
 \left(
 \fg_{-\alpha_1},\,
 \fg_{2\alpha_1+\alpha_2}
 \right)
 \right].
\]
In terms of the ordered pair $(U_1,U_2)$, this reflection interchanges
the two entries.  Similarly,
\[
 R_2(x_0)
 =
 \left[
 \left(
 [\tte_2,\tte_1],\,
 \ttf_2
 \right)
 \right]
 =
 \left[
 \left(
 \fg_{\alpha_1+\alpha_2},\,
 \fg_{-\alpha_2}
 \right)
 \right],
\]
which replaces $U_2$ by $U_2^*$.

More generally, let
\[
 W(C_2)\cong(\mathbb Z/2\mathbb Z)^2\rtimes S_2
\]
be the Weyl group of type $C_2$.  It acts on
$\varepsilon_1,\varepsilon_2$ by signed permutations: its elements may
permute the two weights and independently change their signs.  Under the
correspondence with ordered pairs, permutations interchange the objects
$U_1,U_2$, while changing the sign of $\varepsilon_a$ replaces $U_a$ by
$U_a^*$.
  If
$w(\varepsilon_i)=\eta_i\varepsilon_{\pi(i)}$, set
\[
 U'_i=U_{\pi(i)}^{\eta_i},
 \qquad
 U_a^+=U_a,\quad U_a^-=U_a^*,
\]
and define
\[
 x_w=
 \left[
 \left(
 U'_1\otimes {U'_2}^*,\,S^2U'_2
 \right)
 \right]
 =
 \left[
 \left(
 \fg_{w(\alpha_1)},\,\fg_{w(\alpha_2)}
 \right)
 \right].
\]
The preceding computations give
\[
 R_i(x_w)=x_{ws_i},
\]
where $s_1$ interchanges $\varepsilon_1$ and $\varepsilon_2$, while
$s_2$ changes the sign of $\varepsilon_2$.  The elements $t_1,t_2$
distinguish the eight presentations $x_w$: their eigenvalues on the
root spaces lie in $\{0,\pm1,\pm2\}$, which are distinct in
characteristic $5$.  Thus
\[
 W(C_2)\longrightarrow\cX(\ttV),\qquad w\longmapsto x_w,
\]
is a bijection.

Although the eight presentations are distinct as $\fh$-modules, they
have only two shapes as objects of $\cC$:
\[
\begin{array}{c|c|c|c|c|c}
 & (U'_1,U'_2)
 & (\fg^{x_w})_{\alpha_1}
 & (\fg^{x_w})_{\alpha_2}
 & (\fg^{x_w})_{\alpha_1+\alpha_2}
 & (\fg^{x_w})_{2\alpha_1+\alpha_2}\\
 \hline
 \mathrm A
 & (\one^\pm,L_2^\pm)
 & L_2&L_3&L_2&\one\\
 \mathrm B
 & (L_2^\pm,\one^\pm)
 & L_2&\one&L_2&L_3.
\end{array}
\]
Here the signs retain the $\fh$-module structure even though
$\one^*\cong\one$ and $L_2^*\cong L_2$ as objects of $\cC$.

The reflection $R_1$ exchanges the two shapes, whereas $R_2$ preserves
each shape.  The exchange graph is the following octagon:
\begin{center}
\begin{tikzpicture}[
  x=1cm,y=1cm,
  every node/.style={font=\scriptsize},
  vertex/.style={draw,rounded corners,fill=white,inner sep=2pt},
  edge label/.style={fill=white,inner sep=1pt}
]
\node[vertex] (x0) at (0,2.3) {$(\one,L_2)$};
\node[vertex] (x1) at (2.7,1.6) {$(\one,L_2^*)$};
\node[vertex] (x2) at (3.8,0) {$(L_2^*,\one)$};
\node[vertex] (x3) at (2.7,-1.6) {$(L_2^*,\one^*)$};
\node[vertex] (x4) at (0,-2.3) {$(\one^*,L_2^*)$};
\node[vertex] (x5) at (-2.7,-1.6) {$(\one^*,L_2)$};
\node[vertex] (x6) at (-3.8,0) {$(L_2,\one^*)$};
\node[vertex] (x7) at (-2.7,1.6) {$(L_2,\one)$};
\draw (x0)--node[edge label] {$R_2$} (x1);
\draw (x1)--node[edge label] {$R_1$} (x2);
\draw (x2)--node[edge label] {$R_2$} (x3);
\draw (x3)--node[edge label] {$R_1$} (x4);
\draw (x4)--node[edge label] {$R_2$} (x5);
\draw (x5)--node[edge label] {$R_1$} (x6);
\draw (x6)--node[edge label] {$R_2$} (x7);
\draw (x7)--node[edge label] {$R_1$} (x0);
\end{tikzpicture}
\end{center}

\smallskip
\noindent\emph{The root system and Weyl groupoid.}
For every $w\in W(C_2)$, Theorem~\ref{thm:Ti-isomorphism} identifies
$\fg^{x_w}$ with the same Lie algebra $\fg$, but with the grading
changed by $w$:
\[
 (\fg^{x_w})_\beta\cong\fg_{w(\beta)}.
\]
Therefore every object has the same reduced root
system
\[
\varDelta^{x_w}=\varDelta^{x_w,\re}
 =
 \left\{
 \pm\alpha_1,\ \pm\alpha_2,\ \pm(\alpha_1+\alpha_2),\
 \pm(2\alpha_1+\alpha_2)
 \right\}.
\]
In particular, $m_{12}^{x_w}=4$ and
$(R_1R_2)^4(x_w)=x_w$, as predicted by
Theorem~\ref{th:cartan-graph-root-system}.  The Weyl groupoid has eight
objects, its exchange graph is the octagon above, and all isotropy
groups are trivial.  Its reflections act on $Q$ as the usual
generators of the dihedral group $W(C_2)$ of order $8$.

\smallskip
\noindent\emph{Simplicity.}
We finish by applying the criterion of
Theorem~\ref{thm:contragredient-indec-simple}.  Let
$\tth_i=[\tte_i,\ttf_i]=\operatorname{im}\ttd_i$.  From the defining
property of $\ttd_{\rho,\ttK}$,
$  \tth_i=(\Ann_{\fh}\tte_i)^\perp.$ 
In the present example,
\[
\Ann_{\fh}\tte_1=\Bbbk(t_1+t_2),
 \qquad
 \Ann_{\fh}\tte_2=\mathfrak{gl}(U_1).
\]
Indeed, the two scalar blocks act on
$\tte_1=U_1\otimes U_2^*$ through their difference, while
$\mathfrak{gl}(U_2)$ acts faithfully on
$\tte_2=S^2L_2\cong L_3$.  It follows that
\[
 \tth_1+\tth_2=\fh,
 \qquad
 \Ann\ttV
 =
 \Ann_{\fh}\tte_1\cap\Ann_{\fh}\tte_2
 =0.
\]
Thus $\fg'=\fg$, and Lemma~\ref{lem:center-contragredient} gives
$\fc(\fg)=0$.  Moreover, $\tth_2$ acts nontrivially on $\tte_1$, while
$\tth_1$ acts nontrivially on $\tte_2$, so the datum is
indecomposable.  Note that $\fg'_0=\fh$ is generated by $\tth_1$ and $\tth_2$, and the generating modules $\tte_1\cong L_2$ and $\tte_2\cong L_3$ are simple as objects of $\Ver_5$, hence also as $\fg'_0$-modules. By Theorem~\ref{thm:contragredient-indec-simple}, since the connected quiver has arrows in both directions and $\Ann\ttV=0$, then $\fg$ is simple.

\section*{Acknowledgments}

This work was supported by the American Institute of Mathematics through its
SQuaREs program.  I. A. was partially supported by CONICET and Secyt (UNC). The work of P. E. was partially supported by the NSF grant DMS-2502467.
A.C. was partially supported by Simons Collaboration Grant No. 999367. J.P. was partially supported by US NSF Grant DMS2146392 and by Simons Foundation Award
SFI-MPS-GCS-00008528-08 as part of the Simons Collaboration on Global Categorical Symmetries. 

The authors used OpenAI's ChatGPT 5.6 Pro.  It assisted with some of the theoretical development. It was also used to help develop illustrative examples, check computations and the logical consistency of arguments, and suggest possible revisions. All suggestions were critically evaluated by the authors, and all arguments, computations, and references in the final manuscript were independently verified. The authors take full responsibility for the content of the paper.

\end{document}